\def\hB{\hspace*{\fill}$\qed$}
\title{Branched coarse coverings and transfer maps}
\author{
Ulrich Bunke\thanks{Fakult{\"a}t f{\"u}r Mathematik,
Universit{\"a}t Regensburg,
93040 Regensburg,
ulrich.bunke@mathematik.uni-regensburg.de} 
}
\numberwithin{equation}{section}
\newtheorem{theorem}{Theorem}[section] 
\newtheorem{prop}[theorem]{Proposition}
\newtheorem{lem}[theorem]{Lemma}
\newtheorem{ddd}[theorem]{Definition}
\newtheorem{kor}[theorem]{Corollary}
\newtheorem{ass}[theorem]{Assumption}
\newtheorem{construction}[theorem]{Construction}
\theoremstyle{remark}
\theoremstyle{definition}
\newtheorem{ex}[theorem]{Example}
\newtheorem{rem}[theorem]{Remark}
\newtheorem{constr}[theorem]{Construction}
\newcommand{\trc}{\mathrm{trc}}
\newcommand{\leb}{\mathrm{leb}}
\newcommand{\Ring}{\mathbf{Ring}}
\newcommand{\nRing}{\mathbf{Ring}^{\mathrm{nu}}}
\newcommand{\fin}{\mathrm{fin}}
\newcommand{\fdsc}{\mathrm{fdsc}}
\newcommand{\EE}{\mathrm{E}}
\newcommand{\adim}{\mathrm{adim}}
\newcommand{\udim}{\mathrm{udim}}
\newcommand{\fadim}{\mathrm{fadim}}
\newcommand{\Ban}{\mathrm{Ban}}
\newcommand{\crs}{\mathrm{crs}}
\newcommand{\ctr}{\mathrm{ctr}}
\newcommand{\mult}{\mathrm{mult}}
\newcommand{\free}{\mathrm{free}}
\newcommand{\per}{\mathrm{per}}
\newcommand{\fg}{\mathrm{fg}}
\newcommand{\topp}{\mathrm{top}}
\newcommand{\ee}{\mathrm{e}}
\newcommand{\alg}{\mathrm{alg}}
\newcommand{\exa}{\mathrm{ex}}
\newcommand{\nCalg}{C^{*}\mathbf{Alg}^{\mathrm{nu}}}
\newcommand{\BCov}{\mathbf{BrCov}}
\newcommand{\UBC}{\mathbf{UBC}}
\newcommand{\Yo}{\mathrm{Yo}}
\newcommand{\Res}{\mathrm{Res}}
\newcommand{\Hilb}{\mathbf{Hilb}}
\newcommand{\cR}{\mathcal{R}}
\newcommand{\BC}{\mathbf{BC}}
\newcommand{\Cofib}{\mathrm{Cofib}}
\newcommand{\bB}{{\mathbf{B}}}
\newcommand{\Fib}{{\mathrm{Fib}}}
\newcommand{\incl}{\mathrm{incl}}
\newcommand{\cP}{\mathcal{P}}
\newcommand{\UK}{\mathrm{UK}}
\newcommand{\cZ}{{\mathcal{Z}}}
\newcommand{\cL}{{\mathcal{L}}}
\newcommand{\cW}{{\mathcal{W}}}
\newcommand{\Add}{{\mathtt{Add}}}
\newcommand{\bA}{{\mathbf{A}}}
\newcommand{\const}{{\mathtt{const}}}
\newcommand{\Alg}{{\mathbf{Alg}}}
\newcommand{\nAlg}{\mathbf{Alg}^{\mathrm{nu}}}
\newcommand{\cO}{{\mathcal{O}}}
\newcommand{\cU}{{\mathcal{U}}}
\newcommand{\cY}{{\mathcal{Y}}}
 \newcommand{\Cat}{{\mathbf{Cat}}}
\DeclareMathOperator{\proj}{proj}
\newcommand{\lf}{\mathrm{lf}}
\renewcommand{\Dirac}{\slashed{D}}
\newcommand{\Ccat}{{C^{\ast}\mathbf{Cat}}}
\newcommand{\Calg}{{\mathbf{C}^{\ast}\mathbf{Alg}}}
\renewcommand{\Add}{\mathbf{Add}}
\newcommand{\bd}{\mathrm{bd}}
\newcommand{\op}{\mathrm{op}}
\newcommand{\perf}{\mathrm{perf}}
\newcommand{\add}{\mathrm{add}}
\newcommand{\cone}{\mathrm{cone}}
\newcommand{\nCcat}{C^{*}\mathbf{Cat}^{\mathrm{nu}}}
\renewcommand{\ind}{\mathrm{ind}}
\newcommand{\UCov}{\mathbf{UCov}}
\renewcommand{\tr}{\mathrm{tr}}
\renewcommand{\id}{\mathrm{id}}
\begin{document}
 \setcounter{tocdepth}{1}
\maketitle 
 
\newcommand{\width}{\mathrm{width}}

\begin{abstract}
We introduce the concepts of branched coarse coverings and transfers between coarse homology theories along them.
We show that various versions of coarse $K$-homology theories 
admit the additional structure of transfers. We show versions of Atiyah's $L^{2}$-index theorem in coarse homotopy theory and apply them to give a new argument for the corresponding step in  Higson's counterexample to the coarse Baum-Connes  conjecture.
\end{abstract}
 \tableofcontents

\section{Introduction}

This paper is about transfers between coarse homology  theories for coarse branched coverings.

 In order to motivate our definitions we first recall the notion of a transfer for homology theories in ordinary topology and then explain its generalization to coarse geometry.
Assume that $X$ is a topological space with an action of a group $G$, that 
$Y$ is a topological space with trivial $G$-action, and  that $f:X\to Y$ is an equivariant map which is a branched $G$-covering with branching locus $Z\subseteq Y$.  By definition this means that  for any neighbourhood $V(Z)$ of $Z$ the restriction
$f_{|X\setminus f^{-1}(V(Z))}:X\setminus f^{-1}(V(Z))\to Y\setminus V(Z)$ is a $G$-covering.  
Let  $(E^{G}, E)$ be a pair of  an equivariant homology theory and   its non-equivariant component  related by  natural equivalences    \begin{equation}\label{dfvsdfvsdfvsdfvdsdfv}  E(W/G) \stackrel{\simeq}{\to}E^{G}(W)
\end{equation}for spaces $W$ with free and proper $G$-action.
Then using excision
get  an equivalence \begin{equation}\label{hrthrtggertge}\tr_{f}:E(Y,V(Z))\stackrel{\simeq}{\to} E^{G}(X,f^{-1}(V(Z)))\end{equation} called the topological transfer for branched coverings.

The main goal of 
 the present paper is to  generalize this observation to coarse geometry.  We assume that $X$ is a $G$-bornological coarse space and $Y$ is a bornological coarse space \cite{equicoarse}. Recall that coverings in topology can be characterized by the unique path lifting property. In coarse geometry for  a coarse scale  determined by an entourage $U$  we can consider discrete paths with step width bounded by $U$ which we will call $U$-paths.  
We say that $f$ is a branched coarse $G$-covering if for every coarse scale $U$ there exists a sufficiently large coarse neighbourhood $V_{U}(Z)$ such that $f_{|X\setminus f^{-1}(V_{U}(Z))}:X\setminus f^{-1}(V_{U}(Z))\to Y\setminus V_{U}(Z)$
admits a unique lifting of $U$-paths. In general,   if we increase the step width $U$  we also must also increase the neighbourhood $V_{U}(Z)$ to be taken out. We let $\cZ:=(V_{U}(Z))_{U\in \cC_{Y}}$ denote the big family of subsets $V_{U}(Z)$. We will call $(f:X\to Y,\cZ)$  a branched coarse $G$-covering relative to $\cZ$, see \cref{wroigjowegewrgerfweferf} and \cref{kogpwererwfwerfwerf}.\ref{otzkjprtzjr9} for a more precise formulation.

In topology we have many interesting $G$-coverings which are not branched at all, i.e., with $Z=\emptyset$.
In coarse geometry branched coarse $G$-coverings with empty branching locus $\cZ=(\emptyset)$
are trivial by \cref{ertiohertherthreh}, i.e. the map of underlying $G$-coarse spaces is  the projection $X\cong G_{min}\otimes Y\to Y$. They are examples of the bounded coarse coverings considered in \cite{trans}. Even if $f:X\to Y$ is a connected $G$-covering of proper path metric spaces, then  the
induced map of $G$-bornological coarse spaces is only a branched coarse $G$-covering, see \cref{wrtohue9rheheththe}.
Its  branching locus is the big family $\cZ:=(Z_{r})_{r\in (0,\infty)}$, where $Z_{r}$ is the set of points in $Y$ whose  fibre $f^{-1}(y)$ is not $r$-discrete in the sense that it contains pairs of points $x,x'$ with $x\not=x'$ and $d(x,x')\le r$.

In  \cref{wegjiowgwergrwewf} we will introduce the notion of a pair  $(E^{G},E)$ of
an equivariant coarse homology theory and a coarse homology theory 
related by a transfer  transformation (equivalence) for coarse branched $G$-coverings, a natural transformation (equivalence)  with components 
$$\tr_{f}:E(Y, \cZ)\stackrel{(\simeq)}{\to} E^{G}(X,f^{-1}(\cZ))$$
for all branched coarse $G$-coverings. In    \cref{kgopertgtergebdbfgbdfgb},  \cref{kogpegregwerf} and   \cref{lkhperthtgretg}
we show that various coarse homology theories of $K$-theoretic nature admit this additional structure of transfers.
 In all   these examples the value of the homology theory on a $G$-bornological coarse space  $X$ is  given by applying a $K$-theory functor  to a   category of $X$-controlled objects in some coefficient category. The transfers will be actually constructed on the level of these controlled objects categories. Examples are the coarse algebraic $K$-homology $K^{\alg}\cX^{G}_{\bA}$ with coefficients in an additive category  $\bA$ considered in \cref{kgopertgtergebdbfgbdfgb} and the coarse topological $K$-homology $K\cX_{\bC}^{G}$ with coefficients in a $C^{*}$-category $\bC$ discussed in \cref{lkhperthtgretg}.  As an intermediate coarse homology theory between algebraic and topological coarse $K$-homology we will introduce the
  uncompleted topological coarse $K$-homology $H\cX_{\bC}^{G,\ctr}$ in \cref{kogpegregwerf}.
  Depending on the example, the transfers are only defined  for 
  subclasses of branched coarse coverings satisfying certain finite asymptotic dimension conditions. 
 
 The formalism of transfers developed here extends the transfers for bounded coarse coverings 
 introduced in \cite{coarsetrans},  \cite{coarsek}. The main motivation to consider transfers in these papers were  applications to injectivity results for assembly maps \cite{desc}. 

In the context of  coarse homotopy theory  based on the category  $G\BC$ of $G$-bornological coarse spaces   
the natural category for ordinary homotopy theory is not  the category $G$-topological spaces but rather  the category of $G$-uniform bornological coarse spaces $G\UBC$, see \cite{equicoarse} for definitions. These categories are  related by the cone functor $\cO^{\infty}:G\UBC\to G\BC$ (described in \cref{hojprthertgertgerg}) and the forgetful functor $\iota:G\UBC\to G\BC$. Equivariant (strong) coarse homology theories
are functors $E^{G}:G\BC\to \cC$ to a cocomplete stable target category which are coarsely invariant, excisive,
$u$-continuous and annihilate (weakly) flasques \cite[Def. 3.10 \& 4.19]{equicoarse}.
Similarly, local homology theories are functors $F^{G}:G\UBC\to
\cC$ to a cocomplete stable target category which are  homotopy invariant, excisive and $u$-continuous and vanish on flasques (the obvious equivariant version of \cite[Def. 3.12]{ass}).

By restriction along  the cone  functor
  strong coarse homology theories $E^{G}:G\BC\to \cC$ induce local  homology theories $\Sigma^{-1}E^{G}\cO^{\infty}:G\UBC\to \cC$ \cite[Lem. 9.6]{ass}.
  In this way,  following \cite[Sec. 3.3]{werfwerfwrefw}, the coarse topological $K$-homology $K\cX^{G}:G\BC\to \Sp$ constructed in \cite{coarsek} provides 
 an interesting  model for topological locally finite $K$-homology $\Sigma^{-1}K\cX^{G}\cO^{\infty}:G\UBC\to \Sp$, called the  local $K$-homology. This is an alternative to constructions in terms  of elliptic operators \cite{Baum_1982}, \cite{kasparovinvent} or the Baum-Higson-Schick bordism model \cite{zbMATH05901984}, \cite{Baum_2007}.  A major  motivation for the present paper
is to provide a proof of a version of Atiyah's $L^{2}$-index theorem for coverings in the framework of  coarse topological
$K$-homology. In contrast to all other proofs known to the author \cite{zbMATH03505915}, 
 \cite{zbMATH02028608}, \cite{zbMATH02246450}, \cite[Thm. 6.4]{Willett_2012I}
our proof does not involve differential operators. 

In the classical proofs differential operators are crucially used as very local representatives
 of $K$-homology classes. Differential operators can be lifted  along $G$-coverings  to $G$-equivariant differential operators  in an obvious manner.   In this way one obtains lifts of $K$-homology classes in the classical literature.
In our approach we use coarse homotopy theory and the comparison
 between coarse algebraic and coarse topological $K$-homology in order squeeze local $K$-homology classes
 so that they  can be lifted along coverings.

Classically lifts of $K$-homology classes along coverings were constructed on the cycle level. An advantage of the approach   of the present paper  is that  we consider this lifting as a natural transfer transformation between spectrum-valued coarse homology theories.  As an important consequence we get an automatic compatibility of the transfers with Mayer-Vietoris boundary maps.

  
As said above  our $L^{2}$-index theorem does not involve differential operators. It is rather a statement in the realm of the  abstract coarse  index theory whose basic features we recall in the following. 
%
Let $E^{G}:G\BC\to \cC$ be any strong equivariant coarse homology theory and  $\Sigma^{-1}E^{G}\cO^{\infty}:G\UBC\to \cC$ be its 
 associated equivariant local homology theory. These two functors are related  by the cone boundary $\partial^{\cone}:\Sigma^{-1}E^{G}\cO^{\infty}\to E^{G}\iota:G\UBC\to \cC$ \cite[(9.1)]{ass}.
We interpret classes in $\Sigma^{-1}E^{G}\cO^{\infty}(X)$ as an abstraction of (symbol) classes of $G$-invariant differential operators on $X$, and $E^{G}(\iota X)$ as the home of their index classes.
The cone boundary then sends the symbol class of the  (abstract) differential operator to its (abstract) index.
 
This point of view is justified in  \cite{indexclass}. In the case  of equivariant coarse topological $K$-homology  $E^{G} =K\cX^{G}$  an invariant  Dirac operator $\Dirac$ of degree $n$ on a  complete Riemannian $G$-manifold $X$ naturally gives rise to a local $K$-homology class  $\sigma(\Dirac)$ in $K\cX_{n+1}^{G}\cO^{\infty}(X)$ such that $\partial_{X}^{\cone}(\sigma(\Dirac)) $ in $K\cX_{n}^{G}(\iota X)$ is indeed its  coarse index class $\ind(\Dirac)$.

By \cref{wtiogwtgerwferferfw} the cone functor $\cO^{\infty}$ sends uniform $G$-coverings $f:X\to Y$ to branched coarse coverings
 $(\cO^{\infty}(f):\cO^{\infty}(X)\to \cO^{\infty}(Y),\cO^{-}(Y))$.
  Assume $(E^{G},E)$ is related   by a natural transfer (equivalence) for branched coarse $G$-coverings in the sense of \cref{wegjiowgwergrwewf}.
Then by \cref{kophokhotrpherthetrhee} we have an induced cone transfer (equivalence)
$$\tr_{\cO^{\infty}(f)}:E(\cO^{\infty}(Y))\stackrel{(\simeq)}{\to} E^{G}(\cO^{\infty}(X))$$
for any uniform $G$-covering 
 $X\to Y$. This is the    analogue of  \eqref{dfvsdfvsdfvsdfvdsdfv} for the local homology theories induced from $E^{G}$ and $E$. 
 
 We now describe the content of our $L^{2}$-index theorems  in some detail. We consider the algebraic and the topological case at once.
 The coefficient categories $\bA$ are $k$-linear additive categories for some commutative ring $k$ or $C^{*}$-categories and then $k=\C$.
 Recall that the coarse homology theories $H\cX^{G}_{\bA}$ in the present paper are constructed by  composing a version of the $K$-theory functor $H$ (e.g. $K^{\alg}$ or $K^{\topp}$ depending on the context)  with the functor which associates to a $G$-bornological coarse space $X$  the category  $\bV_{\bA}^{G}(X)$ of $X$-controlled objects in the coefficient category $\bA$.
In   \cref{kopherhetreg} or \cref{okgpergrgwerf}   we  introduce the concept of a trace on  $k$-linear additive categories or $C^{*}$-categories. In this introduction we assume for simplicity that $X$ is $G$-bounded, coarsely connected, and that $G$ acts with finite stabilizers.  Then  in    \cref{hoertighprtgretgerg} or  \cref{jkopwergergwrefwre}   we explain how a trace $\tau$ on $\bA$ determines a   trace $\tau^{G}_{X}$ on $\bV_{\bA}^{G}(X)$.
In \cref{erthgerthertertgertt} we introduce the concept of a tracing of $H$ as a transformation
which associates to each pair $(\bB,\kappa)$ of a $k$-linear category or a $C^{*}$-category  with a trace a homomorphism $\kappa^{H}:\pi_{0}H(\bB)\to  k$.  If $H$ is traced, then we get
an induced homomorphism
$$\tau^{H,G}_{X}:\pi_{0}H \cX^{G}_{\bA}(X)\to k\ .$$

  Our 
 $L^{2}$-index theorems \cref{koprhererge} or \cref{igoewrifoperfrefw} are then   statements  about an equality  of  functions
 \begin{equation}\label{iuhugiregesrg}\tau^{H,G}_{X} \circ \partial_{X}^{\cone} \circ \tr_{\cO^{\infty}(f)}=\tau^{H}_{Y} \circ\partial_{Y}^{\cone} :\pi_{1}H\cX_{\bA}\cO^{\infty}(Y)\to k\ .
\end{equation}

If we specialize $H\cX_{\bA}^{G}$ to the equivariant coarse topological $K$-homology $K\cX^{G}$ with coefficients in the $C^{*}$-category $\Hilb_{c}(\C)$ with its canonical trace (then $H=K^{\topp}$), then $\tau^{K^{\topp}}_{Y}:\pi_{0}K\cX(Y)\to \C$ is the usual dimension homomorphism and 
$\tau^{K^{\topp},G}_{X}:\pi_{0}K\cX^{G}(X)\to \C$ is what is usually called the $G$-dimension or von-Neumann dimension.
Let $Y$ be a closed Riemannian manifold with a generalized Dirac operator $ \Dirac_{Y}$.
If $X\to Y$ is a $G$-covering and  $ \Dirac_{X}$ is the lift of $ \Dirac_{Y}$ to a $G$-invariant operator on $X$, then
we have the equality \begin{equation}\label{wrfrefwerfwerfwr}\tr_{\cO^{\infty}(f)}(\sigma( \Dirac_{Y}))=\sigma(\Dirac_{X})\ .
\end{equation}
Then from  \eqref{iuhugiregesrg} 
 we get the equality  $$\tau^{G,K^{\topp}}_{X} \ind(\Dirac_{X}) =\tau^{K^{\topp}}_{Y} \ind( \Dirac_{Y}) \ .$$  
 This statement 
precisely recovers the  classical formulation of Atiyah's $L^{2}$-index theorem.
In this way the paper provides a  completely independent proof of this classical result. 
But as we do not want to talk about analysis to much we will not expand the details of the verification of \eqref{wrfrefwerfwerfwr}, see also \cref{ojperherthetrg}.

In the remainder of this introduction we indicate  the contents of the sections.
 In \cref{jkgopwerferwfwerf} we introduce the concept of a branched coarse covering.
In \cref{klgfrewegrrefgwerf} we introduce uniform coverings and show how this notion interacts with 
forming cones, squeezing spaces and Rips complexes.
In \cref{kohgprthtrgergtreg} we consider various notions of finite asymptotic dimension and finite uniform topological dimension. We further analyse the compatibility of these notions with the cone construction.
In \cref{wgererwf9} we introduce the formalism of transfers for coarse homology theories in an axiomatic way.

In the next three   \cref{kgopertgtergebdbfgbdfgb},  \cref{kogpegregwerf} and   \cref{lkhperthtgretg}
we show that our examples of coarse $K$-homology theories admit the structure of transfers.

 In \cref{okvpwekvpewvdf} we show how a trace on a coefficient $C^{*}$-category gives rise to traces on the Roe categories of controlled objects. In \cref{gokpwegregwre9} we show that algebraic $K$-homology with coefficients
 in an additive category with a trace admits a version of the $L^{2}$-index theorem.
 In \cref{ohkjptwehgrtherge} we show the $L^{2}$-index theorem for topological $K$-homology with coefficients in a $C^{*}$-category with a trace.
 Finally in \cref{lkthperthergetrgtre} we present Higson's counterexample to the coarse Baum-Connes conjecture.

\cref{jiogwerfwrefrefw} is devoted to the construction a comparison map between versions of  the algebraic $K$-theory 
  of $C^{*}$-categories  considered as $\Z$-linear  categories and the topological $K$-theory of $C^{*}$-categories.
  The main result is \cref{oiw0rgwrefwerfwrefw} which states that there exists a  trace-preserving  approximation of  topological $K$-theory  by the $\cL^{1}$-twised  algebraic homotopy $K$-theory. 
The existence of such an approximation is a crucial ingredient in our proof of the $L^{2}$-index theorem  \cref{igoewrifoperfrefw}.  
  

This paper represents a part of a lecture course on the coarse Baum-Connes conjecture  taught in  fall 2024, Regensburg
 (see also \cite{Bunke:2024aa} for another part).

{\em Acknowledgement:  The author was supported by the SFB 1085 (Higher Invariants) funded by the Deutsche Forschungsgemeinschaft (DFG).  He  thanks M. Ludewig for allowing him to  freely use the material on branched coarse coverings which was initially  part  of a  joined project. 
He further thanks  A. Engel for motivating discussion and suggesting that
the uncompleted topological $K$-homology might be an interesting object to consider.}

\section{Branched coarse coverings}\label{jkgopwerferwfwerf}

 
 
 In this section we introduce the concept of a branched coarse covering.
We work in the category $\BC$ of  bornological coarse spaces  introduced in \cite{buen}. 
In order to state the basic definition we first introduce some notation. The complement 
of a subset $Z$ of a set $Y$ will be denoted by   $Z^{c}:=Y\setminus Z$.
 The right-restriction of an entourage $V$ on $Y$ to a subset $A$  will be denoted by $V_{A}:=V\cap (Y\times A)$. The preimage of the entourage under a map $f:X\to Y$ will be written  as $f^{-1}(V):=(f\times f)^{-1}(V)$.

We now consider two bornological coarse spaces $X$ and $Y$ and a map
$f:X\to Y$  between the underlying sets. Furthermore, we  consider a  big family
  $\cZ$    on $Y$.  
   
 \begin{ddd}\label{wroigjowegewrgerfweferf}
The pair  $(f:X\to Y,\cZ)$ is called a   branched coarse covering relative to $\cZ$  if it has the following properties:
 \begin{enumerate}
 \item \label{etrherthtrherthrth} $f$ is controlled.
 \item\label{etrherthtrherthrth1} $f$ is  bornological and has locally finite fibres.
 \item\label{rgegerwerefwref} There exists an entourage $P$ of $X$  (called a coarse connection) such that: 
\begin{enumerate}  \item\label{rgegerwerefwrefa}  For every coarse entourage $V$ of $Y$  there exists member $Z$ of $\cZ$
such that: \begin{enumerate} \item  \label{wetgrgffsgvdfgsfdgsdfg} for every
 $(y',y)$ in $V_{Z^{c}}$  and $x$ in $f^{-1}(y)$ there exists a unique 
$x'$ in $ f^{-1}(y')$ such that $(x',x)\in P$.  \item \label{wrgtuiwhigwergegr}The subset
$P\cap f^{-1}(V_{Z^{c}})$ is a coarse entourage of $X$.
\end{enumerate}
 \item \label{rgegerwerefwrefb}For every coarse entourage $U$ of $X$ there exists a member $Z$ in $\cZ$  such that
$U_{f^{-1}(Z^{c})}\subseteq P$.
 \end{enumerate}
  \end{enumerate}
 \end{ddd}

Note that $P$ is not necessarily a coarse entourage of $X$,
and that $f$ is in general not a morphism in $G\BC$
 since it is not proper.
 
 \begin{rem}\label{wrotgwgregfefwref} In the situation of 
   \cref{wroigjowegewrgerfweferf}.\ref{wetgrgffsgvdfgsfdgsdfg}   we say that the parallel transport at scale $V$ is defined on $ Z^{c}$. 
 This notion is motivated as follows. 
 A discrete $V$-controlled path  
 in $Z^{c}$ is a finite sequence $\gamma:=(y_{0},\dots,y_{n})$ of points in $Z^{c} $   such that
 $(y_{i},y_{i+1})\in V$ for all $i$ in $\{0,\dots,n-1\}$. 
 For every  
  $(y',y)$ in $V_{Z^{c}}$ we have a canonical bijection $\Phi_{y',y}:f^{-1}(y)\to f^{-1}(y')$ determined by the condition that $P\cap (f^{-1}(y')\times f^{-1}(y))$
 is the graph of $\Phi_{y',y}$. 
  The composition \begin{equation}\label{qewfqewfewfwefwedq}
\Phi_{\gamma}:\Phi_{y_{n},y_{n-1}}\circ\dots\circ \Phi_{y_{1},y_{0}}:f^{-1}(y_{0})\to f^{-1}(y_{n})
\end{equation}
 will be considered as a parallel transport along the path $\gamma$. \hB
  \end{rem}

 The following lemma shows that the  connection of a branched coarse covering is essentially unique.
 Let $f:X\to Y$ be a branched coarse covering relative to $\cZ$.
 \begin{lem}\label{wregegerfrwferf}
 If $P,P'$ are two connections for $f$ and $\cZ$, then for every coarse entourage $V$ of $Y$   there exists a member $Z$ of $\cZ$
 such that
 $ P\cap f^{-1}(V_{Z^{c}})=P'\cap f^{-1}(V_{Z^{c}})$.
 \end{lem}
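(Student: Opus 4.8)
The strategy is to play the two halves of condition \ref{rgegerwerefwref} against each other: the "unique lifting at scale $V$" property (\ref{rgegerwerefwrefa}) pins down $P$ and $P'$ uniquely wherever parallel transport is defined, while condition \ref{rgegerwerefwrefb} says that $P$ and $P'$ are each controlled by an honest entourage of $X$ once we restrict far enough away from $\cZ$. Combining these, restricting to a member of $\cZ$ large enough to serve all purposes, will force $P$ and $P'$ to agree there.

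In detail, first I would fix a coarse entourage $V$ of $Y$. Since $Y$ is a bornological coarse space I may assume $V$ is symmetric and contains the diagonal without loss of generality. Next I would invoke \cref{wroigjowegewrgerfweferf}.\ref{rgegerwerefwrefa} applied to the connection $P$ to obtain a member $Z_P$ of $\cZ$ so that parallel transport at scale $V$ is defined on $Z_P^c$ and $P\cap f^{-1}(V_{Z_P^c})$ is a coarse entourage of $X$; likewise for $P'$ to obtain $Z_{P'}$. Call $U:=\bigl(P\cap f^{-1}(V_{Z_P^c})\bigr)\cup\bigl(P'\cap f^{-1}(V_{Z_{P'}^c})\bigr)$, which is a coarse entourage of $X$. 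Now I would apply \ref{rgegerwerefwrefb} to $P$ with this $U$, getting a member $Z_1$ of $\cZ$ with $U_{f^{-1}(Z_1^c)}\subseteq P$, and similarly apply \ref{rgegerwerefwrefb} to $P'$ with $U$, getting $Z_2$ with $U_{f^{-1}(Z_2^c)}\subseteq P'$. Since $\cZ$ is a big family I may choose a single $Z$ in $\cZ$ containing $Z_P\cup Z_{P'}\cup Z_1\cup Z_2$.

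With this $Z$ fixed, the argument closes as follows. Take any $(x',x)\in P\cap f^{-1}(V_{Z^c})$; then $(f(x'),f(x))\in V_{Z^c}\subseteq V_{Z_{P'}^c}$, so by \ref{wetgrgffsgvdfgsfdgsdfg} applied to the connection $P'$ there is a \emph{unique} $x''\in f^{-1}(f(x'))$ with $(x'',x)\in P'$. But $(x',x)\in P\cap f^{-1}(V_{Z^c})\subseteq U\cap f^{-1}(Z^c\times Z^c)$, hence $(x',x)\in U_{f^{-1}(Z_{P'}^c)}$ — here I use $Z\supseteq Z_{P'}$ — wait, more carefully: $(x',x)\in U_{f^{-1}(Z^c)}\subseteq U_{f^{-1}(Z_2^c)}\subseteq P'$ since $Z\supseteq Z_2$. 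Thus $(x',x)\in P'$, and by the uniqueness of $x''$ we conclude $x''=x'$, i.e. the lifted point coming from $P'$ is the same as the one recorded by $P$. This shows $P\cap f^{-1}(V_{Z^c})\subseteq P'$, hence $P\cap f^{-1}(V_{Z^c})\subseteq P'\cap f^{-1}(V_{Z^c})$. The reverse inclusion is obtained by the symmetric argument, swapping the roles of $P$ and $P'$. Therefore $P\cap f^{-1}(V_{Z^c})=P'\cap f^{-1}(V_{Z^c})$, as claimed.

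The only genuinely delicate point is bookkeeping: making sure the single member $Z$ of $\cZ$ is chosen large enough that \emph{all four} conditions (the two uniqueness statements at scale $V$ and the two "$U$-restricted-into-$P$" statements) hold simultaneously on $Z^c$, which is exactly why one passes to an upper bound of $Z_P,Z_{P'},Z_1,Z_2$ in the big family. Everything else is a direct application of the axioms in \cref{wroigjowegewrgerfweferf}, and in fact the uniqueness half of \ref{wetgrgffsgvdfgsfdgsdfg} is not even needed for both inclusions — one could alternatively argue directly that both $P\cap f^{-1}(V_{Z^c})$ and $P'\cap f^{-1}(V_{Z^c})$ coincide with the graph of the same parallel-transport relation on $Z^c$, via \cref{wrotgwgregfefwref}.
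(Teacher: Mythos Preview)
Your proof is correct and follows essentially the same route as the paper's: use \ref{wrgtuiwhigwergegr} for $P$ to see that $P\cap f^{-1}(V_{Z^{c}})$ is a coarse entourage of $X$, then feed this entourage into \ref{rgegerwerefwrefb} for $P'$ to get containment in $P'$; the other inclusion follows by symmetry. The paper's version is simply terser---it notes the monotonicity of the conditions in $Z$ and enlarges once rather than naming four separate members---and skips the detour through the uniqueness clause of \ref{wetgrgffsgvdfgsfdgsdfg}, which, as you yourself observe at the end, is not actually needed.
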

\begin{proof}
It suffices to show that  for every coarse entourage $V$ of $Y$   there exists a sufficiently large member $Z$ of $\cZ$ such that $P\cap f^{-1}(V_{Z})\subseteq P'$.
 
 We first  observe that  the conditions in 
 \cref{wroigjowegewrgerfweferf}.\ref{rgegerwerefwrefa} and
    \cref{wroigjowegewrgerfweferf}.\ref{rgegerwerefwrefb}  are preserved  under enlarging the member $Z$. 
Let $V$ be a coarse entourage  of $Y$   and let $Z$ in $\cZ$ be such that    \cref{wroigjowegewrgerfweferf}.\ref{rgegerwerefwref} is satisfied for $P$.
Then by  
 \cref{wroigjowegewrgerfweferf}.\ref{wrgtuiwhigwergegr} the set $ P\cap f^{-1}(V_{Z})$ is a coarse entourage of $X$.  
After possibly enlarging $Z$, by  \cref{wroigjowegewrgerfweferf}.\ref{rgegerwerefwrefb} for $P'$ we have $P\cap f^{-1}(V_{Z})\subseteq P'$.
 \end{proof}

It follows from   \cref{wregegerfrwferf} that for every every coarse entourage $V$ of $Y$  there exists a member $Z$ in $\cZ$ such that the parallel transports for 
$V$-controlled paths in $Z^{c}$ defined by $P$ and $P'$ coincide.

For a coarse space we let $\pi^{\crs}_{0}(X)$ denote the set of coarse components of $X$. 
Let $f:X\to Y$ be a  morphism of bornological coarse spaces
and recall the notions of a coarse covering from   \cite[Def. 3.3.16]{unik}  and of a bounded coarse covering from \cite{trans}, \cite[Def. 3.4.16]{unik}.
 \begin{lem}\label{ertiohertherthreh}
 If  $f$ is a coarse branched covering  relative to the  big family $(\emptyset)$, then
 it is a coarse covering.
 In particular,  the underlying map  of coarse spaces   restricted to any coarse component $Y_{i}$ of $Y$   
 is  isomorphic to the projection  $I_{min}\otimes Y_{i}\to Y_{i}$ for some set $I$.
 
If in addition for every bounded set $B$ of $X$   the map  $\pi^{\crs}_{0}(B)\to \pi^{\crs}_{0}(f(B))$ has uniformly finite fibres, then $f$ is a bounded coarse covering  
\end{lem}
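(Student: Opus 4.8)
The plan is to unwind the definition of a branched coarse covering relative to $(\emptyset)$ and to match it with the axioms for a coarse covering in the sense of \cite[Def. 3.3.16]{unik}. First I would observe that when $\cZ=(\emptyset)$ the only available member is $Z=\emptyset$, so $Z^{c}=Y$ throughout, and the connection $P$ satisfies the properties of \cref{wroigjowegewrgerfweferf}.\ref{rgegerwerefwref} with no subset removed. In particular, by \cref{wroigjowegewrgerfweferf}.\ref{rgegerwerefwrefb} we have $U\subseteq P$ for every coarse entourage $U$ of $X$, so $P$ contains the maximal coarse entourage, i.e.\ $P$ itself may be taken to be a coarse entourage of $X$ after all; and by \cref{wroigjowegewrgerfweferf}.\ref{wetgrgffsgvdfgsfdgsdfg}–\ref{wrgtuiwhigwergegr} the parallel transport is globally defined and $P\cap f^{-1}(V)$ is a coarse entourage of $X$ for every coarse entourage $V$ of $Y$. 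Because $f$ is controlled, bornological, and has locally finite fibres by \cref{wroigjowegewrgerfweferf}.\ref{etrherthtrherthrth}–\ref{etrherthtrherthrth1}, the only thing left to check is the covering-type lifting condition of the cited definition, and this is precisely the existence of the bijections $\Phi_{y',y}\colon f^{-1}(y)\to f^{-1}(y')$ for all $(y',y)\in V$ for each $V$, compatibly packaged by $P$; this is exactly what \cref{wregegerfrwferf} tells us is well-defined and essentially unique. So $f$ is a coarse covering.

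Next I would deduce the structural statement about the restriction to a coarse component $Y_i$. Fix a basepoint $y_0\in Y_i$ and set $I:=f^{-1}(y_0)$. Using parallel transport along $V$-controlled paths as in \cref{wrotgwgregfefwref}, for any $y\in Y_i$ one connects $y_0$ to $y$ by a $V$-controlled path for a suitable $V$ (this is possible precisely because $y,y_0$ lie in the same coarse component) and obtains a bijection $\Phi_{\gamma}\colon I\to f^{-1}(y)$. The path-independence of $\Phi_{\gamma}$ — which follows from \cref{wregegerfrwferf} applied to compare the two connections one gets by reparametrising, together with the hypothesis $Z=\emptyset$ so that no enlargement is ever forced — shows these bijections assemble into a single bijection $I\times Y_i\to f^{-1}(Y_i)$ over $Y_i$. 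That this bijection is an isomorphism of coarse spaces onto $f^{-1}(Y_i)$ with the $I_{min}\otimes Y_i$ structure is then a matter of checking that the coarse structure on $f^{-1}(Y_i)$ is generated by the sets $P\cap f^{-1}(V)$ — which under the identification are exactly the entourages of $I_{min}\otimes Y_i$ refining $\mathrm{diag}_I\times V$ — and this uses \cref{wroigjowegewrgerfweferf}.\ref{rgegerwerefwrefb} one more time to see there are no larger entourages. I expect this path-independence / "no larger entourages" bookkeeping to be the main obstacle, though it is entirely routine given the lemmas already in place.

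Finally, for the bounded coarse covering statement, recall that a bounded coarse covering (\cite{trans}, \cite[Def. 3.4.16]{unik}) is a coarse covering with the extra requirement that preimages of bounded sets decompose into uniformly finitely many coarse components lying over the image. Given the additional hypothesis that for every bounded $B\subseteq X$ the map $\pi^{\crs}_0(B)\to\pi^{\crs}_0(f(B))$ has uniformly finite fibres, I would simply check this is literally the defining condition: since we have already identified $f$ over each $Y_i$ with $I_{min}\otimes Y_i$, a bounded set $B$ meets only finitely many "sheets" over each coarse component of $f(B)$ (that is the uniform finiteness of the fibres of $\pi^{\crs}_0(B)\to\pi^{\crs}_0(f(B))$), and $f$ restricted to each sheet is an isomorphism onto a coarse subspace of $Y$, so $f$ is proper on $B$ with the required uniform bound. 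Combining with the first part, $f$ is a bounded coarse covering. This step is immediate once the explicit product description from the first part is available.
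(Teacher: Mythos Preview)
Your overall strategy is sound and close in spirit to the paper's, but organised from the dual side: the paper works with coarse components $X_i$ of $X$ and shows directly that $f_{|X_i}:X_i\to Y_{f_*i}$ is a bijection and a coarse isomorphism (this is the content of \cite[Def.~3.4.16(1)]{unik}), whereas you fix a component $Y_i$ of $Y$ and try to build the trivialisation $I_{min}\otimes Y_i\cong f^{-1}(Y_i)$ by parallel transport from a fixed fibre. Both lead to the same conclusion.

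There is, however, a genuine gap in your justification of path-independence. \cref{wregegerfrwferf} compares two \emph{connections} $P,P'$ and says nothing about parallel transport along two different paths for the \emph{same} connection; ``the two connections one gets by reparametrising'' is not a meaningful object here. The correct argument is the one the paper uses implicitly: if $x,x'\in f^{-1}(y)$ are obtained from $x_0\in f^{-1}(y_0)$ by parallel transport along two paths, then each step of each path moves within a coarse entourage of $X$ by \cref{wroigjowegewrgerfweferf}.\ref{wrgtuiwhigwergegr}, so $(x,x_0)$ and $(x',x_0)$ lie in coarse entourages, hence $(x,x')$ lies in a coarse entourage, hence in $P$ by \cref{wroigjowegewrgerfweferf}.\ref{rgegerwerefwrefb}; now the uniqueness clause of \cref{wroigjowegewrgerfweferf}.\ref{wetgrgffsgvdfgsfdgsdfg} applied to $(y,y)\in\diag(Y)$ forces $x=x'$. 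This is exactly how the paper proves injectivity of $f_{|X_i}$, avoiding any mention of paths. A smaller slip: from $U\subseteq P$ for all $U\in\cC_X$ you cannot conclude that $P$ is itself a coarse entourage (there is in general no maximal one); rather $P=\bigcup_{U\in\cC_X}U=U(\pi_0^{\crs}(X))$, which is what the paper records. The final paragraph on bounded coarse coverings is fine and matches the paper's one-line observation.
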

\begin{proof}
Let $f_{*}:\pi_{0}^{\crs}(X)\to \pi_{0}^{\crs}(Y)$ denote the induced map between the sets of coarse components.
We first verify the condition \cite[Def. 3.4.16(1)]{unik} requiring  that the restriction of $f$ to any coarse component $X_{i}$ of $X$ is an isomorphism from $X_{i}$ to the coarse component $Y_{f_{*}i}$. 
Let $P$ be a connection for $f$.
 It is clear that $f(X_{i})$ is contained in $Y_{f_{*}i}$. Let $x$ be in $X_{i}$ and set $y:=f(x)$ in $Y_{f_{*}i}$. If $y'$ is any other point in $Y_{i}$, then there exists an entourage $V$ of $Y$ such that
$(y',y)\in V$.  Then  $f^{-1}(V)\cap P$ is by \cref{wroigjowegewrgerfweferf}.\ref{rgegerwerefwrefa} a coarse entourage of $X$ and we have the parallel transport
 $x'$ in $f^{-1}(y')$ of $x$ at scale $V$. Since $X_{i}$ is coarsely connected we have  $x'\in X_{i}$ and hence $y'\in f(X_{i})$.  If $x''$ is any point in $f^{-1}(y')\cap X_{i}$, then there exists an entourage $U$ of $X$ such that $(x'',x)\in U$.
 Then $f(U)$ is a coarse entourage of $Y$ and $x''$ is the parallel transport of $x$ at scale $f(U)$. Both, $x'$ and $x''$ are then  parallel transports of $x$ at scale $f(U)\cup V$ and hence equal. 
 We conclude that 
  the restriction $f_{|X_{i}}:X_{i}\to Y_{f_{*}i}$  bijective.

  For every coarse entourage $V$ of $Y$ we have $V=f(f^{-1}(V)\cap P)$, and for every coarse entourage $U$ of $X$
  we have  $U=f^{-1}(f(U))\cap P$ by \cref{wroigjowegewrgerfweferf}.\ref{rgegerwerefwrefb}. This implies that $f_{|X_{i}}:X_{i}\to Y_{f_{*}i}$ is an isomorphism of coarse spaces. Furthermore,  $P=U(\pi_{0}^{\crs}(X)):=\bigcup_{U\in \cC_{X}}U$  (here $U(\pi_{0}^{\crs}(X))$ is the notation from \cite{unik}) and the coarse structure of $X$ is generated by 
  the entourages of the form $f^{-1}(V)\cap P$ for all coarse entourages $V$ of $Y$.
 
 This finishes the proof of the first assertion, see also \cite[Ex. 3.3.18]{unik} for the second assertion.
 
 For the last assertion, the additional condition  on the bornology
is precisely the restriction on the bornologies of the domains of bounded coarse coverings.
\end{proof}

\begin{ex}
Let $I$ be a set and $Y$ be a bornological coarse space. Then the projection
$f:I_{min,min}\otimes Y\to Y$ is a bounded coarse covering and therefore a branched coarse covering
 relative to the big family $(\emptyset)$. \hB
\end{ex}

 \begin{ex}\label{wrtohue9rheheththe}
 We consider a  proper  path metric space $Y$ and  let
 $X\to Y$ be a topological covering. We  equip $X$    with the induced path metric
 such the projection map is a local isometry. Then $X$ is again a proper metric space.

%

  We consider  $X$
 and $Y$ with the   bornological coarse structures  induced by the metrics.
 For $r$ in $[0,\infty)$ we let $V_{r}$ denote the metric entourage of $Y$ of width $r$.
Similarly  the metric entourages of $X$ will be denoted by $U_{r}$.

For $r$ in $(0,\infty)$ a   metric space is called $r$-separated of any two distinct points   have distance $>r$.
We let $Z_{r}$ be the subset of points  $y$ of $Y$ such that the fibre $f^{-1}(y)$ is not $r$-separated.  \begin{lem} The family $\cZ:=(Z_{r})_{r\in [0,\infty)}$ is a big family.\end{lem}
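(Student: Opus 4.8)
The plan is to verify directly the two conditions in the definition of a big family for $\cZ=(Z_{r})_{r\in[0,\infty)}$: that the family is filtered, and that it is closed under coarse thickening. Filteredness I would get for free: if $0\le r\le r'$, then a fibre that is not $r$-separated is a fortiori not $r'$-separated, so $Z_{r}\subseteq Z_{r'}$, and hence $\cZ$ is a nonempty chain, in particular filtered. For the thickening condition I would use that every entourage of $Y$ is contained in some metric entourage $V_{s}$, so that it is enough to find, for each $r,s\in[0,\infty)$, an $r'$ with $V_{s}[Z_{r}]\subseteq Z_{r'}$; the claim will be that $r':=r+2s+1$ does the job. (In passing I would remark that $f^{-1}(y')$ is automatically nonempty for $y'$ in the thickening, being the target of a lifted path, so no separate surjectivity hypothesis is needed.)

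To prove this claim I would argue as follows. Given $y'\in V_{s}[Z_{r}]$, pick $y\in Z_{r}$ with $d(y,y')\le s$ and distinct points $x_{1},x_{2}\in f^{-1}(y)$ with $d(x_{1},x_{2})\le r$. Since $Y$ is a path metric space, choose a path $\gamma$ from $y$ to $y'$ of length $\ell(\gamma)<s+\tfrac12$, and let $\tilde\gamma_{1},\tilde\gamma_{2}$ be its lifts along $f$ starting at $x_{1}$ and $x_{2}$ respectively, with endpoints $x_{i}':=\tilde\gamma_{i}(1)\in f^{-1}(y')$. Because $f$ is a local isometry the lifts have the same length as $\gamma$, hence $d(x_{i},x_{i}')<s+\tfrac12$, and the triangle inequality gives $d(x_{1}',x_{2}')<r+2s+1=r'$. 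Finally I must check $x_{1}'\ne x_{2}'$: if they agreed, the reversed lifts $\overline{\tilde\gamma}_{1},\overline{\tilde\gamma}_{2}$ would be two lifts of the reversed path $\overline\gamma$ with common initial point $x_{1}'=x_{2}'$, hence equal by unique path lifting for topological coverings, forcing $x_{1}=x_{2}$, contrary to assumption. Thus $f^{-1}(y')$ is not $r'$-separated, i.e.\ $y'\in Z_{r'}$, which establishes $V_{s}[Z_{r}]\subseteq Z_{r'}$ and completes the argument.

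I expect the only genuinely delicate point to be the distinctness $x_{1}'\ne x_{2}'$, which is exactly where the covering hypothesis is used, through unique path lifting; everything else is the triangle inequality together with the fact that the local isometry $f$ preserves the lengths of lifted paths. The reduction to metric entourages $V_{s}$ and the elementary verification that $\cZ$ is a chain are routine, so the essential content of the lemma is this one estimate on how non-separatedness of a fibre propagates to nearby fibres along the covering.
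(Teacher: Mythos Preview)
Your proof is correct and follows the same approach as the paper: the paper's entire proof is the single line ``$V_{s}[Z_{r}]\subseteq Z_{r+2s}$'', and your argument via path lifting and the triangle inequality is precisely the natural way to verify this inclusion (you obtain the slightly weaker constant $r+2s+1$ only because you chose a path of length $<s+\tfrac12$ rather than passing to the infimum, which is harmless for the conclusion).
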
 \begin{proof}
We have $V_{s}[Z_{r}]\subseteq Z_{r+2s}$.  \end{proof}
  We call $\cZ$ the canonical big family associated to the covering.
   

 \begin{lem} The map $f:X\to Y$ is a coarse branched covering  relative to the canonical big family $\cZ$.
 \end{lem}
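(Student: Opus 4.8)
The plan is to exhibit an explicit coarse connection and then to check the three clauses of \cref{wroigjowegewrgerfweferf} directly. As connection I would take
\[
P:=\bigl\{(x',x)\in X\times X\ :\ d_{X}(x',x)=d_{Y}(f(x'),f(x))\bigr\}\ ,
\]
which is symmetric and contains the diagonal, hence is an entourage of $X$ (and, as the remark after the definition predicts, it is in general not a coarse entourage, since $d_{Y}(f(x'),f(x))$ is unbounded). Clauses \cref{wroigjowegewrgerfweferf}.\ref{etrherthtrherthrth} and \ref{etrherthtrherthrth1} are immediate: $f$ is a local isometry, hence $1$-Lipschitz, hence maps $U_{r}$ into $V_{r}$ and is controlled; and since $X$ and $Y$ are proper, bounded sets are relatively compact, so $f$ carries bounded sets to bounded sets and every fibre meets every bounded set in a finite set. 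So the real content is \cref{wroigjowegewrgerfweferf}.\ref{rgegerwerefwref}.

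The mechanism behind that clause is length-preserving path lifting. Since $Y$ is a proper length space it is geodesic by Hopf--Rinow, and the same holds for $X$. Given $x\in X$ with $y:=f(x)$ and any $y'\in Y$, I would lift a geodesic from $y$ to $y'$ with initial point $x$; because $f$ is a local isometry the lifted path has the same length, so its endpoint $x'\in f^{-1}(y')$ satisfies $d_{X}(x',x)\le d_{Y}(y',y)$, and since the reverse inequality is automatic this means exactly $(x',x)\in P$. For uniqueness one notes that two points of $f^{-1}(y')$ both within distance $\rho$ of $x$ lie within $2\rho$ of each other, so they coincide as soon as $f^{-1}(y')$ is $2\rho$-separated, i.e. as soon as $y'\notin Z_{2\rho}$; the non-collision of two distinct lifts of one geodesic here is just unique path lifting for the covering $f$.

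With this in hand the verification is a bookkeeping of separation radii through the inclusion $V_{s}[Z_{r}]\subseteq Z_{r+2s}$ proved above. A coarse entourage of $Y$ sits inside some $V_{s}$, so for \cref{wroigjowegewrgerfweferf}.\ref{rgegerwerefwrefa} I would take $Z:=Z_{4s}$: for $(y',y)\in V_{Z^{c}}$ one has $y\notin Z_{4s}$ and $d_{Y}(y,y')\le s$, so $V_{s}[Z_{2s}]\subseteq Z_{4s}$ forces $y'\notin Z_{2s}$, i.e. $f^{-1}(y')$ is $2s$-separated; the geodesic lift produces a $P$-partner $x'$ of $x$, and any $P$-partner satisfies $d_{X}(x',x)=d_{Y}(y',y)\le s$, so by the previous step it is unique, which is \ref{wetgrgffsgvdfgsfdgsdfg}; moreover $P\cap f^{-1}(V_{Z^{c}})\subseteq U_{s}$, hence is a coarse entourage of $X$, which is \ref{wrgtuiwhigwergegr}. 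For \cref{wroigjowegewrgerfweferf}.\ref{rgegerwerefwrefb}, given $U\subseteq U_{t}$ I would take $Z:=Z_{4t}$: if $(x',x)\in U$ with $f(x)\notin Z_{4t}$, then $d_{Y}(f(x),f(x'))\le t$ and $V_{t}[Z_{2t}]\subseteq Z_{4t}$ gives $f(x')\notin Z_{2t}$, so $f^{-1}(f(x'))$ is $2t$-separated; the lift $x''\in f^{-1}(f(x'))$ of a geodesic from $f(x)$ to $f(x')$ starting at $x$ lies within $2t$ of $x'$ and hence equals $x'$, so $d_{X}(x',x)=d_{X}(x'',x)\le d_{Y}(f(x),f(x'))$ and $(x',x)\in P$, whence $U_{f^{-1}(Z^{c})}\subseteq P$.

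The only genuinely delicate point is the second paragraph: arranging existence, uniqueness and the numerology so that they fit together — in particular that distinct lifts of a single geodesic never collide, and that the separation constant $2\rho$ needed at scale $\rho$ is dominated by the shift $r\mapsto r+2s$ in the big family (which is exactly why the choices $Z_{4s}$ and $Z_{4t}$ work). The remaining verifications are routine.
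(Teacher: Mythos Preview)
Your argument is correct. The key difference from the paper is the choice of connection: you take the ``isometric'' entourage
\[
P=\{(x',x)\in X\times X\mid d_{X}(x',x)=d_{Y}(f(x'),f(x))\},
\]
whereas the paper writes down
\[
P=\bigcup_{r\ge 0}(U_{r})_{f^{-1}(Z_{2r}^{c})}
\]
and simply records that for $V_{s}$ one may take $Z_{2s}$, leaving the verification (which also rests on geodesic lifting) to the reader. By \cref{wregegerfrwferf} the two connections necessarily agree away from the big family, so this is a presentational rather than a mathematical divergence. Your choice has the virtue that membership in $P$ is an explicit metric condition, which makes \ref{wrgtuiwhigwergegr} and \ref{rgegerwerefwrefb} completely transparent once you have the Hopf--Rinow/path-lifting input; the paper's $P$ is tied directly to the big family, which makes its definition self-explanatory but pushes the work into the (omitted) verification. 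Two small comments: in your last paragraph you only need the equality $d_{X}(x'',x)=d_{Y}(f(x),f(x'))$ (which you have, since $f$ is $1$-Lipschitz and the lift is length-preserving) to land in $P$, not the inequality you wrote; and the aside about ``non-collision of two distinct lifts'' is not actually used---your uniqueness comes purely from fibre separation, independent of how many geodesics there are between $y$ and $y'$.
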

 \begin{proof}
 The map $f$ is $1$-Lipschitz and therefore controlled and bornological.

  The fibres of $f$ are locally finite since they are discrete and  $X$ is proper.

We choose the connection  
 $$P:=\bigcup_{r\in [0,\infty)} (U_{r})_{f^{-1}(Z^{c}_{2r})} \ .$$
   The condition from  \cref{wroigjowegewrgerfweferf}.\ref{rgegerwerefwref}. is satisfied since for the   metric entourage 
 $V_{s}$ of $Y$  we can take the member
 $Z_{2s}$ in  $\cZ$.
 \end{proof} 
 \end{ex}

\begin{ex}
Let $(Y_{n})_{n\in \nat}$ be a sequence of simplicial complexes. We consider $Y_{n}$ as a path-metric space
 with the spherical path metric.  Let $X_{n}\to Y_{n}$ be the universal covering with the induced path metric.
 We form the bounded unions $Y:=\bigsqcup_{n\in \nat}^{\bd}Y_{n}$ and
 $X:=\bigsqcup_{n\in \nat}^{\bd}X_{n}$ in $\BC$  and let $f:X\to Y$ be the canonical map.
 We furthermore consider the big family $\cZ:=(Y_{\le n})_{n\in \nat}$ with $Y_{\le n}:=\bigcup_{m\le n}Y_{m}$.
 Let $\ell(n)$ denote the minimum of the length of non-contractible loops in $Y_{n}$.
 \begin{lem}
 If $\lim_{n\to \infty}\ell(n)=\infty$, then $f:X\to Y$ is a branched coarse covering  relative to the big family $\cZ$.\hB
 \end{lem}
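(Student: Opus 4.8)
My plan is to treat this as a block‑wise version of \cref{wrtohue9rheheththe}; the role of the hypothesis $\lim_{n}\ell(n)=\infty$ is exactly to make the branching loci of the individual coverings $X_{n}\to Y_{n}$ disappear below any fixed scale, so that they can be absorbed into the big family $\cZ=(Y_{\le n})_{n\in\nat}$. (That $\cZ$ is a big family is immediate, since a coarse entourage of a bounded union is, away from a bounded set, block‑diagonal and hence carries $Y_{\le n}$ into some $Y_{\le m}$.)

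First I would record the block data: by \cref{wrtohue9rheheththe}, for each $n$ the universal covering $f_{n}:=f_{|X_{n}}\colon X_{n}\to Y_{n}$ of the proper path metric space $Y_{n}$ is a branched coarse covering relative to its canonical big family $\cZ_{n}=(Z^{(n)}_{r})_{r\in[0,\infty)}$, with connection $P_{n}:=\bigcup_{r\ge 0}(U^{(n)}_{r})_{f_{n}^{-1}((Z^{(n)}_{2r})^{c})}$; here $U^{(n)}_{r},V^{(n)}_{r}$ denote the width‑$r$ metric entourages of $X_{n},Y_{n}$ and $Z^{(n)}_{r}\subseteq Y_{n}$ is the set of points whose fibre is not $r$‑separated. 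The key elementary observation is then: a fibre $f_{n}^{-1}(y)$ is an orbit of the deck group $\pi_{1}(Y_{n})$ acting by isometries, and for $x\ne x'$ in it $d(x,x')$ equals the length of the shortest loop in $Y_{n}$ in the corresponding nontrivial homotopy class, hence is $\ge\ell(n)$; thus every fibre of $f_{n}$ is $\ell(n)$‑separated and $Z^{(n)}_{r}=\emptyset$ whenever $r<\ell(n)$. Since $\ell(n)\to\infty$, for each $r\ge 0$ the set $\{n\,:\,\ell(n)\le r\}$ is finite; writing $N(r)$ for its maximum (and $N(r):=0$ when it is empty) we obtain $\bigcup_{n}Z^{(n)}_{r}\subseteq Y_{\le N(r)}\in\cZ$.

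Next I would assemble $P:=\bigcup_{n}P_{n}$, a block‑diagonal relation on $X$, and verify \cref{wroigjowegewrgerfweferf}. Conditions \cref{wroigjowegewrgerfweferf}.\ref{etrherthtrherthrth} and \cref{wroigjowegewrgerfweferf}.\ref{etrherthtrherthrth1} (controlled, bornological, locally finite fibres) hold block‑wise by \cref{wrtohue9rheheththe}, as $f$ carries blocks to blocks. For \cref{wroigjowegewrgerfweferf}.\ref{rgegerwerefwref} I use that a coarse entourage of a bounded union is, away from a bounded set, block‑diagonal of uniformly bounded width. Given a coarse entourage $V$ of $Y$, pick $r$ and $M$ with $V\cap(Y_{n}\times Y_{n})\subseteq V^{(n)}_{r}$ for all $n$ and $V$ block‑diagonal outside $Y_{\le M}$, and put $Z:=Y_{\le N}$ with $N:=\max\{M,N(2r)\}\in\cZ$. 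For $n>N$ one has $Z^{(n)}_{2r}=\emptyset$, so by \cref{wrtohue9rheheththe} applied to $f_{n}$ the parallel transport at scale $V^{(n)}_{r}$ is defined on $Y_{n}=Y_{n}\cap Z^{c}$ and $P_{n}\cap f_{n}^{-1}(V\cap(Y_{n}\times Y_{n}))$ is a coarse entourage of $X_{n}$ — in fact of width $\le r$, since the graph property of the scale‑$V^{(n)}_{r}$ transport identifies any such $P_{n}$‑related pair lying over a pair of $f$‑images at distance $\le r$ with the short lift at distance $\le r$. Consequently $P\cap f^{-1}(V_{Z^{c}})=\bigcup_{n>N}\bigl(P_{n}\cap f_{n}^{-1}(V\cap(Y_{n}\times Y_{n}))\bigr)$ is block‑diagonal of uniform width $\le r$, hence a coarse entourage of $X$, giving \cref{wroigjowegewrgerfweferf}.\ref{wrgtuiwhigwergegr}; and uniqueness of the lift in \cref{wroigjowegewrgerfweferf}.\ref{wetgrgffsgvdfgsfdgsdfg} over the whole fibre $f^{-1}(y')$ follows from block‑diagonality of $P$ together with the block‑wise uniqueness of \cref{wrtohue9rheheththe}. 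Finally, for \cref{wroigjowegewrgerfweferf}.\ref{rgegerwerefwrefb}, given a coarse entourage $U$ of $X$ of uniform block‑width $\le s$ and block‑diagonal outside $Y_{\le M'}$, put $Z:=Y_{\le N'}$ with $N':=\max\{M',N(2s)\}$; for $n>N'$ one has $Z^{(n)}_{2s}=\emptyset$, so $U\cap(X_{n}\times X_{n})\subseteq U^{(n)}_{s}=(U^{(n)}_{s})_{f_{n}^{-1}((Z^{(n)}_{2s})^{c})}\subseteq P_{n}$, whence $U_{f^{-1}(Z^{c})}=\bigcup_{n>N'}(U\cap(X_{n}\times X_{n}))\subseteq P$.

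The main obstacle is not conceptual — it is \cref{wrtohue9rheheththe} one block at a time — but lies in the bookkeeping that makes the passage to the bounded union legitimate: precisely, that coarse entourages of $X$ and $Y$ are (away from a bounded set) block‑diagonal of uniformly bounded width. This uniformity is what makes the hypothesis $\ell(n)\to\infty$ effective, since a single scale $r$ then controls all blocks simultaneously, so all but finitely many blocks are unbranched at scale $r$, and the finitely many exceptional ones are swallowed by $\cZ$. Everything else reduces block‑by‑block to \cref{wrtohue9rheheththe} together with the systole estimate above.
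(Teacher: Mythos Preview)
The paper states this lemma inside an example and gives no proof at all; it ends with the end-of-example marker immediately after the statement. So there is nothing to compare against, and your outline is in fact the natural argument the reader is expected to supply.

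Your proof is correct. The two substantive ingredients are exactly the ones you isolate: the systole estimate (distinct points in a fibre of $f_{n}$ are at distance $\ge\ell(n)$, because a geodesic between them projects to a non-contractible loop), and the structural fact that coarse entourages of the bounded union $\bigsqcup^{\bd}_{n}Y_{n}$ are block-diagonal of uniformly bounded metric width. The first gives $Z^{(n)}_{r}=\emptyset$ for $r<\ell(n)$, so that by \cref{wrtohue9rheheththe} each $f_{n}$ is an \emph{unbranched} coarse covering at any fixed scale once $n$ is large; the second is what lets a single scale $r$ control all blocks simultaneously, so that the finitely many exceptional indices are absorbed into $\cZ$. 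Your verification of \cref{wroigjowegewrgerfweferf}.\ref{rgegerwerefwrefa} and \ref{rgegerwerefwrefb} from these two facts is clean; the computation showing $P_{n}\cap f_{n}^{-1}(V^{(n)}_{r})\subseteq U^{(n)}_{r}$ when $\ell(n)>2r$ (via uniqueness of the short lift) is the right way to get uniform control on $P\cap f^{-1}(V_{Z^{c}})$.

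One small remark: you hedge with ``away from a bounded set, block-diagonal'', but for the bounded union as described here (cf.\ the paper's own description of $\hat X=\bigsqcup^{\bd}_{\nat}X$) the coarse entourages are block-diagonal on the nose, so you can drop the qualifier and the auxiliary index $M$.
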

 \end{ex}

 \begin{ex}
 The asymptotically faithful covering sequences of spaces of graphs in \cite[Def. 2.2]{Willett_2012I}
give rise to branched coarse coverings.     \hB
\end{ex}

In the following we introduce the equivariant generalization of the concept of a branched coarse covering.
   We let   $G$ be a group and refer to \cite{equicoarse} for the category $G\BC$ of $G$-bornological coarse spaces.  
  We consider two $G$-bornological coarse spaces $X$ and $Y$ and an equivariant map $f:X\to Y$ between the the underlying sets and  an
 invariant big family $\cZ$.

\begin{ddd}\label{kogpwererwfwerfwerf} \mbox{} \begin{enumerate} \item  $f$ is a $G$-equivariant coarse branched covering relative to $\cZ$ if it is 
a  coarse branched covering relative to $\cZ$  for a $G$-invariant connection. \item \label{otzkjprtzjr9}
If there exists $Z$ in $\cZ$ such that  $G$ acts freely and transitively on the fibres of $f_{|f^{-1}(Z^{c})}$, then
we call $f$ a branched coarse $G$-covering. \end{enumerate}
\end{ddd}

 This condition implies that the parallel transport is also compatible with the $G$-action.
 
 \begin{ex}\label{jiiwogergwerfref}
Let $Y$ be a bornological coarse space and
$I$ be a $G$-set. We consider $I_{min,min}\otimes Y$ as a $G$-bornological coarse space, where $Y$ has the trivial $G$-action.
Then the projection $I_{min,min}\otimes Y\to Y$ is a $G$-equivariant branched coarse  covering 
 relative to the big family $(\emptyset)$.
If $I=G$, then it is a branched coarse $G$-covering.
 \hB
\end{ex}

\begin{ex}\label{jgwpergrfwff}
We consider the subspace $Y:=\{y\in\R^{2}\mid \|y\|\ge 1 \}$ of $\R^{2}$ and let $X\to Y$
be the universal covering with its canonical $\Z$-action. We equip $Y$ with the  path metric structure induced by the euclidean metric on $\R^{2}$.
Since $Y$ is a proper path-metric space, by   \cref{wrtohue9rheheththe}
the map $f:X\to Y$ is a   branched coarse $\Z$-covering  relative   to the big family
of $\cZ:=(B(0,r)\cap Y)_{r\ge 0}$. \hB
\end{ex}

%
%
%
%
%
%
%


\begin{ex}\label{kopgergwerg9}
Let $G$ be a group and $(G_{n})_{n\in \nat}$ be a decreasing family of normal cofinite subgroups such that
$\bigcap_{n\in \nat} G_{n}=\{e\}$.  We form the $G$-set
$$S:=\bigsqcup_{n\in \nat} G/G_{n}$$ and 
  the $G$-bornological coarse space
$$X:= G_{can,min}\otimes S_{min,min}\ .$$
We let $X_{n}$ denote the subset $G\times G/G_{n} $ of $X$.
 
We consider the set $$Y:=G\backslash X$$ with the minimal coarse structure
such that $f:X\to Y$ is controlled, and the minimal compatible  bornology such that the projection $f:X\to Y$ is bornological.
We let $Y_{n}:=f(X_{n})\cong G/G_{n}$ and  define the big family $\cZ:=(Y_{\le n})_{n\in \nat}$ with $Y_{\le n}:=\bigcup_{m\le n}  Y_{n}$.
\begin{lem} If $G$ is countably generated, then the map $f:X\to Y$ is a  branched coarse  $G$-covering relative to the big family $\cZ$.
\end{lem}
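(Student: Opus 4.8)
The plan is to verify \cref{wroigjowegewrgerfweferf} for a $G$-invariant connection and then \cref{kogpwererwfwerfwerf}.\ref{otzkjprtzjr9}, both essentially by hand. First I would make $Y$ explicit: since $G$ acts freely on the $G_{can}$-factor, the $G$-invariant surjection $X\to S$, $(g,s)\mapsto g^{-1}s$, descends to a bijection $X/G\xrightarrow{\sim}S$; under this identification of $Y=X/G$ with $S$, $f$ becomes the map $(g,s)\mapsto g^{-1}s$. Writing, for a finite symmetric $F\subseteq G$ with $e\in F$, $E_F:=\{((g,s),(gb,s)):g\in G,\ b\in F,\ s\in S\}$, the coarse entourages of $X$ are exactly the subsets of the $E_F$, and hence those of $Y$ are exactly the subsets of $D_F:=(f\times f)(E_F)=\{(t,ct):t\in S,\ c\in F\}$, while the bornology of $Y$ consists of the finite sets. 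So $f$ is controlled (by construction of $Y$) and bornological, and since the bounded sets of $X$ are the finite ones the fibres of $f$ are automatically locally finite; this gives \cref{wroigjowegewrgerfweferf}.\ref{etrherthtrherthrth} and \ref{etrherthtrherthrth1}. Also $f^{-1}(t)=\{(g,gt):g\in G\}$, on which $g_0\cdot(g,gt)=(g_0g,g_0gt)$ is free and transitive, so \cref{kogpwererwfwerfwerf}.\ref{otzkjprtzjr9} holds for any member $Z$ of $\cZ$. The remaining content is \cref{wroigjowegewrgerfweferf}.\ref{rgegerwerefwref}: exhibiting a $G$-invariant coarse connection.

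This is where countable generation of $G$ enters. I would fix an increasing exhaustion $B_0\subseteq B_1\subseteq\cdots$ of $G$ by finite symmetric sets with $e\in B_0$ (possible precisely because $G$ is countable), and let $\ell(m)\in\nat\cup\{\infty\}$ be the largest $j$ with $G_m\cap B_j=\{e\}$ (convention $B_\infty:=G$). Since $(G_m)$ is decreasing, $\ell$ is non-decreasing; since each $B_j\setminus\{e\}$ is finite and $\bigcap_m G_m=\{e\}$, one gets $\ell(m)\to\infty$. Setting $\rho(m):=\lfloor\ell(m)/2\rfloor$ one has both $\rho(m)\to\infty$ and $\ell(m)-\rho(m)\to\infty$. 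With $m(s)$ the index such that $s\in G/G_{m(s)}$, the candidate connection is
\[ P:=\{((g',s),(g,s)):s\in S,\ g^{-1}g'\in B_{\rho(m(s))}\}\ , \]
which is symmetric, contains the diagonal, and is $G$-invariant because $(g_0g)^{-1}(g_0g')=g^{-1}g'$ and $m(g_0s)=m(s)$.

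To check \cref{wroigjowegewrgerfweferf}.\ref{rgegerwerefwrefb}, given a coarse entourage $U$ of $X$ I pick $k_0$ with $U\subseteq E_{B_{k_0}}$ and, using $\rho(m)\to\infty$, an index $n_0$ with $\rho(m)\ge k_0$ for $m>n_0$; then $U_{f^{-1}(Z^c)}\subseteq P$ for $Z:=Y_{\le n_0}$, since a pair there is $((g',s),(g,s))$ with $g^{-1}g'\in B_{k_0}$ and $m(s)>n_0$. To check \cref{wroigjowegewrgerfweferf}.\ref{rgegerwerefwrefa}, given $V\subseteq D_{B_{k_0}}$ I use $\rho(m)\to\infty$ and $\ell(m)-\rho(m)\to\infty$ to pick $n_0$ so that $\rho(m)\ge k_0$ and $G_m\cap B_{\rho(m)+k_0}=\{e\}$ for all $m>n_0$, and take $Z:=Y_{\le n_0}$. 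For the unique lifting property \cref{wroigjowegewrgerfweferf}.\ref{wetgrgffsgvdfgsfdgsdfg}: if $(y',y)\in V_{Z^c}$ with $m(y)=:m$ and $y'=cy$, $c\in B_{k_0}$, and $x=(g,gy)\in f^{-1}(y)$, then a point $x'=(g',g'y')\in f^{-1}(y')$ satisfies $(x',x)\in P$ iff $g'y'=gy$ and $g^{-1}g'\in B_{\rho(m)}$; the first condition forces $g^{-1}g'c\in\mathrm{Stab}_G(y)=G_m$ (normality of $G_m$), so $g^{-1}g'=c^{-1}\gamma$ with $\gamma\in G_m$, and then $\gamma\in G_m\cap B_{k_0+\rho(m)}=\{e\}$, leaving the unique solution $x'=(gc^{-1},gy)$, which indeed lies in $f^{-1}(y')\cap P$. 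For \cref{wroigjowegewrgerfweferf}.\ref{wrgtuiwhigwergegr}: the same computation shows every pair of $P\cap f^{-1}(V_{Z^c})$ has equal $S$-coordinate at depth $>n_0$ and projects into $D_{B_{k_0}}$, hence has $G$-difference in $B_{k_0}$, so $P\cap f^{-1}(V_{Z^c})\subseteq E_{B_{k_0}}$ is a coarse entourage of $X$. Since $P$ is $G$-invariant this establishes the first clause of \cref{kogpwererwfwerfwerf}, and together with the free transitivity noted above, $f$ is a branched coarse $G$-covering relative to $\cZ$.

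The one genuinely delicate point is the balancing act in the choice of the radius function $\rho$: condition \cref{wroigjowegewrgerfweferf}.\ref{rgegerwerefwrefb} wants $\rho(m)\to\infty$ so that every coarse entourage of $X$ eventually sits inside $P$, while uniqueness of parallel transport and condition \cref{wroigjowegewrgerfweferf}.\ref{wrgtuiwhigwergegr} want $\rho(m)$ to stay well below the ``injectivity radius'' $\ell(m)$ of $G_m$. It is exactly countable generation of $G$ that forces $\ell(m)\to\infty$ (the finite balls $B_j$ exhaust $G$ and each is avoided by all sufficiently deep $G_m$), which leaves room to interpolate, e.g.\ by $\rho=\lfloor\ell/2\rfloor$.
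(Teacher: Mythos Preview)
Your argument is essentially the paper's, phrased in terms of finite subsets $B_j\subseteq G$ rather than invariant entourages $U_n$; the paper's choice of a cofinal sequence $(U_n)$ with $G_n$ being $U_n^2$-discrete is exactly your balancing act between $\rho(m)\to\infty$ and $\ell(m)-\rho(m)\to\infty$.

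One small slip: in the uniqueness step (and again when verifying \cref{wroigjowegewrgerfweferf}.\ref{wrgtuiwhigwergegr}) you pass from $\gamma\in B_{k_0}B_{\rho(m)}$ to $\gamma\in B_{k_0+\rho(m)}$, which presupposes $B_iB_j\subseteq B_{i+j}$. An arbitrary increasing exhaustion by finite symmetric sets need not satisfy this. The fix is to choose the $B_j$ submultiplicatively from the start: with generators $g_1,g_2,\ldots$ of $G$, set $S_j:=\{e,g_1^{\pm1},\ldots,g_j^{\pm1}\}$ and $B_j:=S_j^{\,j}$; then $B_iB_j\subseteq S_{i+j}^{\,i+j}=B_{i+j}$, the $B_j$ are increasing, finite, symmetric, and exhaust $G$, and your argument goes through verbatim.
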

\begin{proof}
The map $f$ is controlled and has locally finite fibres. 
The group $G$ acts freely and transitively on the fibres of $f$.

 Since $G$ is countably generated  there exists a cofinal sequence $(U_{n})_{n\in \nat}$  of  symmetric  invariant coarse entourages  of $G_{can}$ such that $G_{n}$ is $U^{2}_{n}$-discrete  for every $n$ in $\nat$. 
Then we define the coarse connection by $$P:=\bigcup_{n\in \nat} U_{n}\times \diag(G/G_{n})\ .$$
 
    The condition in  \cref{wroigjowegewrgerfweferf}.\ref{rgegerwerefwrefb} follows from the cofinality of $(U_{n})_{n\in \nat}$.
   Note that the 
   coarse entourages  $V=f(U)$  for  invariant coarse entourages $U$ of $X$ 
   are cofinal in the coarse structure of $Y$. To this end we  observe that $f(U)\circ f(U')=f(U\circ U')$ for  invariant entourages $U,U'$.

    We fix an invariant coarse entourage  $U$ of $X$ and consider the coarse entourage $V:=f(U)$ of $Y$.
We take $m$ such that $U\cap (X_{m}\times X_{m})\subseteq U_{m}$ and check that
   the condition from  \cref{wroigjowegewrgerfweferf}.\ref{wetgrgffsgvdfgsfdgsdfg} holds  true for the member $Y_{\le m}$ of $\cZ$. Let $n$ be in $\nat$ with $n> m$.
   Let $(g,g'G_{n})$ be in $X_{n}$ and assume that  $(\tilde g,\tilde g'G_{n})$ is such that
   $(g,\tilde g)\in U_{n} $ and $g'G_{n}=\tilde g' G_{n}$ so that
   $([g,g'G_{n}],[\tilde g,\tilde g'G_{n}])\in f(U)\cap (Y_{n}\times Y_{n})$.  We can assume that $ g'=\tilde g'$.
   Let $h$ be in $G$ such that $((g,g'G_{n}),(h\tilde g,h  g'G_{n}))\in P\cap (X_{n}\times X_{n})$.  We must show that this implies $h=e$.
    First of all  $h g'G_{n}=g' G_{n}$ implies that $h\in G_{n}$  by normality
   of $G_{n}$. Furthermore, by the symmetry and $G$-invariance of    $U_{n}$ the relations $(g,\tilde g)\in U_{n}$ and $(g,h\tilde g)\in U_{n}$  imply $(e,\tilde g^{-1}h\tilde g)\in U^{2}_{n}$. Since $\tilde g^{-1}h\tilde g\in G_{n}$
   and $G_{n}$ is $U_{n}^{2}$-discrete we can conclude that $\tilde g^{-1}h\tilde g=e$, hence $h=e$.

   In order to check the condition in \cref{wroigjowegewrgerfweferf}.\ref{wrgtuiwhigwergegr} we observe that $U_{f^{-1}(Z_{n}^{c})}\cap P=U_{f^{-1}(Z_{n}^{c})}$.
  \end{proof}
\end{ex}

In the following we discuss the base-change of branched coarse coverings.
For a bornological coarse space $X$ we let $\cC_{X}$ and $\cB_{X}$ denote the coarse structure and the bornology of $X$.
Let $f:X\to Y$ be a branched coarse covering relative to $\cZ$.
 
%
%

Let $h:Y'\to Y$ be a morphism of bornological coarse spaces.
We form the pull-back \begin{equation}\label{qewfwefwfqfwfq}
\xymatrix{X'\ar[r]^{g}\ar[d]^{f'}&X\ar[d]^{f}\\Y'\ar[r]^{h}&Y}
\end{equation}
 
in coarse spaces. We furthermore equip $X'$ with the bornology 
generated by $g^{-1}(\cB_{X})$.
We consider the big family $\cZ':=h^{-1}(\cZ)$ on $Y'$.

\begin{lem}\label{wejigowegfrefrefwe}
$f':X'\to Y'$ is a branched coarse covering relative to $\cZ'$.
\end{lem}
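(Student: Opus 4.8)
The plan is to verify the three conditions of \cref{wroigjowegewrgerfweferf} for $f':X'\to Y'$ and the family $\cZ'=h^{-1}(\cZ)$ directly. First recall the explicit form of the pull-back \eqref{qewfwefwfqfwfq}: the underlying set of $X'$ is $\{(y',x)\in Y'\times X\mid h(y')=f(x)\}$ with $g(y',x)=x$ and $f'(y',x)=y'$; its coarse structure is the initial one for $g$ and $f'$, so a subset $W'$ of $X'\times X'$ is a coarse entourage of $X'$ if and only if $(g\times g)(W')\in\cC_{X}$ and $(f'\times f')(W')\in\cC_{Y'}$; and a subset of $X'$ is bounded if and only if it is contained in $g^{-1}(B)$ for some $B\in\cB_{X}$ (one checks routinely, using that $g$ is controlled, that this bornology is compatible with the coarse structure, so that $X'$ is a bornological coarse space). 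I will repeatedly use that a point of $X'$ is determined by its two coordinates; in particular $g$ is injective on every fibre of $f'$. The connection I propose is $P':=(g\times g)^{-1}(P)$, for $P$ a connection of $f$ relative to $\cZ$.

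Conditions \cref{wroigjowegewrgerfweferf}.\ref{etrherthtrherthrth} and \ref{etrherthtrherthrth1} are the easy part. Controlledness of $f'$ is immediate from the description of the coarse structure. For a bounded $B'\subseteq X'$ choose $B\in\cB_{X}$ with $B'\subseteq g^{-1}(B)$; then $f'(B')\subseteq h^{-1}(f(B))$, which is bounded because $f$ is bornological and $h$, being a morphism of bornological coarse spaces, is proper; hence $f'$ is bornological. Moreover $g(f'^{-1}(y')\cap B')\subseteq f^{-1}(h(y'))\cap B$ is finite since $f$ has locally finite fibres, so by injectivity of $g$ on $f'^{-1}(y')$ the fibres of $f'$ are locally finite.

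The substance of the argument is \cref{wroigjowegewrgerfweferf}.\ref{rgegerwerefwref}, and the basic observation behind it is: for $Z\in\cZ$, writing $Z':=h^{-1}(Z)\in\cZ'$, a pair in $V'_{Z'^{c}}$ (for a coarse entourage $V'$ of $Y'$) is carried by $h\times h$ into $V_{Z^{c}}$, where $V:=(h\times h)(V')\in\cC_{Y}$ — indeed $y_{2}'\notin Z'$ forces $f(x_{2})=h(y_{2}')\notin Z$ for any $(y_{2}',x_{2})\in X'$. Now fix a coarse entourage $V'$ of $Y'$, set $V:=(h\times h)(V')$, choose $Z\in\cZ$ satisfying \cref{wroigjowegewrgerfweferf}.\ref{rgegerwerefwrefa} for $P$ and $V$, and put $Z':=h^{-1}(Z)$. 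Then \cref{wroigjowegewrgerfweferf}.\ref{wetgrgffsgvdfgsfdgsdfg} for $f',P',Z'$ follows coordinatewise from the corresponding statement for $f,P,Z$: the unique lift $x'$ of $x$ with $f(x')=h(y_{1}')$ and $(x',x)\in P$ produced there is exactly the unique $(y_{1}',x')\in f'^{-1}(y_{1}')$ with $((y_{1}',x'),(y_{2}',x))\in P'$. For \cref{wroigjowegewrgerfweferf}.\ref{wrgtuiwhigwergegr}, let $W':=P'\cap f'^{-1}(V'_{Z'^{c}})$; then $(f'\times f')(W')\subseteq V'_{Z'^{c}}\subseteq V'\in\cC_{Y'}$, and $(g\times g)(W')\subseteq P\cap f^{-1}(V_{Z^{c}})\in\cC_{X}$ by the basic observation and \cref{wroigjowegewrgerfweferf}.\ref{wrgtuiwhigwergegr} for $f$, so $W'$ is a coarse entourage of $X'$. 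Finally, for \cref{wroigjowegewrgerfweferf}.\ref{rgegerwerefwrefb}, given a coarse entourage $U'$ of $X'$ set $U:=(g\times g)(U')\in\cC_{X}$, choose $Z\in\cZ$ with $U_{f^{-1}(Z^{c})}\subseteq P$, and put $Z':=h^{-1}(Z)$; any $((y_{1}',x_{1}),(y_{2}',x_{2}))$ in $U'_{f'^{-1}(Z'^{c})}$ then satisfies $f(x_{2})=h(y_{2}')\notin Z$, hence $(x_{1},x_{2})\in U_{f^{-1}(Z^{c})}\subseteq P$, and so the pair lies in $(g\times g)^{-1}(P)=P'$.

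I do not anticipate a genuine obstacle; the one place calling for care is \cref{wroigjowegewrgerfweferf}.\ref{wrgtuiwhigwergegr}, where one must go through the explicit description of the coarse structure on the pull-back and compare $P'\cap f'^{-1}(V'_{Z'^{c}})$ with $P\cap f^{-1}(V_{Z^{c}})$; the choice $\cZ'=h^{-1}(\cZ)$ is exactly what makes the right-restrictions $V'_{Z'^{c}}$ and $V_{Z^{c}}$ correspond, and properness of $h$ enters only in the bornological clause. One may also note that, by \cref{wregegerfrwferf}, the resulting branched coarse covering structure on $f'$ does not depend on the choice of $P$.
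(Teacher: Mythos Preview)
Your proof is correct and follows essentially the same route as the paper: you take $P'=(g\times g)^{-1}(P)$ and verify each clause of \cref{wroigjowegewrgerfweferf} by pushing or pulling along $g$ and $h$, with the pull-back description of $\cC_{X'}$ handling \ref{wrgtuiwhigwergegr}. The only differences are cosmetic---you spell out the coarse structure on the pull-back and the role of properness of $h$ more explicitly than the paper does.
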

\begin{proof}
First of all $f'$ is controlled by definition. Furthermore, the bornology on $X'$ is compatible with the coarse structure.

We next show that  $f'$ has locally finite fibres.
Let $y'$ be in $Y'$  and assume that  $B$ in $X$ is bounded. Then  
$$g^{-1}(B)\cap f^{\prime,-1}(y')\subseteq   \{y'\}  \times (f^{-1}(h(y))\cap B)$$   is indeed  finite.
 
%

Since $f'(g^{-1}(B))\subseteq h^{-1}(f(B))$ we also see that $f'$ is bornological. 
 
 Let $P$ be  the connection for $f$. Then we set $P':=g^{-1}(P)$.
 
 We now check that  $P'$ is a  connection for $f'$.
Let $V'$ be a coarse entourage   of $Y'$. 
Then $V=h(V)$ is a coarse entourage of $Y$ and we can choose the member
$Z$ of $\cZ$ for $V$ such that the  \cref{wroigjowegewrgerfweferf}.\ref{rgegerwerefwref}
is satisfied with $P$. We then claim that this condition is satisfied for
$V'$ and $Z':=f^{-1}(Z)$ with  $P'$.

We start with    \cref{wroigjowegewrgerfweferf}.\ref{wetgrgffsgvdfgsfdgsdfg}.
Let $(y_{1}',y_{0}')$ be in $V'_{Z^{\prime,c}}$ and $x'_{0}$ be in $f^{\prime,-1}(y_{0}')$.
 We set $y_{i}:=h(y_{i}') $  for $i=0,1$ and $x_{0}:= g(x_{0}') $. Then there exists a unique
 $x_{1}$ in $f^{-1}(y_{1})$ such that $(x_{1},x_{0})\in P$. 
Since the underlying square of sets in \eqref{qewfwefwfqfwfq} is cartesian
  there exists a  unique 
 $x_{1}'$ in $X'$ such that $f'(x_{1}')=y_{1}'$ and $g(x_{1}')=x_{1}$.
 By definition of $P'$ we have $(x_{1}',x_{0}')\in P'$.
 For uniqeness, assume that 
  $x_{1}''$ is in $f^{\prime,-1}(y_{1}')$ and $(x_{1}'',x_{0}')\in P'$. Then
 $(g(x_{1}''),g(x_{0}'))\in P$, $g(x_{1}'')\in f^{-1}(y_{1})$ and hence
 $g(x_{1}'')=g(x_{1}')$. Since also $f'(x_{1}'')=f'(x_{1}')$ we conclude that  $x_{1}'=x_{1}''$.
 
We now verify   \cref{wroigjowegewrgerfweferf}.\ref{wrgtuiwhigwergegr}.
Note that $f'(f^{\prime,-1}(V'_{Z^{\prime,c}})\cap P')\subseteq V'$ and
$g(f^{\prime,-1}(V'_{Z^{\prime,c}})\cap P')\subseteq f^{-1}(V_{Z})\cap P$.
Since $V'$ in $\cC_{Y'}$ and $ f^{-1}(V_{Z})\cap P\in \cC_{X}$
by the choice of $Z$ we conclude that 
that $f^{\prime,-1}(V'_{Z^{\prime,c}})\cap P'\in \cC_{X'}$ since  the  square in
 \eqref{qewfwefwfqfwfq} is cartesian in coarse spaces.

  We finally verify  \cref{wroigjowegewrgerfweferf}.\ref{rgegerwerefwrefb}. 
  Let $U'$ be in $\cC_{X'}$. Then there exists a member $Z$ of $\cZ$ such that
  $g(U')_{f^{-1}(Z)}\in P$. Applying $g^{-1}$ we conclude that 
  $U'_{f^{\prime,-1}(Z')}\subseteq P'$ if we set $Z':=h^{-1}(Z)$.
   \end{proof}

\begin{rem}
For a pair $(X,\cY)$ of a bornological coarse space and a big family 
  the coarse corona $\partial_{\cY} X$ was  introduced in
  \cite{werfwerfwrefw}  as the Gelfand dual of the quotient of the algebra of bounded functions on $X$ whose variation
 vanishes away from $\cY$ by the ideal generated by functions supported on $\cY$.
 
We now consider a coarse branched covering $f:X\to Y$ relative to the family $\cZ$. We then get an induced map
$$\partial f:\partial_{f^{-1}(\cZ)}X\to \partial_{\cZ}Y$$
of coronas. \hB
%
\end{rem}

\section{Uniform coverings, cones and Rips complexes}\label{klgfrewegrrefgwerf}

We work in the context of uniform bornological coarse spaces $\UBC$ introduced in 
\cite{buen}.

For a uniform space $X$ we let $\cU_{X}$ denote the  poset of uniform entourages of $X$ with the opposite of the inclusion relation.
Let $f:X\to Y$ be a map between the underlying sets of 
uniform bornological coarse spaces.
 
\begin{ddd}\label{thetgtrggegtr}
The map $f:X\to Y$ is called a uniform covering if:
\begin{enumerate}
\item \label{jkkpgwegrewfre} $f$ is controlled and uniform.
\item\label{jkkpgwegrewfre1} $f$ is  bornological and has locally finite fibres.
\item \label{okgerpetgefe}There exists an entourage $P$ of $X$ (called the connection) such that: \begin{enumerate} \item \label{wergoijergergwerf} For every   $U$ in $\cU_{Y}$ the entourage $ f^{-1}(U)\cap P$ is uniform and coarse.
\item \label{rwegjweriogewrgwregwreg} There exists  $U$ in $\cU_{Y}$ such that for every $y$ in $Y$ 
the map $f$ restricts to a 
uniform homeomorphism
$$\tilde U[f^{-1}(y)] \cong f^{-1}(U[y])  \stackrel{\cong}{\to} f^{-1}(y)_{disc}\otimes U[y]\ ,$$ where $\tilde U:=f^{-1}(U)\cap P$.
\item \label{wergjwerogewrgreg} The set $\{f^{-1}(U)\cap P\mid U\in \cU_{Y}\}$ is cofinal in $\cU_{X}$.
\end{enumerate}
\end{enumerate}
\end{ddd}

 \begin{lem}
If $P$ and $P'$ are two connections for $f$, then there exist uniform entourages $U$ and $U'$ of $ Y$ such that
$f^{-1}(U)\cap P\subseteq P'$ and $f^{-1}(U')\cap P'\subseteq P$.
\end{lem}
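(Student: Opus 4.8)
The plan is to mimic the argument for the branched case in \cref{wregegerfrwferf}; in the uniform setting it becomes even shorter, because the cofinality clause of \cref{thetgtrggegtr}.\ref{okgerpetgefe} can be used directly. By the symmetry of the assertion in $P$ and $P'$ it suffices to produce a single uniform entourage $U$ of $Y$ with $f^{-1}(U)\cap P\subseteq P'$; exchanging the roles of the two connections then yields $U'$.

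First I would fix any $U_{0}\in\cU_{Y}$ (the poset $\cU_{Y}$ is nonempty, as is already witnessed by \cref{thetgtrggegtr}.\ref{okgerpetgefe}.\ref{rwegjweriogewrgwregwreg}) and apply \cref{thetgtrggegtr}.\ref{okgerpetgefe}.\ref{wergoijergergwerf} to the connection $P'$: this shows that $W:=f^{-1}(U_{0})\cap P'$ is a genuine uniform entourage of $X$, and clearly $W\subseteq P'$. Next I would invoke the cofinality clause \cref{thetgtrggegtr}.\ref{okgerpetgefe}.\ref{wergjwerogewrgreg} for the connection $P$: since $\{f^{-1}(U)\cap P\mid U\in\cU_{Y}\}$ is cofinal in $\cU_{X}$ and $W$ belongs to $\cU_{X}$, there is $U\in\cU_{Y}$ with $f^{-1}(U)\cap P\subseteq W$. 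Concatenating the two inclusions gives $f^{-1}(U)\cap P\subseteq f^{-1}(U_{0})\cap P'\subseteq P'$, which is exactly the required statement; the claim for $U'$ follows by interchanging $P$ and $P'$.

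There is essentially no serious obstacle here; the argument is a purely formal manipulation of the axioms. The only two points that need a moment of care are: (i) that the defining properties \cref{thetgtrggegtr}.\ref{okgerpetgefe}.\ref{wergoijergergwerf}--\ref{wergjwerogewrgreg} hold for \emph{every} connection of the uniform covering $f$, so that there is no circularity in applying \ref{wergoijergergwerf} to $P'$ and \ref{wergjwerogewrgreg} to $P$; and (ii) that $\cU_{X}$ carries the \emph{opposite} of the inclusion order, so that ``cofinal'' correctly unwinds to ``for every uniform entourage $W$ of $X$ there is a member of the family contained in $W$''. Finally, in analogy with the remark following \cref{wregegerfrwferf}, I would note as a consequence that the local uniform trivializations, and hence the parallel transports, defined by $P$ and by $P'$ coincide once the uniform scale is taken small enough.
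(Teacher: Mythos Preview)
Your argument is correct and is essentially identical to the paper's own proof: fix any $U_{0}\in\cU_{Y}$, use \cref{thetgtrggegtr}.\ref{okgerpetgefe}.\ref{wergoijergergwerf} for $P'$ to get $f^{-1}(U_{0})\cap P'\in\cU_{X}$, and then use the cofinality clause \cref{thetgtrggegtr}.\ref{okgerpetgefe}.\ref{wergjwerogewrgreg} for $P$ to obtain $U$ with $f^{-1}(U)\cap P\subseteq f^{-1}(U_{0})\cap P'\subseteq P'$. Your explicit remarks on the symmetry in $P,P'$ and on the reversed order on $\cU_{X}$ are useful clarifications but do not alter the argument.
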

\begin{proof}
Let $U_{0}$  be in $\cU_{Y}$. Then $f^{-1}( U_{0} )\cap P'\in \cU_{X}$ by \cref{thetgtrggegtr}.\ref{wergoijergergwerf}. Then by \cref{thetgtrggegtr}.\ref{wergjwerogewrgreg} there exists $U$ in $\cU_{Y}$ such that
$f^{-1}(U)\cap P\subseteq  f^{-1}( U_{0})\cap P'\subseteq P'$. 
  \end{proof}

Assume that $X$ and $Y$ are $G$-uniform bornological coarse spaces and that $f$ is equivariant.
\begin{ddd}
\mbox{}
\begin{enumerate}
\item $f$ is a $G$-equivariant uniform covering if $f$ is a uniform covering with a $G$-invariant connection $P$. 
\item $f$ is a uniform $G$-covering if it is a $G$-equivariant uniform covering 
and $G$ acts freely and transitively on the fibres of $f$. \end{enumerate}

\end{ddd}
 
 \begin{ex}
 In the situation of \cref{wrtohue9rheheththe} assume that
 there exists an $r$ in $(0,\infty)$ such that  all fibres of $f$ are $r$-discrete.  
This happens e.g. if $Y$ is compact. 
 Then
 $f:X\to Y$ is a uniform covering. For the connection $P$ we can take the metric entourage $U_{r/2}$ of $X$. \hB
 \end{ex}

\begin{ex}
The covering $X\to Y=\{y\in \R^{2}\mid \|y\|\ge 1 \}$ from \cref{jgwpergrfwff} is a   uniform $\Z$-covering.
The analogous construction with $X'\to  Y':=\{y\in \R^{2}\mid \|y\|>0 \}$ does not yield
 a uniform covering since condition  \cref{thetgtrggegtr}.\ref{rwegjweriogewrgwregwreg} is violated.
\hB
\end{ex}

Let $X$ be a $G$-uniform bornological coarse space. We first introduce the construction of the squeezing space $\hat X_{h}$.
We start with the bounded union \begin{equation}\label{weirug9egrferw}\hat X:= \bigsqcup_{ \nat}^{\bd}X
\end{equation} in $G\UBC$.
We let $X_{n}$ denote the $n$-th component of $\hat X$.
The uniform and the coarse structure of $\hat X$ are generated by entourages of the form
$\bigsqcup_{\nat} U$ for uniform or coarse entourages $U$  of $X$, respectively. The bornology of $\hat X$ is generated by bounded subsets of $X$ placed in one of the components. 
On $\hat X$ we consider the invariant  big family $\hat \cY:=(\hat X_{\le n})_{n\in \nat}
$ given by $\hat X_{\le n}:= \bigsqcup_{m\le n }X_{m}$.

Recall the construction of hybrid coarse structures from \cite[Sec. 5.1]{buen}.

\begin{ddd}\label{kohperthtregtg}
The squeezing space $\hat X_{h}$ is the $G$-bornological coarse space  obtained from $\hat X$ by equipping it with the 
hybrid coarse structure with respect to the big family $\hat \cY$.
\end{ddd}

 By definition a coarse entourage of $\hat X$ belongs to the hybrid structure if for every uniform entourage $V$ of $X$ there exists $m$ in $\nat$ such that we have
$ U\cap (X_{n}\times X_{n})\subseteq V$ for  all $n$ in $\nat$ with $n\ge m$.
The identity of underlying sets induces a morphism $\hat X_{h}\to \iota \hat X$,
where $\iota:G\UBC\to G\BC$ is the forgetful functor.

The construction of the squeezing space extends to a 
  functor $$X\mapsto \hat{X}_{h}:G\UBC \to G\BC$$
  in the obvious way. We furthermore have a natural transformation  
  $$(\hat{-})_{h}\to (\hat{-}): G\UBC\to G\BC\ .$$ 

 Let $f:X\to Y$ be a controlled, bornological and uniform (but not necessarily proper) map between $G$-uniform bornological coarse spaces.  
  We then get  a controlled and bornological map 
 $\hat f:\hat X_{h}\to \hat Y_{h}$
between the associated  squeezing spaces.
 We consider the invariant big family \begin{equation}\label{regwerwerfwefwef}\hat \cZ:=(\hat Y_{\le n})_{n\in \nat}\ , \quad \hat Y_{\le n}:=\bigcup_{m\le n}Y_{m}
\end{equation}on $\hat Y_{h}$

\begin{lem} If  $f$ is a  uniform covering, then the map $ \hat f :\hat X_{h}\to \hat Y_{h}$ is a
 branched coarse  covering  with respect to the big  family $\hat \cZ$.
\end{lem}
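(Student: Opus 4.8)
The plan is to check each clause of \cref{wroigjowegewrgerfweferf} for $\hat f$ relative to $\hat\cZ$, with coarse connection $\hat P:=\bigsqcup_{\nat}P$ the block-diagonal copy of the connection $P$ of $f$ (one copy of $P\subseteq X\times X$ inside each component $X_n\times X_n$ of $\hat X\times\hat X$). Clause \cref{wroigjowegewrgerfweferf}.\ref{etrherthtrherthrth} and the bornological part of \ref{etrherthtrherthrth1} are already established, since $\hat f$ is controlled and bornological. For the remaining part of \ref{etrherthtrherthrth1}, local finiteness of the fibres of $\hat f$: any bounded subset of $\hat X_h$ lies in a bounded subset $B$ of a single component $X_n$, and then $\hat f^{-1}(y)\cap B$ for $y\in Y_n$ equals $f^{-1}(y)\cap B$, which is finite because $f$ has locally finite fibres.

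Before attacking \cref{wroigjowegewrgerfweferf}.\ref{rgegerwerefwref} I would record two structural facts: every coarse entourage $V$ of $\hat Y_h$ is block-diagonal, $V=\bigsqcup_n V_n$ with $V_n:=V\cap(Y_n\times Y_n)$ viewed as an entourage of $Y$, and by the very definition of the hybrid coarse structure, for each uniform entourage $W$ of $Y$ there is an $m$ with $V_n\subseteq W$ for all $n\ge m$; the analogous statements hold for coarse entourages of $\hat X_h$ and uniform entourages of $X$. To verify \ref{rgegerwerefwrefa} for a coarse entourage $V$ of $\hat Y_h$: let $U_0\in\cU_Y$ be the distinguished uniform entourage of \cref{thetgtrggegtr}.\ref{rwegjweriogewrgwregwreg}, pick $m$ with $V_n\subseteq U_0$ for $n\ge m$, and set $Z:=\hat Y_{\le m}\in\hat\cZ$, so $Z^c=\bigsqcup_{n>m}Y_n$ and $V_{Z^c}\subseteq\bigsqcup_{n>m}U_0$. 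For \ref{wetgrgffsgvdfgsfdgsdfg}: a pair $(y',y)\in V_{Z^c}$ has $y,y'$ in one component $Y_n$ with $(y',y)\in U_0$, and over the ball $U_0[y]$ the map $f$ is by \cref{thetgtrggegtr}.\ref{rwegjweriogewrgwregwreg} a trivial covering whose $P$-part is the flat connection $f^{-1}(U_0)\cap P$, giving the required unique $x'\in f^{-1}(y')$ with $(x',x)\in f^{-1}(U_0)\cap P\subseteq P$; and this $x'$ is the unique point of $\hat f^{-1}(y')$ with $(x',x)\in\hat P$, since any such point lies in $X_n$ and, as $(y',y)\in U_0$, in $f^{-1}(U_0)\cap P$, where the trivialization forces uniqueness.

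The step I expect to be the real obstacle is \cref{wroigjowegewrgerfweferf}.\ref{wrgtuiwhigwergegr}: that $\hat P\cap\hat f^{-1}(V_{Z^c})$ is a coarse entourage of $\hat X_h$, which has two parts. That it is a coarse entourage of $\hat X$ follows from $\hat P\cap\hat f^{-1}(V_{Z^c})\subseteq\bigsqcup_{n>m}(f^{-1}(U_0)\cap P)$ together with coarseness of $f^{-1}(U_0)\cap P$ from \cref{thetgtrggegtr}.\ref{wergoijergergwerf}. The delicate part is the hybrid shrinking condition: given a uniform entourage $W$ of $X$, I would use the cofinality \cref{thetgtrggegtr}.\ref{wergjwerogewrgreg} to find $U\in\cU_Y$ with $f^{-1}(U)\cap P\subseteq W$, then use the shrinking property of $V$ to get $m'\ge m$ with $V_n\subseteq U$ for $n\ge m'$; the $n$-th block of $\hat P\cap\hat f^{-1}(V_{Z^c})$ is then $P\cap f^{-1}(V_n)\subseteq f^{-1}(U)\cap P\subseteq W$ for $n\ge m'$. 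Matching uniform entourages of $X$ with those of $Y$ through $f^{-1}$ is the one place where all three parts of \cref{thetgtrggegtr}.\ref{okgerpetgefe} are used together.

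Finally, for \cref{wroigjowegewrgerfweferf}.\ref{rgegerwerefwrefb}: the entourage $\tilde U_0:=f^{-1}(U_0)\cap P$ is a uniform entourage of $X$ contained in $P$ by \cref{thetgtrggegtr}.\ref{wergoijergergwerf}, so for a coarse entourage $U$ of $\hat X_h$ the hybrid shrinking property yields $m$ with $U\cap(X_n\times X_n)\subseteq\tilde U_0\subseteq P$ for $n\ge m$; taking $Z:=\hat Y_{\le m}$, block-diagonality of $U$ gives $U_{\hat f^{-1}(Z^c)}=\bigsqcup_{n>m}\big(U\cap(X_n\times X_n)\big)\subseteq\hat P$. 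Essential uniqueness of $\hat P$ is then automatic from \cref{wregegerfrwferf}. If in addition $f$ is a $G$-equivariant uniform covering with $G$-invariant $P$, the same $\hat P$ is $G$-invariant, so $\hat f$ is a $G$-equivariant branched coarse covering in the sense of \cref{kogpwererwfwerfwerf}.
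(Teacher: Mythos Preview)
Your proposal is correct and follows essentially the same approach as the paper's own proof: the same choice of connection $\hat P=\bigsqcup_{\nat}P$ (the paper calls it $Q$), the same use of the hybrid shrinking condition combined with the three clauses of \cref{thetgtrggegtr}.\ref{okgerpetgefe}, and the same reduction to block-diagonal entourages on both sides. Your version is in fact somewhat more explicit than the paper's about the uniqueness in the unique-lifting step and already records the $G$-equivariant consequence that the paper states separately as \cref{kjthgprthhdh}.
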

\begin{proof}  
\cref{wroigjowegewrgerfweferf}.\ref{etrherthtrherthrth1} is clearly satisfied. 
The map $\hat f:\hat X\to \hat Y$ is  controlled and uniform. Since
$\hat X_{h}$ is defined using the induced big family $f^{-1}(\hat \cZ)$ the map 
$\hat X_{h}\to \hat Y_{h}$ is also controlled 
by the functoriality of the construction of hybrid structures. Hence \cref{wroigjowegewrgerfweferf}.\ref{etrherthtrherthrth}
is satisfied.

Using   the  connection $P$ for $f$ we define the coarse  connection   for
$\hat f$ by $Q:=\bigsqcup_{n\in \nat} P$.
  
 Let $V$ be a coarse entourage of $\hat Y_{h}$. 
 Let $U$ be a uniform entourage of $Y$ as in \cref{thetgtrggegtr}.\ref{rwegjweriogewrgwregwreg}.
Then we choose $m$ in $\nat$ such that $V\cap (Y_{n}\times Y_{n})\subseteq U$ for all $ n\ge m+1$.
Then \cref{wroigjowegewrgerfweferf}.\ref{wetgrgffsgvdfgsfdgsdfg} is satisfied for the member  $\hat Y_{\le m}$ of $\hat \cZ$.

By \cref{thetgtrggegtr}.\ref{wergoijergergwerf} it is clear that $\hat f^{-1}(V_{\hat Y_{\le m}^{c}})\cap Q$
is a coarse entourage of $\hat X$.
We must show that it also belongs to the hybrid structure.
Let $\tilde U$ be a uniform entourage of $X$. Then by \cref{thetgtrggegtr}.\ref{wergjwerogewrgreg} there exists a uniform entourage
$U'\subseteq U$ of $Y$ such that $f^{-1}(U')\cap P\subseteq \tilde U$.
Since $V$ is a hybrid entourage there exists $m'\ge m$ such that $V\cap 
 (Y_{n}\times Y_{n})\subseteq U'$ for all $n\ge m'+1$.
 Then
 $\hat f^{-1}(V_{\hat Z^{c}_{\le m}})\cap Q \cap (X_{n}\times 
 X_{n})\subseteq \tilde U$ for all $n\ge m'$.
This shows that $\hat f^{-1}(V_{\hat Z_{\le m}^{c}})\cap Q$ belongs to the hybrid structure. We thus have verified

 Finally let $\hat V$ be a coarse entourage of $\hat X_{h}$. 
 We choose a uniform entourage $U$ of $Y$. 
   Then there exists  $k$ in $\nat$ such that $
 \hat V\cap (X_{n}\times X_{n})\subseteq f^{-1}(U)\cap P$ for all $n$ in $\nat$ with $n\ge k+1$.
 This implies  that  \cref{wroigjowegewrgerfweferf}.\ref{rgegerwerefwrefb}.
 $\hat V_{X_{\le k}^{c}}\subseteq Q$. We have verified  
  \cref{wroigjowegewrgerfweferf}.\ref{rgegerwerefwrefb}.
 \end{proof}

%
 
%
%
%

 \begin{kor}\label{kjthgprthhdh}\mbox{}\begin{enumerate}
 \item  If $f$ is a $G$-equivariant uniform covering, then  $\hat f:\hat X_{h}\to \hat Y_{h}$ is a $G$-equivariant branced coarse $G$-covering with respect to $\hat \cZ$.
 \item

 If $f:X\to Y$ is a uniform $G$-covering, then $\hat f:\hat X_{h}\to \hat Y_{h}$ is a branched coarse $G$-covering with respect to $\hat \cZ$.
 \end{enumerate}
 \end{kor}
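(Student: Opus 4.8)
The plan is to obtain both assertions as equivariant refinements of the non-equivariant lemma just proved, simply by tracking the $G$-action through its proof rather than re-verifying the conditions of \cref{wroigjowegewrgerfweferf}. The key observation is that the squeezing-space assignment $X\mapsto\hat X_{h}$ is a functor $G\UBC\to G\BC$, so that $\hat f:\hat X_{h}\to\hat Y_{h}$ is $G$-equivariant whenever $f$ is, and it is controlled, bornological and has locally finite fibres by the preceding lemma. Hence, in view of \cref{kogpwererwfwerfwerf}, for the first assertion it suffices to produce a $G$-invariant connection for $\hat f$ relative to $\hat\cZ$, and for the second assertion it suffices in addition to check that $G$ acts freely and transitively on the fibres of $\hat f$ over the complement of some member of $\hat\cZ$.

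For the first assertion, let $P$ be a $G$-invariant connection for $f$, which exists by hypothesis. In the proof of the preceding lemma the connection for $\hat f$ was taken to be $Q:=\bigsqcup_{n\in\nat}P\subseteq\hat X\times\hat X$. Since the $G$-action on $\hat X=\bigsqcup_{n\in\nat}X$ is the given action of $G$ on $X$ carried out component-wise, the $G$-invariance of $P$ immediately gives the $G$-invariance of $Q$, so $\hat f$ is a $G$-equivariant branched coarse covering relative to $\hat\cZ$.

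For the second assertion, a uniform $G$-covering is in particular a $G$-equivariant uniform covering, so the first assertion already shows that $\hat f$ is a $G$-equivariant branched coarse covering relative to $\hat\cZ$ with connection $Q$; it remains only to exhibit $Z\in\hat\cZ$ on whose complement $G$ acts freely and transitively on the fibres of $\hat f$. But each fibre of $\hat f$ is a fibre of $f$ placed in one component of $\hat X$, and $G$ acts freely and transitively on every fibre of $f$ by hypothesis, so any member of $\hat\cZ$ — for instance the smallest one — will do. I do not expect a genuine obstacle here: the whole argument is the bookkeeping remark that $Q$ and $\hat f$ inherit equivariance from $P$ and $f$ through the component-wise description of the $G$-action on the bounded union $\hat X$, together with the triviality that the fibres of $\hat f$ are exactly those of $f$.
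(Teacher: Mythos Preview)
Your proposal is correct and is essentially the intended argument: the paper states the corollary without proof, and the only content is exactly what you wrote—namely that the connection $Q=\bigsqcup_{n\in\nat}P$ from the preceding lemma is $G$-invariant when $P$ is, and that the fibres of $\hat f$ are componentwise copies of the fibres of $f$, so the free and transitive $G$-action carries over.
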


\begin{constr}\label{hojprthertgertgerg}
 We  recall the cone construction $\cO^{\infty}:\UBC\to \BC$. 
For $Y$ in $\UBC$ the underlying bornological space  of $\cO^{\infty}(Y)$ is  $\R\otimes Y$ on which
we consider the big family $\cO^{-}(Y):=(\cO_{\le n}(Y))_{n\in \nat}$   given by the subsets $\cO_{\le n}(Y):=(-\infty,n)\times Y $.
The  coarse structure of $\cO^{\infty}(Y)$  is    the hybrid   associated to
the coarse and uniform structure on $\R\otimes Y$ and the big family $\cO^{-}(Y)$. Thus a coarse entourage $V$ of $\R\otimes Y$ is hybrid, if for every uniform entourage $U$ of $\R\otimes Y$ there exists
$n$ in $\nat$ such that
$V_{|\cO(Y)_{\le n}^{c}}\subseteq U$. \hB
\end{constr}

We define a map of sets $\hat i:\hat X\to \cO^{\infty}(\hat X)$ 
which restricts to $X_{n}\ni  x\mapsto (n,x)\in \cO^{\infty}(X)$ for all $n$ in $\nat$.
 
  \begin{lem}\label{kwogkprefwerf}
  The map $\hat i_{X}:\hat X_{h}\to \cO^{\infty}( \hat X)$ is a coarse embedding.
  \end{lem}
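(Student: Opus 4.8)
The plan is to verify directly that $\hat i_X$ induces an isomorphism of bornological coarse spaces onto its image, equipped with the induced structures. Since $\hat i_X$ is plainly injective (the copies $X_n$ are sent to pairwise distinct heights $n$, and $x\mapsto(n,x)$ is injective on each $X_n$), this reduces to four checks: that $\hat i_X$ is controlled, that it is proper, that it sends bounded sets to bounded sets, and that it reflects coarse entourages, i.e.\ that $(\hat i_X\times\hat i_X)^{-1}(W)$ is a coarse entourage of $\hat X_h$ for every coarse entourage $W$ of $\cO^{\infty}(\hat X)$.

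First I would fix notation: write a point of $\hat X$ as $x_n$ (the point $x\in X$ in the $n$-th copy $X_n$), so that $\hat i_X(x_n)=(n,x_n)$ in $\R\otimes\hat X=\cO^{\infty}(\hat X)$. The structural inputs are: the coarse structure of the bounded union $\hat X$ is generated by the entourages $\bigsqcup_{\nat}U'$ with $U'$ a coarse entourage of $X$, so every coarse entourage of $\hat X$ pairs only points lying in a common component; the coarse, bornological and uniform structures of $\R\otimes\hat X$ are generated, respectively, by products $U_{\R}\times U_{\hat X}$ of coarse entourages, by products of bounded sets, and by products $\{(s,t):|s-t|<\varepsilon\}\times U_{\hat X}$ of uniform entourages; and the two hybrid descriptions recalled in the text, namely that a coarse entourage of $\hat X$ lies in $\cC_{\hat X_h}$ iff for each uniform entourage $V'$ of $X$ it is eventually in $n$ contained in $V'$ on $X_n\times X_n$ (\cref{kohperthtregtg}), while a coarse entourage of $\R\otimes\hat X$ lies in $\cC_{\cO^{\infty}(\hat X)}$ iff for each uniform entourage of $\R\otimes\hat X$ it is eventually contained in it on the complements $[N,\infty)\times\hat X$ of the members of $\cO^{-}(\hat X)$ (\cref{hojprthertgertgerg}).

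The four checks are then routine. For properness and the bounded-to-bounded property: a bounded set of $\cO^{\infty}(\hat X)$ is contained in some $[-R,R]\times B$ with $B$ bounded inside a single $X_m$, whose $\hat i_X$-preimage is $B$ if $m\le R$ and empty otherwise, while $\hat i_X(B)=\{m\}\times B$ is bounded. For controlledness: given $U\in\cC_{\hat X_h}$, contained in $\bigsqcup_{\nat}U'$, the image $(\hat i_X\times\hat i_X)(U)$ lies inside $\diag(\R)\times\bigsqcup_{\nat}U'$, hence is a coarse entourage of $\R\otimes\hat X$; it is hybrid because the eventual $V'$-estimate for $U$ on $X_n\times X_n$ transfers verbatim to the slices $\{n\}\times X_n$ at heights $n\ge N$, the $\R$-part of any product uniform entourage being automatic since both $\R$-coordinates of a pair in the image coincide. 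For reflecting entourages: if $W\in\cC_{\cO^{\infty}(\hat X)}$ then $W\subseteq U_{R}\times\bigsqcup_{\nat}U'$ forces $(\hat i_X\times\hat i_X)^{-1}(W)$ to pair only points in common components, so it is a coarse entourage of $\hat X$; and applying the hybridness of $W$ to the uniform entourage $\{|s-t|<1\}\times\bigsqcup_{\nat}V'$ gives, for $n$ large, the needed inclusion $(\hat i_X\times\hat i_X)^{-1}(W)\cap(X_n\times X_n)\subseteq V'$, since a pair at level $n$ maps to a pair both of whose $\R$-coordinates equal $n$ and hence lands in $[N,\infty)\times\hat X$ once $n\ge N$.

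I do not expect a genuine obstacle; the one point deserving care is the dovetailing of the two hybrid structures under $\hat i_X$. The map carries ``large component index'' in $\hat X_h$ onto ``large $\R$-coordinate'' in $\cO^{\infty}(\hat X)$ along integer heights, so that the defining uniform-smallness conditions of the two hybrid coarse structures are transported into one another on the nose, and this equivalence is essentially the whole content of the lemma.
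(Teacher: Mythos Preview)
The paper states this lemma without proof, so there is no argument to compare against. Your direct verification is correct and is exactly the natural way to fill in the details: the content is precisely the dovetailing you identify, that passing to the $n$-th copy in $\hat X_h$ corresponds under $\hat i_X$ to passing to $\R$-coordinate $n$ in $\cO^{\infty}(\hat X)$, so the two hybrid conditions match on the nose. All four checks (controlled, proper, bornological, entourage-reflecting) go through as you describe.
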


We now assume that $f:X\to Y$ is a uniform covering. It induces 
 map of sets
$\cO^{\infty}(f):\cO^{\infty}(X)\to \cO^{\infty}(Y)$ given   by $(t,x)\mapsto (t,f(x))$.  

\begin{prop}\label{wtiogwtgerwferferfw}
The map $\cO^{\infty}(f):\cO^{\infty}(X)\to \cO^{\infty}(Y)$ is a branched coarse covering  with respect  to $\cO^{-}(Y)$.
\end{prop}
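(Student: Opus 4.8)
The plan is to check the three defining properties of \cref{wroigjowegewrgerfweferf} for the pair $(\cO^{\infty}(f),\cO^{-}(Y))$. The argument is parallel to the lemma above on $\hat f\colon\hat X_{h}\to\hat Y_{h}$: both $\cO^{\infty}(-)$ and $(\hat{-})_{h}$ equip a space with a hybrid coarse structure relative to a big family near infinity, and in both cases the connection of the branched covering is built from the connection $P$ of the underlying uniform covering by ignoring the extra $\R$-direction.

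Properties \cref{wroigjowegewrgerfweferf}.\ref{etrherthtrherthrth} and \cref{wroigjowegewrgerfweferf}.\ref{etrherthtrherthrth1} are the easy ones. The map $\id_{\R}\otimes f$ is controlled and uniform because $f$ is, and since $\cO^{\infty}(f)^{-1}(\cO_{\le n}(Y))=\cO_{\le n}(X)$ it carries the big family $\cO^{-}(X)$ into $\cO^{-}(Y)$; hence $\cO^{\infty}(f)$ is controlled for the cone structures by the same functoriality of the hybrid construction already invoked for $\hat f$. As $\cO^{\infty}$ does not change the underlying bornological space, a bounded subset of $\cO^{\infty}(Y)$ lies in some $I\times C$ with $I\subseteq\R$ and $C\subseteq Y$ bounded; its preimage $I\times f^{-1}(C)$ is bounded since $f$ is bornological, and meets each fibre $\{t\}\times f^{-1}(y)$ in the finite set $\{t\}\times(f^{-1}(y)\cap C)$. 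So $\cO^{\infty}(f)$ is bornological with locally finite fibres.

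For \cref{wroigjowegewrgerfweferf}.\ref{rgegerwerefwref} I would take as connection
\[
P^{\cone}:=\{\,((t',x'),(t,x))\in(\R\times X)^{2}\mid (x',x)\in P\,\}\ ,
\]
where $P$ is a connection for $f$. Given a coarse entourage $V$ of $\cO^{\infty}(Y)$ — in particular a coarse entourage of $\R\otimes Y$, so $\proj_{\R}(V)$ is a coarse entourage of $\R$ — pick the uniform entourage $U$ of $Y$ supplied by \cref{thetgtrggegtr}.\ref{rwegjweriogewrgwregwreg}. Hybridness of $V$ yields $n\in\nat$ such that the $Y$-coordinates of pairs in $V_{\cO_{\le n}(Y)^{c}}$ are $U$-related, and then \cref{thetgtrggegtr}.\ref{rwegjweriogewrgwregwreg} gives the unique lift $x'$ required in \cref{wroigjowegewrgerfweferf}.\ref{wetgrgffsgvdfgsfdgsdfg}, with $Z:=\cO_{\le n}(Y)$; the one bookkeeping point is that the right-restriction constrains the second coordinate, which is harmless because enlarging $n$ only shrinks $\cO_{\le n}(Y)^{c}$. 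For \cref{wroigjowegewrgerfweferf}.\ref{wrgtuiwhigwergegr} one checks that $P^{\cone}\cap\cO^{\infty}(f)^{-1}(V_{Z^{c}})$ is contained in $\proj_{\R}(V)\times(f^{-1}(U)\cap P)$, a coarse entourage of $\R\otimes X$ by \cref{thetgtrggegtr}.\ref{wergoijergergwerf}, and that it is moreover hybrid (see below). Finally, for \cref{wroigjowegewrgerfweferf}.\ref{rgegerwerefwrefb}, given a coarse entourage $\hat V$ of $\cO^{\infty}(X)$, choose any uniform entourage $U$ of $Y$; then $f^{-1}(U)\cap P$ is a uniform entourage of $X$, so a product of it with a uniform entourage of $\R$ is a uniform entourage of $\R\otimes X$, and hybridness of $\hat V$ produces $k\in\nat$ with $\hat V_{\cO_{\le k}(X)^{c}}$ inside that product, hence inside $P^{\cone}$; since $\cO_{\le k}(X)^{c}=\cO^{\infty}(f)^{-1}(\cO_{\le k}(Y)^{c})$, this is \cref{wroigjowegewrgerfweferf}.\ref{rgegerwerefwrefb} with $Z:=\cO_{\le k}(Y)$.

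The step I expect to be the real work is the hybridness claim inside \cref{wroigjowegewrgerfweferf}.\ref{wrgtuiwhigwergegr}: that $P^{\cone}\cap\cO^{\infty}(f)^{-1}(V_{Z^{c}})$ is not only a coarse but a hybrid entourage of $\cO^{\infty}(X)$. Here the cutoff must be chosen in two stages — first large enough that the $Y$-direction of $V$ is fine enough for parallel transport to be defined, then enlarged so that, after pulling back through $f$, the entourage is squeezed into a prescribed uniform entourage of $\R\otimes X$ — and the second stage is exactly where the cofinality clause \cref{thetgtrggegtr}.\ref{wergjwerogewrgreg} of a uniform covering is used, via a uniform entourage $U'\subseteq U$ of $Y$ with $f^{-1}(U')\cap P\subseteq\tilde U_{X}$. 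Everything else — manipulating product entourages on $\R\otimes Y$ and $\R\otimes X$ and the monotonicity of right-restrictions under enlarging members of $\cO^{-}(Y)$ — is routine.
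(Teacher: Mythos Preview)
Your proposal is correct and follows essentially the same approach as the paper: the same connection $P^{\cone}=(\R\times\R)\times P$, the same use of \cref{thetgtrggegtr}.\ref{rwegjweriogewrgwregwreg} to find the initial cutoff $n$ for parts \ref{wetgrgffsgvdfgsfdgsdfg} and \ref{wrgtuiwhigwergegr}, and the same appeal to \cref{thetgtrggegtr}.\ref{wergjwerogewrgreg} via a smaller $U'\subseteq U$ for the hybridness step. Your identification of the hybridness of $P^{\cone}\cap\cO^{\infty}(f)^{-1}(V_{Z^{c}})$ as the only nontrivial point, with the two-stage cutoff argument, matches the paper exactly.
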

\begin{proof}
We verify the conditions listed in  \cref{wroigjowegewrgerfweferf}.  

 \cref{wroigjowegewrgerfweferf}.\ref{etrherthtrherthrth}
follows from \cref{thetgtrggegtr}.\ref{jkkpgwegrewfre}  and the usual functoriality of the cone \cite{buen}.

  \cref{wroigjowegewrgerfweferf}.\ref{etrherthtrherthrth1} follows  from \cref{thetgtrggegtr}.\ref{jkkpgwegrewfre1}.  
%

We now consider \cref{wroigjowegewrgerfweferf}.\ref{rgegerwerefwref}. 
As   connection  for $\cO^{\infty}(f)$  we take $\hat P:=(\R\times \R)\times P$.
For an entourage $U$ of $Y$ we set $\tilde U:=P\cap f^{-1}(U)$.
 
 We start with checking \cref{wroigjowegewrgerfweferf}.\ref{wetgrgffsgvdfgsfdgsdfg}. 
Let $V$ be a coarse entourage of $\cO^{\infty}(Y)$. Let $U$ in $\cU_{Y}$  be an entourage as in \cref{thetgtrggegtr}.\ref{rwegjweriogewrgwregwreg}.
We can find $n$ in $\nat $ such that if $((t',y'),(t,y))$ is in $V$ and $t\ge n$, then $t'\ge n-1$ and $(y',y)\in U$.  

Let now $(t,y)$ be in $\cO^{\infty}(Y)$, $t\ge n$,  and $(t,x)$ be in $\cO^{\infty}(f)^{-1}(\{(t,y)\})$. Consider  furthermore a point $((t',y'),(t,y))$ in $V$.
Then there is a unique $(t',x')$ in $\cO^{\infty}(f)^{-1}(t',y')$
such that $(t',x')\in   \hat P$. Indeed, we must take for $x'$ the uniquely determined element in $\tilde U[x]$ over $y'$ in $U[y]$.
  
 We now check  \cref{wroigjowegewrgerfweferf}.\ref{wrgtuiwhigwergegr}. 
 We must check that  
 $\hat P\cap \cO^{\infty}(f)^{-1} (\cO^{\infty}(Y)^{c}_{\le n})$ is a coarse entourage of $\cO^{\infty}(X)$. First of all   \cref{thetgtrggegtr}.\ref{wergoijergergwerf} implies that
 $\hat P\cap \cO^{\infty}(f)^{-1} (\cO^{\infty}(Y)^{c}_{\le n})$   is a coarse entourage of $\R\otimes X$. We must further show that
  if $\hat U$ is in $\cU_{X}$ and $\epsilon $ is in $(0,\infty)$, then there  exists  $n'$ in $\nat$ such that for  $((t',x'),(t,x)) \in \hat P\cap \cO^{\infty}(f)^{-1} (\cO^{\infty}(Y)^{c}_{\le n})$ with $t\ge n'$ implies  $|t'-t|<\epsilon$ and $(x',x)\in \hat U$.
 
 First of all by \cref{thetgtrggegtr}.\ref{wergjwerogewrgreg}
there exists   $U'$ in $\cU_{Y}$ such that $U'\subseteq U$
and $\tilde U'\subseteq \hat U$. There exists $n'$ in $\nat$
such that $n\le n'$ and $((t',y'),(t,y))\in V$ with $t\ge n'$ implies that
$(y',y)\in U'$ and $|t'-t|<\epsilon$. Hence if $((t',x'),(t,x)) \in \hat P\cap \cO^{\infty}(f)^{-1} (\cO^{\infty}(Y)^{c}_{\le n})$ such that $t\ge n'$, then also $(x',x)\in \tilde U'\subseteq \hat U$.

 We finally check  \cref{wroigjowegewrgerfweferf}.\ref{rgegerwerefwrefb}.
Let $\hat V$ be a  coarse entourage of $\cO^{\infty}(X)$. Then we must show that
there exists $n$ in $\nat$ such that
$\hat V_{\cO^{\infty}(X)^{c}_{\le n}}\subseteq \hat P$. By 
   \cref{thetgtrggegtr}.\ref{wergjwerogewrgreg}
there exists a uniform entourage $\hat U$ of $\R\otimes X$ contained in $\hat P$. In view of the definition of the hybrid structure there exists $n$ in $\nat $ such that $\hat V_{\cO^{\infty}(X)^{c}_{\le n}}\subseteq \hat U$.
Hence $\hat V_{\cO^{\infty}(X)^{c}_{\le n}}\subseteq \hat P$ as required.
   \end{proof}

 \begin{kor}\label{okgpthertetg}\mbox{}\begin{enumerate}
 \item  If $f$ is a $G$-equivariant uniform covering, then  $\cO^{\infty}( f): \cO^{\infty}(X)\to \cO^{\infty} ( Y)$ is a $G$-equivariant branced coarse $G$-covering with respect to $\cO^{-}(Y)$.
 \item\label{pklhtrthrteht}

 If $f:X\to Y$ is a uniform $G$-covering, then $\cO^{\infty}( f): \cO^{\infty}(X)\to \cO^{\infty} ( Y)$  is a branced coarse $G$-covering with respect to $\cO^{-}(Y)$.
 \end{enumerate}
 \end{kor}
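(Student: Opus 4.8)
The plan is to deduce \cref{okgpthertetg} directly from \cref{wtiogwtgerwferferfw} by checking that the connection produced in its proof is compatible with the extra equivariance and freeness hypotheses. So the first step is to recall that \cref{wtiogwtgerwferferfw} already shows $\cO^{\infty}(f):\cO^{\infty}(X)\to\cO^{\infty}(Y)$ is a branched coarse covering relative to $\cO^{-}(Y)$, with connection $\hat P=(\R\times\R)\times P$, where $P$ is a connection for $f$. The only thing left for part (1) is to observe that if $f$ is a $G$-equivariant uniform covering, then by definition $P$ can be chosen $G$-invariant; since $G$ acts on $\cO^{\infty}(X)=\R\otimes X$ through the second factor only and trivially on $\R$, the entourage $\hat P=(\R\times\R)\times P$ is again $G$-invariant. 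By \cref{kogpwererwfwerfwerf}.(1) this makes $\cO^{\infty}(f)$ a $G$-equivariant branched coarse covering relative to the invariant big family $\cO^{-}(Y)$ (which is invariant because $G$ acts trivially on $\R$ and the subsets $\cO_{\le n}(Y)=(-\infty,n)\times Y$ are $G$-invariant).

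For part (2) I would additionally verify the freeness-and-transitivity condition of \cref{kogpwererwfwerfwerf}.\ref{otzkjprtzjr9}. Here the point is that for a uniform $G$-covering $f:X\to Y$ the group $G$ already acts freely and transitively on all fibres of $f$, so there is nothing to remove: I can take $Z=\emptyset$, which is a member of $\cO^{-}(Y)$ (indeed $\cO_{\le n}(Y)$ for $n$ small enough has empty intersection with... no — more simply, $\emptyset$ need not literally be a member, but the definition only requires \emph{some} $Z$ in $\cZ$ with free transitive action on the fibres of $f_{|f^{-1}(Z^c)}$; since the action is free and transitive on \emph{all} fibres, $Z=\cO_{\le 0}(Y)$, or indeed any member, works). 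The fibres of $\cO^{\infty}(f)$ over $(t,y)$ are canonically identified with the fibres of $f$ over $y$ via $(t,x)\mapsto x$, and this identification is $G$-equivariant because $G$ acts trivially on the $\R$-coordinate; hence $G$ acts freely and transitively on the fibres of $\cO^{\infty}(f)$ as well. Combined with part (1) this gives that $\cO^{\infty}(f)$ is a branched coarse $G$-covering relative to $\cO^{-}(Y)$.

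I expect the whole argument to be essentially bookkeeping: the substantive content is entirely in \cref{wtiogwtgerwferferfw}, and the corollary only records that the construction there respects group actions. The one mild subtlety worth spelling out is the interplay between the two parts of \cref{kogpwererwfwerfwerf}: part (2) of the corollary is \emph{not} simply part (1) plus freeness of the $G$-action on $\cO^{\infty}(X)$; one must check the fibrewise transitivity condition relative to a member of the big family, and the cleanest way to see it is the explicit fibre identification above. No new estimates are needed, so I would keep the proof to a few lines invoking \cref{wtiogwtgerwferferfw}, the $G$-invariance of $\hat P$ and of $\cO^{-}(Y)$, and the equivariant fibre identification.
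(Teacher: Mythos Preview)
Your proposal is correct and follows exactly the intended approach: the paper states this as a bare corollary with no proof, treating it as immediate from \cref{wtiogwtgerwferferfw}. Your filling-in of the details---the $G$-invariance of $\hat P=(\R\times\R)\times P$ and of $\cO^{-}(Y)$, and the equivariant identification of the fibres of $\cO^{\infty}(f)$ with those of $f$---is precisely what is needed and contains no gaps.
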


To a bornological coarse space $X$ with entourage $U$ we associate the Rips complex $P_{U}(X)$
which we consider as  a uniform bornological coarse space. If $X'$ and $U'$ is a second bornological coarse space $X$ with entourage $U'$ and  $f:X\to X'$ is a   map  $f(U)\subseteq U'$, then we get an induced map
$P(f):P_{U}(X)\to P_{U'}(X')$.
If $f$ is proper or bornological, respectively, then $P(f)$ has the same property.

We now assume that $f:X\to Y$ is a branched coarse covering with respect to the big family $\cZ$ in $Y$.
\begin{prop}\label{sfvfdvsfd9v} For every $U$ in $\cC_{X}$ 
there exists a member $Z$ in $\cZ$ such that $P(f_{|f^{-1}(Z^{c})}):P_{U}(f^{-1}(Z^{c}))\to P_{f(U)}(Z^{c})$
is a uniform covering.
\end{prop}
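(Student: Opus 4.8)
The plan is to unwind the definition of a branched coarse covering (\cref{wroigjowegewrgerfweferf}) and show that, after restricting to the complement of a suitable member $Z$ of $\cZ$ and forming Rips complexes at scales $U$ and $f(U)$, each of the three defining conditions of a uniform covering (\cref{thetgtrggegtr}) is met. Fix $U$ in $\cC_X$. First I would apply \cref{wroigjowegewrgerfweferf}.\ref{rgegerwerefwrefb} to the entourage $U$ (and to $U\circ U$, $U^{n}$ as needed — only finitely many iterates matter for a fixed Rips complex, or one can pass to $\bigcup_{k}U^{k}$ if $U$ is assumed symmetric and generating, but it is cleanest to just enlarge $U$) to obtain a member $Z_{0}$ with $U_{f^{-1}(Z_0^{c})}\subseteq P$, and then apply \cref{wroigjowegewrgerfweferf}.\ref{rgegerwerefwrefa} to the coarse entourage $V:=f(U)$ of $Y$ to obtain a member $Z_{1}$; set $Z$ to be a member of $\cZ$ containing both $Z_{0}$ and $Z_{1}$, using that $\cZ$ is a big family and that the two conditions are preserved under enlarging $Z$ (as noted in the proof of \cref{wregegerfrwferf}).

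Next I would verify the conditions of \cref{thetgtrggegtr} for the map $g:=P(f_{|f^{-1}(Z^{c})}):P_{U}(f^{-1}(Z^{c}))\to P_{f(U)}(Z^{c})$. Condition \ref{jkkpgwegrewfre} (controlled and uniform) and \ref{jkkpgwegrewfre1} (bornological, locally finite fibres) follow from the corresponding properties of $f$ recorded in \cref{wroigjowegewrgerfweferf}.\ref{etrherthtrherthrth}--\ref{etrherthtrherthrth1} together with the functoriality of the Rips complex construction stated just before the proposition, noting that on the vertex set $f^{-1}(Z^{c})\to Z^{c}$ the fibres are the same as those of $f$. For the connection I would take $P':=P\cap f^{-1}((f(U))_{Z^{c}})$, which by \cref{wroigjowegewrgerfweferf}.\ref{wrgtuiwhigwergegr} is a coarse entourage of $X$ restricted to $f^{-1}(Z^{c})$, and I would use it to define the connection entourage on the Rips complex $P_{U}(f^{-1}(Z^{c}))$ in the evident way (the product of the canonical uniform entourage of the simplices with $P'$, intersected appropriately). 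Condition \ref{wergoijergergwerf} then reduces to checking that $g^{-1}(\text{uniform entourage of }P_{f(U)}(Z^{c}))\cap P'$ is uniform and coarse on the Rips complex, which follows from \cref{wroigjowegewrgerfweferf}.\ref{wrgtuiwhigwergegr}; condition \ref{wergjwerogewrgreg} (cofinality) follows because the coarse structure of $P_{U}(f^{-1}(Z^{c}))$ is generated by $U$ together with the simplicial entourages, and \cref{wroigjowegewrgerfweferf}.\ref{rgegerwerefwrefb} with our choice of $Z\supseteq Z_{0}$ forces $U_{f^{-1}(Z^{c})}\subseteq P'$.

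The heart of the argument, and the step I expect to be the main obstacle, is condition \ref{thetgtrggegtr}.\ref{rwegjweriogewrgwregwreg}: producing a single uniform entourage $U$ of $Z^{c}$ — concretely the simplicial entourage of $P_{f(U)}(Z^{c})$, or a small refinement of it — over which $g$ restricts to a uniform homeomorphism onto a trivial bundle $f^{-1}(y)_{disc}\otimes U[y]$. This is where the unique parallel transport of \cref{wroigjowegewrgerfweferf}.\ref{wetgrgffsgvdfgsfdgsdfg} enters: for $(y',y)\in (f(U))_{Z^{c}}$ the map $\Phi_{y',y}:f^{-1}(y)\to f^{-1}(y')$ of \cref{wrotgwgregfefwref} furnishes the local trivialization on vertices, and one must check that it extends over the simplices of the Rips complex and is compatible with the uniform structure — i.e. that parallel transport along a simplex is well-defined and continuous in the simplicial coordinates. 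The key point making this work is the uniqueness in \cref{wroigjowegewrgerfweferf}.\ref{wetgrgffsgvdfgsfdgsdfg}: it forces the transports $\Phi$ to be locally constant in the path, so that the lift of a simplex in $P_{f(U)}(Z^{c})$ along $g$ is determined by the lift of any one of its vertices, giving the required homeomorphism onto the product $f^{-1}(y)_{disc}\otimes U[y]$ after possibly shrinking the uniform entourage once more to stay within $P'$ (using \ref{thetgtrggegtr}.\ref{wergjwerogewrgreg}-type cofinality of the sets $f^{-1}(U')\cap P'$). I would finish by remarking that all choices can be made $G$-equivariantly when $f$ is equivariant, so the proposition has the expected equivariant refinement, though the statement as given is the non-equivariant one.
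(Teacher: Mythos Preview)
Your outline is correct and follows the same overall strategy as the paper, but you make the argument considerably harder than necessary by trying to transport the original connection $P$ up to the Rips complex. The paper's key simplification is that the Rips complex already carries the spherical path metric, and the connection for $P(f_{|f^{-1}(Z^{c})})$ is simply taken to be the metric entourage of width $1$. There is no need to build anything out of $P'=P\cap f^{-1}((f(U))_{Z^{c}})$ and then form ``the product of the canonical uniform entourage of the simplices with $P'$, intersected appropriately'': the metric of the simplicial complex already encodes the connection. With the width-$1$ entourage in hand, sub-conditions \ref{wergoijergergwerf} and \ref{wergjwerogewrgreg} of \cref{thetgtrggegtr}.\ref{okgerpetgefe} are immediate (metric entourages are uniform and coarse, and they generate the uniform structure), and for \ref{rwegjweriogewrgwregwreg} one likewise takes the width-$1$ metric entourage on the base.

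One small sharpening: the paper chooses $Z$ so that parallel transport is defined at scale $f(U)^{2}$, not just $f(U)$. This is exactly what ensures that once a single vertex of a downstairs simplex is lifted, the lifts of the remaining vertices obtained by parallel transport are pairwise $P$-close (transport $x_{0}\to x_{i}\to x_{j}$ agrees with $x_{0}\to x_{j}$ by uniqueness at scale $f(U)^{2}$), so the lift of a simplex is determined by the lift of any one vertex. Your parenthetical ``and to $U\circ U$, $U^{n}$ as needed'' shows you anticipated this. Once that is in place, the step you flag as the main obstacle---extending over simplices and continuity in simplicial coordinates---evaporates: the lift of a simplex is a simplex, the map is affine on it, and local triviality over width-$1$ balls follows.
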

\begin{proof}
The map $P(f):P_{U}(X)\to P_{f(U)}(Y)$ is $1$-Lipschitz and therefore  controlled and uniform. Hence
 \cref{thetgtrggegtr}.\ref{jkkpgwegrewfre}  is satisfied.
 
 Since $f$ is bornological, also $P_{U}(f)$ is bornological.

We  choose $Z$ in $\cZ$  such that the parallel transport for $f$ at scale $f(U)^{2}$ is defined on $Z^{c}$.
For a  simplex in $P_{f(U)}(Z^{c})$ the choice of a    preimage of a vertex uniquely determines a preimage of 
the simplex.  

The preimage of a point in a simplex of $P_{f(U)}(Z^{c})$  is in bijection  to   the fibre of $f$
on a vertex of this simplex by a map of propagation $1$. This implies that the fibres of $f$ are locally finite. 
Hence
 \cref{thetgtrggegtr}.\ref{jkkpgwegrewfre1}  is satisfied.

We now argue that \cref{thetgtrggegtr}.\ref{okgerpetgefe} is satisfied.
For the connection of $P(f_{|Z^{c}})$ we can take the metric entourage of width $1$.
For the entourage $U$ in \cref{thetgtrggegtr}.\ref{rwegjweriogewrgwregwreg} we can also take the metric entourage of width $1$. 
\end{proof}
%
%
Assume now that $f:X\to Y$ is a $G$-equivariant branched coarse covering with respect to the invariant big family $\cZ$ of $Y$.
If $U$ is a $G$-invariant entourage of $X$, then $P(f):P_{U}(X)\to P_{f(U)}(Y)$ is clearly $G$-invariant.
The first assertion of the following corollary follows from the assertion of \cref{sfvfdvsfd9v}
while the second uses details of the proof.
\begin{kor}\label{okoprtkgpobgfbdfgbdfgbdfgbdgfb}\mbox{}
\begin{enumerate}
\item 
 For every $U$ in $\cC_{X}^{G}$ 
there exists a member $Z$ in $\cZ$ such that $P(f_{|f^{-1}(Z^{c})}):P_{U}(f^{-1}(Z^{c}))\to P_{f(U)}(Z^{c})$
is a $G$-equivariant uniform covering.
\item\label{elrtjhperthtrgrtgetr} $f$ is a   branched coarse $G$-covering with respect to $\cZ$, then with $U$ and $Z$ as above 
  $P(f_{|f^{-1}(Z^{c})}):P_{U}(f^{-1}(Z^{c}))\to P_{f(U)}(Z^{c})$
is a  uniform $G$-covering.
\end{enumerate}
\end{kor}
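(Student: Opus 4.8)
The plan is to deduce both assertions from \cref{sfvfdvsfd9v}, adding an equivariance check for the first and a bookkeeping argument about $\cZ$ together with the explicit description of fibres from the proof of \cref{sfvfdvsfd9v} for the second. For the first assertion, fix $U$ in $\cC_X^G$. Since $\cZ$ is an invariant big family we may take its members to be $G$-invariant, and by \cref{sfvfdvsfd9v} there is such a member $Z$ with $P(f_{|f^{-1}(Z^{c})})\colon P_{U}(f^{-1}(Z^{c}))\to P_{f(U)}(Z^{c})$ a uniform covering; inspecting the proof, the connection of this covering and the entourage in \cref{thetgtrggegtr}.\ref{rwegjweriogewrgwregwreg} may both be taken to be the metric entourage of width $1$. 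Because $U$ is $G$-invariant, the $G$-action on $X$ induces a simplicial action on $P_U(X)$, which is by isometries of the path metric and restricts to $P_U(f^{-1}(Z^c))$ (using that $f^{-1}(Z^c)$ is $G$-invariant), and similarly on the target. With respect to these actions $P(f_{|f^{-1}(Z^c)})$ is equivariant and its width-$1$ connection is manifestly $G$-invariant, so $P(f_{|f^{-1}(Z^{c})})$ is a $G$-equivariant uniform covering.

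For the second assertion we must, in addition, arrange that $G$ acts freely and transitively on the fibres of $P(f)$ over the complement of a member of $\cZ$. Since $f$ is a branched coarse $G$-covering, by \cref{kogpwererwfwerfwerf}.\ref{otzkjprtzjr9} there is a member $Z_0$ of $\cZ$ on whose complement $G$ acts freely and transitively on the fibres of $f$. As $\cZ$ is directed and both the conclusion of \cref{sfvfdvsfd9v} for $U$ and the free--transitivity condition persist under enlarging the member, we may choose a single $G$-invariant member $Z\supseteq Z_0$ satisfying both; then $Z^c\subseteq Z_0^c$, and $G$ acts freely and transitively on $f^{-1}(y)$ for every vertex $y$ of a simplex of $P_{f(U)}(Z^c)$.

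It remains to transport this freeness and transitivity to the fibres of $P(f_{|f^{-1}(Z^c)})$. By the argument in the proof of \cref{sfvfdvsfd9v}, for a point $p$ in the interior of a simplex of $P_{f(U)}(Z^c)$ and a chosen vertex $y$ of that simplex, the preimage $P(f)^{-1}(p)$ is canonically bijective to $f^{-1}(y)$ via the parallel transport furnished by the connection $P$ at scale $f(U)^2$, which is defined on $Z^c$. Since the connection of a branched coarse $G$-covering is $G$-invariant, this parallel transport, hence the resulting bijection, is $G$-equivariant (this is the remark following \cref{kogpwererwfwerfwerf}); therefore $G$ acts freely and transitively on $P(f)^{-1}(p)$. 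Combined with the first assertion, $P(f_{|f^{-1}(Z^{c})})$ is a uniform $G$-covering. The only point requiring care is the simultaneous choice of $Z$ -- it must serve \cref{sfvfdvsfd9v}, the $G$-invariance, and the free--transitivity at once -- which is exactly what the directedness of $\cZ$ provides; the remaining verifications are routine.
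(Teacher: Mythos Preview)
Your proof is correct and follows exactly the approach the paper indicates: the first assertion is deduced from the statement of \cref{sfvfdvsfd9v} together with the observation that the width-$1$ metric entourage serving as connection is $G$-invariant (since $G$ acts by isometries on the Rips complex once $U$ is $G$-invariant), and the second assertion uses the detail from the proof of \cref{sfvfdvsfd9v} that fibres of $P(f)$ are in $G$-equivariant bijection with fibres of $f$ at vertices. Your treatment is more explicit than the paper's, which merely states that part one follows from the assertion of \cref{sfvfdvsfd9v} while part two uses details of its proof.
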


\section{Finite asymptotic dimension}\label{kohgprthtrgergtreg} 
   
We start with some notions associated with a family 
$\cW$ of subsets of a set $X$.
 If $U$ is an an entourage of $X$ and $B$ is a subset, then we say that $B$ is 
  $U$-bounded if $B\times B\subseteq U$. 
%
%
%
  The entourage $U$   is called
  a Lebesgue entourage of $\cW$   if every $U$-bounded subset of $X$ is contained in a member of $\cW$. 
  We let $$\leb(\cW):=\{U\subseteq X\times X\mid \mbox{$U$ is Lebesgue entourage of $\cW$}\}$$
  denote the set of all Lebesgue entourages of $\cW$.
  The bound of $\cW$ is the entourage
$$\bd(\cW) :=\bigcup_{W\in \cW} W \times W $$ of $X$.
Finally, the 
 multiplicity of the family $ \cW$ is defined as
$$\mult(\cW):=\sup_{x\in X}   | \{W\in \cW \mid x\in W\}|\ .$$

Note that $\mult(\cW)$ is an element of $\nat\cup \infty$.

We now assume that $X$ is a coarse space with coarse structure $\cC_{X}$.

  
   \begin{ddd}\label{uihefigvweeffd1} \mbox{}
   \begin{enumerate}
 \item $X $ has  dimension $n$ at scale  $U$  in $\cC_{X}$ if there exists a  family $\cW$  with  $U\in \leb(\cW)$,  $\bd(\cW)\in \cC_{X}$,  and $\mult(\cW)\le n+1$.
  \item \label{kgopwergrwef}
 $X $ has finite dimension at coarse scales if
 it     has   finite dimension at every   scale   $U$  in $\cC_{X}$    (the dimension  may depend on the scale).
  \item \label{okgopwererfwerf} $X $
  has  asymptotic dimension $\le n$ if it has dimension $\le n$ at every   scale $U$  in $\cC_{X}$ . 
\end{enumerate}
  \end{ddd} 
    If $\cY$ is a big family on $X$, then we say that $(X,\cY )$ has a property listed in \cref{uihefigvweeffd1} eventually
  if there exists a member $Y$ in $\cY$ such that $Y^{c} $  has this property. Thereby in \cref{kgopwergrwef} the set $Y$ may depend on $U$.

\begin{construction}\label{weiogowegrefwfwerfwf}{\em 
Let $f:X\to Y $ be a coarse branched covering relative to the big family $\cZ$ on $Y$ and assume that $(X,f^{-1}(\cZ)) $ has 
eventually finite dimension at coarse scales. Let $P$ denote a connection for $f$.
Let $V$ be an entourage of $Y$. Then we can choose a member $Z$ of $\cZ$ as follows:
\begin{enumerate}
\item We first choose $Z$ such that  $U:=f^{-1}(V_{Z^{c}})\cap P$ is a coarse entourage of $X$. Here we apply  \cref{wroigjowegewrgerfweferf}.\ref{wrgtuiwhigwergegr}.
\item After increasing $Z$  if necessary there exists a   covering $\cW=(W_{i})_{i\in I}$  of $f^{-1}(Z^{c})$  with Lebesgue entourage $U^{2}$, $\tilde U:=\bd(\cW)\in \cC_{X}$, and  with multiplicity $n:=\mult(\cW)<\infty$. This uses the assumption.
\item  We increase $Z$ further in order to ensure that  restriction of $f$ to any $\tilde U$-bounded subset of $f^{-1}(Z^{c})$
is injective.  To this end by \cref{wroigjowegewrgerfweferf}.\ref{wetgrgffsgvdfgsfdgsdfg} we must choose $Z$  so large that the parallel transport at scale $f(\tilde U)$ is defined on $Z^{c}$. We then restrict the covering $\cW$ to the preimage of the new $Z^{c}$.
\end{enumerate}
Recall the definition 
$U(B):=\{x\in X\mid U[x]\subseteq B\}$ of the $U$-thinning of a subset $B$ of $X$.
Since $U^{2}$ is a Lebesgue entourage of $\cW$ the
  family   of  $U$-thinnings $(U(W_{i}))_{i\in I}$   is still a covering.
By choosing appropriate subsets $\hat W_{i}\subseteq U(W_{i})$ for all $i$ in $I$ we can construct a partition 
 $\hat \cW:=(\hat W_{i})_{i\in I}$ of $f^{-1}(Z^{c})$.   
 We have $U[\hat W_{i}]\subseteq W_{i}$ for every $i$ in $I$.
 
 If $(X,f^{-1}(\cZ))$ eventually has finite asymptotic dimension $\le n'$, then we can take $n=n'+1$ independently of $V$.
%
%
 \hB
}
 \end{construction}
 
 \begin{construction}\label{weiogowegrefwfwerfwf1}{\em 
Let $f:X\to Y $ be a coarse branched covering relative to the big family $\cZ$ on $Y$ and assume that $(Y,\cZ) $ has 
eventually finite dimension at coarse scales. We choose a connection  $P$  for $f$. Le  $V$ be an entourage of $Y$. Then we can choose a member $Z$ of $\cZ$ as follows:
\begin{enumerate}
\item  Using the assumption we first choose $Z$ in $\cZ$ such that there exists a covering $\cW=(W_{i})_{i\in I}$
of $Z^{c}$ with Lebesgue entourage $V^{2}$, $\tilde V:=\bd(\cW)\in \cC_{Y}$, and $n:=\mult(\cW)<\infty$.
\item Using \cref{wroigjowegewrgerfweferf}.\ref{rgegerwerefwrefa} we then enlarge $Z$ such that the parallel transport at scale $\tilde V$ is defined on $Z^{c}$ and
$f^{-1}(\tilde V_{Z^{c}})\cap P$ is a coarse entourage of $X$. 
We restrict $\cW$ to the new $Z^{c}$.
%
\end{enumerate}
Then  $(V(W_{i}))_{i\in I}$ is still a covering of $Z^{c}$.
By choosing appropriate subsets $\hat W_{i}\subseteq V(W_{i})$ for all $i$ in $I$ we can construct a partition 
 $\hat \cW:=(\hat W_{i})_{i\in I}$ of $Z^{c}$.    We have $V[\hat W_{i}]\subseteq W_{i}$ for every $i$ in $I$.
%
%

If $(Y,\cZ)$ eventually has finite asymptotic dimension $\le n'$, then we  can take $n=n'+1$ independently of $V$.
 \hB
}
 \end{construction}

%
%
%
%
%
%

Let $f:X\to Y$ be a coarse branched  covering relative to the family $\cZ$ in $\cY$.  
\begin{lem}\label{woigwgwegferfrefw} If  $(Y, \cZ)$ has eventually finite asymptotic dimension $\le n$ (finite  dimension at coarse scales), then
$(X,f^{-1}(\cZ))$ also has eventually finite asymptotic dimension $\le n$ (finite dimension at coarse scales).
\end{lem}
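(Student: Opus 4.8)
The plan is to fix one coarse entourage $U$ of $X$ and build, after removing a large enough member of $f^{-1}(\cZ)$, a covering of the complement with multiplicity $\le n+1$, coarsely bounded mesh, and Lebesgue entourage $U$; running this for every $U$ gives the asymptotic-dimension statement, while running it at each fixed scale (with $n$ then allowed to depend on the scale) gives the finite-dimension-at-coarse-scales statement. We may assume $U$ and the connection $P$ are symmetric, so that the parallel transports of \cref{wrotgwgregfefwref} are mutually inverse bijections.

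Put $V:=f(U)$, a symmetric coarse entourage of $Y$. Since eventually finite asymptotic dimension $\le n$ entails eventually finite dimension at coarse scales, \cref{weiogowegrefwfwerfwf1} applies to $V$, and the covering it produces at scale $V^{2}$ can moreover be chosen of multiplicity $\le n+1$. Thus we obtain a member $Z$ of $\cZ$, a covering $\cW=(W_{i})_{i\in I}$ of $Z^{c}$ with $V^{2}\in\leb(\cW)$, $\tilde V:=\bd(\cW)\in\cC_{Y}$, $\mult(\cW)\le n+1$, with parallel transport at scale $\tilde V$ defined on $Z^{c}$ and $\tilde U:=P\cap f^{-1}(\tilde V_{Z^{c}})\in\cC_{X}$. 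Applying \cref{wroigjowegewrgerfweferf}.\ref{rgegerwerefwrefb} to the coarse entourages $U$ and $\tilde U\circ\tilde U$ of $X$ and enlarging $Z$ accordingly — all earlier properties survive the enlargement, cf. the proof of \cref{wregegerfrwferf} — we may also assume $U_{f^{-1}(Z^{c})}\subseteq P$ and $(\tilde U\circ\tilde U)_{f^{-1}(Z^{c})}\subseteq P$, and we restrict $\cW$ to the new $Z^{c}$. The last inclusion makes the parallel transport path-independent inside each $\tilde V$-bounded subset of $Z^{c}$: a two-step transport $\Phi_{y'',y'}\circ\Phi_{y',y}$ between fibres over such a set carries a point $x$ to a point standing in the relation $\tilde U\circ\tilde U$, hence $P$, to $x$, so it coincides with the one-step transport $\Phi_{y'',y}$ by the uniqueness in \cref{wroigjowegewrgerfweferf}.\ref{wetgrgffsgvdfgsfdgsdfg}. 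Choosing a base point $y_{i}\in W_{i}$ for each $i$, the sets
\[ S_{i,x_{0}}:=\{\Phi_{y,y_{i}}(x_{0})\mid y\in W_{i}\},\qquad x_{0}\in f^{-1}(y_{i}), \]
are therefore well defined; each is mapped bijectively onto $W_{i}$ by $f$, and $\{S_{i,x_{0}}\}_{x_{0}}$ is a partition of $f^{-1}(W_{i})$.

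Let $\cW'$ consist of all the sets $S_{i,x_{0}}$. Over a point $y$, the only members of $\cW'$ meeting $f^{-1}(y)$ are the $S_{i,x_{0}}$ with $y\in W_{i}$, and for each such $i$ the relevant $x_{0}$ is unique; hence $\mult(\cW')\le n+1$. If $x,x'\in S_{i,x_{0}}$ then $(x,x_{0}),(x',x_{0})\in\tilde U$ (the $f$-images of $x,x',x_{0}$ lie in the $\tilde V$-bounded set $W_{i}$), so $(x,x')\in\tilde U\circ\tilde U\in\cC_{X}$, giving $\bd(\cW')\subseteq\tilde U\circ\tilde U$. Finally, if $B\subseteq f^{-1}(Z^{c})$ is $U$-bounded, then $f(B)$ is $V$-bounded, hence $V^{2}$-bounded, hence contained in some $W_{i}$; picking $x_{1}\in B$ and setting $x_{0}:=\Phi_{y_{i},f(x_{1})}(x_{1})$, every $x\in B$ satisfies $(x,x_{1})\in U_{f^{-1}(Z^{c})}\subseteq P$ and $(x_{1},x_{0})\in P$ with all $f$-images in $W_{i}$, so $(x,x_{0})\in(\tilde U\circ\tilde U)_{f^{-1}(Z^{c})}\subseteq P$, i.e. $x\in S_{i,x_{0}}$; thus $B\subseteq S_{i,x_{0}}$ and $U\in\leb(\cW')$. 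Therefore $(f^{-1}(Z))^{c}=f^{-1}(Z^{c})$ has dimension $\le n$ at scale $U$, which is the required assertion.

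The crux is the second enlargement of $Z$: it is precisely what kills the monodromy of $f$ at the scale $\tilde V$, turning $f^{-1}(W_{i})$ into a disjoint union of cross-sections over $W_{i}$. Without it the lifted pieces need not be coarsely bounded, and neither the bound $\bd(\cW')\in\cC_{X}$ nor the multiplicity estimate would go through. Axiom \cref{wroigjowegewrgerfweferf}.\ref{rgegerwerefwrefb}, which absorbs the composite $\tilde U\circ\tilde U$ back into the connection $P$ after deleting a larger member of $\cZ$, is exactly the feature of branched coarse coverings that makes this possible.
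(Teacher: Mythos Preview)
Your proof is correct and follows essentially the same approach as the paper: lift the covering $\cW$ of $Z^{c}$ to $f^{-1}(Z^{c})$ by parallel-transporting a chosen fibre point over a base point $y_{i}\in W_{i}$, then verify multiplicity, bound, and Lebesgue entourage for the lifted family. The only cosmetic difference is bookkeeping: the paper arranges directly that parallel transport is defined at the composite scale $V\circ\bd(\cW)$, whereas you achieve the same effect by enlarging $Z$ via \cref{wroigjowegewrgerfweferf}.\ref{rgegerwerefwrefb} so that $(\tilde U\circ\tilde U)_{f^{-1}(Z^{c})}\subseteq P$, which makes the path-independence step more explicit.
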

\begin{proof}  
It suffices to show for every $U$ in $\cC_{X}$, that if  $Y$  eventually  has dimension $n$ at scale $V:=f(U)$, then
$X$ has eventually dimension $n$ at scale $U$.
We can assume that $U$ and hence $V$ are symmetric.

We choose a connection $P$.
 By assumption  we can choose $Z$ in $\cZ$ and a covering  $\cW$ of $ Z^{c}$ with Lebesgue entourage $   V $, bound $\bd(\cW)\in \cC_{X}$ and $\mult(\cW)\le n+1$  such that the parallel transport is defined at scales $\bd(\cW)$, $V\circ \bd(\cW)$  and $V$ on $  Z^{c}$, and that $ U_{f^{-1}(Z^{c})}\subseteq P$.

Without loss of generality we can assume that all members of $\cW$ are non-empty. For every $W$ in $\cW$ we choose a base point $y_{W}$. Then we define the family of subsets
$\tilde \cW:=((\tilde W_{x})_{x\in f^{-1}(y_{W})})_{W\in \cW}$  of $X$, where $\tilde W_{x}:=\{\Phi_{y',y_{W}}(x)\mid y'\in W\}$, the set of all   parallel transports  
of points $x$ in $f^{-1}(y_{W})$ to points in fibres of points in $W$. 

We claim that $\tilde W$ does the job.
We first   check that $  U_{ f^{-1}(Z^{c})}$ is a Lebesgue entourage of $\tilde \cW$. Thus let $ A$ be a $  U_{ f^{-1}(Z^{c})}$-bounded subset of $  f^{-1}(Z^{c})$. Then $f(A)$ is $ V $-bounded. Hence there exists some $W$ in $\cW$ such that $f(A)\subseteq W$.
Let $a_{0}$ be in $A$.   Then $f(a_{0})\in W$ and since the parallel transport is defined at scale $\bd(\cW)$ on $ Z^{c}$ there exists a unique $x$ in $f^{-1}(y_{W})$ such that $(a,x)\in P$. 
We claim that $A\subseteq \tilde W_{x}$.
Let $a$ be any other point in $A$. Then $(a,a_{0})\in  U_{ f^{-1}(Z^{c})}\subseteq P$. Since $f(A)$ is
  $V$-bounded we know that   $a$ is the parallel transport of $a_{0}$. Since the parallel transport on $ Z^{c}$ is even defined at scale $V\circ \bd(W)$ we know that $a$ is the parallel transport of $y_{W}$. Hence $a\in \tilde W_{x}$, too.

 Let $x_{0}$ be in $X$. If $W$ in $\cW$ is such that $f(x_{0})\in W$, then
 there exists a unique $x$ in $f^{-1}(y_{W})$ with $x_{0}\in \tilde W_{x}$. Indeed, $x$ must be the parallel transport at scale $V$ (at this point we use the assumption that $V$ is symmetric) of
 $x_{0}$ to the fibre $f^{-1}(y_{W})$.
This observation implies that $\mult(\tilde \cW)\le \mult(W)\le n+1$. 
  \end{proof}

In the following we propose the analogue of the notion of topological dimension $\le n$ in the context of uniform
spaces. Recall that a topological space $X$ has topological dimension $\le n$ if every covering $\cW$ of $X$ has a refinement
$\cV$ with $\mult(\cV)\le n+1$. In the context of uniform spaces it is natural to add a condition on  the lebesgue 
entourages of $\cV$ in order to ensure that $\cV$ does not become too small.
The following definition is designed such the cone functor and the squeezing space functor 
translate finite uniform topological dimension into finite asymptotic dimension, see \cref{kohpehtgeg}.

Let $X$ be a uniform space with uniform structure $\cU_{X}$.
\begin{ddd}\label{okhnpehetre} $X$ has uniform topological dimension $\le n$   (finite uniform topological  dimension at uniform scales) if
there exists a cofinal function $\kappa:\cU_{X}\to \cU_{X}$ such that
for every $U$ in $\cU_{X}$ there exists a covering $\cW$ of $X$ with
  $U\in \leb(\cW)$,  $\bd(\cW)\subseteq \kappa(U)$ and $\mult(\cW)\le n+1$ ($\mult(\cW)<\infty$).
 \end{ddd}
  
We write $\udim(X)\le n$ if $X$ has uniform topological dimension $\le n$.
 
 \begin{rem}
 If the uniform structure on $X$ is induced by a metric and
 then function $\kappa$ is required to be linear in the metric scale, then
 \cref{okhnpehetre} reduces the definition of finite Assouad–Nagata dimension. \hB
 \end{rem}

 \begin{lem}\label{okgpertherhtr}
 If $X$ is an $n$-dimensional simplicial complex with the spherical path metric, then $X$
 has  uniform topological dimension $\le n$.
  \end{lem}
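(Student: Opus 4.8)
The plan is to verify \cref{okhnpehetre} for an $n$-dimensional simplicial complex $X$ equipped with the spherical path metric. The key point is that $X$ carries its natural simplicial structure, and we will build our coverings from open stars of vertices in iterated barycentric subdivisions. Recall that for a simplicial complex $L$, the open stars $\{\mathrm{st}(v)\}_{v\in L^{(0)}}$ of the vertices form an open covering of multiplicity $\le n+1$ (a point in the interior of a $k$-simplex lies in exactly the $k+1$ open stars of that simplex's vertices). Passing to the $m$-th barycentric subdivision $\mathrm{sd}^m X$, the open stars of its vertices still have multiplicity $\le n+1$, but now their diameters (in the spherical path metric) shrink geometrically in $m$, while they are still bounded below by a fixed fraction of the mesh. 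So the family of these star coverings, indexed by $m$, is cofinal in $\cU_X$ with controlled Lebesgue and bound entourages.

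Concretely, first I would fix the metric normalization: give each $k$-simplex the round metric of the unit $k$-sphere restricted to a spherical simplex (the standard convention making the spherical path metric well-defined), so that the mesh of $X$ is bounded and, crucially, the mesh of $\mathrm{sd}^m X$ tends to $0$ as $m\to\infty$. Here I will use the standard estimate that barycentric subdivision of a simplex multiplies the diameter of simplices by a factor $\le \frac{n}{n+1}$ (this is the classical estimate, valid in the spherical metric up to a uniform constant depending only on $n$ since locally the metric is bilipschitz to the Euclidean one with constants depending only on $n$). Then I would define $\cW_m$ to be the covering of $X$ by open stars of vertices of $\mathrm{sd}^m X$; this has $\mult(\cW_m)\le n+1$. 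Each star has diameter $\le c\cdot \rho_m$ where $\rho_m$ is the mesh of $\mathrm{sd}^m X$, so $\bd(\cW_m)$ is contained in the metric entourage $U_{c\rho_m}$. On the other hand each open star contains a metric ball of radius $\ge c'\rho_m$ around its center vertex (again with $c'$ depending only on $n$), which gives that the metric entourage $U_{c'\rho_m/2}$ (say) is a Lebesgue entourage of $\cW_m$.

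To package this into the statement, I would define $\kappa:\cU_X\to\cU_X$ as follows: given a uniform entourage $U$, it contains some metric entourage $U_\delta$; choose $m=m(\delta)$ large enough that $c'\rho_m < \delta$, i.e. $U_\delta\in\leb(\cW_m)$, and set $\kappa(U):=U_{c\rho_m}$ so that $\bd(\cW_m)\subseteq\kappa(U)$. One checks $\kappa$ is cofinal because as $U$ shrinks, $\rho_m\to 0$ and hence $\kappa(U)=U_{c\rho_m}$ shrinks as well; and $\kappa$ can be arranged to be monotone by taking the worst (largest) admissible $m$ compatible with $U$. This produces a covering $\cW_m$ with $U\in\leb(\cW_m)$, $\bd(\cW_m)\subseteq\kappa(U)$ and $\mult(\cW_m)\le n+1$, which is exactly the data required by \cref{okhnpehetre}, giving $\udim(X)\le n$.

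The main obstacle is the metric bookkeeping: one has to make precise, with constants depending only on $n$ and not on the complex, both the upper bound on the diameter of open stars in $\mathrm{sd}^m X$ and the lower bound on the radius of a ball inscribed in such a star, working in the \emph{spherical} path metric rather than a Euclidean model. This follows from the fact that each simplex of $X$ (and of any subdivision) is isometric to a spherical simplex, and a spherical simplex of small diameter is bilipschitz to the corresponding Euclidean simplex with bilipschitz constant $\to 1$; combined with the classical Euclidean estimates on barycentric subdivision and on inscribed radii of stars, this yields the needed uniform constants. The verification that $\kappa$ is cofinal and (with the monotone choice) well-defined is then routine.
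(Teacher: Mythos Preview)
Your proposal is correct and follows essentially the same approach as the paper: both use the coverings $\cW_m$ by open stars of vertices in the $m$-th barycentric subdivision, observe $\mult(\cW_m)\le n+1$, and build $\kappa$ from the relation between Lebesgue entourages and bounds of these coverings. The paper is more streamlined, defining directly $s(U):=\max\{m\mid U\in\leb(\cW_m)\}$ and $\kappa(U):=\bd(\cW_{s(U)})$, whereas you unpack the metric constants; but the content is the same.
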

\begin{proof}  For $n$ in $\nat$
we let $\cW_{n}$ be the covering of $X$ by the stars of the vertices in the $n$th barycentric subdivision.
We set furthermore  $\cW_{-1}:=(X)$.
Then $\mult(\cW_{n})\le \dim(X)+1$ for all $n\in \nat\cup \{-1\}$.
We define $s:\cU_{X} \to \nat\cup \{-1\}$ 
by $$s(U):=\max\{n\in \nat \cup \{-1\}\mid U\in \leb(\cW_{n})   \}\ .$$
For every $U$ in $\cU_{X}$ we can then take the covering  $\cW_{s(U)}$.
We  observe that
$\kappa:\cU_{X}\to \cU_{X}$ given by $U\mapsto \bd(W_{s(U)})$ is cofinal.
  \end{proof}

   Note that smooth manifolds admit triangulations.
   \begin{kor}\label{knknonkpfgfghnfnfhnf9}
  A  closed Riemannian  manifold $M$ with the metric uniform structure
  has uniform topological dimension $\le \dim(M)$.
  \end{kor}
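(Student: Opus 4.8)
The plan is to deduce \cref{knknonkpfgfghnfnfhnf9} directly from \cref{okgpertherhtr} together with the standard fact that closed smooth manifolds admit triangulations. First I would invoke the existence of a smooth triangulation of the closed manifold $M$: there is a simplicial complex $K$ of dimension $\dim(M)$ and a homeomorphism $M\cong |K|$. Since $M$ is compact, $K$ is a finite simplicial complex.

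Next I would compare the metric uniform structure on $M$ with the spherical path metric uniform structure on $|K|$. The point is that a triangulating homeomorphism of a compact space is automatically a uniform homeomorphism: both $M$ and $|K|$ are compact metric spaces, and every continuous map between compact uniform spaces is uniformly continuous, hence a homeomorphism between them is a uniform isomorphism. (One can also argue more concretely that the given Riemannian metric and the pulled-back spherical path metric on $|K|$ are bi-Lipschitz on each simplex and hence induce the same uniformity globally by compactness, but invoking uniform continuity on compacta is cleaner.) Therefore $M$ with its metric uniform structure is uniformly isomorphic to $|K|$ with its spherical path metric uniform structure.

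Finally I would observe that uniform topological dimension is a uniform-isomorphism invariant: the defining data in \cref{okhnpehetre} — a cofinal function $\kappa:\cU_X\to\cU_X$ and coverings with prescribed Lebesgue entourages, bounds and multiplicity — transport along a uniform isomorphism, since such an isomorphism induces an order isomorphism of the posets of uniform entourages preserving $\leb$, $\bd$ and $\mult$. Applying \cref{okgpertherhtr} to the $n$-dimensional simplicial complex $|K|$ with $n=\dim(M)$ gives $\udim(|K|)\le\dim(M)$, and transporting along the uniform isomorphism yields $\udim(M)\le\dim(M)$, as required.

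I expect the only real subtlety is the second step: checking that the metric uniformity on $M$ agrees with the spherical path metric uniformity under the triangulation. This is where one genuinely uses compactness of $M$ (and hence finiteness of the triangulation), so the statement would fail without the closedness hypothesis. Everything else is formal transport of the definition along a uniform isomorphism, so I would keep the write-up short, essentially just citing the triangulation theorem and remarking that on a compact space any homeomorphism is a uniform isomorphism.
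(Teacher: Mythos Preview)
Your proposal is correct and follows the same approach as the paper, which simply notes that smooth manifolds admit triangulations and leaves the rest implicit as an immediate corollary of \cref{okgpertherhtr}. Your write-up merely makes explicit the two points the paper takes for granted: that compactness forces the triangulating homeomorphism to be a uniform isomorphism, and that $\udim$ is a uniform-isomorphism invariant.
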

 
  For complete manifolds must add bounded geometry assumptions which imply that
  the uniform  structure given by the Riemannian metric is  compatible with the spherical path metric   coming from a triangulation.  

Let $X$ be a uniform bornological coarse space with a big family $\cY=(Y_{i})_{i\in I}$. 
We let $X_{h}$ denote the bornological coarse space obtained from $X$ by replacing the  coarse structure $\cC_{X}$
by the hybrid coarse structure  $\cC_{X_{h}}$ associated to $\cY$.

\begin{prop} \label{ggroewpgkorep}\mbox{}
\begin{enumerate}
\item \label{kophrtherthe}We assume that $\cY=(Y_{n})_{n\in \nat}$, i.e., that the big family is indexed by natural numbers.

\item \label{kophrtherthe1}We assume that  the uniform structure of $X$  admits a countable cofinal family of entourages.
\item We assume that there exists a uniform entourage $U_{0}$ such that $\cY$ is $U_{0}$-increasing in the sense that there exists $n_{0}$ in $\nat$ such that for all $n\ge n_{0}$ we have $U_{0}[Y_{n}^{c}]\subseteq Y_{n-1}^{c}$. 
\item We assume that $X$ has finite (coarse) asymptotic dimension.

\item We assume that $X$ has finite    uniform topological dimension.
\end{enumerate}
Then 
$X_{h}$ has finite asymptotic dimension.
\end{prop}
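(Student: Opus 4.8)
The plan is to fix a coarse entourage $V$ of $X_h$ and produce, for it, a covering $\cW$ of $X$ with $V\in\leb(\cW)$, $\bd(\cW)\in\cC_{X_h}$ and $\mult(\cW)\le N+1$, where $N$ depends only on the coarse asymptotic dimension and the uniform topological dimension of $X$ (so in fact we will get a global bound). The main structural idea is that a hybrid coarse entourage ``interpolates'' between a coarse entourage far out in $\cY$ and a uniform entourage close in; accordingly we should glue together a coarse-type covering (produced from finite coarse asymptotic dimension) with a uniform-type covering (produced from finite uniform topological dimension), the glueing being controlled by the $U_0$-increasing hypothesis and by countable generation of the uniform structure.

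First I would set up notation: let $(U_k)_{k\in\nat}$ be a cofinal decreasing sequence of uniform entourages of $X$ (using \cref{ggroewpgkorep}.\ref{kophrtherthe1}), which we may assume are symmetric and satisfy $U_{k+1}^3\subseteq U_k$ and $U_0$ is the entourage in hypothesis (3). Given a hybrid coarse entourage $V$, by definition of the hybrid structure associated to $\cY=(Y_n)_{n\in\nat}$ there is a monotone function $k\mapsto n(k)$ such that $V\cap(Y_{n(k)}^c\times Y_{n(k)}^c)\subseteq U_k$; enlarging, we may assume $V$ is symmetric. Next, using finite coarse asymptotic dimension, choose a covering $\cA=(A_i)_{i\in I}$ of $X$ with $V\in\leb(\cA)$, $\bd(\cA)\in\cC_X$ and multiplicity $\le d_{\mathrm{as}}+1$; using finite uniform topological dimension, for each relevant scale $U_k$ choose a covering $\cB^{(k)}=(B^{(k)}_j)_j$ of $X$ with $U_k\in\leb(\cB^{(k)})$, $\bd(\cB^{(k)})\subseteq\kappa(U_k)$ a uniform entourage, and multiplicity $\le d_{\mathrm{top}}+1$. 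Then I would build the desired covering by a ``layered'' construction: on the region $Y_{n_0}$ (bounded, hence controlled in the hybrid structure since it is a member of $\cY$) one may take a single layer built from $\cA$; on the successive annular regions $Y_{n(k+1)}\setminus Y_{n(k)}$ one uses the pieces of $\cB^{(k)}$ (whose bound is uniform, hence eventually hybrid-small), truncated to that annulus and fattened by $U_0$ so that overlap between consecutive annuli is controlled; the $U_0$-increasing hypothesis guarantees that the $U_0$-fattening of the $k$-th annulus does not reach past the $(k-1)$-st or $(k+1)$-st, so only boundedly many layers meet any point. The multiplicity of the resulting family is then bounded by, say, $2(d_{\mathrm{top}}+1)+(d_{\mathrm{as}}+1)$, and $V$ is a Lebesgue entourage because a $V$-bounded set lying in $Y_{n(k)}^c$ is $U_k$-bounded and hence sits in a member of $\cB^{(k)}$, while a $V$-bounded set meeting $Y_{n_0}$ sits in a member of $\cA$. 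Finally $\bd$ of the whole family lies in the hybrid structure: its intersection with $Y_{n(k)}^c\times Y_{n(k)}^c$ is controlled by $\kappa(U_{k-1})\circ U_0^2$ or similar, which tends to the diagonal uniformly, and it is a coarse entourage of $X$ because $\bd(\cA)$ and finitely-many-glued $\bd(\cB^{(k)})\cup U_0$ pieces are.

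The step I expect to be the main obstacle is the bookkeeping at the interfaces between the annular layers: making the truncation-and-fattening precise so that (a) the family still covers $X$ (no gaps at the seams), (b) the multiplicity stays bounded independently of $V$, and (c) the bound of the glued family is simultaneously a coarse entourage of $X$ and ``decays'' along $\cY$ fast enough to be hybrid. All three hinge on choosing the seam locations $n(k)$ and the seam width (a fixed power of $U_0$) consistently, and on the $U_0$-increasing hypothesis to keep the fattened seams from cascading across many layers. Once the interface combinatorics is organized — essentially a partition-of-unity-style argument at the level of subsets rather than functions, as in \cref{weiogowegrefwfwerfwf} where thinnings $U(W_i)$ are used to convert a covering into a partition — the verification of the three defining properties of ``dimension $\le N$ at scale $V$'' is routine, and since $V$ was an arbitrary hybrid entourage and $N$ is uniform, $X_h$ has finite asymptotic dimension. \qed
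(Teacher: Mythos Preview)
Your overall strategy—layer a coarse covering (from finite asymptotic dimension) with uniform coverings (from finite uniform topological dimension) on annular shells indexed by the big family, and verify the three properties (Lebesgue, bound, multiplicity) for the glued family—is correct and is exactly the paper's approach. The paper uses overlapping annuli $Y_{n+1}\setminus Y_{n-1}$ rather than your fattening-by-$U_0$, which makes the seam argument slightly cleaner (a $U_\delta$-bounded set meeting $Y_n^c$ lands in $Y_{n-1}^c$ directly by the $U_0$-increasing hypothesis, so it sits inside a single overlapping annulus), but your fattening idea could also be made to work.

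There is, however, one genuine gap in your plan. On the outer annuli you use only the uniform covers $\cB^{(k)}$ and then claim $\bd(\cW)\in\cC_X$ ``because $\bd(\cA)$ and finitely-many-glued $\bd(\cB^{(k)})\cup U_0$ pieces are''. But there are infinitely many annuli, and each $\bd(\cB^{(k)})\subseteq\kappa(U_k)$ is only a \emph{uniform} entourage. In a general uniform bornological coarse space the compatibility condition says merely that $\cU_X\cap\cC_X\neq\emptyset$; uniform entourages need not be coarse, and you have no control over $\kappa$. So $\bigcup_k\kappa(U_k)$ (even restricted to the annuli) need not lie in $\cC_X$, and your $\bd(\cW)$ may fail to be a hybrid coarse entourage. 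The paper's fix is to take on each annulus the \emph{intersection} covering, with pieces $(Y_{n+1}\setminus Y_{n-1})\cap W'\cap W$ for $W'\in\cW'$ and $W\in\cW_{\delta(n)}$. The Lebesgue property survives because a $U\cap U_\delta$-bounded set in the annulus is simultaneously $U$-bounded (hence in some $W'$) and $\delta(n)$-bounded (hence in some $W$); the multiplicity is still uniformly bounded; and now $\bd(\cW)\subseteq\bd(\cW')\cap U_{\delta'}$, where $\bd(\cW')\in\cC_X$ supplies the missing coarse bound and $U_{\delta'}$ supplies the decay along $\cY$. Your aside that $Y_{n_0}$ is ``bounded'' is also incorrect—members of a big family need not be bounded—though this slip is harmless since you already restrict $\cA$ to $Y_{n_0}$ there.
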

\begin{proof}
Assume that $\adim(X)\le n'-1$ and $\udim(X)\le n''-1$. The assumptions \ref{ggroewpgkorep}.\ref{kophrtherthe} and   \ref{ggroewpgkorep}.\ref{kophrtherthe1} on $X$ imply that every hybrid entourage in $\cC_{X_{h}}$ is contained in an entourage of the form  $U\cap U_{\delta}$, where $U$ is in $\cC_{X}$ and
 $\delta:\nat \to \cU_{X}$ is a cofinal function determining  $$U_{\delta}:=\{(x,y)\in X\times X\mid \forall n\in \nat  : x\in Y_{n}^{c} \lor y\in  Y_{n}^{c} \implies (x,y)\in \delta(n)\}\ .$$  
 We will construct a covering $\cW$ with $\mult(\cW)\le 3n'+2n''$,
$\bd(\cW)\in \cC_{X_{h}}$ and $U\cap U_{\delta}\in \leb(\cW)$.
Since $\adim(X)\le n'-1$ we can find a covering $\cW'$ with $\bd(\cW')\in \cC_{X}$,  $U \in \leb(\cW')$
and $\mult(\cW')\le n'$.

Let $\kappa: \cU_{X}\to \cU_{X}$ be as in \cref{okhnpehetre}.
For every $n$ in $\nat $ we can  choose a covering $\cW_{n}$ of $X$ with 
$\delta(n)\in \leb(\cW)$, $\bd(\cW)\subseteq \kappa(\delta(n))$ and $\mult(\cW_{n})\le n''$.


By cofinality of $\delta$ there exists $n_{1}\ge n_{0}$   in $\nat$ such that $\delta(n_{1})\subseteq U_{0}$.
We then define the covering 
\begin{equation}\label{wefqwedqqewdq}\cW:=(\cW' \cap Y_{n_{1}+1})\cup \bigcup_{n\ge n_{1}+1} ((Y_{n+1}\setminus Y_{n-1})\cap \cW'\cap  \cW_{\delta(n)})
 \ .
\end{equation} 
We show that  $U\cap U_{\delta}\in \leb(\cW)$.
 Let $B$ be $U\cap U_{\delta}$-bounded.  
  If $B\subseteq Y_{n_{1}+1}$, then since $B$ is $U$-bounded   there exists
 $W$ in $\cW' $ with $B\subseteq W$. 
 Hence
 $B\subseteq  Y_{n_{1}+1} \cap W$ and therefore $B$ is contained in a member of $\cW$.
Since $B$ is coarsely bounded,  if $B\not\subseteq Y_{n_{1}+1}$, then   there exists  a minimal $n> n_{1}+1$ such that
 $B\subseteq Y_{n+1}$.  Minimality of $n$ implies that    $B\cap Y_{n}^{c}\not=\emptyset$. 
 Since $B$ is $U_{\delta}$-bounded we get   $B\subseteq \delta(n)[Y_{n}^{c}]\subseteq U_{0}[Y_{n}^{c}]\subseteq Y_{n-1}^{c}$. Hence $B\subseteq Y_{n+1}\setminus Y_{n-1}$.
Since $B$ is $ U_{\delta}$-bounded it is $\delta(n)$-bounded and there exists $W$ in $\cW_{\delta(n)}$
a 
such that $B\subseteq W$. Since $B$ is $U$-bounded there exists furthermore   $W'$ in $\cW'$ with $B\subseteq W'$. Consequently, $B\subseteq  (Y_{n+1}\setminus Y_{n-1})\cap W\cap W'$
and is therefore contained in a member of $\cW$.
This finishes the verification of  $U\cap U_{\delta}\in \leb(\cW)$.

 By an inspection of \eqref{wefqwedqqewdq} we get 
 the crude estimate
  $\mult(\cW)\le 3n'+2n''$.
  We define the cofinal function $\delta':\nat\to \cU_{X}$ by 
 $$\delta'(n):=\left\{\begin{array}{cc} X\times X&n\le n_{1}+1 \\   \kappa(\delta(n-1))& n>n_{1}+1  \end{array} \right.
 \ .$$ 
 
 Then again by an inspection of \eqref{wefqwedqqewdq} we get   $\bd(\cW)\subseteq \bd(\cW')\cap U_{\delta'}\in \cC_{X_{h}}$.
  \end{proof}

Let $X$ be a uniform bornological coarse space. Recall the \cref{kohperthtregtg} of the squeezing space $\hat X_{h}$ and of the   \cref{hojprthertgertgerg} of the cone $\cO^{\infty}(X)$.
\begin{kor}\label{kohpehtgeg}
Assume that the uniform structure of $X$ has countably generated.
If  $X$ has  finite (coarse) asymptotic dimension and finite uniform topological dimension, then $\hat X_{h}$
and $\cO^{\infty}(X)$ have
  finite asymptotic dimension.
\end{kor}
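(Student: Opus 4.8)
The statement to prove is \cref{kohpehtgeg}: if the uniform structure of $X$ is countably generated and $X$ has finite coarse asymptotic dimension and finite uniform topological dimension, then $\hat X_{h}$ and $\cO^{\infty}(X)$ have finite asymptotic dimension. The plan is to deduce both assertions from \cref{ggroewpgkorep}, since each of the two spaces in question is obtained from a suitable uniform bornological coarse space by passing to a hybrid coarse structure associated to a big family indexed by $\nat$. So the only real work is to check, in each of the two cases, that the five hypotheses of \cref{ggroewpgkorep} are met.

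\textbf{The squeezing space $\hat X_{h}$.} Here $\hat X_{h}$ is, by \cref{kohperthtregtg}, the hybrid bornological coarse space built from $\hat X=\bigsqcup_{\nat}^{\bd}X$ and the big family $\hat\cY=(\hat X_{\le n})_{n\in\nat}$. Hypothesis \ref{ggroewpgkorep}.\ref{kophrtherthe} holds since $\hat\cY$ is indexed by $\nat$ by construction. Hypothesis \ref{ggroewpgkorep}.\ref{kophrtherthe1} holds because the uniform structure of $\hat X$ is generated by the entourages $\bigsqcup_\nat U$ for $U\in\cU_X$, and $\cU_X$ is countably generated by assumption. For the $U_0$-increasing condition, note that $\hat X_{\le n}^{c}=\bigsqcup_{m>n}X_m$, so any uniform entourage $U_0=\bigsqcup_\nat U$ with $U$ a uniform entourage of $X$ satisfies $U_0[\hat X_{\le n}^{c}]=\hat X_{\le n}^{c}\subseteq \hat X_{\le n-1}^{c}$ for all $n$: the components do not mix under $U_0$. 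Finite coarse asymptotic dimension and finite uniform topological dimension of $\hat X$ follow from the corresponding properties of $X$, since $\hat X$ is a disjoint union of copies of $X$ — a covering of $\hat X$ of bounded multiplicity and with the required Lebesgue and bound control can be assembled componentwise from one for $X$ (asymptotic dimension is preserved by bounded disjoint unions; for the uniform topological dimension one applies \cref{okhnpehetre} in each component with the common cofinal function $\kappa$ transported from $X$). Then \cref{ggroewpgkorep} gives $\adim(\hat X_{h})<\infty$.

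\textbf{The cone $\cO^{\infty}(X)$.} By \cref{hojprthertgertgerg}, $\cO^{\infty}(X)$ is the hybrid coarse space built from $\R\otimes X$ and the big family $\cO^{-}(X)=(\cO_{\le n}(X))_{n\in\nat}$, $\cO_{\le n}(X)=(-\infty,n)\times X$. Again hypothesis \ref{ggroewpgkorep}.\ref{kophrtherthe} is immediate; \ref{ggroewpgkorep}.\ref{kophrtherthe1} holds since the uniform structure of $\R\otimes X$ is countably generated (a product of the countably generated uniform structures on $\R$ and on $X$). For the $U_0$-increasing property, take $U_0$ to be the metric entourage of width $1$ on the $\R$-factor; then $\cO_{\le n}(X)^{c}=[n,\infty)\times X$ and $U_0[\cO_{\le n}(X)^{c}]\subseteq [n-1,\infty)\times X=\cO_{\le n-1}(X)^{c}$, so the condition holds with $n_0=1$. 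Finally, $\R\otimes X$ has finite coarse asymptotic dimension because $\adim(\R\otimes X)\le \adim(\R)+\adim(X)=1+\adim(X)<\infty$, and $\R\otimes X$ has finite uniform topological dimension because $\R$ does (it is a one-dimensional simplicial complex, so \cref{okgpertherhtr} applies) and uniform topological dimension is subadditive under products by the same product-of-coverings argument as in \cref{okgpertherhtr}. Thus \cref{ggroewpgkorep} applies and yields $\adim(\cO^{\infty}(X))<\infty$.

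\textbf{Main obstacle.} None of the steps is deep; the only points requiring care are the product/disjoint-union estimates for asymptotic dimension and for uniform topological dimension — i.e., that $\adim$ and $\udim$ behave subadditively so that the relevant spaces $\hat X$ and $\R\otimes X$ inherit finiteness from $X$ (and from $\R$). These are standard Hurewicz-type estimates for coverings of controlled multiplicity; for $\udim$ one combines the cofinal functions $\kappa$ coming from the factors into a cofinal function for the product, exactly as in the proof of \cref{okgpertherhtr}. Once these are in place, everything else is a direct verification of the bookkeeping hypotheses of \cref{ggroewpgkorep}.
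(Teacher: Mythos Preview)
Your proposal is correct and follows exactly the paper's intended route: the corollary is stated immediately after \cref{ggroewpgkorep} with no separate proof, and you have simply supplied the verification that the five hypotheses of that proposition hold for $\hat X$ with $\hat\cY$ and for $\R\otimes X$ with $\cO^{-}(X)$. One cosmetic point: for the product $\R\otimes X$ you only need that $\udim$ is \emph{finite}, and the naive product-of-coverings argument gives a multiplicity bound $\mult(\cW_{\R})\cdot\mult(\cW_{X})$ (not an additive dimension bound), which already suffices---the reference to \cref{okgpertherhtr} is not quite the right pointer here, but the substance of your argument is fine.
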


\section{Coarse homology theories and  transfers}\label{wgererwf9}

In this section we axiomatically  introduce the concept of transfers along coarse branched coverings for equivariant coarse homology theories. We build on the axioms for a equivariant coarse homology theory proposed in  \cite{equicoarse}. 
Transfers along bounded coarse coverings have been  introduced in \cite{trans}, see \cref{ertiohertherthreh}. 
  In this section we   generalize this concept  to branched coarse coverings.
  In order to capture the complete functoriality of transfers for bounded coarse coverings in  \cite{trans}
  we introduced a $(2,1)$-category $G\BC_{\tr}$ of spans of bornological coarse spaces whose wrong-way directions consisted of 
  bounded coarse coverings. In the present paper we will not consider the full possible functoriality of transfers for branched coarse coverings. In particular we will not discuss their composition. 
  We can therefore work with the one-categories ${}^{G}\BCov$ and $G\BCov$ which we will introduce below.

We let $G\BC^{2}$ be the category of pairs in $G\BC$ defined as follows:
\begin{enumerate}
\item objects: The objects of $G\BC^{2}$ are pairs $(X,\cY)$ of $X$ in $G\BC$ and an invariant big family $\cY$ on $X$.
\item morphisms: A morphism $g:(X',\cY')\to (X,\cY)$ in $G\BC^{2}$ is a morphism $g:X'\to X$ in $G\BC$ such that $\cY'\subseteq g^{-1}(\cY)$.
\item composition:   The composition is inherited from the composition in $G\BC$. 
\end{enumerate}

We have a natural inclusion $G\BC\to G\BC^{2}$ sending $X$ to $(X,(\emptyset))$.

Let $\cC$ be a cocomplete stable $\infty$-category.
Any functor $E^{G}:G\BC\to \cC$ extends to a functor
$E^{G}:G\BC^{2}\to \cC$ given by
\begin{equation}\label{qfewdqewfefr}E^{G}(X,\cY):=\Cofib(E^{G}(\cY)\to E^{G}(X))\ .
\end{equation}
By definition we have a natural  fibre sequence
$$E^{G}(\cY)\to  E^{G}(X)\to E^{G}(X,\cY)\to \Sigma E^{G}(\cY)\ .$$

 
We define the category ${}^{G} \BCov$ of $G$-equivariant branched coarse coverings as follows:
\begin{enumerate}
\item The objects of ${}^{G} \BCov$ are pairs  $(f:X\to Y,\cZ)$  of a $G$-equivariant branched coarse covering
  $f:X\to Y$   relative to the invariant big family $\cZ$ on $Y$.
\item A morphism $(g,h):(f':X'\to Y',\cZ')\to (f:X\to Y,\cZ)$ is a pair of morphisms $g,h$ in $G\BC$  such that 
 \begin{equation}\label{qwefqwfohjfioqwefwefqerwf}
\xymatrix{X'\ar[r]^{g}\ar[d]^{f'}&X\ar[d]^{f}\\Y'\ar[r]^{h}&Y}
\end{equation}   is a cartesian diagram of coarse spaces, the bornology of $X'$ is induced via $g$ from the bornology of $X$,  and $\cZ'\subseteq f^{-1}(\cZ)$(see \cref{wejigowegfrefrefwe}).
\item composition: The composition is inherited from the composition  in $G\BC$.
\end{enumerate}

Sometimes will will consider full subcategories of ${}^{G}\BCov$ defined by putting additional conditions on the objects, e.g. the condition of having eventually finite asymptotic dimension, which will be indicated by a superscript like in ${}^{G}\BCov^{\fadim}$.

We let
$G\BCov$ be the full subcategory of   ${}^{G}\BCov$ of branched coarse $G$-coverings.

 We have forgetful functors $$s:{}^{G}\BCov\to G\BC^{2} \ , \quad t:{}^{G}\BCov\to G\BC^{2}$$ (source and target) given by $s(X\stackrel{f}{\to} Y,\cZ):=(X,f^{-1}(\cZ))$
 and  $t(X\stackrel{f}{\to} Y,\cZ):=(Y ,\cZ)$. The description of these functors on morphisms is the obvious one.
 Furthermore, the restriction of $t$ of $G\BCov$ factorizes over a functor
 $G\BCov\to \BC^{2}$ which we will also denote by $t$.

 We now consider a functor $$E^{G}:G\BC\to \cC \ .$$  
 Then we can define the functors
 $$t^{*}E^{G},s^{*}E^{G}:{}^{G}\BCov\to \cC\ .$$

 
 Let $C$ be a condition on the objects of ${}^{G}\BCov$ and
 ${}^{G}\BCov^{C}$ be a full subcategory of $G$-equivariant branched coarse coverings
 satisfying  $C$.

 \begin{ddd}\label{wegjiowgwergrwewf1}
We say that $E^{G}$ has transfers for $G$-equivariant branched coarse coverings satisfying condition $C$
 if $E^{G}$ is equipped with the additional structure of a natural transformation
$$\tr:t^{*}E^{G}\to s^{*}E^{G}:{}^{G}\BCov^{C}\to \cC\ .$$
\end{ddd}

We write $\tr_{f}$ for the evaluation of $\tr$ at the $G$-equivariant branched coarse covering $f$.

\begin{ex}
If $E^{G}$ has transfers for bounded coarse coverings 
in the sense of  \cite{trans}, then it has transfers
for  $G$-equivariant branched coarse coverings  $(f:X\to Y, \cZ)$ satisfying the condition
$$C=\mbox{$f$ is bounded coarse covering}\ ,$$
see 
\cref{ertiohertherthreh}. \hB
\end{ex}

We now consider a pair of an equivariant coarse homology theory $E^{G}:G\BC\to \cC$ 
and a coarse homology theory $E:\BC\to \cC$.

 \begin{ddd}\label{wegjiowgwergrwewf}
We say that $E^{G}$ and $E$ are  related by a transfer (equivalence) for   branched coarse $G$-coverings satisfying condition $C$
 if  we have the  additional structure of a natural transformation (equivalence)
$$\tr:t^{*}E\to s^{*}E^{G}:G\BCov^{C}\to \cC\ .$$
\end{ddd}

We again write $\tr_{f}$ for the evaluation of $\tr$ at the  branched coarse $G$-covering $f$.

\begin{ex}
We can define $$E(-):=E^{G}(G_{min,min}\otimes \Res_{G}(-)):\BC\to \cC\ .$$
To give a transfer equivalence relating
$E^{G}$ and $E$ for  branched coarse $G$-coverings which are bounded coarse coverings and therefore    isomorphic to
$G_{min,min}\otimes X\to X$ is equivalent to give a trivialization of the $G$-action on
the functor $E^{G}(G_{min,min}\otimes -)$ induced by the right $G$-action on $G_{min,min}$. \hB
\end{ex}

We define the category ${}^{G}\UCov$ of  $G$-equivariant uniform coverings  as follows:
\begin{enumerate}
\item The objects of ${}^{G}\UCov$ are $G$-equivariant uniform coverings $f:X\to Y$.   
\item A morphism $(g,h):(f:X\to Y)\to (f':X'\to Y')$ is a pair of morphisms
$g,h$ in $\UBC$ such that $$ 
\xymatrix{X'\ar[r]^{g}\ar[d]^{f'}&X\ar[d]^{f}\\Y'\ar[r]^{h}&Y}
$$   is a cartesian square of uniform and coarse spaces and the bornology of $X'$ is induced via $g$ from the bornology of $X$.
\end{enumerate}
We let $G\UCov$ denote the full subcategory of ${}^{G}\UCov$  of uniform $G$-coverings.
%
%

The cone functor $ \cO^{\infty}$ induces a functor 
 $$\cO^{\infty}:{}^{G}\UCov\to {}^{G}\BCov$$ which sends
 $(f:X\to Y)$ to $(\cO^{\infty} (f):\cO^{\infty}(X)\to \cO^{\infty}(Y),\cO^{-}(Y))$.
These constructions  preserve $G$-coverings.
%
%
%
%
%
%

Note that the transfer for a branched coarse covering is defined one the level of relative homology.
Taking advantage of the fact that the members of the big family $\cO^{-}(Y)$ are flasque
for cones of uniform coverings we can define  an absolute transfer.

Let $C$ be a condition ensuring that $E^{G}$ has transfers for  $G$-equivariant coarse coverings
satisfying $C$. We let ${}^{G}\UCov^{C}$ be the full subcategory of $G$-equivariant uniform coverings $f:X\to Y$
such that $(\cO^{\infty}(f):\cO^{\infty}(X)\to \cO^{\infty}(Y),\cO^{-}(Y))\in {}^{G}\BCov^{C}$.
See e.g. \cref{kohpehtgeg} for conditions involving dimensions.

The horizontal maps in the square below are equivalences since $\cO^{-}(X)$ and $\cO^{-}(Y)$ consist of flasque members.
\begin{ddd}\label{khphertgetrg}
We define the cone transfer
$$\tr_{\cO^{\infty}}:t^{*}E^{G}\cO^{\infty}\to s^{*}E^{G}\cO^{\infty}:{}^{G}\UCov^{C}\to \cC$$ such that
 $$\xymatrix{E^{G}(\cO^{\infty}(Y)) \ar[r]^-{\simeq}\ar[d]^{\tr_{\cO^{\infty}(f)}} &E^{G}(\cO^{\infty}(Y),\cO^{-}(Y)) \ar[d]^{\tr_{\cO^{\infty}(f)}} \\E^{G}(\cO^{\infty}(X) ) \ar[r]^-{\simeq} & E^{G}(\cO^{\infty}(X),\cO^{-}(X))}$$ 
naturally commutes.
\end{ddd}

We let $G\UCov^{C}$ be the full subcategory of ${}^{G}\UCov $ of  uniform $G$-coverings.
 We furthermore consider a pair  $(E^{G},E)$  of homology theories related by a transfer (equivalence) for branched coarse $G$-coverings
satisfying condition $C$ (see \cref{wegjiowgwergrwewf}).
 
 \begin{ddd}\label{kophokhotrpherthetrhee}
We define the cone transfer (equivalence)
$$\tr_{\cO^{\infty}}:t^{*}E\cO^{\infty}\to s^{*}E^{G}\cO^{\infty}:G\UCov^{C}\to \cC$$ such that
 $$\xymatrix{E(\cO^{\infty}(Y)) \ar[r]^-{\simeq}\ar[d]^{\tr_{\cO^{\infty}(f)}} &E(\cO^{\infty}(Y),\cO^{-}(Y)) \ar[d]^{\tr_{\cO^{\infty}(f)}} \\E^{G}(\cO^{\infty}(X) ) \ar[r]^-{\simeq} & E^{G}(\cO^{\infty}(X),\cO^{-}(X))}$$ 
naturally commutes.
\end{ddd}

\section{The transfer for coarse algebraic $K$-homology}\label{kgopertgtergebdbfgbdfgb}

In this section we show that the equivariant coarse algebraic $K$-theory functor $K\cX^{G}_{\bA}$ with coefficients in an additive category $\bA$ with strict $G$-action
admits the additional structure of transfers for all $G$-equivariant branched coarse coverings.
We will construct this structure on the level of additive categories of equivariant controlled objects in $\bA$.
We choose an approach which can easily be modified to cover the case of topological coarse $K$-homology in subsequent sections.

Let $k$ be a commutative base ring.
We consider an  idempotent complete and   sum complete compactly generated $k$-linear additive category $ \bA$ with $G$-action.
We write $M\to gM$ and $A\mapsto gA$ for the action of $g$ in  $G$ on objects and morphisms of $\bA$.

To  a $G$-bornological coarse space $X$ we associate  the $k$-linear additive category $\bV^{G}_{\bA}(X)$   of $X$-controlled objects
in $\bA$ which we describe in the following.

\begin{enumerate}
\item The objects of $\bV^{G}_{\bA}(X)$ are locally finite equivariant $X$-controlled objects $(M,\rho,\mu)$ where:
\begin{enumerate}
\item $M$ is an object of $ \bA$.
\item $\rho=(\rho_{g})_{g\in G}$ is a family of isomorphisms $\rho_{g}:M\to gM$ satisfying the cocycle condition
$g\rho_{g'}\circ \rho_{g}=\rho_{g'g}$ for all $g,g'$ in $G$.
\item\label{kogpwergewrf} $\mu$ is an equivariant  finitely additive locally finite projection-valued measure on $M$ defined on the  power set $\cP(X)$ of $X$.   Equivariance is the condition  $\mu(gY)=\rho_{g}^{-1}g\mu(Y) \rho_{g}$ for all $Y$ in $\cP(X)$ and $g$ in $G$. Local finiteness is the condition that   there exists a family
$(M(x),u(x))_{x\in X}$ of compact objects $M(x)$ of $\bA$ and  morphisms $u(x):M(x)\stackrel{}{\to}  M$
 presenting $M$ as the direct sum of the family $(M(x))_{x\in X}$
such that for every bounded set $B$ of $X$ the set $\{x\in B\mid M(x)\not=0\}$ is finite and for every $Y$ in $\cP(X)$
 we have  \begin{equation}\label{regewrfwefreewerfr} \mu(Y)u(x)=
\left\{\begin{array}{cc} u(x)&x\in Y\\0 & x\not\in Y  \end{array} \right.\ .
\end{equation} 
    \end{enumerate}
 \item Morphisms in $\bV^{G}_{\bA}(X)$  are equivariant and controlled morphisms in $\bA$.  More precisely, a  morphism  $A:(M,\rho,\mu)\to (M',\rho',\mu')$ in $\bV^{G}_{\bA}(X)$ is a morphism $A:M\to M'$ in $\bA$
 which satisfies $gA=\rho_{g}A\rho_{g}^{-1}$ for all $g$ in $G$, and which is $U$-controlled for some    $U$ in $\cC_{X}$ in the sense  that $\mu'(B')A\mu(B)=0$ whenever $B,B'$ are in $\cP(X)$ and $B'\cap U[B]=\emptyset$. 
 \item The $k$-linear structure and the composition is inherited from $ \bA$.
 \end{enumerate}
 
 In the notation of \cref{kogpwergewrf} for every $x$ in $X$  we define a morphism
 $u(x)^{*}:M\to M(x)$ (the projection onto the summand $M(x)$) by the condition that
\begin{equation}\label{refwrfwrfrwfwer}u(x)^{*}u(x')=\left\{\begin{array}{cc} \id_{M(x)} &x=x'\\0 & x\not=x'  \end{array} \right.\ .
\end{equation} 
A morphism $A:M\to M'$  is completely determined by its matrix coefficients  
 $$A_{x',x}:=u'(x')^{*}A u(x):M(x)\to M'(x')  \ .$$
The only condition is that for every $x$ in $X$ the set $\{x'\in X\mid A_{x',x}\not=0\}$ is finite. If $A$ is
 a morphism in $\bV^{G}_{\bA}(X)$, then  $$\supp(A):=\{(x',x)\in X\times X\mid A_{x',x}\not=0\}$$
 is an invariant coarse entourage of $X$ called the propagation of $A$.

For a morphism $f:X\to X'$ of $G$-bornological coarse space we define the $k$-linear  additive  functor
$f_{*}:\bV^{G}_{\bA}(X)\to \bV^{G}_{\bA}(X')$ as follows:
 
\begin{enumerate}
\item The functor $f_{*}$ sends the object $(M,\rho,\mu)$ to the object $f_{*}(M,\rho,\mu)=(M,\rho,f_{*}\mu)$
with $f_{*}\mu(Y):=\mu(f^{-1}(Y))$ for all $Y$ in $\cP(X')$.
\item It acts as identity on morphisms.
\end{enumerate}
One checks that $f_{*}(M,\rho,\mu)$ is again a locally finite equivariant $X'$-controlled object.
If $A:(M,\rho,\mu)\to (M',\rho',\mu')$ is a morphism in $\bV^{G}_{\bA}(X)$, then
$A$ considered as a morphism $f_{*}(M,\rho,\mu)\to f_{*}(M',\rho',\mu')$ is still equivariant and controlled, now over $X'$.
 We thus get a functor
$$\bV_{\bA}^{G}:G\BC\to \Add_{k}$$
from $G$-bornlogical coarse spaces to $k$-linear additive categories.

The construction is also functorial in the $k$-linear additive category $\bA$. 
We let $G\Add_{k}^{\sharp}$ be the subcategory of $ G\Add_{k}$ of idempotent   and sum complete
compactly generated additive categories  with $G$-action and   equivariant compact object preserving  additive functors.
\begin{lem}\label{tlrkhpetrhertgetre}
The construction $\bA\mapsto \bV^{G}(X)$ extends to a functor
$G\Add_{k}^{\sharp} \to \Add_{k}$.
\end{lem}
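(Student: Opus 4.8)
The plan is to define the extension by applying $\Phi$ to every piece of structure in sight, so that the only genuine content is to check that the defining conditions of $\bV^{G}_{\bA'}(X)$ are preserved and that these are precisely the conditions that a morphism of $G\Add_{k}^{\sharp}$ is set up to guarantee. Concretely, for a morphism $\Phi\colon\bA\to\bA'$ in $G\Add_{k}^{\sharp}$ I would set
\[
\bV^{G}(\Phi)(M,\rho,\mu):=\bigl(\Phi M,\ (\Phi\rho_{g})_{g\in G},\ \Phi\circ\mu\bigr),\qquad \bV^{G}(\Phi)(A):=\Phi(A),
\]
with $(\Phi\circ\mu)(Y):=\Phi(\mu(Y))$ for $Y\in\cP(X)$. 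Since this is nothing but post-composition of each datum with $\Phi$, functoriality in $\bA$ — that is, $\bV^{G}(\id_{\bA})=\id$ and $\bV^{G}(\Psi\circ\Phi)=\bV^{G}(\Psi)\circ\bV^{G}(\Phi)$ — holds strictly and for free, and $k$-linearity on morphism modules together with compatibility with composition and units is inherited verbatim from $\Phi$. So the real task is to verify that $\bV^{G}(\Phi)$ actually lands in $\bV^{G}_{\bA'}(X)$ and is additive.

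On objects I would run through the three requirements in the definition of an object of $\bV^{G}_{\bA'}(X)$. That $\Phi\circ\mu$ is again a finitely additive, projection-valued measure with $(\Phi\mu)(\emptyset)=0$ and $(\Phi\mu)(X)=\id_{\Phi M}$ follows from $\Phi$ being $k$-linear and additive, hence preserving zero morphisms, identities, and sums over finite families of pairwise orthogonal idempotents; the cocycle identity for $(\Phi\rho_{g})_{g}$ and the equivariance $(\Phi\mu)(gY)=(\Phi\rho_{g})^{-1}\,g(\Phi\mu)(Y)\,(\Phi\rho_{g})$ follow from $\Phi$ being $G$-equivariant, i.e. from the natural identifications $\Phi(gN)=g\Phi N$ compatible with morphisms. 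The crucial condition is local finiteness: starting from a presentation $(M(x),u(x))_{x\in X}$ of $M$ as in \eqref{regewrfwefreewerfr}, I would check that $(\Phi M(x),\Phi u(x))_{x\in X}$ presents $\Phi M$. Here each $\Phi M(x)$ is compact because $\Phi$ preserves compact objects; the set $\{x\in B\mid \Phi M(x)\ne 0\}$ is contained in $\{x\in B\mid M(x)\ne 0\}$ and hence finite for bounded $B$; the relations \eqref{regewrfwefreewerfr} and \eqref{refwrfwrfrwfwer} survive applying $\Phi$; and the assertion that the $\Phi u(x)$ exhibit $\Phi M$ as $\bigoplus_{x\in X}\Phi M(x)$ is exactly the statement that $\Phi$ carries the locally finite $X$-indexed coproduct $M=\bigoplus_{x\in X}M(x)$ to a coproduct. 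For morphisms: if $A$ is $U$-controlled and equivariant, then $\Phi(A)$ is $U$-controlled for the same $U$, since $(\Phi\mu')(B')\,\Phi(A)\,(\Phi\mu)(B)=\Phi\bigl(\mu'(B')A\mu(B)\bigr)=0$ whenever $B'\cap U[B]=\emptyset$, and it is equivariant because $g\Phi(A)=\Phi(gA)=\Phi(\rho'_{g}A\rho_{g}^{-1})=(\Phi\rho'_{g})\,\Phi(A)\,(\Phi\rho_{g})^{-1}$. Finally $\bV^{G}(\Phi)$ preserves finite biproducts because biproducts in $\bV^{G}_{\bA}(X)$ are formed componentwise out of biproducts in $\bA$ (adding measures and taking the direct sum of the $\rho$'s) and $\Phi$ is additive; hence $\bV^{G}(\Phi)$ is a morphism in $\Add_{k}$.

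The main obstacle is the single point isolated above: showing that a $G$-equivariant, compact-object-preserving additive functor between idempotent complete, sum complete, compactly generated $k$-linear categories sends the locally finite $X$-indexed direct sum decompositions underlying the objects of $\bV^{G}_{\bA}(X)$ to such decompositions in $\bA'$. This is exactly why the objects of $G\Add_{k}^{\sharp}$ are taken to be idempotent and sum complete and compactly generated and why its morphisms are required to preserve compact objects; once this preservation of the relevant coproducts is in hand, everything else is a purely formal transport of structure along $\Phi$, and the strict functoriality of $\bA\mapsto\bV^{G}_{\bA}(X)$ is automatic.
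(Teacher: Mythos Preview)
Your approach—apply $\Phi$ to every piece of data and verify the conditions—is exactly what the paper has in mind; its proof reads, in full, ``This is straightforward.'' You have supplied far more detail than the paper does, and your checks of the cocycle condition, equivariance of the measure, control and equivariance of morphisms, and additivity of $\bV^{G}(\Phi)$ are all correct.

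You are also right that the only substantive point is whether $(\Phi M(x),\Phi u(x))_{x}$ still presents $\Phi M$ as a direct sum, and you flag this honestly. But be aware that this does \emph{not} follow from ``$\Phi$ is additive and preserves compact objects'' alone: on $k$-vector spaces the double dual $(-)^{**}$ is $k$-linear, additive, and restricts to the identity on finite-dimensional (i.e.\ compact) spaces, yet fails to send $\bigoplus_{I} k$ to $\bigoplus_{I} k$ when $I$ is infinite. So either one should read the morphisms of $G\Add_{k}^{\sharp}$ as also preserving the relevant sums (the paper is not explicit), or one bypasses the issue by rebuilding the object in $\bA'$: choose a presentation $(M(x),u(x))$, form $N:=\bigoplus_{x}\Phi M(x)$ anew in the sum-complete category $\bA'$, and transport morphisms via their matrix coefficients $\Phi(A_{x',x})$—exactly the device the paper uses for the transfer in Theorem~\ref{kogpwreregwe9}. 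The second route depends on the chosen presentation and is therefore only strictly functorial after passing to $\Add_{k,2,1}$, which is in any case the target the paper actually uses downstream.
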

\begin{proof}
This is straightforward.
\end{proof}

Let $\cY$ be an invariant big family in $X$. 
  For $Y$ in $\cY$ we can regard $\bV_{\bA}^{G}(Y)$ as a full subcategory of
$\bV_{\bA}^{G}(X)$ of objects supported on $Y$. Then $$\bV_{\bA}^{G}(\cY):=\bigcup_{Y\in \cY}\bV_{\bA}^{G}(Y)$$
is a Karoubi filtration  \cite[Def. 8.3]{equicoarse}  and we can define the quotient category
$$\bV_{\bA}^{G}(X,\cY):=\frac{\bV_{\bA}^{G}(X)}{\bV_{\bA}^{G}(\cY)}\ .$$
In this way we get a functor
$$\bV^{G}_{\bA}:G\BC^{2}\to \Add_{k}\ , \qquad  (X,\cY)\mapsto \bV_{\bA}^{G}(X,\cY)\ .$$

We want to construct equivariant coarse homology theories by composing $\bV^{G}_{\bA}$ with  
 functors $H:\Add_{k}\to \cC$.
  
\begin{ddd}\label{kopwegrfw}\mbox{}
 We say that $H:\Add_{k}\to \cC$ is a homological functor for $k$-linear additive categories if it satisfies the following conditions:
 \begin{enumerate}
 \item $\cC$ is a cocomplete stable $\infty$-category.
 \item $H$ sends equivalences of $k$-linear additive categories to equivalences.
 \item $H$ preserves filtered colimits.
 \item $H$ sends Karoubi filtrations to  fibre sequences.
 \item $H$ annihilates flasque objects  (see \cite[Def. 8.1]{equicoarse}).
 \end{enumerate}
 \end{ddd}

\begin{ex}\label{koopherhegg}
Here is a list examples of homological functors.
\begin{enumerate}
\item  We let $\Add:=\Add_{\Z}$ denote the category of additive categories.
 As explained in \cite[Sec. 8.1]{equicoarse} (see also \cref{okgpwerewfwerfw}) the non-connective algebraic $K$-theory functor $$K^{\alg}:\Add\to \Sp$$
\eqref{werfwerfewfwefwref} is a homological functor for additive categories. 
\item  \cite{Bunke:2017aa}   proposes to consider the functor 
$$\UK:\Add\stackrel{\Ch^{b}_{\infty}}{\to} \Cat^{\exa}_{\infty}\stackrel{\cU_{\loc}}{\to} \cM_{\loc}$$
 where the second morphism is the universal localizing invariant of 
\cite{MR3070515}.  The properties of $\UK$ verified in the reference imply that it is a homological functor.
\item The
homotopy $K$-theory $K^{\alg}H:\Add\to \Sp$ for additive categories \eqref{wergoowerfwefewfwe} is a homological functor by \cref{kopgwerfewfwefw}.
\item Hochschild homology $HH:\Add_{\Q}\to D(\Q)$ or cyclic homology $CH:\Add_{\Q}\to D(\Q)$ 
for $\Q$-linear additive categories are homological functors, see \cite{Caputi_2020} for a discussion of these in connection with coarse homology theories.
\item \label{hetggretg} If $R$ is a unital $k$-algebra and $H:\Add_{k}\to \cC$ is a homological functor, then we can form a new homological functor
$H_{R}(-):=H(R\otimes_{k} -)$. If $R$ is non-unital, then we set
$H_{R}:=\Fib(H_{R^{+}}\to H_{k})$, where  the map is induced by the canonical projection $R^{+}\to k$ from the unitalization $R^{+}$. 
\item \label{gkweropfwerfwfrfwrefw}
 We   consider the forgetful functor  $\cZ:\Add_{k}\to \Add$.
 If $H:\Add\to \cC$ is a homological functor for additive categories, then its restriction along $\cZ$  is a homological functor 
 $$H\cZ :\Add_{k}\to \cC$$ for $k$-linear additive categories. 
\item  In the case $k=\C$ the most important example for the present paper is the homological
functor
$K^{\alg}H\cZ_{\cL^{1}}:\Add_{\C}\to \Sp$, where
$\cL^{1}$ is the algebra of trace class operators on the Hilbert space $\ell^{2}$.
Here we combine the constructions from \cref{hetggretg} and \cref{gkweropfwerfwfrfwrefw}.
 
 \end{enumerate}
  \hB
   \end{ex}
   
   Let $\bA$ be an idempotent complete and   sum complete compactly generated $k$-linear additive category $ \bA$ with $G$-action.
For a   functor $H:\Add_{k}\to \cC$ we consider the composition \begin{equation}\label{freiufiweofwerfwref}H\cX^{G}_{\bA}:=H\circ \bV^{G}_{\bA}:G\BC\to \cC
\end{equation}

 \begin{theorem}\label{jrtzjrtzhrt}
 If $H $ is  homological, then  
 $ H\cX^{G}_{\bA} $ is an equivariant coarse homology theory which is in addition
 strong and continuous.
 \end{theorem}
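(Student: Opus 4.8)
I would verify, one at a time, that $H\cX^{G}_{\bA}=H\circ\bV^{G}_{\bA}$ is coarsely invariant, excisive, $u$-continuous, annihilates flasque and (for strongness) weakly flasque $G$-bornological coarse spaces, and is continuous; in each case the assertion about $H\cX^{G}_{\bA}$ is reduced to a structural property of the functor $\bV^{G}_{\bA}\colon G\BC^{2}\to\Add_{k}$ together with one of the five defining properties of a homological functor from \cref{kopwegrfw}. This is exactly the argument run for $H=K^{\alg}$ in \cite[Sec.~8]{equicoarse}; since that argument uses $K^{\alg}$ only through the formal properties gathered in \cref{kopwegrfw}, it carries over verbatim to an arbitrary homological $H$.

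The $u$-continuity and continuity are formal. The objects of $\bV^{G}_{\bA}(X)$ do not depend on the coarse structure of $X$, and every morphism is by definition controlled, hence has a propagation entourage; therefore the canonical functor $\operatorname*{colim}_{U\in\cC_{X}}\bV^{G}_{\bA}(X_{U})\to\bV^{G}_{\bA}(X)$ is an equivalence of $k$-linear additive categories (here $X_{U}$ carries the coarse structure generated by $U$), and since each controlled object is supported on an invariant locally finite subset of $X$, so is $\operatorname*{colim}_{L}\bV^{G}_{\bA}(L)\to\bV^{G}_{\bA}(X)$, the colimit running over the directed poset of invariant locally finite subspaces. Both colimits are filtered, so applying $H$ and using that $H$ preserves filtered colimits yields $u$-continuity and continuity of $H\cX^{G}_{\bA}$. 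Coarse invariance is again formal: $\bV^{G}_{\bA}$ sends the projection $\{0,1\}_{max,max}\otimes X\to X$ to an equivalence of additive categories — every $X$-controlled object lifts by placing it over one of the two points, and a morphism is controlled over $\{0,1\}_{max,max}\otimes X$ if and only if its push-forward is controlled over $X$ — and $H$ preserves equivalences.

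For excision, recall that $\bV^{G}_{\bA}(\cY)\subseteq\bV^{G}_{\bA}(X)$ is a Karoubi filtration with quotient $\bV^{G}_{\bA}(X,\cY)$; since $H$ takes Karoubi filtrations to fibre sequences it turns $\bV^{G}_{\bA}(\cY)\to\bV^{G}_{\bA}(X)\to\bV^{G}_{\bA}(X,\cY)$ into a fibre sequence, which identifies $H\cX^{G}_{\bA}$ on $G\BC^{2}$ with the functor obtained from it via \eqref{qfewdqewfefr}. Given a coarsely excisive pair $(Z,\cY)$ covering $X$, one uses the projection-valued measure of a controlled object $(M,\rho,\mu)$ to split $M=\mu(Z)M\oplus\mu(Z^{c})M$: the summand $\mu(Z^{c})M$ becomes negligible in the Karoubi quotient by $\bV^{G}_{\bA}(\cY)$, while $\mu(Z)M$ is controlled-isomorphic to an object supported on $Z$; this shows that the inclusion induces a Karoubi equivalence $\bV^{G}_{\bA}(Z)/\bV^{G}_{\bA}(Z\cap\cY)\xrightarrow{\ \simeq\ }\bV^{G}_{\bA}(X)/\bV^{G}_{\bA}(\cY)$. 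Applying $H$ and combining with the two fibre sequences above gives the excision push-out square for $H\cX^{G}_{\bA}$.

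Finally, if $X$ is flasque, witnessed by a self-map $f\colon X\to X$, then — using that $\bA$ is idempotent complete and sum complete — for each controlled object $M$ the object $\bigoplus_{n\ge 0}(f^{n})_{*}M$ is again a well-defined controlled object over $X$ and exhibits $\bV^{G}_{\bA}(X)$ as a flasque additive category in the sense of \cite[Def.~8.1]{equicoarse}, hence sent to $0$ by $H$; the same Eilenberg-swindle construction carried out under the weaker hypotheses of weak flasqueness gives vanishing on weakly flasque spaces and therefore strongness. Assembling these pieces shows that $H\cX^{G}_{\bA}$ is a strong continuous equivariant coarse homology theory. The only genuinely non-formal point is the excision step, namely checking that the displayed inclusion of Karoubi quotients is an equivalence; all the other axioms drop out formally from preservation of filtered colimits and from $H$ annihilating flasque additive categories.
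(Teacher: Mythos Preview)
Your proposal is correct and follows exactly the approach the paper takes: the paper's proof simply points to \cite[Sec.~8.3]{equicoarse} for the case $H=K^{\alg}$ and observes that the argument there uses only the properties of $H$ listed in \cref{kopwegrfw}. You have spelled out precisely those reductions axiom by axiom, which is a faithful expansion of the one-line proof in the paper.
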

\begin{proof}
In  \cite[Sec. 8.3]{equicoarse} this statement was shown for $H=K$. We just observe that  the argument 
 only depends on the properties of the functor $ H$ listed in \cref{kopwegrfw}.
\end{proof}
  
  Using the notation of  \eqref{qfewdqewfefr} we have
  \begin{equation} H\cX^{G}_{\bA}(X,\cY)\simeq H(\bV^{G}_{\bA}(X,\cY))\ .
\end{equation}


 Applying \cref{jrtzjrtzhrt} to the homological functor
 $K^{\alg}\cZ:\Add_{k}\to \Sp$ (see \cref{gkweropfwerfwfrfwrefw}) we get:
 \begin{kor}\label{gwergweropifgpwerfwerfwerf}
 The functor $$K^{\alg}\cX^{G}_{\bA}:=K^{\alg}\cZ\circ \bV_{\bA}^{G}:G\BC\to \Sp$$
 is an equivariant   coarse homology theory called the equivariant coarse algebraic $K$-homology with coefficients in $\bA$. It    is   strong and continuous. 
 \end{kor}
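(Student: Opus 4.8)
The plan is to obtain this as a direct application of \cref{jrtzjrtzhrt}, so the only point to check is that the composite $H:=K^{\alg}\cZ\colon\Add_{k}\to\Sp$ is a homological functor for $k$-linear additive categories in the sense of \cref{kopwegrfw}. Once that is known, \cref{jrtzjrtzhrt} applied to this $H$ produces precisely the functor $H\cX^{G}_{\bA}=K^{\alg}\cZ\circ\bV^{G}_{\bA}$ and asserts that it is an equivariant coarse homology theory which is in addition strong and continuous; calling it the equivariant coarse algebraic $K$-homology with coefficients in $\bA$ is then a naming convention.

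Thus I would devote the proof to the homological property of $K^{\alg}\cZ$, which is really just the combination of two observations already listed in \cref{koopherhegg}. First, non-connective algebraic $K$-theory $K^{\alg}\colon\Add\to\Sp$ is a homological functor for additive categories by \cite[Sec.~8.1]{equicoarse}: $\Sp$ is cocomplete and stable, $K^{\alg}$ inverts equivalences, commutes with filtered colimits, sends Karoubi filtrations to fibre sequences, and kills flasque additive categories via an Eilenberg swindle. Second, the forgetful functor $\cZ\colon\Add_{k}\to\Add$ preserves every piece of structure entering \cref{kopwegrfw}: it sends equivalences of $k$-linear additive categories to equivalences of additive categories; filtered colimits in $\Add_{k}$ have the filtered colimits of underlying additive categories as their underlying categories, so $\cZ$ preserves them; being a Karoubi filtration is a condition phrased purely in terms of morphisms and splittings of idempotents and is therefore untouched by forgetting the $k$-linear enrichment, so $\cZ$ carries Karoubi filtrations to Karoubi filtrations; and the endofunctor witnessing an Eilenberg swindle on a flasque $k$-linear additive category is still such a witness after applying $\cZ$. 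Composing a homological functor on $\Add$ with $\cZ$ therefore again satisfies all five conditions of \cref{kopwegrfw}; this is exactly the general mechanism already invoked in \cref{koopherhegg} to pass from homological functors on $\Add$ to homological functors on $\Add_{k}$, applied here to $H=K^{\alg}$.

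With $K^{\alg}\cZ$ identified as homological, and with $\bA$ assumed idempotent complete, sum complete and compactly generated so that $\bV^{G}_{\bA}\colon G\BC\to\Add_{k}$ of \cref{tlrkhpetrhertgetre} is available, \cref{jrtzjrtzhrt} completes the argument. I do not expect a genuine obstacle: the statement is a formal corollary, and its one non-formal ingredient, the fact that non-connective algebraic $K$-theory of additive categories sends Karoubi filtrations to fibre sequences, is imported from \cite{equicoarse} rather than reproved here. If anything deserves a careful sentence it is the compatibility of $\cZ$ with Karoubi filtrations and with flasqueness, and even that is routine once the relevant definitions are unwound.
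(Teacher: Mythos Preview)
Your proposal is correct and follows exactly the same approach as the paper: the paper introduces this corollary with the sentence ``Applying \cref{jrtzjrtzhrt} to the homological functor $K^{\alg}\cZ:\Add_{k}\to \Sp$ (see \cref{koopherhegg}.\ref{gkweropfwerfwfrfwrefw}) we get:'' and gives no further proof. Your write-up simply unpacks the content of \cref{koopherhegg}.\ref{gkweropfwerfwfrfwrefw} by spelling out why precomposition with $\cZ$ preserves equivalences, filtered colimits, Karoubi filtrations, and flasqueness; this is a welcome elaboration but not a different argument.
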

 
 Note that $K^{\alg}\cX^{G}_{\bA}$ is also strongly additive by  \cite[Prop. 8.19]{equicoarse}
 in the sense that it sends free unions of $G$-bornological coarse spaces to products.


%


We now start with the construction of  transfers.
We let $$\ell:\Add_{k}\to \Add_{k,2,1}$$ denote the Dwyer-Kan localization at equivalences.
We realize $\Add_{k,2,1}$ by the strict two category of additive categories, additive functors and equivalences.
In the following theorem we consider the functor $\ell \bV^{G}_{\bA}:G\BC\to \Add_{k,2,1}$.
  Going over to this localization is necessary since
the construction of the transfer involves choices which would destroy the one-categorical naturality.

\begin{theorem}\label{kogpwreregwe9}
There exists a canonical  natural transformation
\begin{equation}\label{vdfpsojvklsdfvsfdv}\tr:t^{*}\ell \bV^{G}_{\bA}\to s^{*}\ell \bV^{G}_{\bA} :{}^{G}\BCov  \to \Add_{k,2,1}\ .
\end{equation}
\end{theorem}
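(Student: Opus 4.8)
The plan is to construct the transfer functor directly on the categories of controlled objects, using a connection $P$, and then to check that it descends to the relative categories of \eqref{qfewdqewfefr} and is natural in ${}^{G}\BCov$ after passing to the localization $\ell$.

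First I would fix a $G$-invariant connection $P$ for $(f:X\to Y,\cZ)$ and define $\tr_{f}$ on objects. Given $(N,\sigma,\nu)$ in $\bV^{G}_{\bA}(Y)$ with local finiteness presentation $(N(y),u(y))_{y\in Y}$, I would set $M(x):=N(f(x))$ for $x\in X$, take $M:=\bigoplus_{x\in X}M(x)$ (which exists since $\bA$ is sum complete), let $\mu$ be the canonical point measure, and let $\rho$ be induced from $\sigma$ via the equivariance of $f$, using that $\sigma_{g}$ restricts on fibres to isomorphisms $N(f(x))\xrightarrow{\sim} g(N(gf(x)))$. One then checks that $\tr_{f}(N,\sigma,\nu):=(M,\rho,\mu)$ is again a locally finite equivariant $X$-controlled object: each $M(x)$ is compact, and for bounded $B\subseteq X$ the set $f(B)$ is bounded since $f$ is bornological, so only finitely many $y\in f(B)$ have $N(y)\neq 0$, while each $f^{-1}(y)\cap B$ is finite because $f$ has locally finite fibres. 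I would also record that if $(N,\sigma,\nu)$ is supported on a member $Z_{0}$ of $\cZ$, then $\tr_{f}(N,\sigma,\nu)$ is supported on $f^{-1}(Z_{0})\in f^{-1}(\cZ)$.

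On morphisms, for $A\colon(N,\sigma,\nu)\to(N',\sigma',\nu')$ with $\supp(A)\subseteq V$, $V\in\cC_{Y}$, I would use \cref{wroigjowegewrgerfweferf}.\ref{wetgrgffsgvdfgsfdgsdfg} and \cref{wroigjowegewrgerfweferf}.\ref{wrgtuiwhigwergegr} to pick $Z=Z(V)$ in $\cZ$ so large that the parallel transport at scale $V$ is defined on $Z^{c}$ and $P\cap f^{-1}(V_{Z^{c}})$ is a coarse entourage of $X$, and then define $\tr_{f}(A)$ by $\tr_{f}(A)_{x',x}:=A_{f(x'),f(x)}$ whenever $(x',x)\in P$ and $f(x),f(x')\in Z^{c}$, and $\tr_{f}(A)_{x',x}:=0$ otherwise. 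This is exactly where the connection is needed: for fixed $x$ with $f(x)\in Z^{c}$, only finitely many $y'$ in the bounded set $V[f(x)]$ have $N'(y')\neq 0$, and for each such $y'$ the uniqueness clause of \cref{wroigjowegewrgerfweferf}.\ref{wetgrgffsgvdfgsfdgsdfg} provides exactly one $x'\in f^{-1}(y')$ with $(x',x)\in P$, so the column is finite and $\tr_{f}(A)$ is a morphism in $\bA$. Its propagation lies in $P\cap f^{-1}(V_{Z^{c}})$, which is a coarse entourage by the choice of $Z$, and $G$-equivariance of $\tr_{f}(A)$ follows from that of $A$ together with the invariance of $P$ and of $\cZ$, so $\tr_{f}(A)$ is a morphism of $\bV^{G}_{\bA}(X)$.

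Then I would check that $\tr_{f}$ respects identities and composition up to morphisms whose nonzero matrix entries all lie over members of $f^{-1}(\cZ)$: for identities one applies \cref{wroigjowegewrgerfweferf}.\ref{rgegerwerefwrefb} to the diagonal entourage of $X$; for composition one compares, column by column away from the branching locus, the parallel transports along the concatenated discrete path, which agree wherever defined by \cref{wregegerfrwferf} and the remark following it. Since such error terms are controlled and factor through objects of $\bV^{G}_{\bA}(f^{-1}(\cZ))$, the composite of $\tr_{f}$ with the quotient functor is an honest functor $\bV^{G}_{\bA}(Y)\to\bV^{G}_{\bA}(X,f^{-1}(\cZ))$; by the remark above it kills $\bV^{G}_{\bA}(\cZ)$, hence descends to $\bV^{G}_{\bA}(Y,\cZ)\to\bV^{G}_{\bA}(X,f^{-1}(\cZ))$. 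The same estimates show that enlarging $Z(V)$ or replacing $P$ by another connection (\cref{wregegerfrwferf}) changes this functor only by a natural isomorphism, which is precisely why the target must be the $2$-categorical localization $\Add_{k,2,1}$ rather than $\Add_{k}$. For naturality along a morphism $(g,h)$ of ${}^{G}\BCov$ as in \eqref{qwefqwfohjfioqwefwefqerwf}, I would use \cref{wejigowegfrefrefwe} to see that $g^{-1}(P)$ is a connection for $f'$ and then identify $\tr_{f}\circ\bV^{G}_{\bA}(h)$ with $\bV^{G}_{\bA}(g)\circ\tr_{f'}$ by inspecting the point-measure constructions: over $x\in X$ both produce $\bigoplus_{y'\in h^{-1}(f(x))}N'(y')$, since the square is cartesian and hence the fibre of $f'$ over $y'$ is the fibre of $f$ over $h(y')$. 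Assembling the $\tr_{f}$ together with these (coherent) isomorphisms gives the natural transformation \eqref{vdfpsojvklsdfvsfdv}. The hard part, and the reason one localizes, is managing the interplay between the connection $P$ --- which is typically not a coarse entourage --- and the branching locus $\cZ$: one is forced to truncate near $f^{-1}(\cZ)$ to retain controlledness and column-finiteness, and must then verify that all these truncations, and the resulting failure of strict functoriality, disappear in the relative categories and under $\ell$.
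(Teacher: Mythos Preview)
Your construction is essentially the paper's: build $\tr_f$ on objects by pulling back the point decomposition along $f$, on morphisms by the parallel-transport matrix formula, check controlledness via \cref{wroigjowegewrgerfweferf}.\ref{wrgtuiwhigwergegr}, descend to the relative categories, and fill the naturality squares by the canonical reindexing isomorphism of sums coming from the cartesian square. The paper works directly with morphism classes $[A]$ in $\bV^{G}_{\bA}(Y,\cZ)$ (replacing $A$ by its restriction to $Z^{c}$) rather than first defining a non-functorial assignment on $\bV^{G}_{\bA}(Y)$ and then descending, but this is a presentational difference.

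One point deserves correction: you misidentify the source of the non-canonicity that forces the target to be $\Add_{k,2,1}$. Changing $P$ or enlarging $Z(V)$ does \emph{not} change the functor on the quotient even up to isomorphism --- it changes nothing at all. By \cref{wregegerfrwferf} two connections satisfy $P\cap f^{-1}(V_{Z^{c}})=P'\cap f^{-1}(V_{Z^{c}})$ for large enough $Z$, so they produce the same matrix $(B_{x',x})$ once you restrict to $Z^{c}$; likewise enlarging $Z(V)$ only alters the representative $B$, not its class $[B]$. The connection does not enter the object-level transfer at all. The genuine non-canonicity is the choice of the sum object $M=\bigoplus_{x\in X}N(f(x))$ together with its structure maps $v(x)$: a different choice gives a canonically isomorphic but unequal object, hence a canonically isomorphic but unequal functor $\tr_f$. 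This is also exactly why the naturality square \eqref{qwefqwfohjfioqwefwefqerwf} commutes only up to the isomorphism $t_\Phi$ that compares two ways of summing the family $(N'(y'))_{y'\in h^{-1}(f(x))}$, and one must then check the cocycle condition $t_{\Phi\Psi}=t_\Phi\circ t_\Psi$ for composable morphisms in ${}^{G}\BCov$.
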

\begin{proof} 
First of all, for every   $(X\to Y,\cZ)$  in ${}^{G}\BCov$ we will construct a $k$-linear functor $$\tr_{f}:\bV^{G}_{\bA}(Y,\cZ)\to \bV^{G}_{\bA}(X,f^{-1}(\cZ))$$
by describing its action on objects and morphisms.
Assume that 
 $(M,\rho,\mu)$ is an object in $\bV_{\bA}^{G}(Y)$. Then by \cref{kogpwergewrf} we can choose
a presentation $(M(y), u(y))_{y\in Y}$ of $M$ as a sum of   
the  components associated to the points of $Y$.

We then define the object
$$\tr_{f}(M,\rho,\mu):=(N,\sigma,\nu)$$ in $\bV^{G}_{\bA}(X)$ such that 
  $(N,(v(x))_{x\in X})$ is a choice of   a sum of the family
$(M(f(x)))_{x\in X}$. The families  $(gM(y),gu(y))_{y\in Y}$ and $(gM(f(x)),gv(x))_{x\in X}$ present $gM$ and $gN$ as  sums.
The cocycle
$\sigma=(\sigma_{g})_{g\in G}$ is defined such that
$$\sigma_{g}v(x)= gv(g^{-1}x)\ gu(g^{-1}x)^{*}\rho_{g}\ u(f(x)) :M(f(x))\to gN$$
for all $x$ in $X$.
%
Finally, the projection valued measure is determined in analogy to \eqref{regewrfwefreewerfr} by the condition 
 \begin{equation}\label{regewrfwefreewerfr1} \nu(W)v(x)=
\left\{\begin{array}{cc} v(x)&x\in W\\0 & x\not\in W  \end{array} \right.\ .
\end{equation} 
 One checks that $\tr_{f}(M,\rho,\mu)$ is an object of $\bV^{G}_{\bA}(X)$.
 In particular, in order to verify local finiteness one uses  \cref{wroigjowegewrgerfweferf}.\ref{etrherthtrherthrth1}.
 
 Let $[A]:(M,\rho,\mu)\to (M',\rho',\mu')$ be a morphism in $\bV_{\bA}^{G}(Y,\cZ)
$  represented by
a $V$-controlled morphism $A$ in $\bV_{\bA}^{G}(Y)$
  for some $V$ in $\cC_{Y}$. We let $P$ denote a connection for $f$  and choose $Z$ in $\cZ$ for $V$ 
  as in \cref{wroigjowegewrgerfweferf}.\ref{rgegerwerefwref}.  
  
  Since $[A]=[\mu'(Z^{c})A\mu(Z^{c})]$  
  we can assume that $A$ is supported on $Z^{c}$.
  Let $(N,\sigma,\nu)$ and $ (N',\sigma',\nu')$ be the transfers of $(M,\rho,\mu)$ and $(M',\rho',\mu')$. 
  We then define
  $\tr_{f}(A):=B:(N,\sigma,\nu)\to (N',\sigma',\nu')$ such that
its  matrix is given by 
\begin{equation}\label{werfwerweg}B_{x',x} = \left\{\begin{array}{cc} A_{f(x'),f(x)} & (f(x'),f(x))\in V_{Z^{c}} \:\& \:(x',x)\in P  \\0&else \end{array} \right. 
\end{equation} from 
$M(f(x))$ to $M(f(x'))$. So 
$B_{x',x}\not=0$ implies  that $x'$ is the  parallel transport of $x$ along the one-step $V$-path $(f(x),f(x'))$.

One checks that $B$ is $f^{-1}(V_{Z^{c}})\cap P$-controlled and equivariant. Furthermore, the class $[B]$ in
$\bV^{G}_{\bA}(X,f^{-1}(\cZ))$ only depends on the class $[A]$.  One finally checks that
this construction is compatible with the composition since path-lifting is compatible with composition of paths. This finishes the construction of the functor $\tr_{f}$.
Note that it depends on the choice of sums (which come together with the families of canonical inclusions).
Different choices lead to canonically isomorphic functors. This will be employed in the following construction.

For every morphism \begin{equation}\label{vsdfvdsfvsvfd}\Phi:\xymatrix{X'\ar[r]^{g}\ar[d]^{f'}&X\ar[d]^{f}\\Y'\ar[r]^{h}&Y}
\end{equation} in ${}^{G}\BCov$ we must construct
a natural transformation of functors
 \begin{equation}\label{sdbsdfbkopsgdb}t_{\Phi}:g_{*}\tr_{f'}\to \tr_{f}h_{*}:\bV^{G}_{\bA}(Y',\cZ')\to \bV^{G}_{\bA}(X,f^{-1}(\cZ))\ .
\end{equation} 
  filling the square \begin{equation}\label{rgwerfefefwef}\xymatrix{\bV^{G}_{\bA}(Y')\ar[r]^{h_{*}}\ar[d]^{\tr_{f'}}&\bV^{G}_{\bA}(Y)\ar[d]^{\tr_{f}}\\\bV^{G}_{\bA}(X')\ar[r]^{g_{*}}\ar@{=>}[ur]^{t_{\Phi}}&\bV^{G}_{\bA}(X)}\ .
\end{equation}
 Thereby we must make sure that
  if $$ 
\Psi:\xymatrix{X''\ar[r]^{g'}\ar[d]^{f''}&X'\ar[d]^{f'}\\Y''\ar[r]^{h'}&Y'}
$$ is a second morphism, then the composition 
$$\xymatrix{\bV^{G}_{\bA}(Y'')\ar[r]^{h'_{*}}\ar[d]^{\tr_{f''}}&\bV^{G}_{\bA}(Y')\ar[r]^{h_{*}}\ar[d]^{\tr_{f'}}&\bV^{G}_{\bA}(Y)\ar[d]^{\tr_{f}}\\\bV^{G}_{\bA}(X'')\ar[r]^{g'_{*}}\ar@{=>}[ur]^{t_{\Psi}}&\bV^{G}_{\bA}(X')\ar[r]^{g_{*}}\ar@{=>}[ur]^{t_{\Phi}}&\bV^{G}_{\bA}(X)}$$
 is the transformation associated to $\Phi\circ \Psi$. 
 
Saying that an object $R$  of $\bA$ is presented as the sum of a family $(R_{i})_{i\in I}$ of objects means that we are given 
the family of structure morphisms $(R_{i}\to R)_{i\in I}$.
In the following argument we will not write these structure morphisms explicitly
but one must keep track of them in the background. 
   If $(M',\rho',\mu')$ is  in $ \bV^{G}_{\bA}(Y')$ and $M'$ is represented as the sum of
 $(M'(y'))_{y'\in Y'}$, then writing  	$h_{*}(M',\mu',\rho')= (M,\mu,\rho)$ we see that $M=M'$ is canonically isomorphic to the  sum of the family $((M'(y'))_{ y'\in h^{-1}(y)})_{y\in Y}$. In a first step we
 form the partial sums $ \bigoplus_{y'\in h^{-1}(y)} M'(y')\cong M(y)$ and in a second step we get an isomorphism
  $  \bigoplus_{y\in Y} M(y)\cong M$.
 
  We get   
 $\tr_{f}h_{*}(M',\mu',\rho')=(N,\sigma,\nu)$, where $N$ is  represents the  sum of the family 
 $$   ((M'(y'))_{ y'\in h^{-1}(f(x))})_{x\in X}\ .$$
 The index family  can  be reordered as $((M'(f(x')))_{x'\in g^{-1}(x)})_{x\in X}$ whose sum is
 the object appearing in $g_{*}\tr_{f'}(M',\rho',\rho')=(\tilde N,\tilde \sigma,\tilde \nu)$.   We let $t_{\Phi}:\tilde N\to N$ denote the corresponding isomorphism.
 One then checks the compatibility with morphisms (naturality)  and compositions.
 \end{proof}

Since a homological functor $H$   for $k$-linear additive categories sends equivalences to equivalences
it has a factorization over $\ell$ as in 
$$ \xymatrix{\Add_{k}\ar[rr]^{H}\ar[dr]^{\ell}&&\cC\\&\Add_{k,2,1}\ar@{-->}[ur]^{H_{2,1}}&}\ .$$
We conclude   an equivalence of functors
\begin{equation}\label{gwrewfwerfwf}H\cX^{G}_{\bA}\simeq H_{2,1}\ell \bV_{\bA}^{G}:G\BC\to \cC\ .
\end{equation} 
 Therefore \cref{kogpwreregwe9} immediately implies:
\begin{theorem}\label{poiobipdfgbdfgbd}
The equivariant coarse  homology functor $H\cX^{G}_{\bA}$  admits the structure of transfers
\begin{equation}\label{vsdfvsdfvsdfvfd}\tr:t^{*}H\cX^{G}_{\bA}\to s^{*}H\cX^{G}_{\bA}:{}^{G}\BCov\to \cC\ .
\end{equation} for all $G$-equivariant 
coarse branched coverings. 
\end{theorem}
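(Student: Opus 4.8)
The plan is to obtain \eqref{vsdfvsdfvsdfvfd} by whiskering the categorical transfer of \cref{kogpwreregwe9} with the functor $H_{2,1}$, and then identifying the resulting source and target with the relative homology $H\cX^{G}_{\bA}$. Since $H$ is homological it carries equivalences of $k$-linear additive categories to equivalences, so it factors through the Dwyer--Kan localization as $H\simeq H_{2,1}\circ\ell$ with $H_{2,1}\colon\Add_{k,2,1}\to\cC$; this is the factorization already used to produce the equivalence \eqref{gwrewfwerfwf}. Applying $H_{2,1}$ to the pseudo-natural transformation
$$\tr\colon t^{*}\ell\bV^{G}_{\bA}\to s^{*}\ell\bV^{G}_{\bA}\colon {}^{G}\BCov\to\Add_{k,2,1}$$
of \cref{kogpwreregwe9} then yields a natural transformation $H_{2,1}(\tr)$ of functors ${}^{G}\BCov\to\cC$: here passing to the localization pays off, since $H_{2,1}$ takes the coherent invertible $2$-cells $t_{\Phi}$ of \eqref{rgwerfefefwef} to equivalences in $\cC$, and post-composition with $H_{2,1}$ commutes with restriction along the forgetful functors $s,t\colon{}^{G}\BCov\to G\BC^{2}$.

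It then remains to identify $H_{2,1}\ell\bV^{G}_{\bA}$ with the cofibre extension $H\cX^{G}_{\bA}$ of \eqref{qfewdqewfefr}, as functors $G\BC^{2}\to\cC$. For $(X,\cY)$ in $G\BC^{2}$ the sequence $\bV^{G}_{\bA}(\cY)\to\bV^{G}_{\bA}(X)\to\bV^{G}_{\bA}(X,\cY)$ is a Karoubi filtration, so by \cref{kopwegrfw} the functor $H$ sends it to a fibre sequence in the stable category $\cC$; since $H$ also preserves filtered colimits one identifies $H\cX^{G}_{\bA}(\cY)$ with $H(\bV^{G}_{\bA}(\cY))$, and hence $\Cofib(H\cX^{G}_{\bA}(\cY)\to H\cX^{G}_{\bA}(X))\simeq H(\bV^{G}_{\bA}(X,\cY))\simeq H_{2,1}\ell\bV^{G}_{\bA}(X,\cY)$, naturally in $(X,\cY)$. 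This is exactly the identification recorded just after \cref{jrtzjrtzhrt}. Composing $H_{2,1}(\tr)$ with this equivalence on source and target produces the desired transfer transformation $\tr\colon t^{*}H\cX^{G}_{\bA}\to s^{*}H\cX^{G}_{\bA}\colon{}^{G}\BCov\to\cC$.

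There is essentially no obstacle at this last stage: all the genuine work — the choice of a presentation of a controlled object as a sum over its point components, the formula \eqref{werfwerweg} for the lifted morphism via $U$-path lifting, the verification of equivariance and control, and above all the construction of the coherent filling $2$-cells $t_{\Phi}$ — has already been carried out in the proof of \cref{kogpwreregwe9}. The only point that needs a moment's care here is the compatibility of the localization $\ell$ and of the homological functor $H$ with the relative construction \eqref{qfewdqewfefr}, which is precisely the Karoubi-filtration observation above. One could, if desired, also remark that the same argument applies verbatim to any full subcategory ${}^{G}\BCov^{C}$ cut out by conditions on the objects (e.g.\ eventually finite asymptotic dimension), which is the form in which the transfers for the topological variants will be needed later.
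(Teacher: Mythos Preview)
Your proposal is correct and follows exactly the paper's approach: factor $H$ through the localization as $H_{2,1}\circ\ell$, whisker the categorical transfer of \cref{kogpwreregwe9} with $H_{2,1}$, and use the identification $H\cX^{G}_{\bA}(X,\cY)\simeq H(\bV^{G}_{\bA}(X,\cY))$. The paper is terser, simply pointing to \eqref{gwrewfwerfwf} and the line after \cref{jrtzjrtzhrt}, whereas you spell out the Karoubi-filtration step explicitly; but the content is the same.
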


\begin{rem}
It follows from an inspection of the internal details of the construction that the transfer for $H\cX^{G}_{\bA}$ for $G$-equivariant branched
coarse coverings extends the transfer for bounded coarse coverings constructed in \cite{trans}. \hB
\end{rem}

We now assume that $\bA$ has the trivial $G$-action.
We then consider the functors
$$\bV_{\bA}^{G}:G\BC\to \Add_{k}\ , \quad \bV_{\bA}:\BC\to \Add_{k}\ .$$ 

\begin{theorem}\label{pkgtrgtegw} The natural transformation from \eqref{vdfpsojvklsdfvsfdv} restricts to a 
 natural equivalence $$\tr:t^{*}\ell \bV_{\bA}\stackrel{\simeq}{\to} s^{*}\ell \bV_{\bA}^{G}:G\BCov\to \Add_{k,2,1}\ .$$ 
\end{theorem}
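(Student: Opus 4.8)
The plan is to reduce the statement to a pointwise check and then to an explicit descent argument. Writing out \eqref{vdfpsojvklsdfvsfdv} at an object $(f\colon X\to Y,\cZ)$ of $G\BCov$ and precomposing with the fully faithful, natural inclusion $\bV_{\bA}(Y,\cZ)\hookrightarrow\bV_{\bA}^{G}(Y,\cZ)$, $(M,\mu)\mapsto(M,\id,\mu)$, which is available precisely because $\bA$ (and, for objects of $G\BCov$, also $Y$) carries the trivial $G$-action, the component of the transformation in the theorem is the ``pullback'' functor $\tr_{f}\colon\bV_{\bA}(Y,\cZ)\to\bV_{\bA}^{G}(X,f^{-1}(\cZ))$ sending $(M,\mu)$ to the $X$-controlled object whose component at $x$ is $M(f(x))$, with the $G$-action permuting summands along the $G$-action on $X$. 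Naturality of $\tr$ is already provided by \cref{kogpwreregwe9}, so it suffices to verify that each $\tr_{f}$ is fully faithful and essentially surjective as a functor of $k$-linear additive categories; morally this is coarse descent along the $G$-torsor that $f$ becomes after removing a sufficiently large member of $\cZ$ (\cref{kogpwererwfwerfwerf}.\ref{otzkjprtzjr9}).

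For full faithfulness I would construct a two-sided inverse on morphism sets. Given a $G$-equivariant controlled $B\colon\tr_{f}(M,\mu)\to\tr_{f}(M',\mu')$, enlarge $Z\in\cZ$ and apply \cref{wroigjowegewrgerfweferf}.\ref{rgegerwerefwrefb} to replace $B$, without changing its class in $\bV_{\bA}^{G}(X,f^{-1}(\cZ))$, by one with $\supp(B)\subseteq P\cap f^{-1}(Z^{c}\times Z^{c})$. Then its matrix coefficients $B_{x',x}\colon M(f(x))\to M'(f(x'))$ are supported on parallel‑transport pairs and are $G$-invariant ($B_{gx',gx}=B_{x',x}$, since $G$ permutes presentations on $X$ and acts trivially on $\bA$); by uniqueness and $G$-equivariance of the parallel transport they depend only on $(f(x),f(x'))$ and hence define a morphism $\bar B\colon(M,\mu)\to(M',\mu')$ in $\bV_{\bA}(Y)$, controlled because $f(\supp(B))\in\cC_{Y}$ by \cref{wroigjowegewrgerfweferf}.\ref{etrherthtrherthrth}. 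Comparing with the formula \eqref{werfwerweg} for $\tr_{f}$ on morphisms shows that $[B]\mapsto[\bar B]$ and $[A]\mapsto[\tr_{f}(A)]$ are mutually inverse.

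For essential surjectivity, take $(N,\sigma,\nu)\in\bV_{\bA}^{G}(X)$, choose $Z\in\cZ$ large enough that $G$ acts freely and transitively on the fibres of $f$ over $Z^{c}$, and replace $(N,\sigma,\nu)$ by $\nu(f^{-1}(Z^{c}))N$, which is isomorphic to it in the quotient. Fix a set‑theoretic section $\varsigma\colon Z^{c}\to f^{-1}(Z^{c})$ of $f$ and put $M:=\nu(\varsigma(Z^{c}))N$, $M(y):=N(\varsigma(y))$, $\mu(B):=\nu(\varsigma(B\cap Z^{c}))$, extended by zero. Unwinding the definition of $\tr_{f}$, the parallel transport spreads $M$ over the fibres and yields a $G$-equivariant isomorphism $\tr_{f}(M,\mu)\cong(N,\sigma,\nu)$ in $\bV_{\bA}^{G}(X,f^{-1}(\cZ))$. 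The one substantive point is that $(M,\mu)$ is again locally finite, i.e.\ that only finitely many fibres over a bounded $B\subseteq Y$ carry a nonzero summand of $N$; this follows from local finiteness of $N$ by transporting a single nonzero point of one fibre over $B\cap Z^{c}$ to every other occupied fibre over $B$ along one‑step $(B\times B)$-paths, which by \cref{wroigjowegewrgerfweferf}.\ref{wrgtuiwhigwergegr} remain inside a fixed coarse entourage applied to that point (after possibly enlarging $Z$).

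The main obstacle is bookkeeping rather than a single hard idea: away from $\cZ$ the map $f$ is a $G$-torsor only at bounded scales — parallel transport along $Z^{c}$ is defined only up to a scale depending on $Z$ (\cref{wrotgwgregfefwref}) — so one cannot work over a fixed complement but must enlarge the member of $\cZ$ at each step, using that $\bV_{\bA}^{G}(f^{-1}(\cZ))\subseteq\bV_{\bA}^{G}(X)$ is a Karoubi filtration, making all such manipulations invisible in the quotient. One must also carry along the chosen sums and presentations (exactly as in the proof of \cref{kogpwreregwe9}) in order to stay inside $\Add_{k,2,1}$; but once $\tr_{f}$ is an equivalence objectwise and on morphisms, its compatibility with the $2$-cells witnessing naturality of $\tr$ is automatic.
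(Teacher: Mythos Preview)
Your proposal is correct and follows essentially the same route as the paper. The paper also restricts the transfer of \cref{kogpwreregwe9} to the subfunctor $\bV_{\bA}\subseteq\bV_{\bA}^{G}$ of objects with trivial $G$-action, and then, instead of checking full faithfulness and essential surjectivity separately as you do, packages the same ingredients into an explicit inverse functor $r\colon\bV_{\bA}^{G}(X,f^{-1}(\cZ))\to\bV_{\bA}(Y,\cZ)$ (built from a set-theoretic section $s\colon Z^{c}\to f^{-1}(Z^{c})$, exactly your $\varsigma$) together with natural isomorphisms $r\circ\tr_{f}\cong\id$ and $\tr_{f}\circ r\cong\id$. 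Your descent of morphisms via $G$-invariance of matrix coefficients is the content of the paper's formula for $r$ on morphisms, and your essential-surjectivity isomorphism is the paper's $\beta$; the local-finiteness check you spell out (using $G$-invariance of $\supp(N)$ and the boundedness of parallel transports guaranteed by \cref{wroigjowegewrgerfweferf}.\ref{wrgtuiwhigwergegr}) is left implicit in the paper.
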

\begin{proof} We  reuse the argument for 
 \cref{kogpwreregwe9}. Since $G$ acts trivially on $\bA$ the cocycles in the controlled objects become actions of $G$ on the corresponding objects.
 We can interpret $\bV_{\bA}(Y)$ as the full subcategory of $\bV_{\bA}^{G}(Y)$ of 
 objects $(M,\rho,\mu)$ where $\rho$ is the trivial  $G$-action.
 For varying $Y$ we interpret $\bV_{\bA}$ as a subfunctor of $\bV_{\bA}^{G}$.
 We then define  $$\tr :\bV_{\bA} \to \bV_{\bA}^{G} $$
 as the restriction of the transfer constructed in  \cref{kogpwreregwe9} to this subfunctor.
 For morphisms $\Phi$ in $G\BCov$ as in \eqref{vsdfvdsfvsvfd} we furthermore define the transformations
  $$t_{\Phi}:g_{*}\tr_{f'}\to \tr_{f}h_{*}:\bV_{\bA}(Y',\cZ')\to \bV^{G}_{\bA}(X,f^{-1}(\cZ))$$
  by restricting the transformation  
 $t_{\Phi}$ constructed in the proof of   in  \cref{kogpwreregwe9}.
 In this way we obtain the desired natural transformation.
 It remains to show that it is an equivalence.

  To this end, for every  branched coarse $G$-covering $(f:X\to Y,\cZ)$  relative to $\cZ$ we construct an inverse functor
  $$r:\bV_{\bA}^{G}(X,f^{-1}(\cZ))\to \bV_{\bA}(Y,\cZ)\ .$$
  Let $Z$ be in $\cZ$ such that $G$ acts freely and transitively on the fibres of $f$ over $Z^{c}$.
   Let
  $(N,\sigma,\nu)$ be in $\bV_{\bA}^{G}(X)$. Then
  we choose a presentation $(N(x),v(x))_{x\in X}$ of $N$ as a sum of contributions of the points of $X$.
  We choose a set-theoretic section $s:Z^{c}\to f^{-1}(Z^{c})$ of $f$.
Then we define $M$ as the sum of $(N(s(y)))_{y\in Z^{c}}$
and let $u(y):M(s(y))\to M$ denote the structure maps. 
The projection-valued measure $\mu$
on $M$ is uniquely characterized by 
 \begin{equation}\label{regewrfwefreewerfr11} \mu(W)u(y)=
\left\{\begin{array}{cc} u(y)&y\in W\\0 & y\not\in W  \end{array} \right.\ .
\end{equation} 
 In this way we get the object $(M,\mu):=r(N,\sigma,\nu)$ of $\bV_{\bA}(Y)$.
 Let now
 $[B]:(N,\sigma,\nu)\to (N',\sigma',\nu')$ be a morphism in $\bV_{\bA}^{G}(X,f^{-1}(\cZ))$.
 We can assume that $B$ is supported on $f^{-1}(Z^{\prime,c})$ for some $Z'$ in $\cZ$ such that $Z\subseteq Z'$, that there
 exists an entourage $V$ of $Y$ such that  $B$ is $f^{-1}(V_{Z^{\prime,c}})\cap P$-controlled, and that
 the parallel transport at scale $V$ is defined on $Z^{\prime,c}$.
 For every $y$ in $Z^{\prime,c}$ and $y'$ in $Y$ with $(y',y)\in V$ we define $g(y,y')$  in $G$  such that $g(y,y')s(y')$ in $f^{-1}(y')$  is  the parallel transport of $s(y)$ along the $V$-path $(y,y')$.
 We define $A:M\to M'$ uniquely such that its matrix coefficients are given by 
\begin{equation}\label{werfwergwg}A_{y',y}  :=\sigma'_{g(y',y);s'(y'),g(y',y)s'(y')} B_{ g(y',y)s'(y'), s(y)} :M(y)\to M'(y')\ .
\end{equation}  
  One checks that the class $[A]:(M,\mu)\to (M',\mu')$ does only depend on the class $[B]$, and that the
  construction is compatible with compositions.

  We have  morphisms
  $\alpha_{(M,\rho)}:r(\tr_{f}(M,\rho)) \to  (M,\rho)$ and
  $\beta_{(N,\sigma,\nu)}:\tr_{f}(r(N,\sigma,\nu)) \to (N,\sigma,\nu)$. 
 For $\alpha_{(M,\rho)}$  we note that
 $r(\tr(M,\rho))=(M',\rho')$ such that $M'$ is a sum of the family $(N(s(y))_{y\in Y}$ which can be identified with the family
 $(M(y))_{y\in Y}$ summing to $M$.  We let $\alpha_{(M,\mu)}$ be the resulting isomorphism  $M'\stackrel{\cong}{\to} M$.
 For $\beta_{(N,\sigma,\nu)}$ we note that 
 $\tr_{f}(r(N,\sigma,\nu))=(N',\sigma',\nu')$  and $N'$ is the sum of the family $(N(s(f(x)))_{x\in X}$.
 For $x$ in $ f^{-1}(Z^{c})$ we can define $g(x)$ in $G$ uniquely  such that $g(x)s(f(x))=x$.
 The family $(\rho_{g;x,g(x)s(f(x))}^{-1})_{x\in f^{-1}(Z^{c})}$ induces an isomorphism from the family $(N(s(f(x)))_{x\in f^{-1}(Z^{c})}$
to the family
 $(N(x))_{x\in f^{-1}(Z^{c})}$ which sums to the summand $ f^{-1}(Z^{c})N$ of $N$. We let $\beta_{(N,\sigma,\nu)}$ be the resulting  morphism
 $N'\to N$. Then $[\beta_{(N,\sigma,\nu)}]$ is an isomorphism in $\bV^{G}_{\bA}(X,f^{-1}(\cZ))$.
 
One checks that the isomorphisms $[\alpha_{(M,\rho)}]$ and $[\beta_{(N,\sigma,\nu)}]$
  are components of natural isomorphisms $\alpha:r\circ \tr_{f}\stackrel{\cong}{\to } \id$ and
  $\beta:\tr_{f}\circ r\stackrel{\cong}{\to} \id$
   witnessing that
  $\tr_{f}$ and $r$ are inverse to each other equivalences.
      \end{proof}

Recall that $\bA$ is   an  idempotent complete and   sum complete compactly generated additive category. Let $H:\Add_{k}\to \cC$ be a homological functor for $k$-linear additive categories and 
 consider the functors
$$H\cX_{\bA}^{G}:G\BC\to \cC\ , \quad H\cX_{\bA}:\BC\to \cC\ .$$ 
 In view of  \eqref{gwrewfwerfwf} the \cref{pkgtrgtegw} implies:

 \begin{theorem} The  functors $H\cX_{\bA}$ and $H\cX_{\bA}^{G}$ are related by a natural transfer equivalence 
\begin{equation}\label{vsdfvsdfvwrdfv}\tr:t^{*}H\cX_{\bA}\stackrel{\simeq}{\to} s^{*}H\cX^{G}_{\bA}:G\BCov\to \cC\ .
\end{equation} 
 \end{theorem}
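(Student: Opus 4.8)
The plan is to deduce this from \cref{pkgtrgtegw} in exactly the way \cref{poiobipdfgbdfgbd} was deduced from \cref{kogpwreregwe9}: one post-composes the transfer on categories of controlled objects with the homological functor $H$. First I would recall that, because $H$ sends equivalences of $k$-linear additive categories to equivalences, it factors as $H\simeq H_{2,1}\circ\ell$ through the Dwyer--Kan localization $\ell:\Add_{k}\to\Add_{k,2,1}$; together with \eqref{freiufiweofwerfwref} this is exactly the content of \eqref{gwrewfwerfwf}, giving $H\cX^{G}_{\bA}\simeq H_{2,1}\ell\bV^{G}_{\bA}$ on $G\BC$ and, non-equivariantly, $H\cX_{\bA}\simeq H_{2,1}\ell\bV_{\bA}$ on $\BC$.

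Next I would check that these equivalences are compatible with the extension to pairs. For a pair $(X,\cY)$ the category $\bV^{G}_{\bA}(X,\cY)$ is the Karoubi quotient $\bV^{G}_{\bA}(X)/\bV^{G}_{\bA}(\cY)$, and a homological functor carries the associated Karoubi filtration to a fibre sequence (\cref{kopwegrfw}), so $H_{2,1}\ell\bV^{G}_{\bA}(X,\cY)\simeq\Cofib\big(H\cX^{G}_{\bA}(\cY)\to H\cX^{G}_{\bA}(X)\big)$, which by \eqref{qfewdqewfefr} is the value at $(X,\cY)$ of the extension of $H\cX^{G}_{\bA}$ to $G\BC^{2}$; similarly for $H\cX_{\bA}$ on $\BC^{2}$. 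Hence the identifications above upgrade to equivalences of functors on $G\BC^{2}$ and on $\BC^{2}$.

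I would then apply the functor $H_{2,1}$ to the natural equivalence supplied by \cref{pkgtrgtegw}, namely $\tr:t^{*}\ell\bV_{\bA}\stackrel{\simeq}{\to}s^{*}\ell\bV^{G}_{\bA}:G\BCov\to\Add_{k,2,1}$, keeping in mind that the source functor $t$ on $G\BCov$ takes values in $\BC^{2}$ while $s$ takes values in $G\BC^{2}$. Since any functor preserves equivalences, $H_{2,1}\tr$ has equivalences as components and is therefore a natural equivalence $t^{*}H_{2,1}\ell\bV_{\bA}\stackrel{\simeq}{\to}s^{*}H_{2,1}\ell\bV^{G}_{\bA}$ of functors $G\BCov\to\cC$. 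Finally I would use the equivalences of the previous step, pulled back along $t$ and $s$, to rewrite the source as $t^{*}H\cX_{\bA}$ and the target as $s^{*}H\cX^{G}_{\bA}$, which gives the asserted transfer equivalence $\tr:t^{*}H\cX_{\bA}\stackrel{\simeq}{\to}s^{*}H\cX^{G}_{\bA}$ on $G\BCov$.

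I do not expect a genuine obstacle here, since all the substance already sits in \cref{pkgtrgtegw}; the one place needing care is the bookkeeping in the second step --- making the identification $H_{2,1}\ell\bV^{G}_{\bA}(X,\cY)\simeq H\cX^{G}_{\bA}(X,\cY)$ natural in $(X,\cY)$ and compatible with the cofibre sequence \eqref{qfewdqewfefr} --- and then keeping straight that on $G\BCov$ the non-equivariant functor $H\cX_{\bA}$ is pulled back along $t$ while the equivariant functor $H\cX^{G}_{\bA}$ is pulled back along $s$.
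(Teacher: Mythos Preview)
Your proposal is correct and follows exactly the paper's approach: the paper simply says the theorem follows from \cref{pkgtrgtegw} in view of \eqref{gwrewfwerfwf}, and you have spelled out precisely what that means. Your additional discussion of the compatibility with pairs via Karoubi filtrations and \eqref{qfewdqewfefr} is a careful elaboration of bookkeeping the paper leaves implicit, but the substance is identical.
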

  
%
%
%

%

\begin{kor}
The transfers or transfer equivalences are compatible with natural transformations between coarse homology theories induced by natural transformation between homological functors.
\end{kor}

 \begin{ex}
For example, there is a natural transformation of homological functors functors $$c^{GJ}:K^{\alg}\cZ  \to HC^{-} :\Add_{\Q}\to D(\Q)$$ given by the 
Goodwillie-Jones character.
The following square of functors on ${}^{G}\BCov$ commutes
$$\xymatrix{t^{*}K^{\alg}\cX^{G}_{\bA}\ar[r]^{c^{GJ}}\ar[d]^{\tr} &t^{*}HC^{-} \cX^{G}_{\bA} \ar[d]^{\tr} \\ s^{*}K^{\alg}\cX^{G}_{\bA} \ar[r]^{c^{GJ}} &s^{*}HC^{-} \cX^{G}_{\bA} } \ .$$ \hB
\end{ex}

A similar statement holds for the transfer equivalences for  branched coarse $G$-coverings.

 \section{Uncompleted topological coarse $K$-homology}\label{kogpegregwerf}

In this section we introduce the equivariant  uncompleted topological coarse $K$-homology
and show that it admits transfers for $G$-equivariant branched coarse coverings and transfer
equivalences for branched coarse $G$-coverings satisfying a finiteness condition \cref{jgoiertbgdbgf}. The uncompleted topological coarse $K$-homology
is designed to be an algebraic approximation of the equivariant topological coarse $K$-homology.
\footnote{A first version of this theory was constructed by A. Engel (unpublished yet).}

Analogously to  the equivariant topological $K$-homology $K\cX^{G}_{\bC}$ constructed in \cite{coarsek} (see \eqref{ojoijiorfwrf})
the uncompleted topological coarse $K$-homology depends on a (non-unital) $C^{*}$-category $\bC$ with $G$-action which  admits all AV-sums  and whose multiplier category $M\bC$ is   idempotent complete.    Recall from \cite{antoun_voigt}, \cite{cank} that an object decomposed into an orthogonal family
of subobjects is the AV-sum of this family if
the net of projections onto the finite partial sums converges to the identity in the strict topology of the multiplier category.

 Modifying the constructions from \cref{kgopertgtergebdbfgbdfgb} we define the functor  
$$\bV^{G,\ctr}_{\bC}:G\BC\to \Add_{\C}$$
 which associates  to a $G$-bornological space  $X$ the following $\C$-linear additive category 
$\bV^{G,\ctr}_{\bC}(X)$.
Note that the multiplier category $M\bC$ is a $\C$-linear additive category. 
For the following we interpret the term "compact object" used in the algebraic case in  \cref{kgopertgtergebdbfgbdfgb} as "object whose identity belongs to $\bC$".
Furthermore infinite sums are interpreted as AV-sums.  
 The objects and morphisms
 of $\bV^{G,\ctr}_{\bC}(X)$ are the objects $(M,\rho,\mu)$ or morphisms $A:(M,\rho,\mu)\to (M',\rho',\mu')$
  of $\bV_{M\bC}^{G}(X)$ as in  \cref{kgopertgtergebdbfgbdfgb} with the following additional properties:
 \begin{enumerate}
 \item The cocycle $\rho$ consist of unitary operators.
 \item The measure $\mu$ takes values in selfadjoint projections.
 \end{enumerate}

 \begin{prop}\label{jiopgwetgergerfw} The functor $\bV_{\bC}^{G,\ctr}:G\BC\to \Add_{\C}$ has the following properties:
  \begin{enumerate}
  \item \label{gjhghjjh1}It sends coarse equivalences to equivalences.
  \item \label{gjhghjjh2}It is $u$-continuous.
  \item \label{gjhghjjh3}It sends flasque bornological coarse spaces to flasque additive categories.
   \item \label{gjhghjjh4} It sends pairs $(X,\cY)$ in $G\BC^{2}$ to Karoubi filtrations $\bV^{G,\ctr}_{\bC}(\cY)\subseteq \bV^{G,\ctr}_{\bC}(X)$.
   \item \label{gjhghjjh5} If $(Z,\cY)$ is an invariant excisive pair on $X$, then
   the induced map $$\frac{\bV^{G,\ctr}_{\bC}(Z)}{\bV^{G,\ctr}_{\bC}(Z\cap\cY)}
\to \frac{\bV^{G,\ctr}_{\bC}(X)}{\bV^{G,\ctr}_{\bC}(\cY)}$$ is an equivalence.  \end{enumerate}
 \end{prop}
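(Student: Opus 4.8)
The plan is to derive the five properties from the corresponding statements for the purely algebraic functor $\bV^{G}_{M\bC}:G\BC\to\Add_{\C}$, which are established in \cite[Sec. 8.3]{equicoarse} (and recalled via \cref{jrtzjrtzhrt}), using that $\bV^{G,\ctr}_{\bC}(X)$ is the full subcategory of $\bV^{G}_{M\bC}(X)$ on those $X$-controlled objects $(M,\rho,\mu)$ which admit a local-finiteness presentation by components with identity in $\bC$ and with presenting sums understood as AV-sums, whose cocycle $\rho$ consists of unitaries, and whose measure $\mu$ takes selfadjoint projections as values. The point to check at each step is that the constructions entering the proofs in the reference --- the pushforward $f_{*}$, the Eilenberg--swindle endofunctor, the splitting attached to a member $Y$ of a big family, and the comparison (iso)morphisms witnessing coarse invariance and excision --- preserve this full subcategory.

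For properties \ref{gjhghjjh1} and \ref{gjhghjjh2} I would transcribe the arguments of \cite[Sec. 8.3]{equicoarse} directly. Since $f_{*}$ leaves $\rho$ and the underlying morphisms unchanged and replaces $\mu$ by $f_{*}\mu(-):=\mu(f^{-1}(-))$, with the component over $x'$ becoming the finite orthogonal sum of the $M(x)$ for $x\in f^{-1}(x')$, the pushforward of a $\ctr$-object is again a $\ctr$-object; and for a coarse inverse $g$ of a coarse equivalence $f$ the natural isomorphisms comparing $g_{*}f_{*}$ and $f_{*}g_{*}$ with the respective identities are realised by the identity morphisms of the underlying objects, which are unitary and controlled, hence morphisms of $\bV^{G,\ctr}_{\bC}$. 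For $u$-continuity one observes, exactly as in the algebraic case, that objects are independent of the coarse structure and that every morphism of $\bV^{G,\ctr}_{\bC}(X)$ has a coarse entourage as propagation, so it already exists at a finite scale.

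For property \ref{gjhghjjh3} I would import the swindle: for a flasque $G$-bornological coarse space with structure map $s$, the Eilenberg--swindle endofunctor of \cite[Sec. 8.3]{equicoarse}, built from $s$ and countable AV-sums of the form $\bigoplus_{n\geq 1}s^{n}_{*}$, exhibits $\bV^{G,\ctr}_{\bC}(X)$ as a flasque additive category in the sense of \cite[Def. 8.1]{equicoarse}. Here one must check that an AV-sum of $\ctr$-objects is again a $\ctr$-object: an AV-sum is characterised by strict convergence in the multiplier category of the selfadjoint projections onto the finite partial sums, so the resulting cocycle remains unitary, the resulting measure selfadjoint-projection valued, and the hypotheses on $\bC$ guarantee the sum exists with identity in $M\bC$. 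For property \ref{gjhghjjh4}, the Karoubi-filtration axioms \cite[Def. 8.3]{equicoarse} for $\bV^{G,\ctr}_{\bC}(\cY)\subseteq\bV^{G,\ctr}_{\bC}(X)$ follow as in \cite[Sec. 8.3]{equicoarse} from the orthogonal splitting of $(M,\rho,\mu)$ provided by the projection $\mu(Y)$ for $Y\in\cY$: the summand $\mu(Y)M$ is supported on $Y$, hence lies in $\bV^{G,\ctr}_{\bC}(Y)$, and is again a $\ctr$-object, and the factorisation conditions on morphisms are inherited verbatim. Finally property \ref{gjhghjjh5} reduces, again as in the reference, to showing that every object of $\bV^{G,\ctr}_{\bC}(X)$ is isomorphic modulo $\bV^{G,\ctr}_{\bC}(\cY)$ to one supported on $Z$ and that the induced functor on quotient categories is fully faithful; both statements use only the projection measure, controlledness of morphisms, and the excisiveness of $(Z,\cY)$, and all isomorphisms involved are orthogonal reindexings and hence again morphisms of $\bV^{G,\ctr}_{\bC}$.

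The step I expect to require genuine care --- and would spell out --- is the compatibility of AV-sums with the underlying $C^{*}$-structure used in property \ref{gjhghjjh3}, together with the recurring point that the comparison morphisms produced by the algebraic arguments can be chosen to be $\ctr$-morphisms between $\ctr$-objects; once this is secured, the remaining arguments are routine transcriptions of \cite[Sec. 8.3]{equicoarse}.
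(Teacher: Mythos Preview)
Your proposal is correct and essentially follows the paper's route. The only difference is in the choice of reference: the paper cites \cite[Sec.~6]{coarsek} (the completed $C^{*}$-categorical version) for properties \ref{gjhghjjh1}, \ref{gjhghjjh2}, \ref{gjhghjjh3}, \ref{gjhghjjh5} and reserves \cite[Lem.~8.14]{equicoarse} for property \ref{gjhghjjh4}, whereas you derive everything uniformly from the algebraic arguments of \cite[Sec.~8.3]{equicoarse}; the underlying constructions are the same in both sources, and your explicit check that pushforwards, swindles, splittings, and comparison isomorphisms remain within the $\ctr$-subcategory is exactly what the paper's terse ``similar arguments'' sweep under the rug.
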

\begin{proof}
The Properties   \ref{gjhghjjh1},  \ref{gjhghjjh2},  \ref{gjhghjjh3},  \ref{gjhghjjh5}
are shown  by the similar arguments as for the complete version  $\bV^{G}_{\bC}$ in \cite[Sec. 6]{coarsek}.
The Property  \ref{gjhghjjh4} is shown  by the same argument as in the algebraic case for $\bV^{G}_{\bA}$  in \cite[Lem. 8.14]{equicoarse} using that equivariant locally finite controlled objects   can be restricted to invariant subsets $Y$ 
  in the obvious manner using the assumption that $M\bC$ is idempotent complete.
\end{proof}

For a functor $H:\Add_{\C}\to \cC$  we  consider the composition
\begin{equation}\label{fwqedewdqweddqe}H\cX^{G,\ctr}_{\bC}:=H\circ \bV^{G,\ctr}_{\bC}:G\BC\to \cC
\end{equation}Recall  \cref{kopwegrfw} of a homological functor.
\begin{theorem}\label{hlkepthetrgege} If $H$ is homological, then 
$H\cX^{G,\ctr}_{\bC} $
is an equivariant coarse homology theory which is in addition strong and continuous.
\end{theorem}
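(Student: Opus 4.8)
The plan is to deduce \cref{hlkepthetrgege} formally, exactly along the lines of the proof of \cref{jrtzjrtzhrt}: all of the genuinely new, $C^{*}$-flavoured content is already packaged in \cref{jiopgwetgergerfw}, and the passage from $\bV^{G,\ctr}_{\bC}$ to $H\cX^{G,\ctr}_{\bC}=H\circ\bV^{G,\ctr}_{\bC}$ will use nothing beyond the abstract properties of a homological functor listed in \cref{kopwegrfw}. Recall that an equivariant coarse homology theory is a functor $G\BC\to\cC$ which is coarsely invariant, $u$-continuous, excisive and annihilates flasques; it is \emph{strong} if it moreover annihilates weakly flasque $G$-bornological coarse spaces, and \emph{continuous} in the sense of \cite{equicoarse}.

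First I would check coarse invariance and vanishing on flasques, which are immediate: $\bV^{G,\ctr}_{\bC}$ sends coarse equivalences to equivalences of $\C$-linear additive categories by \cref{jiopgwetgergerfw}.\ref{gjhghjjh1} and sends flasque $G$-bornological coarse spaces to flasque additive categories by \cref{jiopgwetgergerfw}.\ref{gjhghjjh3}, while a homological functor sends equivalences to equivalences and annihilates flasque objects. For $u$-continuity I would combine \cref{jiopgwetgergerfw}.\ref{gjhghjjh2} with the fact that a homological functor preserves filtered colimits; continuity in the sense of \cite{equicoarse} follows by the same filtered-colimit compatibility argument as for $K\cX^{G}$ in \cite[Sec.~8.3]{equicoarse}.

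Next I would establish excision. Given an invariant excisive pair $(Z,\cY)$ on $X$, \cref{jiopgwetgergerfw}.\ref{gjhghjjh4} provides the Karoubi filtrations $\bV^{G,\ctr}_{\bC}(\cY)\subseteq\bV^{G,\ctr}_{\bC}(X)$ and $\bV^{G,\ctr}_{\bC}(Z\cap\cY)\subseteq\bV^{G,\ctr}_{\bC}(Z)$, and \cref{jiopgwetgergerfw}.\ref{gjhghjjh5} states that the induced functor on the quotient categories is an equivalence. Applying $H$, which sends Karoubi filtrations to fibre sequences and equivalences to equivalences, yields the defining fibre sequences for $H\cX^{G,\ctr}_{\bC}(X,\cY)$ and $H\cX^{G,\ctr}_{\bC}(Z,Z\cap\cY)$ together with an equivalence $H\cX^{G,\ctr}_{\bC}(Z,Z\cap\cY)\simeq H\cX^{G,\ctr}_{\bC}(X,\cY)$; since $\cC$ is stable this is exactly the excision square. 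At this point $H\cX^{G,\ctr}_{\bC}$ is a continuous equivariant coarse homology theory.

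Finally I would prove strongness. If $X$ is weakly flasque with implementing endomorphism $f\colon X\to X$, then as in \cite[Sec.~3.2 and Sec.~8.3]{equicoarse} the functor $f_{*}$ on $\bV^{G,\ctr}_{\bC}(X)$ is naturally isomorphic to the identity (because $f$ is coarsely close to $\id_{X}$), and $\bV^{G,\ctr}_{\bC}(X)$ carries an Eilenberg-swindle natural isomorphism $\id\oplus f_{*}\cong f_{*}$, using that $\bV^{G,\ctr}_{\bC}(X)$ admits the relevant AV-sums as part of the standing hypotheses on $\bC$; applying the additive, equivalence-preserving functor $H$ then gives $H\cX^{G,\ctr}_{\bC}(X)\simeq 0$. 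This is the single step that is not a literal quotation of \cref{jiopgwetgergerfw} (and could alternatively be recorded as an extra clause of that proposition). I expect no genuine obstacle in the present theorem itself: the delicate point, already handled in \cref{jiopgwetgergerfw}, is excision in the presence of AV-sums, which relies on restricting equivariant locally finite controlled objects to invariant subsets and hence on $M\bC$ being idempotent complete. Granting \cref{jiopgwetgergerfw}, \cref{hlkepthetrgege} is pure formalism driven by \cref{kopwegrfw}.
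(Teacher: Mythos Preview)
Your approach is essentially the paper's own: derive all the axioms formally from \cref{jiopgwetgergerfw} together with the properties in \cref{kopwegrfw}, exactly as done for \cref{jrtzjrtzhrt}. The paper's proof is a one-line reference to \cite[Sec.~8.3]{equicoarse} and \cite[Sec.~7]{coarsek}; you have simply unpacked that reference.

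There is, however, a slip in your treatment of strongness. For a \emph{weakly} flasque $X$ the witnessing endomorphism $f$ is \emph{not} required to be close to $\id_{X}$, so $f_{*}$ is not naturally isomorphic to the identity on $\bV^{G,\ctr}_{\bC}(X)$, and the swindle isomorphism is not ``$\id\oplus f_{*}\cong f_{*}$'' (which would be false already for ordinary flasques). The correct argument runs as follows: form the endofunctor $S:=\bigoplus_{n\ge 0} f_{*}^{n}$ on $\bV^{G,\ctr}_{\bC}(X)$ (this exists by the locally-finite/AV-sum hypotheses and the entourage condition in the definition of weakly flasque), and observe the natural isomorphism $S\cong \id\oplus (f_{*}\circ S)$. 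Applying $H$ and using additivity gives $H(S)=\id_{H\cX^{G,\ctr}_{\bC}(X)}+H(f_{*})\circ H(S)$. Now the definition of weakly flasque demands that $E(f)=\id$ for every equivariant coarse homology theory $E$; since you have already established in the preceding steps that $H\cX^{G,\ctr}_{\bC}$ is one, you get $H(f_{*})=\id$, whence $\id_{H\cX^{G,\ctr}_{\bC}(X)}=0$. With this correction your sketch is complete and matches the paper.
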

 \begin{proof}  
  The derivation of the axioms    of a coarse homology theory and of the additional properties  for  $H\cX^{G,\ctr}_{\bC} $ from \cref{jiopgwetgergerfw} and \cref{kopwegrfw}
  is essentially straightforward, see 
   \cite[Sec. 8.3]{equicoarse} or  \cite[Sec. 7]{coarsek} for similar arguments.  
 \end{proof}

We now construct transfers.
Let $(f:X\to Y,\cZ)$ be a $G$-equivariant  branched coarse covering relative to the big family $\cZ$. 
We recall  \cref{uihefigvweeffd1} and introduce the following abbreviations.
\begin{ddd}\label{jgoiertbgdbgf}\mbox{}
\begin{enumerate}
\item By   $s\fdsc$ we denote the condition that  the source $(X,f^{-1}(\cZ) )$ eventually has finite dimension at coarse scales.
\item By   $\fdsc$ we denote the condition that  the target $(Y,\cZ)$  eventually has finite dimension at coarse scales.  \end{enumerate}
\end{ddd}

Note that $\fdsc$ for $(f:X\to Y,\cZ)$  implies by \cref{woigwgwegferfrefw} that
also the source  $(X,f^{-1}(\cZ))$  eventually has  finite dimension at coarse scales.

\begin{theorem}\label{kogpwreregwe91}
We have a canonical natural transformation
\begin{equation}\label{vsdfvsdfvcsdfc}\tr:t^{*}\ell \bV^{G,\ctr}_{\bC}\to s^{*}\ell \bV^{G,\ctr}_{\bC} :{}^{G}\BCov^{s\fdsc}  \to \Add_{\C,2,1}\ .
\end{equation}
\end{theorem}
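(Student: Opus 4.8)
The plan is to transcribe the proof of \cref{kogpwreregwe9} to the $C^{*}$-categorical setting, the only genuinely new ingredient being a norm estimate on the transferred morphisms that forces the hypothesis $s\fdsc$. Fix an object $(f\colon X\to Y,\cZ)$ of ${}^{G}\BCov^{s\fdsc}$. First I would build a $\C$-linear functor $\tr_{f}\colon\bV^{G,\ctr}_{\bC}(Y,\cZ)\to\bV^{G,\ctr}_{\bC}(X,f^{-1}(\cZ))$. On objects the recipe is verbatim that of \cref{kogpwreregwe9}: given $(M,\rho,\mu)$ with a presentation $(M(y),u(y))_{y\in Y}$ of $M$ as a sum of the contributions of the points of $Y$, let $N$ be an AV-sum of the family $(M(f(x)))_{x\in X}$ --- which exists since $\bC$ admits all AV-sums --- with structure morphisms $v(x)$, and define $\sigma$ by the same cocycle formula and $\nu$ by \eqref{regewrfwefreewerfr1}. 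Local finiteness of $(N,\sigma,\nu)$ is exactly \cref{wroigjowegewrgerfweferf}.\ref{etrherthtrherthrth1}; and since $\rho$ consists of unitaries and the structure morphisms of an AV-sum are isometries, $\sigma$ is again unitary, while $\nu$ is visibly projection-valued, so the extra conditions cutting out $\bV^{G,\ctr}_{\bC}$ inside $\bV^{G}_{M\bC}$ are preserved.

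The new point concerns morphisms. Given $[A]\colon(M,\rho,\mu)\to(M',\rho',\mu')$ represented by a $V$-controlled $A$, choose a connection $P$ and then, invoking \cref{weiogowegrefwfwerfwf} (this is where $s\fdsc$ enters), a member $Z$ of $\cZ$ together with a partition $\hat\cW=(\hat W_{i})_{i\in I}$ of $f^{-1}(Z^{c})$ whose associated covering $\cW=(W_{i})_{i\in I}$ has $U:=f^{-1}(V_{Z^{c}})\cap P$ among its Lebesgue entourages, satisfies $\bd(\cW)\in\cC_{X}$ and $\mult(\cW)<\infty$, and is such that $f$ is injective on each $\bd(\cW)$-bounded set, in particular on each $W_{i}$. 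One may assume $A$ supported on $Z^{c}$, and defines $B:=\tr_{f}(A)$ by the matrix formula \eqref{werfwerweg}. Then $B$ is visibly $U$-controlled and equivariant, but one must check that it is bounded, hence a bona fide morphism of the $C^{*}$-category. Writing $T_{i}:=\nu'(W_{i})B\nu(\hat W_{i})$, one has $B=\sum_{i\in I}T_{i}$ in the strict topology because $B$ is $U$-controlled and $U[\hat W_{i}]\subseteq W_{i}$; the $\hat W_{i}$ being disjoint, $T_{i}T_{j}^{*}$ vanishes for $i\neq j$, so $BB^{*}=\sum_{i}T_{i}T_{i}^{*}$. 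The injectivity of $f$ on $W_{i}$ identifies $T_{i}$, through the canonical unitaries between the AV-subsum of $N$ over $W_{i}$ and that of $M$ over $f(W_{i})$ (and likewise for $N'$, $M'$), with the compression $\mu'(f(W_{i}))\,A\,\mu(f(\hat W_{i}))$, whence $\|T_{i}\|\le\|A\|$; and $\|\sum_{i}\nu'(W_{i})\|\le\mult(\cW)$ because that operator is diagonal with each block $\le\mult(\cW)\cdot\id$. Hence $\|B\|^{2}=\|\sum_{i}T_{i}T_{i}^{*}\|\le\mult(\cW)\,\|A\|^{2}$. As in \cref{kogpwreregwe9}, using \cref{wregegerfrwferf} one checks that $[B]$ depends only on $[A]$ (independence of $Z$, of the representative, and of the chosen presentation up to canonical isomorphism) and that $\tr_{f}$ is compatible with composition since $U$-path lifting respects concatenation.

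It remains to promote the $\tr_{f}$ to a natural transformation, i.e. to produce for every morphism $\Phi$ as in \eqref{vsdfvdsfvsvfd} in ${}^{G}\BCov^{s\fdsc}$ an invertible $2$-cell $t_{\Phi}\colon g_{*}\tr_{f'}\Rightarrow\tr_{f}h_{*}$ satisfying the cocycle identity for composites; this is word-for-word the last part of the proof of \cref{kogpwreregwe9}, the isomorphism $t_{\Phi}$ arising from reindexing the AV-sum over $X$ along the bijection with the family indexed first over $Y$ via $h$ and then over the $g$-fibres, together with the partial-sum identifications, using only that AV-sums are compatible with partial sums and with reindexing along bijections. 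Assembling these data over all objects and morphisms of ${}^{G}\BCov^{s\fdsc}$ yields the transformation \eqref{vsdfvsdfvcsdfc}. I expect the boundedness estimate for $B$ to be the sole real obstacle; everything else is a faithful, if bookkeeping-heavy, copy of the algebraic argument with infinite sums read as AV-sums, $\mu$ self-adjoint projection-valued, $\rho$ unitary, and ``compact object'' read as ``object whose identity lies in $\bC$''.
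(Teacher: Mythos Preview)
Your proposal is correct and takes essentially the same approach as the paper: reuse the object-level construction from \cref{kogpwreregwe9} verbatim, then establish boundedness of the transferred morphism by decomposing over the partition $\hat\cW$ from \cref{weiogowegrefwfwerfwf}, identifying each piece with a compression of $A$ via the injectivity of $f$ on $W_i$, and summing using $T_iT_i^*\le\|A\|^2\,\nu'(W_i)$ together with $\|\sum_i\nu'(W_i)\|\le\mult(\cW)$. The paper isolates the norm estimate as a separate lemma (\cref{rekgopewergwerfwerf}) and is slightly more careful about the order of operations---defining the finite partial sums $B_i$ directly rather than as $\nu'(W_i)B\nu(\hat W_i)$ for a $B$ that does not yet exist as a bounded operator---but the mathematical content is identical.
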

\begin{proof} 
We reuse the proof of \cref{kogpwreregwe9}.
On the level of objects there is no difference.
The transfer of morphisms is defined in \cref{kogpwreregwe9} using matrices. 
In the present situation we must make sure that the 
 transferred matrices determine bounded operators.

Let $f:X\to Y$ be a branched coarse covering relative to the big family $\cZ$ on $Y$.
Let $ A:(M,\rho,\mu)\to (M',\rho',\mu')$ be a morphism $\bV^{G,\ctr}_{\bC}(Y)$ which is controlled by an entourage $V$. 
We let $(N,\sigma,\nu)$ and $(N',\sigma',\nu')$ denote the transferred objects in $\bV^{G,\ctr}_{\bC}(X)$.
Furthermore we let $(u(y):M(y)\to M)_{y\in Y}$, $(u'(y):M'(y)\to M')_{y\in Y}$ and
$(v(x):M(f(x))\to N)_{x\in X}$, $(v'(x):M'(f(x))\to N')_{x\in X}
$ denote the family of isometric inclusions chosen in the construction of the transfers.

We assume that $Z$ is a member of $\cZ$ such that the parallel transport at scale $V$ is defined on $Z^{c}$. We can assume that $A$ is supported on $Z^{c}$.   The transfer of $A$ shall be determined by the matrix \eqref{werfwerweg}
 $$B_{x',x}:=\left\{\begin{array}{cc} A_{f(x'),f(x)}&(x',x)\in P \land (f(x'),f(x))\in V_{Z^{c}}\\0 &else  \end{array} \right.\ ,$$
 where $B_{x',x}:M(f(x))\to M'(f(x'))$ and $A_{y',y}:=u'(y')^{*}Au(y)$ for all $y,y'$ in $Y$. 
 For every $x$  in $f^{-1}(Z^{c})$ the map
\begin{equation}\label{werfrewfwerf}B_{x}:=\sum_{x'\in f^{-1}(Z^{c})} v'(x') B_{x',x}:M(x)\to N'
\end{equation}
is well-defined since the sum has finitely many non-zero terms. 
Since $A$ has propagation controlled by $V$ we see that if $x'$ contributes to this sum, then
$f(x')\in V[f(x)]\cap \supp(M',\rho',\mu')$.    Since 
$V[f(x)]$ is bounded  and the support is locally finite this
intersection a finite set. The contributing $x'$  are the parallel transports of $x$ at scale $V$ 
to the fibres over the points in this finite set.


We must show that \begin{equation}\label{sfggerwggfs}\sum_{x\in f^{-1}(Z^{c})} B_{x}v(x)^{*}
\end{equation}  converges to a bounded operator
 $B:N\to N'$.  
 
 We apply \cref{weiogowegrefwfwerfwf} which gives
a possibly larger member $Z$ of $\cZ$, a   partition  $\hat \cW=(\hat W_{i})_{i\in I}$   and a covering 
$\cW=(W_{i})_{i\in I}$ of $f^{-1}(Z^{c})$ such  $n:=\mult(\cW)<\infty$, and $U[\hat W_{i}]\subseteq W_{i}$ and
 $f$ is injective on $W_{i}$ for every $i$ in $I$,  where  $U:=f^{-1}(V_{Z^{c}})\cap P$.  

 \begin{lem}\label{rekgopewergwerfwerf}
 The sum in \eqref{sfggerwggfs} converges and  defines $B:N\to N'$  in $M\bC$ with $$\|B\|\le \sqrt{n}\|A\|\ .$$
 \end{lem}
 \begin{proof}

We fix $i$ in $I$. We claim that
$$B_{i}:=\sum_{x\in \hat W_{i}} B_{x}v(x)^{*}:N\to N'$$ is well-defined and satisfies $\|B_{i}\|\le \|A\|$.
 To  this end we define the morphisms
$$a_{i}:=\sum_{x'\in W_{i}} u'(f(x')) v(x')^{*}  : N' \to M'\ ,  \quad 
 b_{i}:=\sum_{x\in \hat W_{i}}   u(f(x)) v(x)^{*} : N\to M\ .$$ 
Since the subsets $W_{i}$ and $\hat W_{i}$ of $X$ are bounded
and $\supp(N,\sigma,\nu)$ and $\supp(N',\sigma',\nu')$ are locally finite subsets of $X$
 theses sums have finitely many non-zero terms. 
Since  
 $f$ is injective on $W_{i}$ and $\hat W_{i}$ the morphisms $a_{i}$ and $b_{i}$ are partial isometries.  
 More precisely, $a_{i}$ is an isometry between the subobjects   $\nu'(W_{i})N$ and $\mu'(f(W_{i}))M'$, while
 $b_{i}$ is an isometry between $\nu(\hat W_{i})N$ and $\mu(f(\hat W_{i}))M$.
 We observe that
$B_{i}= a_{i}^{*}A b_{i}$ since 
  for every $x$ in $f^{-1}(Z^{c})$ we have 
$a_{i}^{*}A b_{i} v(x)=  B_{x}$.
We can then
  conclude  that $\|B_{i}\|\le \|A\|$.
 This finishes the proof of the claim.
 
 Since $(\hat W_{i})_{i\in I}$ is a partition of $f^{-1}(Z^{c})$
 we have  $$B_{i'}\nu(\hat W_{i})=\left\{\begin{array}{cc} B_{i} &i=i'\\0 &else  \end{array} \right.\ .$$
Furthermore, by   \cite[Lem. 4.15]{cank}  the object $N$ is the AV-sum of the pairwise orthogonal 
family of images $(\nu(\hat W_{i})N)_{i\in I}$.
Note that $B_{i}$ is a finite  partial sum of \eqref{sfggerwggfs}.
In order to show the assertion of the lemma  it suffices to show     (see e.g.   \cite[Sec. 3]{cank}) that
$$\sup_{F\subseteq I} \|\sum_{i\in F}  B_{i} B_{i}^{*} \|\le n\|A\|^{2} \ ,$$
where $F$ runs over the finite subsets of $I$.

%
%
By construction  the propagation of the matrix $(B_{x',x})$ is bounded by $U$. Thus for every $i$ in $I$ and $x'$ in $X$,
if $ v'(x')^{*}B_{i}\not=0$, then $x'\in U[\hat W_{i}] \subseteq W_{i}$.
 This implies  $B_{i}=\nu'(W_{i}) B_{i}$. We get   the operator inequality
$$B_{i} B_{i}^{*} =\nu'(W_{i}) B_{i} B_{i}^{*}\nu'(W_{i})  \le \nu'(W_{i})  \|B_{i} B_{i}^{*}\|\le  \nu'(W_{i}) \|A\|^{2} \ .$$
This gives 
$$ \|\sum_{ i\in F}  B_{i} B_{i}^{*} \|\le
\|A\|^{2} \|\sum_{i\in F} \nu'(W_{i}) \| \le n\|A\|^{2}$$ since the family $\cW$ 
 has multiplicity bounded by $n$. 
 \end{proof}
\cref{rekgopewergwerfwerf} finishes the proof of \cref{kogpwreregwe91}.
\end{proof}

Recall that $\bC$ is a $C^{*}$-category with $G$-action
 which admits all AV-sums and such that $M\bC$ is idempotent complete.
 Let $H:\Add_{\C}\to \cC$ be a homological functor for $\C$-linear additive categories \cref{kopwegrfw}.
 Recall \cref{hlkepthetrgege}.
\begin{kor}\label{rgrthegkeporg}
 The  equivariant coarse homology functor
$H \cX^{G,\ctr}_{\bC} $ admits the structure of transfers  \begin{equation}\label{vsdfvsdfvsdfvdfvs}\tr:t^{*}H \cX^{G,\ctr}_{\bC} \to s^{*}H \cX^{G,\ctr}_{\bC} :{}^{G}\BCov^{s\fdsc}\to \cC\ .
\end{equation}  
\end{kor}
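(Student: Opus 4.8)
The plan is to obtain this as a formal consequence of \cref{kogpwreregwe91}, in precisely the way \cref{poiobipdfgbdfgbd} was obtained from \cref{kogpwreregwe9} in the algebraic setting. All of the genuine content --- constructing the transferred controlled objects, verifying the boundedness estimate $\|B\|\le\sqrt{n}\,\|A\|$ for the transferred matrices by means of the finite-multiplicity covering furnished by \cref{weiogowegrefwfwerfwf}, and checking naturality in ${}^{G}\BCov^{s\fdsc}$ together with compatibility with composition --- has already been carried out in \cref{kogpwreregwe91}; what remains is only to transport that structure along $H$.

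Concretely, I would first recall that a homological functor $H\colon\Add_{\C}\to\cC$ sends equivalences of $\C$-linear additive categories to equivalences (\cref{kopwegrfw}), and hence factors through the Dwyer--Kan localization $\ell\colon\Add_{\C}\to\Add_{\C,2,1}$ as $H\simeq H_{2,1}\circ\ell$ for a functor $H_{2,1}\colon\Add_{\C,2,1}\to\cC$. Composing with $\bV^{G,\ctr}_{\bC}$ --- where the value on a pair $(X,\cY)$ comes from the Karoubi filtration of \cref{jiopgwetgergerfw}.\ref{gjhghjjh4}, which $H$ turns into the fibre sequence computing the cofibre in \eqref{qfewdqewfefr} --- one gets, exactly as in \eqref{gwrewfwerfwf}, a natural equivalence $H\cX^{G,\ctr}_{\bC}\simeq H_{2,1}\circ\ell\,\bV^{G,\ctr}_{\bC}$ of functors on $G\BC^{2}$, and therefore also after pullback along the source and target functors $s,t\colon{}^{G}\BCov^{s\fdsc}\to G\BC^{2}$. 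I would then define the transfer for $H\cX^{G,\ctr}_{\bC}$ by whiskering the natural transformation $\tr\colon t^{*}\ell\,\bV^{G,\ctr}_{\bC}\to s^{*}\ell\,\bV^{G,\ctr}_{\bC}$ of \cref{kogpwreregwe91} with $H_{2,1}$ and transporting it along these equivalences.

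I expect no real obstacle here. The one point that genuinely must be respected is the restriction to the subcategory ${}^{G}\BCov^{s\fdsc}$: this is forced by \cref{kogpwreregwe91}, where eventual finite dimension at coarse scales of the source is exactly the hypothesis needed for the sum \eqref{sfggerwggfs} to converge to a bounded operator, hence for the transfer of morphisms to be defined at all. Finally, as in the algebraic case, I would add a remark that an inspection of the internal details of the construction shows this transfer restricts on bounded coarse coverings to the transfer of \cite{trans}.
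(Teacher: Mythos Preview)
Your proposal is correct and takes essentially the same approach as the paper: the corollary is stated without proof immediately after \cref{kogpwreregwe91}, and the intended argument is precisely the factorization $H\simeq H_{2,1}\circ\ell$ combined with whiskering, exactly as in the derivation of \cref{poiobipdfgbdfgbd} from \cref{kogpwreregwe9} in the algebraic setting. Your write-up is in fact more detailed than what the paper provides, and the final remark about compatibility with the transfers of \cite{trans} is a reasonable addition (the paper makes the analogous remark only in the algebraic case).
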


We now assume that $\bC$ has the trivial $G$-action. We consider the functors
$$\bV^{G,\ctr}_{\bC}:G\BC\to \Add_{\C}\ , \quad \bV^{\ctr}_{\bC}:\BC\to \Add_{\C}\ .$$
\begin{theorem}\label{igopetgertgertgt}The transfer from \eqref{vsdfvsdfvcsdfc} restricts to a 
  natural transformation
\begin{equation}\label{wergegwerf1}\tr:t^{*}\ell \bV^{\ctr}_{\bC}\stackrel{}{\to} s^{*}\ell \bV^{G,\ctr}_{\bC}:G\BCov^{s\fdsc}\to \Add_{\C,2,1}\ .
\end{equation}
which becomes an equivalence after further restriction $G\BCov^{\fdsc}$.
\end{theorem}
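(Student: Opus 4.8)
The plan is to mimic the proof of \cref{pkgtrgtegw} step by step, replacing algebraic sums by AV-sums and inserting, at the one place where it is needed, the convergence estimate already developed in the proof of \cref{kogpwreregwe91} and in \cref{rekgopewergwerfwerf}. When $\bC$ carries the trivial $G$-action the cocycle $\rho$ of an object $(M,\rho,\mu)$ of $\bV^{G,\ctr}_{\bC}(Y)$ is a genuine unitary $G$-action on $M$, and $\bV^{\ctr}_{\bC}(Y)$ is precisely the full subcategory on those objects whose $G$-action is trivial; letting $Y$ vary this realises $\bV^{\ctr}_{\bC}$ as a subfunctor of $\bV^{G,\ctr}_{\bC}$ on $\BC\hookrightarrow G\BC$. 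The natural transformation \eqref{wergegwerf1} is then obtained by restricting the transfer $\tr$ of \cref{kogpwreregwe91} --- both the functors $\tr_{f}$ and the connecting $2$-morphisms $t_{\Phi}$ attached to morphisms $\Phi$ of $G\BCov^{s\fdsc}$ --- to this subfunctor, exactly as in the opening of the proof of \cref{pkgtrgtegw}; one only has to observe that the transferred object $\tr_{f}(M,\rho,\mu)=(N,\sigma,\nu)$ carries a cocycle $\sigma$ built from $\rho$ and the fibrewise permutation of summands, so $\tr_{f}$ genuinely lands in $\bV^{G,\ctr}_{\bC}(X,f^{-1}(\cZ))$. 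No finiteness beyond the inherited $s\fdsc$ is used in this first step.

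For the equivalence on $G\BCov^{\fdsc}$, fix a branched coarse $G$-covering $(f:X\to Y,\cZ)$ with $\fdsc$; since $\fdsc$ implies $s\fdsc$ (remark after \cref{jgoiertbgdbgf}, via \cref{woigwgwegferfrefw}) the functor $\tr_{f}$ is defined. Following \cref{pkgtrgtegw} I construct an inverse $r\colon\bV^{G,\ctr}_{\bC}(X,f^{-1}(\cZ))\to\bV^{\ctr}_{\bC}(Y,\cZ)$: choose $Z\in\cZ$ over whose complement $G$ acts freely and transitively on the fibres of $f$ (\cref{kogpwererwfwerfwerf}.\ref{otzkjprtzjr9}), a set-theoretic section $s\colon Z^{c}\to f^{-1}(Z^{c})$, and for $(N,\sigma,\nu)$ with a chosen AV-presentation $(N(x),v(x))_{x\in X}$ let $r(N,\sigma,\nu)=(M,\mu)$ be the AV-sum of $(N(s(y)))_{y\in Z^{c}}$ with the evident selfadjoint projection-valued measure. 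On morphisms, for $[B]$ represented by an operator $B$ supported on $f^{-1}(Z^{\prime,c})$ for some $Z\subseteq Z'\in\cZ$, controlled by $f^{-1}(V_{Z^{\prime,c}})\cap P$ with parallel transport at scale $V$ defined on $Z^{\prime,c}$, I define $A=r([B])$ by the matrix \eqref{werfwergwg}; each entry is a matrix entry of $B$ premultiplied by a unitary summand of $\sigma'$, so $\|A_{y',y}\|\le\|B\|$, and the usual check shows $[A]$ depends only on $[B]$ and that $r$ respects composition.

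The \textbf{main obstacle}, and the reason $\fdsc$ rather than mere $s\fdsc$ appears, is that the matrix \eqref{werfwergwg} must be shown to define a bounded morphism of $M\bC$. For this I apply \cref{weiogowegrefwfwerfwf1} at scale $V$ (enlarging $Z'$ as allowed) to obtain a partition $\hat\cW=(\hat W_{i})_{i\in I}$ and a covering $\cW=(W_{i})_{i\in I}$ of $Z^{\prime,c}$ with $n:=\mult(\cW)<\infty$, $\bd(\cW)\in\cC_{Y}$ and $V[\hat W_{i}]\subseteq W_{i}$. Setting $A_{i}:=A\mu(\hat W_{i})=\mu'(W_{i})A\mu(\hat W_{i})$ (the second equality because the propagation of the matrix is $\le V$ and $V[\hat W_{i}]\subseteq W_{i}$), each $A_{i}$ is a genuine finite partial sum of \eqref{werfwergwg}, since $\hat W_{i}$ is bounded, $\supp(M,\mu)$ is locally finite, and the propagation bound makes each column finite. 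As in \cref{rekgopewergwerfwerf}, $A_{i}$ is realised as $c_{i}^{*}Bd_{i}$ for partial isometries $c_{i},d_{i}$ assembled from the $v'$'s, the $v$'s and the cocycle $\sigma'$ over $W_{i}$ and $\hat W_{i}$ --- here using that $s$ is injective and that the group elements $g(y',y)$ realise the parallel transport --- whence $\|A_{i}\|\le\|B\|$. Since $\hat\cW$ partitions $Z^{\prime,c}$, the object $M$ is the AV-sum of the pairwise orthogonal family $(\mu(\hat W_{i})M)_{i\in I}$ by \cite[Lem. 4.15]{cank}, and $A_{i'}\mu(\hat W_{i})=\delta_{ii'}A_{i}$; the operator inequality $A_{i}A_{i}^{*}=\mu'(W_{i})A_{i}A_{i}^{*}\mu'(W_{i})\le\|B\|^{2}\mu'(W_{i})$ gives $\sup_{F\subseteq I}\|\sum_{i\in F}A_{i}A_{i}^{*}\|\le n\|B\|^{2}$ over finite $F$, so $A=\sum_{i}A_{i}$ converges strictly and $\|A\|\le\sqrt{n}\,\|B\|$. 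This is the only genuinely new input beyond \cref{pkgtrgtegw}.

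Finally I produce the natural isomorphisms $\alpha\colon r\circ\tr_{f}\stackrel{\cong}{\to}\id$ and $\beta\colon\tr_{f}\circ r\stackrel{\cong}{\to}\id$ exactly as in \cref{pkgtrgtegw}: $r(\tr_{f}(M,\rho))$ is an AV-sum of $(M(y))_{y\in Y}$, giving the tautological isometry onto $M$, while $\tr_{f}(r(N,\sigma,\nu))$ is an AV-sum of $(N(s(f(x))))_{x\in X}$, from which the unitaries $\sigma_{g(x)}$ (with $g(x)\in G$ the unique element satisfying $g(x)s(f(x))=x$ on $f^{-1}(Z^{c})$) assemble the comparison morphism; both are isomorphisms in the relevant Karoubi quotients. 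All choices --- sections, AV-presentations, and the data extracted from \cref{weiogowegrefwfwerfwf1} --- alter these functors only up to canonical isomorphism, so everything descends to $\Add_{\C,2,1}$ and \eqref{wergegwerf1} becomes a natural equivalence after restriction to $G\BCov^{\fdsc}$. The estimate of the previous paragraph is precisely what prevents extending the equivalence to all branched coarse $G$-coverings.
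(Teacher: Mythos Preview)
Your proposal is correct and follows essentially the same route as the paper: restrict the transfer of \cref{kogpwreregwe91} to the subfunctor as in \cref{pkgtrgtegw}, build the inverse $r$ on objects identically, and for morphisms establish boundedness of the matrix \eqref{werfwergwg} via the partition/covering data from \cref{weiogowegrefwfwerfwf1} together with the $\sqrt{n}\|B\|$ estimate, then finish with the isomorphisms $\alpha,\beta$. The paper packages the boundedness step as a separate \cref{kohpertegtrg} and writes the partial isometries out explicitly (they involve both cocycles $\sigma$ and $\sigma'$, not only $\sigma'$ as you wrote, and a locally adjusted section $t$ obtained by parallel transport from a base point of $\hat W_{i}$), but the structure of the argument is the same as what you sketched.
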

\begin{proof}
We reuse that argument for \cref{pkgtrgtegw}. 
The construction of the natural transformation in \eqref{wergegwerf1} 
by restriction  of the one from  \cref{kogpwreregwe91} is the same as in  the proof of \cref{pkgtrgtegw}. 
In order to show that it is an equivalence, we again    construct   for every branched coarse $G$-covering 
 $(f:X\to Y,\cZ)$  an inverse 
\begin{equation}\label{werfewrfwerwerffwrefw}r:\bV^{G,\ctr}_{\bC}(X,f^{-1}(\cZ))\to  \bV^{\ctr}_{\bC}(Y,\cZ)\ .
\end{equation}
The construction of $r$ on the level of objects is the same as in \cref{pkgtrgtegw}.
The value $A:(M,\mu)\to (M',\mu')$ of $r$ on a morphism $B:(N,\sigma,\nu)\to (N',\sigma',\nu')$ was defined in terms of a matrix $(A_{y',y})$ in \eqref{werfwergwg}. 
 In the present case we must again must make sure that this matrix determines a bounded operator.

Let $U$ denote the propagation of $B$ and set $V:=f(U)$.
For every $y $ in $Y$ we define $$A_{y}:=\sum_{y'\in Y} u'(y')A_{y',y} :N(s(y))\to M\ .$$
This sum has finitely many non-zero summands since the matrix $(A_{y',y})$ has propagation $V:=f(U)$ 
and there are only finitely may $y'$ in $V[y]$ with $0\not=A_{y',y}$.
We must show that
\begin{equation}\label{gwergwerf} \sum_{y\in Y} A_{y}u(y)^{*}
\end{equation}
converges and defines a bounded morphism $A:M\to M'$

The construction \cref{weiogowegrefwfwerfwf1} provides  a member $Z$ of $\cZ$, a covering $\cW=(W_{i})_{i\in I}$ with $\tilde V:=\bd(\cW)\in \cC_{Y}$ and $n:=\mult(\cW)<\infty$, and
a partition $\hat \cW=(\hat W_{i})_{i\in I}$ of $Z^{c}$ such that $V(W_{i})\subseteq \hat W_{i}$ for all $i$ in $I$   and the parallel transport at scale $\tilde V$ is defined on $Z^{c}$.

\begin{lem}\label{kohpertegtrg}
The sum  in \eqref{gwergwerf} converges and defines $A:M\to M'$ in $M\bC$ with $$\|A\|\le \sqrt{n}\|B\|\ .$$ 
\end{lem}
\begin{proof}
 We fix $i$ in $I$ and 
  define 
$$A_{i}:=  \sum_{y\in \hat W_{i}} A_{y}u(y)^{*}\ .$$
 Since $\hat W_{i}$ is $\tilde V$-bounded the sum has finitely many non-zero terms. 
 We claim that $\|A_{i}\|\le \|B\|$. 
 We fix a base point $y_{i}$ in $\hat W_{i}$  and choose a section $t:W_{i}\to f^{-1}(W_{i})$ uniquely determined by the condition
  that $t(y')$ in $f^{-1}(y')$  is the parallel transport of $s(y_{i})$. This is possible since 
 $\hat W_{i}$ is $\tilde V$-bounded and the parallel transport at scale $\tilde V$ is defined on $Z^{c}$.
 We define the function $h:W_{i}\to G$ such that $h(y') t(y')=s(y')$.
  We further define
$$a_{i}:=\sum_{y\in \hat W_{i}}   \sigma^{-1}_{h(y)}   v(s(y))u(y)^{*}  :M \to N\ , \quad b_{i}:=\sum_{y'\in W_{i}}   u'(y') v'(s(y'))^{*}\sigma'_{h(y')} :  N'\to M'\ .$$ 
Since $W_{i}$ and $\hat W_{i}$ are bounded these sums have finitely many non-zero terms.
Since $s$ is injective these maps are  partial isometries. More precisely,
$a_{i}$ is an isometry between $\mu(\hat W_{i})$ and $\nu(t(\hat W_{i}))N$, and
$b_{i}$ is an isometry from $\nu'(t(W_{i}))N' $ to $\mu'(W_{i})M'$. 
We  observe that
$A_{i}=b_{i}Ba_{i}$ since for every $y$ in $Z^{c}$  one can check that
$A_{y}=b_{i}Ba_{i} u(y)$. 
We further conclude that $\|A_{i}\|\le \|B\|$.

 Since $(\hat W_{i})_{i\in I}$ is a partition of $Z^{c}$
 we have  $$A_{i'}\mu(\hat W_{i})=\left\{\begin{array}{cc} A_{i} &i=i'\\0 &else  \end{array} \right.\ .$$
Furthermore, by   \cite[Lem. 4.15]{cank}  the object $M$ is the AV-sum of the pairwise orthogonal 
family of images $(\mu(\hat W_{i})M)_{i\in I}$.
Note that $A_{i}$ is a finite  partial sum of \eqref{gwergwerf}.
In order to show the assertion of the lemma it suffices to show that
$$\sup_{F\subseteq I} \|\sum_{i\in F}  A_{i} A_{i}^{*} \|\le n\|B\|^{2} \ ,$$
where $F$ runs over the finite subsets of $I$.

Since the   matrix $(A_{y',y})$ is  $V$-controlled and $V[\hat W_{i}]\subseteq W_{i}$   we have the equality
$A_{i}=\mu'( W_{i})  A_{i}$ for every $i$ in $I$.
 We furthermore have the operator inequality
 $$A_{i}A_{i}^{*}=\mu'(  W_{i})    A_{i}  A_{i}^{*}\mu'( W_{i})  \le \mu'( W_{i})  \|A_{i}\|^{2}\ .$$
Using $n=\mult(\hat \cW)$ we get  for every finite subset $F$ of $I$
$$\|\sum_{i\in F}A_{i}A_{i}^{*}\|^{2}\le \|B\|^{2} \|\sum_{i\in F} \mu'( W_{i})  \| \le n\|B\|^{2}$$ which implies the desired estimate.
\end{proof} \cref{kohpertegtrg} finishes the proof of the theorem.
 \end{proof}

Recall that $\bC$ is a $C^{*}$-category 
 which admits all AV-sums and such that $M\bC$ is idempotent complete.
 Let $H:\Add_{\C}\to \cC$ be a homological functor for $\C$-linear additive categories \cref{kopwegrfw} 
 and consider the functors 
 $$H\cX^{G,\ctr}_{\bA}:G\BC\to \cC\ , \quad H\cX_{\bC}^{\ctr}:\BC\to \cC\ .$$
Recall 
  \cref{hlkepthetrgege}.


\begin{kor}\label{kophertgegertg}
The  functors $H \cX^{G,\ctr}_{\bC}$ and $  H \cX^{\ctr}_{\bC}$ are related by a natural transfer transformation
\begin{equation} 
\tr:t^{*}H \cX^{\ctr}_{\bC}\to 
s^{*} \cX^{G,\ctr}_{\bC}:G\BCov^{s\fdsc}\to \cC
\end{equation}
which becomes an equivalence after further restriction to $G\BCov^{\fdsc}$.
\end{kor}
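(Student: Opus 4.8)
The plan is to deduce this corollary from \cref{igopetgertgertgt} by the same formal argument that, in the algebraic setting, produced the transfer equivalence relating $H\cX_{\bA}$ and $H\cX^{G}_{\bA}$ out of \cref{pkgtrgtegw}. First I would record that, by \cref{kopwegrfw}, the homological functor $H$ inverts equivalences of $\C$-linear additive categories, so it factors through the Dwyer--Kan localization $\ell:\Add_{\C}\to\Add_{\C,2,1}$, say $H\simeq H_{2,1}\circ\ell$ with $H_{2,1}:\Add_{\C,2,1}\to\cC$. Together with the definition \eqref{fwqedewdqweddqe} this yields natural equivalences $H\cX^{G,\ctr}_{\bC}\simeq H_{2,1}\ell\bV^{G,\ctr}_{\bC}:G\BC\to\cC$ and $H\cX^{\ctr}_{\bC}\simeq H_{2,1}\ell\bV^{\ctr}_{\bC}:\BC\to\cC$, exactly in the style of \eqref{gwrewfwerfwf}.

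Next I would pass to relative categories. For a pair $(X,\cY)$ the inclusion $\bV^{G,\ctr}_{\bC}(\cY)\subseteq\bV^{G,\ctr}_{\bC}(X)$ is a Karoubi filtration by \cref{jiopgwetgergerfw}.\ref{gjhghjjh4}, and since $H$ sends Karoubi filtrations to fibre sequences one gets, as in the algebraic case, $H\cX^{G,\ctr}_{\bC}(X,\cY)\simeq H(\bV^{G,\ctr}_{\bC}(X,\cY))\simeq H_{2,1}\ell\bV^{G,\ctr}_{\bC}(X,\cY)$, and similarly without $G$. Hence the two functors appearing in the statement, $t^{*}H\cX^{\ctr}_{\bC}$ and $s^{*}H\cX^{G,\ctr}_{\bC}$ on $G\BCov^{s\fdsc}$, are naturally equivalent to $H_{2,1}$ composed with the functors $t^{*}\ell\bV^{\ctr}_{\bC}$ and $s^{*}\ell\bV^{G,\ctr}_{\bC}$ occurring in \cref{igopetgertgertgt}.

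Then I would define $\tr$ to be $H_{2,1}$ applied to the natural transformation \eqref{wergegwerf1}, transported along the equivalences just described; this is the desired transfer transformation on $G\BCov^{s\fdsc}$. Since \eqref{wergegwerf1} becomes a natural equivalence on $G\BCov^{\fdsc}$, and $H_{2,1}$ --- being induced by the equivalence-preserving functor $H$ --- carries equivalences to equivalences, the restriction of $\tr$ to $G\BCov^{\fdsc}$ is an equivalence. I do not expect a genuine obstacle here: the real content, namely the construction of the transfer on controlled objects and the operator-norm bounds controlled by finite asymptotic dimension, is already carried out in \cref{kogpwreregwe91} and \cref{igopetgertgertgt}, so what remains is only the bookkeeping of the factorization through $\ell$ and of its compatibility with forming the quotient categories $\bV^{\ctr}_{\bC}(Y,\cZ)$ and $\bV^{G,\ctr}_{\bC}(X,f^{-1}(\cZ))$, which is routine given the functoriality recorded in \cref{jiopgwetgergerfw}.
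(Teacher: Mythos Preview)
Your proposal is correct and matches the paper's approach: the corollary is an immediate consequence of \cref{igopetgertgertgt} by factoring the homological functor $H$ through $\ell$ and applying $H_{2,1}$, exactly in parallel to how \cref{poiobipdfgbdfgbd} and \cref{rgrthegkeporg} were derived from \cref{kogpwreregwe9} and \cref{kogpwreregwe91} via \eqref{gwrewfwerfwf}. The paper does not even spell out a separate proof here, treating the passage from \cref{igopetgertgertgt} to \cref{kophertgegertg} as routine.
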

%

\begin{kor}
The transfers or transfer equivalences are compatible with natural transformations between coarse homology theories induced by natural transformation between homological functors.
\end{kor}

\begin{ex}  Let $\cL$ be any $\C$-algebra. The following square of functors on ${}^{G}\BCov^{s\fdsc}$ commutes
$$\xymatrix{t^{*}K^{\alg}\cZ_{\cL}\cX^{G,\ctr}_{\bC}\ar[r]\ar[d]^{\tr} &t^{*}K^{\alg}H\cZ_{\cL}\cX^{G,\ctr}_{\bC} \ar[d]^{\tr} \\ s^{*}K^{\alg}\cZ_{\cL}\cX^{G,\ctr}_{\bC} \ar[r] &s^{*}K^{\alg}H\cZ_{\cL}\cX^{G,\ctr}_{\bC} } \ ,$$ 
where the horizontal maps are induced by the natural transformation $K^{\alg}\to K^{\alg}H$.
\hB
 \end{ex}

\section{The transfer for topological coarse $K$-homology}\label{lkhperthtgretg}

In this section we show that the  equivariant topological coarse $K$-homology $K\cX^{G}_{\bC}$ 
constructed in \cite{coarsek} has transfers or transfer equivalences for branched coarse coverings under finite asymptotic dimension assumptions.

Let $\bC$ be a   $C^{*}$-category  with $G$-action which  admits all AV-sums  and whose multiplier category $M\bC$ is   idempotent complete.  We then have the functor
$$\bV^{G}_{\bC}:G\BC\to \Ccat$$
which associates to every $G$-bornological coarse space the $C^{*}$-category
$\bV^{G}_{\bC}(X)$ of equivariant locally locally finite objects in $\bC$ introduced in \cite[Sec. 5]{coarsek} \footnote{In the notation of the reference  \cite{coarsek} the $C^{*}$-category     $\bV^{G}_{\bC}(X)$ from the present paper  denoted by $\overline{\bC}^{G,\ctr}_{\lf}$.}.

Note that the involution on $M\bC$ induces an involution on $\bV^{G,\ctr}_{\bC}$  described in  \cref{kogpegregwerf}  so that $\bV^{G,\ctr}_{\bC}$ becomes a $\C$-linear $*$-category. It further inherts a $C^{*}$-norm from $M\bC$.
By an inspection of the definitions in   \cref{kogpegregwerf}  and  \cite[Sec. 5]{coarsek}  we have an equivalence  
 \begin{equation}\label{vwercecvfsdv}\bV^{G}_{\bC}:=\overline{\bV^{G,\ctr}_{\bC}}\ ,
\end{equation} 
i.e. $  \bV^{G}_{\bC}(X)$ is the completion of $\bV^{G,\ctr}_{\bC}(X)$ in the  $C^{*}$-norm induced from $M\bC$.

Let $(f:X\to Y,\cZ)$ be a   branched coarse covering relative to the big family $\cZ$.
  \begin{ddd}\label{jgoiertbgdbgf1}\mbox{}
\begin{enumerate}
\item By   $s\fadim$ we denote the condition that the source   $(X,f^{-1}(\cZ) )$ eventually has finite asymptotic dimension. 
\item By   $\fadim$ we denote the condition that  the target
$(Y,\cZ)$  eventually has finite asymptotic dimension.  
\end{enumerate}
\end{ddd}

Note that eventually finite asymptotic dimension of he target $ (Y,\cZ)$  implies by \cref{woigwgwegferfrefw} that
also the source  $(X,f^{-1}(\cZ))$  eventually has  finite asymptotic dimension.

We let $\ell:\Ccat \to \Ccat_{2,1}$ denote the Dwyer-Kan localization at the unitary equivalences.
We realize  $\Ccat_{2,1}$ by the strict $2$-category of unital $C^{*}$-categories, unital functors and unitary equivalences.

\begin{theorem}\label{kogpwreregwe913}
We have a canonical natural transformation
\begin{equation}\label{vfsdvcsdvsfdv}\tr:t^{*}\ell \bV^{G}_{\bC}\to s^{*}\ell \bV^{G}_{\bC} :{}^{G}\BCov^{s\fadim}  \to \Ccat_{2,1}\ .
\end{equation}
\end{theorem}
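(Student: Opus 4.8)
The plan is to obtain this transformation from the transfer on the uncompleted controlled categories constructed in \cref{kogpwreregwe91}, using the identification \eqref{vwercecvfsdv} which exhibits $\bV^{G}_{\bC}(X)$ — and, passing to Karoubi quotients, its relative versions $\bV^{G}_{\bC}(X,\cY)$ — as the completion of $\bV^{G,\ctr}_{\bC}(X)$ in the $C^{*}$-norm induced from $M\bC$. Since an eventually finite asymptotic dimension $\le n$ of $(X,f^{-1}(\cZ))$ is in particular an eventually finite dimension at coarse scales, there is a full inclusion ${}^{G}\BCov^{s\fadim}\subseteq {}^{G}\BCov^{s\fdsc}$; I would restrict the transformation \eqref{vsdfvsdfvcsdfc} to ${}^{G}\BCov^{s\fadim}$ and then extend the functors $\tr_{f}$ and the structure isomorphisms $t_{\Phi}$ along the completion $\bV^{G,\ctr}_{\bC}\to\bV^{G}_{\bC}$.

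The key point is a norm estimate that is uniform in the transferred morphism. First, $\tr_{f}\colon\bV^{G,\ctr}_{\bC}(Y,\cZ)\to\bV^{G,\ctr}_{\bC}(X,f^{-1}(\cZ))$ leaves objects unchanged, and the defining matrix \eqref{werfwerweg} is compatible with the involution — because parallel transport is invertible and one may take the controlling entourages symmetric — so $\tr_{f}(A^{*})=\tr_{f}(A)^{*}$ and $\tr_{f}$ is a $*$-functor on the dense $*$-subcategory. Now for a branched coarse covering in ${}^{G}\BCov^{s\fadim}$ the source $(X,f^{-1}(\cZ))$ eventually has asymptotic dimension $\le n$ for a fixed $n$, so in \cref{weiogowegrefwfwerfwf} the covering $\cW$ can always be chosen with $\mult(\cW)\le n+1$, independently of the entourage $U$ and hence of the propagation of $A$. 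Inserting this into \cref{rekgopewergwerfwerf} gives $\|\tr_{f}([A])\|\le\sqrt{n+1}\,\|[A]\|$ for every morphism class $[A]$ of $\bV^{G,\ctr}_{\bC}(Y,\cZ)$, where for the quotient $C^{*}$-norm one infimizes over representatives of $[A]$ (supported on $Z^{c}$, with nearly minimal norm). Thus $\tr_{f}$ is bounded on all morphism spaces with one and the same bound and extends uniquely to a $*$-functor $\tr_{f}\colon\bV^{G}_{\bC}(Y,\cZ)\to\bV^{G}_{\bC}(X,f^{-1}(\cZ))$ of $C^{*}$-categories; compatibility with composition survives by continuity.

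For the $(2,1)$-categorical naturality I would reuse the natural isomorphisms $t_{\Phi}$ from the proofs of \cref{kogpwreregwe9} and \cref{kogpwreregwe91}: choosing the AV-sums presenting the transferred objects with isometric structure maps, these reordering isomorphisms are unitaries, hence bounded, hence extend to the completions, and their compatibility with composition in ${}^{G}\BCov$ — an equality of continuous maps valid on a dense subcategory — passes to the completions. Applying the Dwyer--Kan localization $\ell\colon\Ccat\to\Ccat_{2,1}$ then produces the asserted natural transformation \eqref{vfsdvcsdvsfdv}.

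I expect the main obstacle to be precisely the uniformity of the bound $\sqrt{n+1}$: over ${}^{G}\BCov^{s\fdsc}$ the multiplicity delivered by \cref{weiogowegrefwfwerfwf} may grow with the propagation of $A$, so $\tr_{f}$ lives only on the uncompleted categories; it is exactly the strengthening from finite dimension at coarse scales to finite asymptotic dimension that makes $\tr_{f}$ norm continuous and thereby extendable to the $C^{*}$-completion. A secondary, routine point is that the partial-sum and AV-sum manipulations underlying the definitions of $\tr_{f}$ and $t_{\Phi}$ remain valid for the completed morphism spaces, which follows from \cite[Lem. 4.15]{cank} and the density of $\bV^{G,\ctr}_{\bC}$ in $\bV^{G}_{\bC}$.
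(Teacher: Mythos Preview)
Your proposal is correct and follows essentially the same line as the paper's own proof: restrict the uncompleted transfer of \cref{kogpwreregwe91} to ${}^{G}\BCov^{s\fadim}$, use the finite-asymptotic-dimension hypothesis to make the multiplicity bound in \cref{rekgopewergwerfwerf} independent of the propagation, deduce a uniform norm bound $\sqrt{n}$ (your $\sqrt{n+1}$, same thing up to indexing) and extend by continuity to the $C^{*}$-quotients, noting that the fillers $t_{\Phi}$ are unitary. Your explicit remarks that $\tr_{f}$ is a $*$-functor and your discussion of why $s\fdsc$ alone would not suffice are a bit more detailed than the paper but entirely in line with it.
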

\begin{proof}
We reuse the proof of \cref{kogpwreregwe91}. 
Instead of a quotient by a Karoubi filtration we now have
$$  \bV^{G}_{\bC}(X,\cY)=\frac{  \bV^{G}_{\bC}(X)}{  \bV^{G}_{\bC}(\cY\subseteq X)}$$
where $\bV^{G}_{\bC}(\cY\subseteq X)$ is the ideal  in $\bV^{G}_{\bC}(X)$ obtained by completing the Karoubi filtration $\bV^{G,\ctr}_{\bC}(\cY)$. Note that $ \bV^{G}_{\bC}(\cY\subseteq X)\cap   \bV^{G,\ctr}_{\bC}(X)= \bV^{G,\ctr}_{\bC}(\cY)$.
 We can therefore define the transfer
 on the dense wide subcategory  $  \bV^{G,\ctr}_{\bC}(X,\cY)\subseteq   \bV^{G}_{\bC}(X,\cY)$
 and then extend it by continuity.
 
Assume that $(X,f^{-1}(\cZ))$ eventually has asymptotic dimension $n-1$.
We take this integer to bound the multiplicities of the coverings $\cW$ used in the proof of  \cref{rekgopewergwerfwerf}.
  By \cref{rekgopewergwerfwerf} the norm of the transfer of a controlled morphism $A$ is then bounded by $\sqrt{n}\|A\|$.
Thus the norm of the uncompleted transfer map $\tr_{f}:\bV^{G,\ctr}_{\bC}(Y,\cZ)\to\bV^{G,\ctr}_{\bC}(X,f^{-1}(\cZ))   $ is bounded by $\sqrt{n}$.  It therefore extends by continuity to the desired transfer $\tr_{f}:\bV^{G }_{\bC}(Y,\cZ)\to\bV^{G }_{\bC}(X,f^{-1}(\cZ))$ between the completed categories.

We finally observe that the morphism $t_{\Phi}$ in \eqref{sdbsdfbkopsgdb} is unitary by construction.
So the fillers of the squares are implemented by unitary natural isomorphisms.
\end{proof}

 We consider the composition   \begin{equation}\label{ojoijiorfwrf}K\cX^{G}_{\bC}:=K^{\topp}\circ \bV^{G}_{\bC}:G\BC\to \Mod(KU)\ .
\end{equation} The following is taken from \cite{coarsek}.
\begin{theorem}\label{lrgerhrtgerg}
The functor $K\cX^{G}_{\bC}:G\BC\to \Sp$ is an equivariant coarse homology theory. In addition  it is continuous, strong and strongly additive.
\end{theorem}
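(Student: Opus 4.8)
The plan is to deduce every assertion from the structural properties of the functor $\bV^{G}_{\bC}:G\BC\to \Ccat$ established in \cite[Sec. 5--6]{coarsek}, combined with the exactness, invariance and continuity properties of the topological $K$-theory functor $K^{\topp}:\Ccat\to \Mod(KU)$. This is the $C^{*}$-categorical analogue of the deduction of \cref{jrtzjrtzhrt} and \cref{hlkepthetrgege} from \cref{kopwegrfw} and \cref{jiopgwetgergerfw}, so I would essentially transcribe that argument (carried out in detail in \cite[Sec. 7]{coarsek}) in the $C^{*}$-setting.

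First I would recall the relevant properties of $\bV^{G}_{\bC}$: it sends coarse equivalences to unitary equivalences, is $u$-continuous, sends flasque and weakly flasque $G$-bornological coarse spaces to flasque $C^{*}$-categories, sends a pair $(X,\cY)$ to an ideal inclusion $\bV^{G}_{\bC}(\cY\subseteq X)\subseteq \bV^{G}_{\bC}(X)$ with quotient $\bV^{G}_{\bC}(X,\cY)$, and satisfies excision in the sense that for an invariant excisive pair $(Z,\cY)$ on $X$ the induced map $\bV^{G}_{\bC}(Z)/\bV^{G}_{\bC}(Z\cap\cY)\to \bV^{G}_{\bC}(X)/\bV^{G}_{\bC}(\cY)$ is a unitary equivalence; via \eqref{vwercecvfsdv} these are the completed versions of the statements collected for $\bV^{G,\ctr}_{\bC}$ in \cref{jiopgwetgergerfw}. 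I would then recall the corresponding properties of $K^{\topp}$: it inverts unitary equivalences, commutes with filtered colimits, sends a short exact sequence of $C^{*}$-categories to a fibre sequence of $KU$-modules, annihilates flasque $C^{*}$-categories, and sends products of $C^{*}$-categories to products.

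Next I would assemble the axioms formally. Coarse invariance of $K\cX^{G}_{\bC}$ is immediate from invariance of $\bV^{G}_{\bC}$ and of $K^{\topp}$. Excision follows by applying $K^{\topp}$ to the short exact sequence $\bV^{G}_{\bC}(\cY\subseteq X)\to \bV^{G}_{\bC}(X)\to \bV^{G}_{\bC}(X,\cY)$ and combining the resulting fibre sequence with the excision equivalence for $\bV^{G}_{\bC}$. The $u$-continuity of $K\cX^{G}_{\bC}$ follows from $u$-continuity of $\bV^{G}_{\bC}$ together with commutation of $K^{\topp}$ with filtered colimits, and vanishing on weakly flasque spaces follows from the corresponding properties of $\bV^{G}_{\bC}$ and $K^{\topp}$; hence $K\cX^{G}_{\bC}$ is an equivariant coarse homology theory, and it is strong since it even kills weakly flasques. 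Continuity is again $u$-continuity together with compatibility with the relevant filtered colimits, and strong additivity follows because $\bV^{G}_{\bC}$ sends a free union $\bigsqcup_{i}X_{i}$ to the product $\prod_{i}\bV^{G}_{\bC}(X_{i})$ while $K^{\topp}$ preserves products.

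The one point requiring real care — and the one on which \cite[Sec. 6]{coarsek} spends effort — is the exactness input: one must check that completing the Karoubi filtration $\bV^{G,\ctr}_{\bC}(\cY)\subseteq \bV^{G,\ctr}_{\bC}(X)$ yields an honest closed ideal with quotient $\bV^{G}_{\bC}(X,\cY)$, so that $K^{\topp}$ genuinely applies to a short exact sequence of $C^{*}$-categories, and that the excision equivalence for the uncompleted categories survives completion. Everything else is routine bookkeeping with the axioms.
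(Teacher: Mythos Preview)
Your proposal is correct and matches the approach: the paper itself offers no proof of this theorem, it simply records the statement as ``taken from \cite{coarsek}'', and your sketch is a faithful outline of what that reference does (combine the structural properties of $\bV^{G}_{\bC}$ with the homological properties of $K^{\topp}$, exactly as in the additive/uncompleted cases). One minor imprecision: your sentence on continuity conflates it with $u$-continuity; continuity in the sense of \cite{equicoarse} means $E^{G}(X)\simeq \colim_{F}E^{G}(F)$ over locally finite $G$-invariant subsets $F$, and the argument uses that every object of $\bV^{G}_{\bC}(X)$ has locally finite support, not the entourage colimit.
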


  Since the topological $K$-theory functor $K^{\topp}:\Ccat\to \Mod(KU)$
sends  unitary equivalences to equivalences it has a factorization
$$ \xymatrix{\Ccat\ar[rr]^{K^{\topp}}\ar[dr]^{\ell}&&\Mod(KU)\\&\Ccat_{2,1}\ar@{-->}[ur]^{K^{\topp}_{2,1}}&}\ .$$
We conclude   an equivalence of functors
\begin{equation}\label{gwrewfwerfwf1}K\cX^{G}_{\bC}\simeq K^{\topp}_{2,1}\ell \bV_{\bC}^{G}:G\BC\to \Mod(KU)\ .
\end{equation} 
 Therefore \cref{kogpwreregwe913} immediately implies:
\begin{theorem}\label{okjgpwerergw9} The functor $K\cX^{G}_{\bC}$ admits the structure of transfers 
\begin{equation} 
tr:t^{*}K\cX^{G}_{\bC}\to s^{*}K\cX^{G}_{\bC}:{}^{G}\BCov^{s\fadim}\to \Sp\ .
\end{equation}
\end{theorem}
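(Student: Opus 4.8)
The plan is to transport the $C^{*}$-categorical transfer of \cref{kogpwreregwe913} through topological $K$-theory, exactly as the algebraic transfer of \cref{poiobipdfgbdfgbd} was deduced from \cref{kogpwreregwe9}. Recall from the discussion preceding the statement that $K^{\topp}:\Ccat\to\Mod(KU)$ inverts unitary equivalences, hence factors as $K^{\topp}\simeq K^{\topp}_{2,1}\circ\ell$ through the Dwyer--Kan localization $\ell:\Ccat\to\Ccat_{2,1}$, and that this gives the identification \eqref{gwrewfwerfwf1}, $K\cX^{G}_{\bC}\simeq K^{\topp}_{2,1}\ell\bV^{G}_{\bC}$ as functors $G\BC\to\Mod(KU)$.

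The first point to check is that this identification upgrades to an equivalence of functors $G\BC^{2}\to\Mod(KU)$. For a pair $(X,\cY)$ one has $\bV^{G}_{\bC}(X,\cY)=\bV^{G}_{\bC}(X)/\bV^{G}_{\bC}(\cY\subseteq X)$, an exact sequence of $C^{*}$-categories; since $K^{\topp}$ is exact and continuous and the value of $K\cX^{G}_{\bC}$ on a big family is the corresponding filtered colimit, $K^{\topp}_{2,1}\ell\bV^{G}_{\bC}(X,\cY)$ is naturally identified with $\Cofib(K\cX^{G}_{\bC}(\cY)\to K\cX^{G}_{\bC}(X))$, which is precisely the pair extension \eqref{qfewdqewfefr} of $K\cX^{G}_{\bC}$ (already implicit in \cref{lrgerhrtgerg}). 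Hence $t^{*}K^{\topp}_{2,1}\ell\bV^{G}_{\bC}\simeq t^{*}K\cX^{G}_{\bC}$ and $s^{*}K^{\topp}_{2,1}\ell\bV^{G}_{\bC}\simeq s^{*}K\cX^{G}_{\bC}$ on ${}^{G}\BCov^{s\fadim}$. Post-composing the natural transformation $\tr:t^{*}\ell\bV^{G}_{\bC}\to s^{*}\ell\bV^{G}_{\bC}$ of \cref{kogpwreregwe913} with $K^{\topp}_{2,1}:\Ccat_{2,1}\to\Mod(KU)$ then produces the natural transformation $t^{*}K^{\topp}_{2,1}\ell\bV^{G}_{\bC}\to s^{*}K^{\topp}_{2,1}\ell\bV^{G}_{\bC}$, which under these identifications is the desired transfer $\tr:t^{*}K\cX^{G}_{\bC}\to s^{*}K\cX^{G}_{\bC}$ on ${}^{G}\BCov^{s\fadim}$.

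I do not expect a genuine obstacle here: all the substantive work has been done in \cref{kogpwreregwe913} --- in particular the uniform operator-norm bound $\|B\|\le\sqrt{n}\|A\|$ of \cref{rekgopewergwerfwerf}, with $n-1$ bounding the eventual asymptotic dimension of the source (this is exactly the hypothesis $s\fadim$), which is what lets the transfer extend by continuity from $\bV^{G,\ctr}_{\bC}$ to the completions $\bV^{G}_{\bC}$, together with the unitarity of the coherence $2$-morphisms $t_{\Phi}$, which ensures the filler squares descend to $\Ccat_{2,1}$. The only care needed is to keep the domain restricted to ${}^{G}\BCov^{s\fadim}$, since that finiteness condition is precisely what makes \cref{kogpwreregwe913}, and hence the present transfer, defined.
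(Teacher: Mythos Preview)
Your proof is correct and follows exactly the paper's approach: the paper simply states that \cref{kogpwreregwe913} ``immediately implies'' the result via the factorization \eqref{gwrewfwerfwf1}, and you spell out precisely this deduction. Your additional verification that the identification $K\cX^{G}_{\bC}\simeq K^{\topp}_{2,1}\ell\bV^{G}_{\bC}$ is compatible with the pair extension \eqref{qfewdqewfefr} (via exactness of $K^{\topp}$) is a detail the paper leaves implicit, but your argument is otherwise identical in substance.
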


We now assume that $\bC$ has the trivial $G$-action. We consider the functors
$$\bV^{G}_{\bC}:G\BC\to \Ccat\ , \quad \bV_{\bC}:\BC\to  \Ccat\ .$$
\begin{theorem}\label{klgpwegregw9}
The transfer from \eqref{vfsdvcsdvsfdv} restricts to a transfer \begin{equation}\label{wergegwerf}\tr:t^{*}\ell \bV_{\bC}\stackrel{ }{\to} s^{*}\ell \bV^{G}_{\bC}:G\BCov^{s\fadim}\to \Ccat_{2,1}\ .
\end{equation}
which becomes an equivalence after further restriction to $G\BCov^{\fadim}$.
\end{theorem}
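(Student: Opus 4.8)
The plan is to reuse, almost verbatim, the proofs of \cref{pkgtrgtegw} and \cref{igopetgertgertgt}, adding only the bookkeeping needed to stay inside the $C^{*}$-world and to pass to completions. First I would identify $\bV_{\bC}(Y)$ (resp.\ $\bV^{\ctr}_{\bC}(Y)$) with the wide $*$-subcategory of $\bV^{G}_{\bC}(Y)$ (resp.\ $\bV^{G,\ctr}_{\bC}(Y)$) consisting of those objects $(M,\rho,\mu)$ whose cocycle $\rho$ is the trivial $G$-action; for varying $Y$ this realizes $\bV_{\bC}$ as a subfunctor of $\bV^{G}_{\bC}$ and $\bV^{\ctr}_{\bC}$ as a subfunctor of $\bV^{G,\ctr}_{\bC}$. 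Restricting the transfer of \cref{kogpwreregwe913} (equivalently, its uncompleted version \eqref{vsdfvsdfvcsdfc}) and the fillers $t_{\Phi}$ from \eqref{sdbsdfbkopsgdb} to this subfunctor produces, on uncompleted categories, the transformation \eqref{wergegwerf}; since by \cref{rekgopewergwerfwerf} the transfer of any controlled morphism $A$ over a source $(X,f^{-1}(\cZ))$ of eventually finite asymptotic dimension $\le n-1$ obeys $\|\tr_{f}(A)\|\le\sqrt{n}\,\|A\|$ with $n$ independent of the controlling entourage, this transformation extends by continuity to the completions, yielding \eqref{wergegwerf}. It is precisely the uniform (asymptotic) dimension bound that makes the continuous extension possible, which is why the condition imposed here is $s\fadim$ and not merely $s\fdsc$.

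Next I would show that the restriction to $G\BCov^{\fadim}$ is an equivalence by constructing, for each branched coarse $G$-covering $(f:X\to Y,\cZ)$ with $(Y,\cZ)$ of eventually finite asymptotic dimension, an inverse $r:\bV^{G}_{\bC}(X,f^{-1}(\cZ))\to\bV_{\bC}(Y,\cZ)$, exactly as in the proof of \cref{igopetgertgertgt}. On objects one picks $Z$ in $\cZ$ with $G$ acting freely and transitively on the fibres of $f$ over $Z^{c}$ together with a set-theoretic section $s:Z^{c}\to f^{-1}(Z^{c})$, and sets $r(N,\sigma,\nu):=(M,\mu)$ with $M$ the AV-sum of $(N(s(y)))_{y\in Z^{c}}$; one checks that $M$ carries the trivial $G$-action and that $(M,\mu)$ is a locally finite controlled object. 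On morphisms one uses the matrix \eqref{werfwergwg}, and the decisive estimate $\|r(B)\|\le\sqrt{n}\,\|B\|$, with $n$ a multiplicity bound for a covering furnished by \cref{weiogowegrefwfwerfwf1} applied to $(Y,\cZ)$, is supplied by \cref{kohpertegtrg}; the uniformity of $n$ --- hence the existence of a continuous extension of $r$ to the completed category --- is exactly where eventually finite asymptotic dimension of $(Y,\cZ)$ is used.

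Finally I would observe that the unit and counit $\alpha:r\circ\tr_{f}\xrightarrow{\cong}\id$ and $\beta:\tr_{f}\circ r\xrightarrow{\cong}\id$, built in the proof of \cref{pkgtrgtegw} from reorderings of pairwise orthogonal AV-sum decompositions composed with the structure morphisms $\rho_{g}$ and $\sigma_{g}$, are unitary in the present setting, hence norm-preserving and extendable by continuity; so $\tr_{f}$ and $r$ are mutually inverse unitary equivalences, and passing through the Dwyer--Kan localization $\ell$ exhibits \eqref{wergegwerf} as a natural equivalence on $G\BCov^{\fadim}$. I expect the only genuine subtlety to be checking that the uncompleted norm bounds of \cref{rekgopewergwerfwerf} and \cref{kohpertegtrg} are truly uniform over all controlled morphisms under the $\fadim$ and $s\fadim$ hypotheses --- that is, that a single integer $n$ serves at every scale --- since everything else is a transcription of the algebraic and uncompleted arguments, now carried out with $*$-functors and isometric (rather than arbitrary) inclusions and then completed in the $C^{*}$-norm inherited from $M\bC$.
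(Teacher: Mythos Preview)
Your proposal is correct and follows essentially the same approach as the paper: restrict the transfer of \cref{kogpwreregwe913} to the subfunctor with trivial $G$-action, build the inverse $r$ on the dense uncompleted subcategory as in \cref{igopetgertgertgt}, use the uniform multiplicity bound from eventual finite asymptotic dimension together with \cref{kohpertegtrg} to extend $r$ by continuity, and observe that the $\alpha$, $\beta$ from \cref{pkgtrgtegw} are unitary. Your write-up is more explicit than the paper's, but the strategy and the key lemmas invoked are identical.
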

\begin{proof}
We reuse the proof of \cref{igopetgertgertgt}.
Let $(f:X\to Y,\cZ)$ be in $G\BCov^{\fadim}$.
Assume that $(Y,\cZ)$ eventually has asymptotic dimension $n-1$. Then we can choose the coverings 
in the construction of the inverse $r$  in \eqref{werfewrfwerwerffwrefw}  on the dense subcategory
$\bV^{G,\ctr}_{\bC}(X,f^{-1}(\cZ))$ of $\bV^{G}_{\bC}(X,f^{-1}(\cZ))$  all with multiplicity $n$.
By  \cref{kohpertegtrg} the functor $r$ in  \eqref{werfewrfwerwerffwrefw} is bounded by $\sqrt{n}$ and therefore extends by continuity to
   $r:\bV^{G}_{\bC}(X,f^{-1}(\cZ))\to \bV_{\bC}(Y,\cZ)$. 

The transformations $\alpha$ and $\beta$ constructed at the end of the proof of \cref{pkgtrgtegw} are implemented by unitaries and therefore again witness that $r$ is an inverse of the transfer in \eqref{wergegwerf}. 
\end{proof}

\begin{rem}
In the special case of a asymptotically faithful covering of a space of graphs an analogue of 
\cref{klgpwegregw9} was shown in \cite[Lem. 3.12]{Willett_2012I}. 
Instead of the finite asymptotic dimension assumption on the source in this reference one assumes the uniform operator norm
localization property  \cite[Def. 3.10]{Willett_2012I}
\hB
\end{rem}

\begin{rem}
In the proofs of  \cref{kogpwreregwe913} and \cref{klgpwegregw9} we have shown that the transfer functors
and its inverse were bounded by $\sqrt{n}$, where $n-1$ was the relevant asymptotic dimension.
As functors between $C^{*}$-categories are automatically contractions we a posteriori can conclude that the norms of these functors are bounded by $1$. \hB
\end{rem}

\begin{kor}\label{koheprthkoptrkgergrtgegrtgte}
The   functors $K\cX^{G}_{\bC}$ and $ K\cX_{\bC}$ are related by a natural transfer  transformation
\begin{equation}\label{gewrferfwerfw}
\tr:t^{*} K\cX_{\bC}\to s^{*}K\cX^{G}_{\bC}:G\BCov^{s\fadim}\to \Sp
\end{equation} which becomes an
equivalence after further restriction to $G\BCov^{\fadim}$.
\end{kor}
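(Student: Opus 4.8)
The plan is to deduce this corollary formally from \cref{klgpwegregw9} by applying topological $K$-theory levelwise. As in \eqref{gwrewfwerfwf1}, the functor $K^{\topp}:\Ccat\to\Mod(KU)$ sends unitary equivalences to equivalences, hence factors through the Dwyer--Kan localization as $K^{\topp}\simeq K^{\topp}_{2,1}\ell$; applied to $\bV^{G}_{\bC}$ and to $\bV_{\bC}$ (the latter giving $K\cX_{\bC}=K^{\topp}\circ\bV_{\bC}$, the trivial-group case of \eqref{ojoijiorfwrf}) this yields equivalences $K\cX^{G}_{\bC}\simeq K^{\topp}_{2,1}\ell\bV^{G}_{\bC}$ and $K\cX_{\bC}\simeq K^{\topp}_{2,1}\ell\bV_{\bC}$. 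Precomposing with the source and target functors $s,t$ on $G\BCov^{s\fadim}$ I then identify $s^{*}K\cX^{G}_{\bC}$ and $t^{*}K\cX_{\bC}$ with $K^{\topp}_{2,1}$ applied to $s^{*}\ell\bV^{G}_{\bC}$ and $t^{*}\ell\bV_{\bC}$ respectively.

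Next I would push the natural transformation $\tr:t^{*}\ell\bV_{\bC}\to s^{*}\ell\bV^{G}_{\bC}$ of \cref{klgpwegregw9} through the functor $K^{\topp}_{2,1}:\Ccat_{2,1}\to\Mod(KU)$. Since $K^{\topp}_{2,1}$ is a functor of $(2,1)$-categories, it transports the component $1$-morphisms $\tr_{f}$ to morphisms in $\Mod(KU)$ and the coherence $2$-cells $t_{\Phi}$ to the required homotopies, producing the transfer transformation $\tr:t^{*}K\cX_{\bC}\to s^{*}K\cX^{G}_{\bC}$ of \eqref{gewrferfwerfw}. To see that it becomes an equivalence after restriction to $G\BCov^{\fadim}$, I would invoke the last sentence of \cref{klgpwegregw9}: over that subcategory each $\tr_{f}$ is a unitary equivalence of $C^{*}$-categories with inverse $r$, and $K^{\topp}$ preserves such equivalences, so $\tr_{f}$ is carried to an equivalence in $\Mod(KU)$; a natural transformation all of whose components are equivalences is itself an equivalence.

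The substance of the statement lies entirely in \cref{klgpwegregw9} and the material it rests on --- in particular the operator-norm estimates of \cref{rekgopewergwerfwerf} and \cref{kohpertegtrg}, used to extend the uncompleted transfer and its inverse by continuity to the completed $C^{*}$-categories, together with \cref{woigwgwegferfrefw}, which propagates the $\fadim$ hypothesis from the target to the source. Consequently the present corollary is purely formal; the only point requiring a little care is the $(2,1)$-categorical bookkeeping, namely checking that $K^{\topp}$ genuinely factors through $\Ccat_{2,1}$ as a functor of $(2,1)$-categories, so that the natural transformation together with its invertible $2$-cells is carried over coherently. Once that is granted, the argument above is immediate.
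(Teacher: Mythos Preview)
Your proposal is correct and follows exactly the approach the paper intends: the corollary is stated without proof because it is the immediate consequence of applying the factorization $K^{\topp}\simeq K^{\topp}_{2,1}\ell$ from \eqref{gwrewfwerfwf1} to the natural transformation of \cref{klgpwegregw9}, just as \cref{okjgpwerergw9} was deduced from \cref{kogpwreregwe913}. Your additional remarks about the underlying analytic content (\cref{rekgopewergwerfwerf}, \cref{kohpertegtrg}, \cref{woigwgwegferfrefw}) and the $(2,1)$-categorical bookkeeping are accurate and appropriate context, though strictly speaking not part of the proof of the corollary itself.
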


\begin{rem} It follows from the example discussed in \cref{lkthperthergetrgtre}  that the 
natural transfer  transformation \eqref{gewrferfwerfw} 
is not an equivalence in general if we drop the finite asymptotic dimension assumption on the domain. \hB 
\end{rem}

Let $Y$ be a bornological coarse space with a big family $\cZ$ and $(M,\mu)$, $(M',\mu')$ be  $Y$-controlled
objects in $\bV_{\bC}(Y)$.
Following G. Yu we adopt the following notion.
\begin{ddd}
A morphism $A:(M,\mu)\to (M',\mu')$ in $\bV_{\bC}(Y)$ is called a ghost if
for every coarse entourage $U$ of $Y$ we have
$$\lim_{Z\in \cZ} \sup_{x,x'\in  Z^{c}}  \|\mu'(U[x'])A\mu(U[x])\|=0\ .$$
\end{ddd}

\begin{lem}\label{lphzrthh}
If $Y$ has finite asymptotic dimension, then 
a ghost represents zero in  the quotient $\bV_{\bC}(Y,\cZ)$.
\end{lem}
\begin{proof}
It suffices to show that
\begin{equation}\label{fwerfwerferwfferf}\limsup_{Z\in \cY} \| \mu'(Z^{c}) A\mu(Z^{c})\|=0\ .
\end{equation} 
We will use similar estimates as in the proof of    \cref{rekgopewergwerfwerf}.
Let $n-1$ bound the asymptotic dimension of $Y$.
We fix $\epsilon$ in $(0,\infty)$ and choose
a controlled $A':(M,\mu)\to (M',\mu')$ such that $\|A-A'\|\le \epsilon$.
Let $V$ be a bound of the propagation of $A'$.
We can choose a coarsely bounded partition $\hat \cW=(\hat W_{i})_{i\in I}$ of $Y$ 
and a coarsely bounded covering $\cW=(W_{i})_{i\in I}$ of multiplicity bounded by $n$ such that
$V[\hat W_{i}]\subseteq W_{i}$ for all $i$ in $I$. Then the sum
$A'=\sum_{i\in I} A'_{i}$ converges strongly, where $A'_{i}:=\mu'(W_{i})A'\mu(\hat W_{i})$.

Let $I_{Z}:=\{i\in I\mid W_{i}\cap  Z^{c}\not=\emptyset\}$.
For a finite subset $F$ of $I_{Z}$ we have 
$$\sum_{i,j\in F}A'_{i}A^{\prime,*}_{j}=  \sum_{i\in F} A'_{i}A_{i}^{\prime,*}\ .$$
Therefore $$  \| \sum_{i,j\in F}A'_{i}A^{\prime,*}_{j}\| \le \sup_{i\in I_{Z}}\|A'_{i}\|^{2} \|\sum_{i\in F} \mu'(W_{i})\|\le n  \sup_{i\in I_{Z}}\|A'_{i}\|^{2}$$
and hence
$$\|\sum_{i\in I_{Z}} A_{i} \|^{2} =\sup_{F\subseteq I_{Z}} \|  | \sum_{i,j\in F}A'_{i}A^{\prime,*}_{j}\| \le n \sup_{i\in I_{Z}}\|A'_{i}\|^{2}\ .$$
Since $A$ was a ghost we have
$\lim_{Z\in \cZ} \sup_{i\in I_{Z}}\|A'_{i}\|^{2}\le \epsilon^{2}$.
Hence
$\limsup_{Z\in \cZ} \|\sum_{i\in I_{Z}} A'_{i}\|\le  \sqrt{n}\epsilon$.
This implies
$\limsup_{Z\in \cZ} \|\mu'(Z^{c})A \mu(Z^{c})\| \le \epsilon+  \sqrt{n}\epsilon$.
Since $\epsilon$ was arbitrary we conclude \eqref{fwerfwerferwfferf}.
\end{proof}

\begin{kor}\label{lphetgerg}
If $(f:X\to Y,\cZ)$ is a branched coarse covering such that $X$ has finite asymptotic dimension, then
the transfer $\tr: \ell\bV_{\C}(Y,\cZ)\to \ell\bV_{\bC}(X,f^{-1}\cZ)$ annihilates ghost operators.
\end{kor}
\begin{proof}
The transfer preserves ghosts by construction. But ghosts represent zero in
$\bV_{\bC}(X,f^{-1}\cZ)$ by  \cref{lphzrthh}.

\end{proof}

\section{Traces on $C^{*}$-categories of controlled objects}\label{okvpwekvpewvdf}

 In this section we first recall the concept of a trace on a $C^{*}$-category and show that an everywhere defined trace 
 induces a  trace function on $K$-theory. The main result of this section is the construction of traces on the category of  equivariant $\bC$-controlled objects $\bV^{G}_{\bC}$ from a trace on the coefficient category $\bC $.

 We consider   a possibly non-unital $C^{*}$-category   $\bC$    in $\nCcat $.
  
%
%
%

 \begin{ddd}\label{okgpergrgwerf}\mbox{}\begin{enumerate} \item   A $\C$-valued trace on $\bC$ consists of a subset $\dom(\tau)$ of objects of $\bC$ which is closed under isomorphism in $M\bC$ and a collection of linear  maps
$$\tau:=(\tau_{C})_{C\in \dom(\tau)}\ , \qquad \tau_{C}:\End_{\bC}(C)\to \C$$ such that for every two morphisms
$f:C\to C'$  and $g:C'\to C$ with $C,C'\in \dom(\tau)$ we have $\tau_{C'}(f \circ g)=\tau_{C}(g\circ f)$. \item We say that   $\tau$  on $\bC$ is continuous if the maps $\tau_{C}$ are continuous for all   objects $C$ in $\bC$.
\item We say that $\tau$ is positive if $\tau_{C}(f)\ge 0$ whenever $f\ge 0$.
\end{enumerate}
\end{ddd}

We will usually omit the subscripts $C$ and denote the members $\tau_{C}$ of $\tau$ simply also by $\tau$.
 \begin{ex}
   Recall the inclusion functor $\incl:\nCalg\to \nCcat$.
 If $\tau$ is a trace on a $C^{*}$-algebra $A$, then  it is a trace on the one-object $C^{*}$-category $\incl(A)$ in $\nCcat$.  Assume that $A$ is unital. Then the trace  canonically extends to a  trace on 
 the category $\Hilb_{c}(A)$ of Hilbert $A$-modules and compact morphisms with domain $\dom(\tau)=
 \Hilb_{c}^{\fg,\proj}(A)$.
 
 If $\tau$ is continuous or positive, then so is its extension.
  \hB
 \end{ex}

 \begin{ex}\label{plkthrherthrt9}
The identity  is a trace  $\tr:\C\to \C $. It extends to the usual matrix trace $\tr$ on 
 the   $C^{*}$-category $\Hilb_{c}(\C)$  of  Hilbert spaces and compact operators with domain $\dim(\tr)=\Hilb^{\fin}(\C)$ the finite-dimensional Hilbert spaces. Note that $\tr$ is continuous and positive.
  \hB
 \end{ex}

  \begin{ex}\label{lpherthertgertg}

 If $G$ is a group and $[g]$ is a finite conjugacy class in $G$, then we   have a continuous trace   $$\tau_{[g]}:C_{r}^{*}(G)\to \C \ ,\qquad f\mapsto \sum_{g'\in [g]} f(g')  $$  on the reduced group $C^{*}$-algebra,
 where we consider $C^{*}_{r}(G)$ as a linear subspace  of the complex-valued functions on $G$.
We therefore get a   continuous extension of $\tau_{[g]}$ to the $C^{*}$-category category $\Hilb_{c}(C_{r}^{*}(G))$ with domain $\dom(\tau_{[g]})= \Hilb_{c}^{\fg, \proj}(C_{r}^{*}(G))$. The trace $\tau_{[e]}$ is also positive.
 \hB
 \end{ex}

Let $\bC$ be a $C^{*}$-category with a trace $\tau$.
Recall that $K^{\topp}$ denotes the topological $K$-theory functor for $C^{*}$-categories, \cref{kopgwergwerg9}.
\begin{prop}\label{kogpwergweferwf}
If 
  $\bC$ is unital and $\dom(\tau)=\bC$, then $\tau$  induces a 
homomorphism \begin{equation}\label{vwerfcsdfvs}\tau^{K^{\topp}}:K^{\topp}_{0}(\bC)\to \C \ .
\end{equation}
\end{prop}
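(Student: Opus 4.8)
The plan is to reduce the statement to the standard description of $K^{\topp}_0$ of a unital $C^{*}$-category as a Grothendieck group of projections, after first extending the trace $\tau$ to the additive and idempotent completion $\bC^{\natural}$ of $\bC$. First I would recall that $K^{\topp}$ (as constructed in \cref{kopgwergwerg9}) is insensitive to passing from $\bC$ to $\bC^{\natural}$, so it suffices to construct the homomorphism on $K^{\topp}_0(\bC^{\natural})$, and that $K^{\topp}_0(\bC^{\natural})$ is the group completion of the commutative monoid $V(\bC^{\natural})$ of unitary isomorphism classes of objects of $\bC^{\natural}$, with monoid structure given by orthogonal direct sum. Equivalently, $V(\bC^{\natural})$ is generated by pairs $(C,p)$ with $C$ an object of $\bC$ and $p$ a self-adjoint idempotent in $\End_{\bC}(C)$, where $(C,p)$ and $(C',q)$ represent the same class iff they are Murray--von Neumann equivalent, i.e. there is a partial isometry $v\colon C\to C'$ in $\bC$ with $v^{*}v=p$ and $vv^{*}=q$.

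Next I would extend $\tau$ to a trace $\tau^{\natural}$ on $\bC^{\natural}$ with $\dom(\tau^{\natural})$ equal to all objects of $\bC^{\natural}$. On a finite orthogonal direct sum $C=\bigoplus_{i}C_{i}$ with structure projections $e_{i}\colon C\to C$ I would set $\tau^{\natural}_{C}(a):=\sum_{i}\tau_{C_{i}}(e_{i}ae_{i})$ for $a\in\End_{\bC^{\natural}}(C)$. Using $\sum_{i}e_{i}=\id_{C}$ together with the cyclic property of \cref{okgpergrgwerf} one checks that $\tau^{\natural}_{C}$ is cyclic and, because $\dom(\tau)$ is closed under isomorphism in $M\bC$, that it is independent of the chosen decomposition (two decompositions are intertwined by a unitary, to which cyclicity applies). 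On a subobject $(C,p)$ of $\bC^{\natural}$ I would set $\tau^{\natural}_{(C,p)}(a):=\tau_{C}(a)$ for $a\in p\End_{\bC}(C)p$, which is again well defined and cyclic since $\dom(\tau)=\bC$ is isomorphism-closed. By construction $\tau^{\natural}$ is additive with respect to orthogonal direct sums.

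Then I would define $\tau^{K^{\topp}}$ on generators by $[(C,p)]\mapsto\tau^{\natural}_{(C,p)}(\id)=\tau_{C}(p)$. Well-definedness reduces to two points: (i) if $(C,p)$ and $(C',q)$ are Murray--von Neumann equivalent via a partial isometry $v$, then $\tau_{C'}(q)=\tau_{C'}(vv^{*})=\tau_{C}(v^{*}v)=\tau_{C}(p)$ by the cyclic property (this also covers the case of homotopic or unitarily equivalent projections inside a fixed $\End_{\bC}(C)$); and (ii) additivity on orthogonal sums, which holds by the very construction of $\tau^{\natural}$. Hence $\tau^{K^{\topp}}$ descends to a monoid homomorphism $V(\bC^{\natural})\to\C$, and by the universal property of group completion to a group homomorphism $K^{\topp}_{0}(\bC)=K^{\topp}_{0}(\bC^{\natural})\to\C$, which is the map \eqref{vwerfcsdfvs}. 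Linearity in $\tau$ (useful for later statements) is immediate from the formula $\tau_{C}(p)$.

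I expect the only real obstacle to be bookkeeping about the chosen model of $K^{\topp}$: one must invoke that the topological $K$-theory functor of \cref{kopgwergwerg9} is invariant under additive and idempotent completion and that $K^{\topp}_{0}$ agrees with the Grothendieck group of projections; granting this, the entire argument is the cyclic identity $\tau(fg)=\tau(gf)$ applied to partial isometries and to the structure projections of direct sums. I would also note that continuity of $\tau$ is \emph{not} used here — it is an algebraic fact about $K_{0}$ — and would only become relevant when comparing with algebraic homotopy $K$-theory or higher $K$-groups.
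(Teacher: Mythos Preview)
Your proof is correct and takes essentially the same route as the paper: pass to an additive situation via Morita invariance of $K^{\topp}$, extend the trace, and check well-definedness of the assignment $[\,\cdot\,]\mapsto\tau(\text{projection})$ on the projection-level description of $K_0^{\topp}$ using the cyclic identity $\tau(vv^{*})=\tau(v^{*}v)$ for partial isometries. The only cosmetic difference is that the paper stops at the additive completion $\bC_{\oplus}$ and invokes the pairs-of-projections description of $K_0^{\topp}$ from \cite[Lem.~15.3]{cank}, setting $\tau^{K^{\topp}}([P,\tilde P]):=\tau(P)-\tau(\tilde P)$, whereas you also idempotent-complete and use the Grothendieck group of objects; the underlying computation is identical.
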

\begin{proof}
Let $\bC \to \bC_{\oplus}$ be an additive completion.
By Morita invariance of $K^{\topp}$ (see \cref{tkohprthrethrth}) it induces an equivalence
$K^{\topp}(\bC)\stackrel{\simeq}{\to} K^{\topp} (\bC_{\oplus})$.
Furthermore, the trace $\tau$ canonically extends from $\bC$ to $\bC_{\oplus}$.

We can therefore in addition assume that $\bC$ is additive.
In this case we can use the description of $K^{\topp}_{0}(\bC)$ given in  \cite[Lem. 15.3]{cank}.
Every class    $p$  in $K^{\topp}_{0}(\bC)$ can be represented by  
a pair of projections $(P,\tilde P)$ in $\End_{\bC}(C)$ for some object $C$ of $\bC$.
 Furthermore,  $p=0$ if and only if 
    exists a partial isometry 
$U$ in $\End_{\bC}(C)$ such that   $UU^{*}=P$ and $U^{*}U=\tilde P$.
A second pair $(P',\tilde P')$ in $\End_{\bC}(C')$ represents the same
class $p$ if  and only if the class represented by $(P\oplus \tilde P',\tilde P\oplus P')$ in $\End_{\bC}(C\oplus C')$ represents $0$.
Finally, the  sum $p+p'$ in $K^{\topp}_{0}(\bC)$ is represented by
$(P\oplus  P',\tilde P\oplus \tilde P')$ in $\End_{\bC}(C\oplus C')$.

We define the homomorphism $$\tau^{K^{\topp}}:K^{\topp}_{0}(\bC)\to \C \ ,\qquad \tau^{K^{\topp}}(p):=\tau(P)-\tau(\tilde P) \ .$$
Using the facts above it is straightforward to check that
$\tau^{K^{\topp}}$ is  well-defined  and a homomorphism.
\end{proof}
%
%
%
%
%
%
%
%
%

\begin{ex}
 The
homomorphism $$\tr^{K^{\topp}}:K^{C^{*}}(\C)\simeq K^{\topp}_{0}(\Hilb^{\fin}(\C))\to \C$$ is the dimension homomorphism and takes values in the integers. \hB
\end{ex}

\begin{ex}Let $A$ be a unital $C^{*}$-algebra with a   trace $\tau$
and   consider its extension to $\Hilb_{c}^{\fg,\proj}(A)$.
The inclusion $\incl(A)\to \Hilb_{c}^{\fg,\proj}(A)$ of unital  $C^{*}$-categories  is a Morita equivalence. Let $K^{C^{*}}$ denote the topological $K$-theory functor for $C^{*}$-algebras. Then  
\begin{equation}\label{gwergkop0werf}K^{C^{*}}(A)\stackrel{\eqref{gwerfwerfw}}{\simeq} K^{\topp}(\incl(A))\stackrel{\simeq}{\to} K^{\topp}( \Hilb_{c}^{\fg,\proj}(A))\ .
\end{equation}
  The induced homomorphism
$$K^{C^{*}}_{0}(A)\stackrel{\eqref{gwergkop0werf}}{\cong} K^{\topp}_{0}(\Hilb_{c}^{\fg,\proj}(A))\stackrel{\tau^{K^{\topp}}}{\to}  \C$$
coincides with the   homomorphism on $C^{*}$-algebra $K$-theory
 classically associated to $\tau$.
  \hB
\end{ex}

Let $\bC$ be a $C^{*}$-category with a $G$-action and a  trace $\tau$.

\begin{ddd}\label{hkoprhertgrtgretge}
We say that $\tau$ is $G$-invariant if  $\dom(\tau)$ is $G$-invariant and $\tau_{gC}(gA)=\tau_{C}(A)$ for every morphism  $A$ in $\bC$ and $g$ in $G$.
\end{ddd}

Let $\bC$ be a $C^{*}$-category with a $G$-action and an invariant trace $\tau$.
We assume that $\bC$  admits all $AV$-sums and
 is idempotent complete.  We further assume that the unital objects belong to the domain of $\tau$, i.e., that $\bC^{u}\subseteq \dom(\tau)$.
 
 For a   $G$-bornological coarse space $X$ we   consider the $C^{*}$-category $\bV_{\bC}^{G}(X)$ of equivariant $X$-controlled objects in $\bC$ described by  \eqref{vwercecvfsdv}.



We consider and object $(H,\rho,\mu)$ in $\bV^{G}_{\bC}(X)$.
For a point $x$ in $X$ we choose an   isometry $u(x):C\to H$ in $M\bC$ representing the image of $\mu(\{x\})$. 
For $A$ in $\End_{\bV^{G}_{\bC}(X)}((H,\rho,\mu))$   we  then set $A_{x,x}:=u(x)^{*}Au(x)$ in $\End_{\bC}(C)$.  
\begin{lem}
The map \begin{equation}\label{vwvevwev}\tau_{x}:\End_{\bV^{G}_{\bC}(X)}((H,\rho,\mu))\to \C\ , \quad A\mapsto  \tau(A_{x,x})\ . 
\end{equation}   is well-defined independent of the choices  and satisfies  \begin{equation}\label{bfdsvsdfveqwrvf}\tau^{G}_{gx} =\tau^{G}_{x} 
\end{equation}
  for every $g$ in $G$.
\end{lem}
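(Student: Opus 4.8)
The plan is to deduce both assertions from the trace identity of \cref{okgpergrgwerf} together with the $G$-invariance of $\tau$ recorded in \cref{hkoprhertgrtgretge}, the only real work being to check that all the objects and morphisms that appear lie where they must. Throughout one uses that $\rho$ consists of unitaries, that each $g\in G$ acts on $\bV^{G}_{\bC}(X)$ through a $*$-functor, and the equivariance $\mu(gY)=\rho_{g}^{-1}g\mu(Y)\rho_{g}$ of the projection-valued measure.

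\emph{Independence of the choices.} Given two isometries $u(x)\colon C\to H$ and $u'(x)\colon C'\to H$ in $M\bC$ with $u(x)u(x)^{*}=\mu(\{x\})=u'(x)u'(x)^{*}$, I would first note that $w:=u'(x)^{*}u(x)\colon C\to C'$ is unitary, since $w^{*}w=u(x)^{*}\mu(\{x\})u(x)=\id_{C}$ and symmetrically $ww^{*}=\id_{C'}$; in particular $u(x)=u'(x)w$. The objects $C,C'$ are unital — local finiteness presents $H$ as an AV-sum over $X$ of unital objects and $\mu(\{x\})$ is the projection onto the summand at $x$ — hence $C,C'\in\bC^{u}\subseteq\dom(\tau)$, and $A_{x,x}=u(x)^{*}Au(x)$, $A'_{x,x}=u'(x)^{*}Au'(x)$ are genuine morphisms of $\bC$ because $\bC$ is an ideal in $M\bC$. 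Then $A_{x,x}=w^{*}A'_{x,x}w$, and the trace identity applied to $A'_{x,x}w\colon C\to C'$ and $w^{*}\colon C'\to C$ gives $\tau(A_{x,x})=\tau_{C}(w^{*}A'_{x,x}w)=\tau_{C'}(A'_{x,x}ww^{*})=\tau(A'_{x,x})$, so $\tau_{x}$ is well-defined.

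\emph{Invariance.} Fix $g\in G$ and an isometry $u(x)\colon C\to H$ onto $\mu(\{x\})$. Since $\rho_{g}$ is unitary and $g$ is a $*$-functor, $gu(x)\colon gC\to gH$ is an isometry with $(gu(x))(gu(x))^{*}=g\mu(\{x\})$, and the equivariance relation $\mu(\{gx\})=\rho_{g}^{-1}g\mu(\{x\})\rho_{g}$ shows that $u(gx):=\rho_{g}^{-1}\circ gu(x)\colon gC\to H$ is an isometry onto $\mu(\{gx\})$. Computing with this admissible choice: first $A_{gx,gx}=u(gx)^{*}Au(gx)=(gu(x))^{*}\rho_{g}A\rho_{g}^{-1}(gu(x))$, using $(\rho_{g}^{-1})^{*}=\rho_{g}$; then, since $\rho_{g}A\rho_{g}^{-1}=gA$ by the $G$-equivariance of the morphism $A$ and $g$ is functorial, this equals $(gu(x))^{*}(gA)(gu(x))=g(u(x)^{*}Au(x))=g(A_{x,x})$. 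As $gC\in\bC^{u}\subseteq\dom(\tau)$, the $G$-invariance of $\tau$ yields $\tau_{gx}(A)=\tau(g(A_{x,x}))=\tau_{gC}(g(A_{x,x}))=\tau_{C}(A_{x,x})=\tau_{x}(A)$, which is \eqref{bfdsvsdfveqwrvf}.

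The argument is essentially formal; the place where I expect to have to be careful is the bookkeeping between $\bC$ and its multiplier category $M\bC$ — ensuring each $A_{x,x}$ is an honest $\bC$-morphism on which $\tau_{C}$ is defined and that the source objects $C,C',gC$ all lie in $\dom(\tau)$ — and in verifying that the cocycle condition on $\rho$ together with the equivariance of $\mu$ really make $\rho_{g}^{-1}\circ gu(x)$ a legitimate choice of $u(gx)$.
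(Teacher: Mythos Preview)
Your proof is correct and follows essentially the same route as the paper's: the paper also chooses a second isometry $u'(x)$, sets $w:=u(x)^{*}u'(x)$ (your $w$ is its inverse), deduces $A_{x,x}=wA'_{x,x}w^{*}$ and invokes the trace property; for the $G$-invariance it likewise takes $\rho_{g}^{-1}\,gu(x)$ as an admissible $u(gx)$ and computes $A_{gx,gx}=g(A_{x,x})$ before applying $G$-invariance of $\tau$. If anything, your version is slightly more explicit about why $C\in\bC^{u}\subseteq\dom(\tau)$ and why $A_{x,x}$ lands in $\bC$ rather than just $M\bC$.
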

\begin{proof}
 By local finiteness of $(H,\rho,\mu)$ the object $C$  belongs to   $\bC^{u}$.
Therefore  the complex number $\tau(A_{x,x}) $ is defined. 

If $u'(x):C'\to H$ is another representative of the image of $\mu(\{x\})$ and $A_{x,x}'= u'(x)^{*}Au'(x)$, then $w:=u(x)^{*}u'(x):C'\to C$ is a unitary isomorphism and
$A_{x,x} =w A_{x,x}' w^{*}$.  Using the trace property of $\tau$ this implies that $\tau(A_{x,x})=\tau(A_{x,x}')$. 
Consequently, $\tau_{x}(A)$ is well-defined.
 
  We claim that  
  \begin{equation}\label{bfdsvsdfveqwvf}\tau^{G}_{gx}(A)=\tau^{G}_{x}(A)
\end{equation}
  for every $g$ in $G$.
By $G$-invariance of $A$ and $\mu$ we have
$\rho_{g}A \rho_{g}^{-1} =gA $
and $ \rho_{g}^{-1} \ g\mu(\{x\})  \ \rho_{g}=  \mu(\{gx\}) $.
If $u(x):C\to H$ represents the image of $\mu(\{x\})$,
then $\rho_{g}^{-1}\ gu(x)\  : gC\to H$ represents the image of $\mu(\{gx\})$.
With this choice \begin{equation}\label{erfwerfwefwefwe}A_{gx,gx}= gu(x)^{*}\  \rho_{g} A   \rho_{g}^{-1}\ gu(x)  = g(A_{x,x})     \ .
\end{equation}
 Using the   $G$-invariance of $\tau$ we conclude \eqref{bfdsvsdfveqwvf}.
 \end{proof}
 
 A $G$-invariant subset $Y$ of $X$ is called $G$-bounded if  there exists a bounded subset $B$ of $Y$ such that $GB=Y$. 

\begin{ddd} By $\pi_{0}^{\crs}(X)$ we denote the $G$-set of coarse components of $X$.  \end{ddd}
We consider a $G$-orbit $i$ in $\pi^{\crs}_{0}(X)/G$ and let $X_{i}$ be the corresponding $G$-invariant subspace of $X$.

Consider an object $(H,\rho,\mu)$ of $\bV_{\bC}^{G}(X)$.     We assume that $\supp(H,\rho,\mu)\cap X_{i}$ is $G$-bounded. 
 We can choose a bounded set-theoretic fundamental domain
$F$  for the $G$-action on $\supp(H,\rho,\mu)\cap X_{i}$. For every $x$ in $X$ we let $G_{x}$ denote the stabilizer $G_{x}$ of $x$ in $G$.
For   $A$ in $\End_{\bV^{G}_{\bC}(X)}((H,\rho,\mu))$ we define the complex number
\begin{equation}\label{gtrewfwerfw}\tau^{G}_{i,(H,\rho,\mu)}(A):=\sum_{x\in F}  \frac{1}{|G_{x}|} \tau_{x}(A)\ ,
\end{equation} 
where we interpret $1/\infty$ as $0$.

\begin{lem}\label{jkopwergergwefwre} If $G$ acts   with finite stabilizers and  $\tau$ is continuous, then 
$ \tau^{G}_{X,(H,\rho,\mu)}$ is well-defined, independent of the choice of $F$. The family 
of maps $(\tau^{G}_{i,(H,\rho,\mu)})_{(H,\rho,\mu)}$  is a continuous trace $\tau^{G}_{i} $ on $\bV^{G}_{\bC}(X)$ with
$\dom(\tau^{G}_{i})$ consisting of all objects with the property that $\supp(H,\rho,\mu)\cap X_{i}$ is  $G$-bounded.
\end{lem}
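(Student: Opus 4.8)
The plan is to verify in turn: (1) well-definedness (independence of the fundamental domain $F$ and of the local choices entering $\tau_x$), (2) $\C$-linearity of $\tau^G_{i,(H,\rho,\mu)}$, (3) the trace property, and (4) continuity. Independence of the auxiliary isometries $u(x)$ was already handled in the preceding lemma, and \eqref{bfdsvsdfveqwvf} shows $\tau_{x}$ depends only on the $G$-orbit of $x$. Hence if $F'$ is a second bounded fundamental domain for the $G$-action on $\supp(H,\rho,\mu)\cap X_i$, there is a bijection $F\to F'$, $x\mapsto g_x x$, and term-by-term $\tfrac{1}{|G_x|}\tau_x(A)=\tfrac{1}{|G_{g_xx}|}\tau_{g_xx}(A)$ since $|G_x|=|G_{g_xx}|$ and $\tau_x=\tau_{g_xx}$ by \eqref{bfdsvsdfveqwvf}; this gives independence of $F$. (Here we use that the support, being locally finite and $G$-bounded, meets $X_i$ in a set on which $G$ acts with finite orbits, so $F$ is finite and the sum \eqref{gtrewfwerfw} is a finite sum of finite complex numbers — this is where the finite-stabilizer hypothesis is used.) Linearity of $\tau^G_{i,(H,\rho,\mu)}$ is immediate from linearity of each $\tau_x$ (which in turn follows from linearity of $\tau$ on $\bC$).

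For the trace property, one must show $\tau^G_{i,(H',\rho',\mu')}(AB)=\tau^G_{i,(H,\rho,\mu)}(BA)$ for $A:(H,\rho,\mu)\to(H',\rho',\mu')$ and $B:(H',\rho',\mu')\to(H,\rho,\mu)$, with both objects in the domain. The standard device: $\supp(A)$ and $\supp(B)$ are controlled, so on the relevant support set there is a single $G$-invariant entourage $U$ bounding the propagations; after enlarging $F$ to a fundamental domain for the support of both objects intersected with $X_i$, the "off-diagonal Hilbert–Schmidt/absolute convergence" bookkeeping is governed only by the finitely many points in $U[x]$ for each $x$. Writing $A_{x',x}=u'(x')^*Au(x)$ etc., one has $(AB)_{x,x}=\sum_{x'} A_{x,x'}B_{x',x}$ (finite sum, by local finiteness of the support and controlledness), and similarly $(BA)_{x',x'}=\sum_x B_{x',x}A_{x,x'}$. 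Applying $\tau$ on $\bC$ and using its trace property, $\tau((AB)_{x,x})=\sum_{x'}\tau(A_{x,x'}B_{x',x})=\sum_{x'}\tau(B_{x',x}A_{x,x'})$. Now summing over a fundamental domain with the weights $1/|G_x|$: the key identity is that $\sum_{x\in F}\tfrac{1}{|G_x|}\sum_{x'}\tau(A_{x,x'}B_{x',x})$ and $\sum_{x'\in F}\tfrac{1}{|G_{x'}|}\sum_{x}\tau(B_{x',x}A_{x,x'})$ agree, which one sees by reorganizing the double sum over the set of \emph{pairs} $(x,x')$ in the support with $(x',x)\in U$, modulo the diagonal $G$-action, and checking that the orbit-counting weights match (the orbit of a pair $(x,x')$ under $G$ has size $|G|/|G_{(x,x')}|$, and $G_{(x,x')}=G_x\cap G_{x'}$; the weight accounting reduces to the orbit-stabilizer theorem applied to both the $x$- and $x'$-projections of this pair-orbit). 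I expect \textbf{this orbit-weight bookkeeping to be the main obstacle}: one must be careful that every pair-orbit is counted with the correct multiplicity from both sides so that the two weighted sums literally coincide, and that absolute convergence (for $C^*$-category morphisms the relevant sums are finite here, since the support is locally finite and $G$-bounded) lets one freely reindex.

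For continuity, $\tau_x$ is continuous because $A\mapsto A_{x,x}=u(x)^*Au(x)$ is norm-continuous (the $u(x)$ are contractions) and $\tau$ on $\bC$ is continuous by hypothesis (and $C=u(x)^*Hu(x)$ lies in $\bC^u\subseteq\dom(\tau)$); then $\tau^G_{i,(H,\rho,\mu)}$ is a finite linear combination of the $\tau_x$ ($x$ ranging over the finite set $F$) with fixed coefficients $1/|G_x|$, hence continuous on $\End_{\bV^G_{\bC}(X)}((H,\rho,\mu))$. Finally one checks $\dom(\tau^G_i)$ — the objects $(H,\rho,\mu)$ with $\supp(H,\rho,\mu)\cap X_i$ $G$-bounded — is closed under isomorphism in $M\bV^G_{\bC}(X)$: a multiplier isomorphism preserves controlledness and hence changes the support only by a $G$-invariant entourage, which cannot destroy $G$-boundedness. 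This completes the verification that $\tau^G_i$ is a continuous trace with the asserted domain.
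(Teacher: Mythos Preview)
Your overall structure is sound, and your pair-orbit bookkeeping for the trace identity is a legitimate route, but there is one genuine gap: you assert that ``$\supp(A)$ and $\supp(B)$ are controlled'' and that the relevant sums are finite.  This is only true for morphisms in the dense subcategory $\bV^{G,\ctr}_{\bC}(X)$.  The category $\bV^{G}_{\bC}(X)$ in the statement is the $C^{*}$-completion \eqref{vwercecvfsdv}, so a general morphism $A$ is only a norm limit of controlled ones, and for such $A$ the set $\{x':A_{x,x'}\neq 0\}$ may well be infinite.  The paper fixes this by first establishing the trace identity for $U$-controlled $A,B$ (where, as you note, all sums are finite) and then invoking the already-proved continuity of $\tau^{G}_{i}$ together with density of controlled morphisms to extend.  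You have all the ingredients for this; you just need to reorder so that continuity is proved \emph{before} the trace property, and then state the density step explicitly.

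On the combinatorics for controlled $A,B$: the paper takes a more computational line, unfolding $\sum_{y\in X}$ as $\sum_{y\in F}\sum_{g\in G}\tfrac{1}{|G_{y}|}$ (this is precisely where finite stabilizers enter), using $G$-equivariance of the matrix coefficients in the form $\tau(B_{x,gy}A_{gy,x})=\tau(B_{g^{-1}x,y}A_{y,g^{-1}x})$ (via \eqref{erfwerfwefwefwe} and $G$-invariance of $\tau$ on $\bC$) to shift the $g$-sum to the other variable, and then refolding.  Your approach via diagonal $G$-orbits of pairs reaches the same conclusion: both sides equal $\sum_{[(x,x')]}\tfrac{1}{|G_{x}\cap G_{x'}|}\tau(B_{x',x}A_{x,x'})$, summed over orbits.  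The two arguments are equivalent; yours is more conceptual, the paper's makes the use of equivariance more visible.

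One small correction: your justification that $F$ is finite because ``$G$ acts with finite orbits'' is wrong when $G$ is infinite with finite stabilizers (then orbits are infinite).  The correct reason, as in the paper, is that $F$ lies inside a bounded set (this is what $G$-boundedness gives) and the support is locally finite.
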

\begin{proof}
By assumption the set $ \supp(H,\rho,\mu)$
is a $G$-invariant locally finite subset. In particular,
$\supp(H,\rho,\mu)\cap F$ is a finite set.
This  shows that the sum  in \eqref{gtrewfwerfw} is finite and that the formula defines a continuous  linear map
$$\tau^{G}_{i,(H,\rho,\mu)}:\End_{\bV^{G}_{\bC}(X)}((H,\rho,\mu))\to \C\ .$$

Since $|G_{x}|=|G_{gx}|$ for every $x$ in $X$ and $g$ in $G$
it follows from \eqref{bfdsvsdfveqwrvf} that $\tau_{i,(H,\rho,\mu)}^{G}(
A)$ does not depend on the choice of the fundamental domain $F$.

It remains to check the trace property.  From now on in order to simplify the notation  we drop the index indicating the controlled Hilbert space.  We consider $A:(H,\rho,\mu)\to (H',\rho',\mu')$ and $B:(H',\rho',\mu')\to (H,\rho,\mu)$. We fix a bounded   fundamental domain $F$  for $(\supp(H,\rho,\mu)\cup (H',\rho',\mu'))\cap X_{i}  $. We first assume that $A$ and $B$ are 
 $U$-controlled for some entourage $U$ of $X$.
Then for every $x$ in $X_{i}$ the sets $\{y\in X\mid A_{y,x}\not=0\}$ and $\{y\in X\mid B_{x,y}\not=0\}$ are finite subsets of $X_{i}$. The first  is contained in $
\supp(H',\rho',\mu')\cap U[\{x\}]$ which is finite since $\supp(H',\rho',\mu')$ is locally finite and $U[\{x\}]$ is bounded.
For the second we argue similarly.
We calculate   \begin{eqnarray*}
\tau^{G}_{i} (BA)&=&\sum_{x\in F,y\in X}  \frac{1}{|G_{x}|}\tau(B_{x,y}A_{y,x})\\&\stackrel{!}{=}&
  \sum_{x\in F,y\in F,g\in G} \frac{1}{|G_{x}||G_{y}|}  \tau(B_{x,gy}A_{gy,x})\\&\stackrel{!!}{=}&
   \sum_{x\in F,y\in F,g\in G} \frac{1}{ |G_{g^{-1}x}||G_{y}|} \tau(B_{g^{-1}x,y}A_{y,g^{-1}x})\\&\stackrel{!!!}{=}&
     \sum_{x\in F,y\in F,g\in G} \frac{1}{ |G_{g^{-1}x}||G_{y}|} \tau( A_{y,g^{-1}x}B_{g^{-1}x,y})\\&=&
 \sum_{x\in X,y\in F} \frac{1}{ |G_{y}|} \tau(A_{y, x}B_{ x,y} )\\&=&
 \tau_{i}^{G}(AB)
\end{eqnarray*}
All sums have finitely many non-zero terms.  
For the  step marked by $!$ it is important that  the groups $G_{y}$ are finite and that only points  $y$ in $ X_{i}$ contribute.
For $!!$ we use the $G$-equivariance of the trace and relations like \eqref{erfwerfwefwefwe}, and for $!!!$ the trace property. General $A,B$ can be approximated in norm by controlled morphisms. We conclude the trace property by continuous extension using the continuity of $\tau$.
\end{proof}

Recall that $i$ is in $\pi_{0}^{\crs}(X)/G$ and $X_{i}$ is the corresponding $G$-invariant subset of $X$.
 If $X_{i}$ is $G$-bounded, then $\dom(\tau^{G}_{i})=\bV^{G}_{\bC}(X)$.
 Note that $\bV^{G}_{\bC}(X)$ is unital and  $\pi_{0}K\cX^{G}_{\bC}(X)\cong K_{0}^{\topp}(\bV^{G}_{\bC}(X))$.
\begin{ddd}\label{hoertighprtgretgerg} If $X_{i}$ is $G$-bounded and $G$ acts with finite stabilizers on $X_{i}$, then we define
  $$\tau^{K,G}_{i}:\pi_{0}K\cX^{G}_{\bC}(X)\to \C $$
as the homomorphism associated  by \cref{kogpwergweferwf}  to the trace $\tau^{G}_{i}$.
\end{ddd}

If $G$ is trivial, then we omit it as a superscript.  Recall that we also omit the subscript $\bC$ in case that $\bC=\Hilb_{c}(\C)$ with the trivial $G$-action.  
In this case we consider the canonical trace from \cref{plkthrherthrt9}.
Finally, if $\pi_{0}^{\crs}(X)/G$ is a singleton, then we also omit the subscript $i$.

\begin{ex}
If $G$ is trivial and $X$ is bounded and coarsely connected, then
$\bV(X)\simeq \Hilb^{\fin}(\C)$ and $\tau^{K}:\pi_{0}K\cX(X)\to \C$ takes values in $\Z$. \hB
\end{ex}

\begin{ex}\label{kohperthertgetge}
We assume that $G$ is non-trivial and acts freely on $X$, and that $X$ $G$-bounded and coarsely connected. For any $x$ in $X$ the map $G\to X$, $g\mapsto gx$ induces a coarse equivalence
  $G_{can,min}\to X$. Under the isomorphism
$$\pi_{0}K\cX^{G}(X)\cong \pi_{0}K\cX^{G}(G_{can,min})\cong K^{C^{*}}_{0}(C_{r}^{*}(G))$$
the trace homomorphism  $\tau^{K,G}:\pi_{0}K\cX^{G}(X)\to \C$ corresponds to
the homomorphism $\tau_{[e]}^{K^{C^{*}}}:\pi_{0}K^{C^{*}}(C^{*}_{r}(G))\to \C$ induced by the trace $\tau_{[e]}:C^{*}_{r}(G)\to \C$ in 
\eqref{lpherthertgertg}.
\hB
\end{ex}
%

\begin{ex}
If $M$ is a compact Riemannian manifold with a Dirac operator $\Dirac$ of degree $0$.
Then $M$ is a bounded bornological coarse space with the metric structures.
We have the coarse index class
$\ind\cX(\Dirac)\in \pi_{0}K\cX(X)$. In this case 
 $\tau^{K}(\ind\cX(\Dirac))$ is the Fredholm index of $\Dirac$.  \hB
\end{ex}

\begin{ex}
If $M$ is a $G$-compact Riemannian $G$-manifold with an invariant generalized Dirac operator $\Dirac$ of degree $0$, then 
we have the equivariant coarse index index class $\ind\cX^{G}(\Dirac)\in \pi_{0}K\cX^{G}(X)$. 
Then  $\tau^{K,G}(\ind\cX^{G}(\Dirac))$ is the classical $L^{2}$-index of $\Dirac$ introduced in \cite{zbMATH03505915}.     \hB
\end{ex}

 \section{The algebraic $L^{2}$-index theorem}\label{gokpwegregwre9}
 
 Let $k$ be a commutative base ring and   $\Cat_{k}$ denote the category  of $k$-linear  categories.
  We consider  a $k$-linear    category  $\bA$. The following is the algebraic analogue of \cref{okgpergrgwerf}. 
    \begin{ddd} \label{kopherhetreg}A  trace on $\bA$ consists of  a subset $\dom(\tau)$ of objects of $\bA$ which is closed under isomorphisms  and a collection of  $k$-linear  maps
$$\tau:=(\tau_{C})_{C\in \dom(\tau)}\ , \qquad \tau_{C}:\End_{\bA}(C)\to k$$ such that for every two morphisms
$f:C\to C'$  and $g:C'\to C$ with $C,C'\in \dom(\tau)$ we have $\tau_{C'}(f \circ g)=\tau_{C}(g\circ f)$.
\end{ddd}

 We let $\Cat_{k}^{\tr}$ denote the category of pairs
 $(\bA,\tau)$ of a $k$-linear category $\bA$ and a   trace
 $\tau$. A morphism $(\bA',\tau') \to (\bA,\tau)$ is a  $k$-linear functor
 $\phi:\bA'\to \bA$ such that $\phi(\dom(\tau'))\subseteq \dom(\tau)$ and  $\tau_{\phi(C')}(\phi(A'))=\tau'_{C'}(A')$ for all objects $C'$ in $\dom(\tau')$ and endomorphisms $A'$  in $\End_{\bA'}(C')$.  
 
We have a forgetful functor $\cF_{\trc}:\Cat_{k}^{\tr}\to \Cat_{k}$ which forgets the traces. 
We let furthermore $\Cat_{k}^{\tr,\perf}$ denote the full category
of pairs $(\bA,\tau)$ with $\dom(\tau)=\bA$. 
 We consider a   functor $H:\Cat_{k}\to \Sp$.
 
\begin{ddd}\label{erthgerthertertgertt}
A tracing of $H$ is a natural transformation 
\begin{equation}\label{fdvsdfvsdffsc}\trc^{H}:\pi_{0}H\circ \cF_{\trc}\to \const_{k}:\Cat_{k}^{\tr,\perf}\to\Ab\ .
\end{equation}
\end{ddd}
We let $\tau^{H}:\pi_{0}H(\bA)\to k$ denote the evaluation of the tracing $\trc^{H}$ at $(\bA,\tau)$.
  

\begin{rem}
With the obvious modifications of definitions we can consider 
$$(\bC,\tau)\mapsto (\tau^{K^{\topp}}:\pi_{0}K^{\topp}(\bC)\to \C)$$  from \eqref{vwerfcsdfvs}
as a the natural transformation of functors from $\Ccat^{\tr,\perf}$   to $\Ab$ 
and therefore as a tracing $\trc^{K^{\topp}}$of the functor $K^{\topp}$. \hB
\end{rem}

\begin{ex} Recall that $\cZ:\Cat_{k}\to \Cat_{\Z}$ is the forgetful functor.
The algebraic $K$-theory functor $K^{\Cat_{\Z}}\cZ$ for $k$-linear categories is traced. 
The argument is analogous to the one from \cref{kogpwergweferwf}.
 We just replace  
the  word unitary by invertible and $U^{*}$ by $U^{-1}$. 
The naturality is obvious from the construction.

If $\Q\subseteq k$, then alternatively one can construct the tracing via cyclic homology. More precisely,
the tracing of homotopy $K$-theory from \cref{gokrjpwegerwwre} induces the tracing of algebraic $K$-theory $K^{\Cat_{\Z}}\cZ$ by restriction along the  comparison map \eqref{vdfsvewrvfsdvsdfvsd}.
 \hB
\end{ex}

%
%

%
%

\begin{ex}\label{kgopweewrfwerf}

 If $R$ is a unital $k$-algebra with a trace $\kappa$,  then
 the endofunctor $R\otimes_{k}-:\Cat_{k}\to \Cat_{k}$
 lifts to an endofunctor  $R\otimes_{k}-:\Cat^{\tr}_{k}\to \Cat^{\tr}_{k}$.
 To this end we note that a   trace $\tau$ on $\bA$ induces
 a  trace $\kappa\otimes \tau$ on $R\otimes_{k} \bA$ uniquely characterized by 
 $(\kappa\otimes \tau)(r\otimes A)=\kappa(r)\tau(A)$.

 Let now   $R$ be a possibly non-unital $k$-algebra with a 
    trace $\kappa:R\to k$.
 If $H$ is a traced homological functor (we only need split-exact) for $k$-linear  categories, then 
 the twist $H_{R}:\Cat_{k}\to \Sp$ is again traced. In order to construct this tracing we first 
   extend the trace to a trace $\tau^{+}$ on the unitalization
 $R^{+}\cong R\oplus k$ such that $\kappa^{+}_{|k}=0$ and $\kappa^{+}_{|R}=\kappa$.
 Then $H_{R^{+}}$ is traced. 
 We then define $\tau^{H_{R}}:\pi_{0}H_{R}(\bA)\to k$ as the restriction of
 $\tau^{H_{R^{+}}}:\pi_{0}H_{R^{+}}(\bA)\to k$
 along the inclusion    $\pi_{0}H_{R}(\bA)\to \pi_{0}H_{R^{+}}(\bA)$. See also \cref{koopherhegg}.\ref{hetggretg}.
   \hB
\end{ex}

If $\bA$ is a $k$-linear   category with a $G$-action, then 
$G$-invariance of a trace on $\bA$ is defined as in \cref{hkoprhertgrtgretge}.

We let $G\Add_{k}^{\tr,\sharp} $ denote the category of
idempotent  and sum complete compactly generated additive categories with $G$-action and with an invariant  trace
such that $\dom(\tau)$ contains all compact objects,
and equivariant compact-object and  trace preserving $k$-linear functors.
 Let $X$ be a $G$-bornological coarse space.
 
 \begin{lem} \label{jkopwergergwrefwre}  If $G$ acts  with finite stabilizers on $X$, then for every $i$ in $\pi_{0}^{\crs}(X)/G$
  the functor $ \bA\to \bV^{G}_{\bA}(X)$ from \cref{tlrkhpetrhertgetre} lifts to a functor
 $$ (\bA,\tau)\to (\bV^{G}_{\bA}(X),\tau^{G}_{i}):G\Add_{k}^{\tr,\sharp}  \to \Add^{\tr}_{k}\ ,$$
 where   $\dom(\tau^{G}_{i})$ consists of all objects $(M,\rho,\mu)$ of $\bV_{\bA}^{G}(X)$ such that
 $\supp(M,\rho,\mu)\cap X_{i}$ is $G$-bounded.
 \end{lem}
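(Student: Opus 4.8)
The plan is to transcribe into the purely algebraic setting the construction carried out for $C^{*}$-categories in \cref{okvpwekvpewvdf}; the simplification is that every morphism of $\bV^{G}_{\bA}(X)$ is controlled by definition, so no completion or continuous-extension argument is needed. First I would build the pointwise trace. Given an object $(M,\rho,\mu)$ of $\bV^{G}_{\bA}(X)$ and a point $x$ in $X$, fix a locally finite presentation $(M(y),u(y))_{y\in X}$ of $M$ as in \cref{kogpwergewrf}, with projections $u(y)^{*}\colon M\to M(y)$ normalised by \eqref{refwrfwrfrwfwer}; since $M(x)$ is a compact object it lies in $\dom(\tau)$ by the $\sharp$-condition, so $\tau_{x}(A):=\tau\big(u(x)^{*}A\,u(x)\big)$ is defined for $A$ in $\End_{\bV^{G}_{\bA}(X)}((M,\rho,\mu))$. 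Independence of the chosen presentation follows from the trace identity of \cref{kopherhetreg} exactly as in the $C^{*}$ case: two presentations of the $x$-component differ by an isomorphism, and the two matrix corners are conjugate. The same computation as in \cref{okvpwekvpewvdf}, using $G$-invariance of $\tau$ (\cref{hkoprhertgrtgretge}) together with the equivariance relations $gA=\rho_{g}A\rho_{g}^{-1}$ and $\mu(gY)=\rho_{g}^{-1}g\mu(Y)\rho_{g}$, gives $(\bV^{G}_{\bA}(X)\ni A)_{gx,gx}=g(A_{x,x})$ as in \eqref{erfwerfwefwefwe} and hence $\tau_{gx}=\tau_{x}$, the analogue of \eqref{bfdsvsdfveqwrvf}.

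Next I would fix $i$ in $\pi_{0}^{\crs}(X)/G$, let $X_{i}$ be the corresponding invariant subspace, and declare $\dom(\tau^{G}_{i})$ to consist of all $(M,\rho,\mu)$ for which $\supp(M,\rho,\mu)\cap X_{i}$ is $G$-bounded; this set of objects is closed under the (automatically controlled and equivariant) isomorphisms of $\bV^{G}_{\bA}(X)$, since such an isomorphism changes supports only by thickening with an invariant entourage, and a $G$-invariant subset of a $G$-bounded set is again $G$-bounded. For such an object, choose a bounded set-theoretic fundamental domain $F$ for the $G$-action on $\supp(M,\rho,\mu)\cap X_{i}$ and set $\tau^{G}_{i,(M,\rho,\mu)}(A):=\sum_{x\in F}|G_{x}|^{-1}\tau_{x}(A)$, in analogy with \eqref{gtrewfwerfw} and with all stabilisers finite by hypothesis. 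Local finiteness of the support makes this a finite sum, $k$-linearity is immediate, and independence of $F$ follows from $|G_{x}|=|G_{gx}|$ together with $\tau_{gx}=\tau_{x}$. The remaining point is the trace identity $\tau^{G}_{i}(BA)=\tau^{G}_{i}(AB)$ for composable $A,B$ in $\bV^{G}_{\bA}(X)$: since $A$ and $B$ are automatically controlled, this is literally the reindexing computation of \cref{jkopwergergwefwre} — expand over a common bounded fundamental domain, translate one summation index by $G$ using equivariance of $\tau$, and apply the trace identity on $\bA$ — with the final continuous-extension step of that proof now superfluous. This establishes that $\tau^{G}_{i}$ is a trace on $\bV^{G}_{\bA}(X)$ with the asserted domain.

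It then remains to check functoriality in the coefficient datum. Given a morphism $\phi\colon(\bA',\tau')\to(\bA,\tau)$ of $G\Add_{k}^{\tr,\sharp}$ — an equivariant, compact-object- and trace-preserving $k$-linear functor — \cref{tlrkhpetrhertgetre} supplies the induced functor $\bV^{G}_{\phi}\colon\bV^{G}_{\bA'}(X)\to\bV^{G}_{\bA}(X)$, which by that construction carries a locally finite presentation $(M(y),u(y))_{y\in X}$ to the presentation $(\phi M(y),\phi u(y))_{y\in X}$ of the image object. Since $\phi$ is additive, $\supp(\bV^{G}_{\phi}(M,\rho,\mu))\subseteq\supp(M,\rho,\mu)$, so $\bV^{G}_{\phi}$ maps $\dom((\tau')^{G}_{i})$ into $\dom(\tau^{G}_{i})$. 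For trace-preservation one computes $(\bV^{G}_{\phi}A)_{x,x}=\phi(A_{x,x})$ from $\phi$-naturality of \eqref{refwrfwrfrwfwer}; as $M(x)$ is compact it lies in $\dom(\tau')$, and $\phi$ trace-preserving gives $\tau\big((\bV^{G}_{\phi}A)_{x,x}\big)=\tau'(A_{x,x})$, whence summing over a bounded fundamental domain for $\supp(M,\rho,\mu)\cap X_{i}$ with weights $|G_{x}|^{-1}$ yields $\tau^{G}_{i}\circ\bV^{G}_{\phi}=(\tau')^{G}_{i}$, i.e. $\bV^{G}_{\phi}$ is a morphism of $\Add^{\tr}_{k}$. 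I expect the only step requiring genuine care to be the trace identity for $\tau^{G}_{i}$, which however is the same reindexing argument as in \cref{jkopwergergwefwre}; the rest — well-definedness of $\tau_{x}$, the domain bookkeeping, and functoriality in $\phi$ — is formal.
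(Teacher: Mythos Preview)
Your proposal is correct and follows essentially the same approach as the paper: the paper's proof simply says to repeat the construction from \cref{jkopwergergwefwre}, drop the completion steps, and observe that naturality in $(\bA,\tau)$ is obvious. You have spelled this out in detail—including the domain bookkeeping and the functoriality in $\phi$ that the paper leaves implicit—so your write-up is in fact more thorough than the paper's own three-sentence argument.
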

\begin{proof}
We repeat the construction from \cref{jkopwergergwefwre}. 
We can drop the steps discussing completions. 
The naturality of the construction  in $(\bA,\tau)$ is obvious.
\end{proof}
%
%
%
If $G$ acts with  finite stabilizers on $X$, then we have  a vector-valued trace \begin{equation}\label{fqpopoqwedqwedqwedqwede}  \tau^{G}_{X}:=(\tau^{G}_{i})_{i\in \pi^{\crs}_{0}(X)/G} :=\bV_{\bA}^{G}(X)\to \prod_{\pi^{\crs}_{0}(X)/G} k\end{equation}
defined on objects with coarse component-wise   bounded support.

Let $f:X\to Y$ be a branched coarse $G$-covering  relative to the big family $\cZ $.
We assume that the coarse components of $Y$ are bounded and that  the coarse $G$-components of $X$ are $G$-bounded. Note that $G$ acts freely on $X$ by assumption.

  We assume that   every member of $\cZ$  is a union of coarse components.
  We then introduce the following abbreviations:  
$$  T(X):= \colim_{Z\in \cZ}  \frac{\prod_{\pi_{0}^{\crs}(X)/G}k }{\prod_{\pi_{0}^{\crs}(f^{-1}(Z))/G} k}\ , \quad T(Y):= \colim_{Z\in \cZ}  \frac{\prod_{\pi_{0}^{\crs}(Y)}k }{\prod_{\pi_{0}^{\crs}(Z)} k}\ .$$
We have a pull-back   map  $$f^{*}:T(Y)\to T(X)\ .$$

\begin{lem}
We get induced every-where defined vector-valued traces  
$$\bar \tau^{G}_{X}:=([(\tau^{G}_{j})_{j\in  \pi_{0}^{\crs}(X)/G  }]:\bV_{\bA}^{G}(X,f^{-1}(\cZ)) \to T(X)\ , \quad  \bar \tau_{Y}:=[(\tau_{i})_{i\in \pi_{0}^{\crs}(Y)}]:\bV_{\bA}(Y,\cZ) \to T(Y)\ .$$
\end{lem}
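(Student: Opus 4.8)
The plan is to obtain $\bar\tau^{G}_{X}$ and $\bar\tau_{Y}$ by first assembling the component-wise traces of \cref{jkopwergergwrefwre} into everywhere-defined vector-valued traces on $\bV^{G}_{\bA}(X)$ and $\bV_{\bA}(Y)$, and then checking that these descend along the respective Karoubi quotients once we project to the colimits $T(X)$ and $T(Y)$.

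First I would record that, under the standing hypothesis that each coarse $G$-component $X_{i}$ of $X$ is $G$-bounded, the trace $\tau^{G}_{i}$ of \cref{jkopwergergwrefwre} is in fact defined on all of $\bV^{G}_{\bA}(X)$: the support of any object $(M,\rho,\mu)$ is a $G$-invariant subset of $X$, so its intersection with $X_{i}$ is a $G$-invariant subset of the $G$-bounded set $X_{i}$, and any $G$-invariant subset $S$ of a $G$-bounded set is again $G$-bounded (if $B$ is bounded with $GB\supseteq X_{i}$, then $B\cap S$ is bounded and $G(B\cap S)=S$). Here we also use that $G$ acts freely, hence with finite stabilizers, on $X$. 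Assembling over the orbits $i$ gives the everywhere-defined vector-valued trace $\tau^{G}_{X}$ of \eqref{fqpopoqwedqwedqwedqwede}; likewise, boundedness of the coarse components of $Y$ makes the $G$-trivial analogue $\tau_{Y}=(\tau_{i})_{i\in\pi_{0}^{\crs}(Y)}$ everywhere defined on $\bV_{\bA}(Y)$.

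Next I would exploit the hypothesis that every member $Z$ of $\cZ$ is a union of coarse components. As $f$ is controlled and equivariant, $f^{-1}(Z)$ is then a $G$-invariant union of coarse components of $X$, so $\pi_{0}^{\crs}(f^{-1}(Z))/G$ is a subset of $\pi_{0}^{\crs}(X)/G$ and $\prod_{\pi_{0}^{\crs}(f^{-1}(Z))/G}k$ sits inside $\prod_{\pi_{0}^{\crs}(X)/G}k$ by extension by zero. The key point is that for any endomorphism $E$ of an object of $\bV^{G}_{\bA}(f^{-1}(Z))$ the diagonal matrix coefficients $E_{x,x}$ vanish for $x$ outside $f^{-1}(Z)$, so $\tau^{G}_{i}(E)=0$ for every orbit $i$ not contained in $f^{-1}(Z)$, whence $\tau^{G}_{X}(E)$ lies in $\prod_{\pi_{0}^{\crs}(f^{-1}(Z))/G}k$. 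Together with the trace identity $\tau^{G}_{X}(\alpha\beta)=\tau^{G}_{X}(\beta\alpha)$ this shows that every morphism of $\bV^{G}_{\bA}(X)$ factoring through an object of $\bV^{G}_{\bA}(f^{-1}(Z))$ has $\tau^{G}_{X}$-value in that subgroup, hence maps to $0$ in the quotient $\prod_{\pi_{0}^{\crs}(X)/G}k\,/\,\prod_{\pi_{0}^{\crs}(f^{-1}(Z))/G}k$, and a fortiori in $T(X)$. Since morphisms in the Karoubi quotient $\bV^{G}_{\bA}(X,f^{-1}(\cZ))$ are represented by morphisms of $\bV^{G}_{\bA}(X)$ modulo those factoring through some single $\bV^{G}_{\bA}(f^{-1}(Z))$ (cf.\ \cite[Def. 8.3]{equicoarse}), the composite $\bV^{G}_{\bA}(X)\xrightarrow{\ \tau^{G}_{X}\ }\prod_{\pi_{0}^{\crs}(X)/G}k\to T(X)$ then descends to a well-defined, $k$-linear, everywhere-defined map $\bar\tau^{G}_{X}$ on $\bV^{G}_{\bA}(X,f^{-1}(\cZ))$, whose trace property is inherited by evaluating on representatives. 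The construction of $\bar\tau_{Y}$ is identical, now with $G$ trivial, using boundedness of the coarse components of $Y$ and the fact that members of $\cZ$ are unions of such components.

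The step I expect to be the main (if modest) obstacle is the bookkeeping that matches the colimit defining $T(X)$ (respectively $T(Y)$) with the Karoubi-filtration structure of $\bV^{G}_{\bA}(f^{-1}(\cZ))$ (respectively $\bV_{\bA}(\cZ)$): one must make sure that any factorisation through $\bV^{G}_{\bA}(f^{-1}(\cZ))$ already occurs at a finite stage $Z$, and that the vanishing of the relevant $\tau^{G}_{i}$ is uniform enough to be annihilated simultaneously after passing to the colimit. What makes this go through cleanly is exactly the hypothesis that the members of $\cZ$ are unions of coarse components, together with the $G$-boundedness assumptions that make the $\tau^{G}_{i}$ everywhere defined to begin with; without these the vector-valued traces would be only partially defined and the argument would have to track domains through the quotient.
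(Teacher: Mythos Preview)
Your proposal is correct and follows essentially the same line as the paper's proof: represent a morphism in the Karoubi quotient by a morphism in $\bV_{\bA}^{G}(X)$ (resp.\ $\bV_{\bA}(Y)$), observe that a representative factoring through an object supported on some member $Z$ has vanishing trace components on the coarse components outside $Z$, and conclude that the image in $T(X)$ (resp.\ $T(Y)$) is independent of the representative. The paper's proof is considerably terser---it simply asserts the vanishing of $\tau_{i}(A)$ for components in $Z^{c}$ without spelling out the use of the trace identity or the role of the coarse-component hypothesis---whereas you make these steps explicit; but the underlying argument is the same.
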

\begin{proof}
We consider the second map. The argument for the first is similar.
Note that morphisms  $[A]$ in $ \bV_{\bA}(Y,\cZ) $ are represented by morphisms
$A:(M,\mu)\to (M',\mu')$ in $ \bV_{\bA}(Y)$, and $[A]=0$ if and only if $A$ factorizes
over an object coming from $\bV_{\bA}(\cZ)$. Let $Z$ be a member of $\cZ$ containing the support of this object. Then  indeed $\tau_{i}(A)=0$ for all $i$  in $\pi_{0}^{\crs}(Y) $ with $Y_{i}\subseteq Z^{c}$.
%
%
\end{proof}

\begin{rem}
Let $(f:X\to Y,\cZ)$ be a branched coarse covering relative to the big family $\cZ$. We assume that the members of  $\cZ$  are unions of coarse components.
Let $C$ be a coarse component of $X$. Then $f(C)$ is contained in a coarse component $D$ of $Y$, but in general is not equal to $D$.
 
Assume that there exists an entourage $V$ of $Y$ such that $D$ is already coarsely connected in $Y_{V}$ and
that the parallel transport at scale $V$ is defined on $D$. 
Then any two points in $D$ are connected by a $V$-path. Using the path-lifting
we can conclude that $f(C)\to D$ is surjective. 

Consequently, if $Y$ has a generating entourage $V$, then we can choose $Z$ in $\cZ$ such that for every coarse component 
$C$ in $f^{-1}(Z^{c})$ the image $f(C)$ is a coarse component of $Y$.
The latter property is an assumption in \cref{gouweriofjmwkemrfwklerf} below. \hB
\end{rem}

Recall that we  assume that the coarse components of $Y$ are bounded and that  the coarse $G$-components of $X$ are $G$-bounded.
By \cref{pkgtrgtegw} the transfer induces an equivalence
$$\tr_{f}:\bV_{\bA}(Y,\cZ)\stackrel{\simeq}{\to} \bV_{\bA}^{G}(X,f^{-1}(\cY))\ .$$ 
\begin{prop} \label{gouweriofjmwkemrfwklerf}
We assume  that the members of $\cZ$ are unions of  coarse components and that there exists  $Z$ in $\cZ$ such that $f$ sends
every coarse component of $f^{-1}(Z^{c})$ surjectively to a coarse component of $Z^{c}$.
Then the following square
  \begin{equation}\label{fqwedqewde} \xymatrix{  \bV_{\bA}(Y,\cZ)\ar[r]^-{\bar \tau_{Y} }\ar[d]_{\simeq }^{\tr_{f}} &T(Y) \ar[d]^{f^{*}} \\   \bV^{G}_{\bA}(X,f^{-1}(\cZ))\ar[r]^-{\bar \tau_{X}^{G}} &  T(X)} 
\end{equation} 
commutes.
\end{prop}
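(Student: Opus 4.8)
The plan is to prove commutativity of \eqref{fqwedqewde} by evaluating the two composites on an arbitrary endomorphism. Both $\bar\tau_{Y}$ and $\bar\tau^{G}_{X}$ are vector-valued traces, so the square commutes if and only if, for every object $(M,\mu)$ of $\bV_{\bA}(Y,\cZ)$ and every endomorphism $[A]$, the elements $\bar\tau^{G}_{X}(\tr_{f}[A])$ and $f^{*}(\bar\tau_{Y}[A])$ of $T(X)$ agree. Since $f^{*}$ is induced on indexing sets by the map $f_{*}\colon\pi_{0}^{\crs}(X)/G\to\pi_{0}^{\crs}(Y)$ (a coarse component of $X$ in the orbit $j$ maps into a fixed coarse component $Y_{f_{*}(j)}$, because $f$ is constant on $G$-orbits of points), it suffices to fix an orbit $j\in\pi_{0}^{\crs}(X)/G$ with image $i:=f_{*}(j)$ and to identify the $X_{j}$-component $\tau^{G}_{j}(\tr_{f}[A])$ with $\tau_{i}([A])$ in the relevant quotient of $\prod k$.

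The first step is to choose, using that $\cZ$ is a big family, a single member $Z$ large enough that all of the following hold at once: $[A]$ is represented by a morphism $A$ of $\bV_{\bA}(Y)$ supported on $Z^{c}$; the parallel transports at the relevant scales used in \cref{kogpwreregwe9} to compute $\tr_{f}$ are defined on $Z^{c}$; and, by the standing hypothesis, $f$ sends every coarse component of $f^{-1}(Z^{c})$ surjectively onto a coarse component of $Z^{c}$. Writing $\tr_{f}(M,\mu)=(N,\nu)$, its chosen presentation has components $N(x)=M(f(x))$, so $\supp(N,\nu)=f^{-1}(\supp(M,\mu))$, which is locally finite by \cref{wroigjowegewrgerfweferf}.\ref{etrherthtrherthrth1}; as $X_{j}$ is $G$-bounded the set $(\supp(N,\nu)\cap X_{j})/G$ is finite. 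From the matrix formula \eqref{werfwerweg} the diagonal entry of $\tr_{f}(A)$ at a point $x$ equals $A_{f(x),f(x)}$ when $f(x)\in Z^{c}$ (the one-step path at $f(x)$ transporting trivially) and vanishes when $f(x)\in Z$; hence, in the notation of \eqref{gtrewfwerfw}, the pointwise trace satisfies $\tau_{x}(\tr_{f}A)=\tau_{f(x)}(A)$ for $x\in f^{-1}(Z^{c})$ and $\tau_{x}(\tr_{f}A)=0$ otherwise. This is independent of the auxiliary presentations, different choices producing canonically isomorphic data and $\tau$ being invariant under isomorphism.

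The second step is to sum over a fundamental domain. Since $f$ is a branched coarse $G$-covering, $G$ acts freely on $X$, so by \cref{jkopwergergwrefwre} and \eqref{gtrewfwerfw} we have $\tau^{G}_{j}(\tr_{f}A)=\sum_{x\in F_{j}}\tau_{x}(\tr_{f}A)$ for any set $F_{j}$ of $G$-orbit representatives in $\supp(N,\nu)\cap X_{j}$; representatives lying over $Z$ contribute $0$. For the remaining ones, \cref{kogpwererwfwerfwerf}.\ref{otzkjprtzjr9} guarantees that $G$ acts freely and transitively on the fibres of $f$ over $Z^{c}$, so combining this with the surjectivity hypothesis the map $f$ induces a bijection from $(\supp(N,\nu)\cap X_{j}\cap f^{-1}(Z^{c}))/G$ onto $\supp(M,\mu)\cap Y_{i}\cap Z^{c}$ (if $X_{j}$ is contained in $f^{-1}(Z)$ there is nothing to prove in $T(X)$; and $Y_{i}\cap Z^{c}$ is a single coarse component of $Z^{c}$ since $Z$ is a union of coarse components). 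This yields
\[
\tau^{G}_{j}(\tr_{f}A)=\sum_{x\in F_{j}}\tau_{f(x)}(A)=\sum_{y\in Y_{i}}\tau_{y}(A)=\tau_{i}([A]),
\]
which is exactly the $X_{j}$-component of $f^{*}(\bar\tau_{Y}[A])$. Because the same identity holds for every sufficiently large $Z$, it descends to the colimits defining $T(Y)$ and $T(X)$, and \eqref{fqwedqewde} commutes.

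I expect the principal difficulty to be organisational rather than conceptual: one must arrange a single $Z$ to serve simultaneously as a representative-support for $[A]$, as a scale at which the parallel transports occurring in \cref{kogpwreregwe9} are defined, and as the place where the surjectivity-on-components hypothesis applies, and then check that the final formula is independent of $Z$ so as to pass to the colimits $T(X),T(Y)$. The genuinely substantive point is the identification, in the displayed computation, of the $G$-orbits of support points of $\tr_{f}(M,\mu)$ over $Z^{c}$ with the support points of $(M,\mu)$ over $Z^{c}$ --- this combines the free and transitive action on fibres over $Z^{c}$ with the hypothesis that coarse components of $f^{-1}(Z^{c})$ map onto coarse components of $Z^{c}$ --- together with the elementary computation $\tau_{x}(\tr_{f}A)=\tau_{f(x)}(A)$ read off from the matrix \eqref{werfwerweg}.
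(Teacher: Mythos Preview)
Your proof is correct and follows essentially the same approach as the paper: both arguments compute the diagonal entries $B_{x,x}=A_{f(x),f(x)}$ of the transferred matrix from \eqref{werfwerweg}, and then use that $f$ restricted to a fundamental domain $F_{j}\cap f^{-1}(Z^{c})$ gives a bijection onto $Y_{i}\cap Z^{c}$ (injectivity from the free $G$-action on fibres, surjectivity from the hypothesis) to match the two sums. Your write-up is somewhat more careful about organising the choice of $Z$ and handling the degenerate case $X_{j}\subseteq f^{-1}(Z)$, but the substance is identical.
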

\begin{proof}
Let $[A]$ be in $\End_{\bV_{\bA}(Y,\cZ)}((M,\mu))$ be represented by $
A$ in $ \End_{\bV_{\bA}(Y)}((M,\mu))$ with 
propagation bounded by $V$. It gives rise to a matrix $A_{y',y}$.
Then the transfer $\tr_{f}([A])$ is represented by $B$ in $ \End_{\bV^{G}_{\bA}(X)}((N,\rho,\nu))$ 
with matrix $B_{x',x}=A_{f(x'),f(x)}$ given by \eqref{werfwerweg}. It is
defined on $f^{-1}(Z^{c})$ for a sufficiently enlarged member $Z$ of $\cZ$ depending on $V$. 

By construction, 
$\bar \tau_{Y}([A])$ is represented by the family
$(\tau_{i}(A))_{i\in \pi_{0}^{\crs}(Y)}$, where
$\tau_{i}(A)=\sum_{y\in Y_{i}} \tau(A_{y,y})$.
For $j$ in $\pi^{\crs}_{0}(X)/G$ let $F_{j}$ be the fundamental domain for the $G$-action chosen in the construction of $\tau^{G}_{j}$.
  Then $\bar \tau^{G}_{X}(B)$ is represented by the family
  $(\tau^{G}_{j} (B))_{j\in \pi_{0}^{\crs}(X)/G}$    and
  $\tau^{G}_{j} (B)=\sum_{x\in F_{j}} \tau(B_{x,x})$.

Assume now that $\pi_{0}^{\crs}(f)(j)=i$ and that   $Y_{i} \subseteq Z^{c}$. Then
the map  $f:F\cap X_{j}\to Y_{i}$  is a bijection. It is injective since $F$ is a fundamental domain and surjective by the assumption on $f$.
 We conclude that $ 
\tau^{G}_{j}(B)=\tau_{i}(A)$
since $\tau(B_{x,x})=\tau(A_{f(x),f(x)})$ for all $x$ in $F\cap X_{j}$.
This implies the assertion. 
\end{proof}

We assume that $H:\Add_{k}\to \Sp$ is a traced homological functor.
 Then we get  functions
$$\tau_{X}^{H,G}:\pi_{0}H\cX_{\bA}^{G}(X)\to \prod_{\pi_{0}^{\crs}(X)/G} k\ , \qquad 
\tau_{Y}^{H}:\pi_{0} H\cX_{\bA}(Y)\to \prod_{\pi_{0}^{\crs}(Y)} k\ .
$$ 
In detail, let $i$ be in $ \pi_{0}^{\crs}(X)/G$. Our assumptions on $(f:X\to Y,\cZ)$ imply that $\tau_{i}^{G}$ is an everywhere defined trace on $\bV_{\bA}^{G}(X)$.
Then the $i$th component of 
$\tau^{H,G}_{X}$ is given by $\tau^{H,G}_{i}$.
We furthermore get functions
 $$\bar \tau_{X}^{H,G}:\pi_{0}H\cX_{\bA}^{G}(X,f^{-1}(\cZ))\to T(X)\ , \qquad 
\bar \tau_{Y}^{H}:\pi_{0} H\cX_{\bA}(Y,\cZ)\to  T(Y)\ .
$$ 

\begin{kor}\label{kogpwergrefwferfw} The following square commutes:
 \begin{equation}  \xymatrix{\pi_{0}  H \cX_{\bA}(Y,\cZ)\ar[r]^-{\bar \tau_{Y}^{H}}\ar[d]^{\tr_{f}} &T(Y) \ar[d]^{f^{*}} \\ \pi_{0}H\cX_{\bA}^{G}(X,f^{-1}(\cZ))\ar[r]^-{\bar \tau_{X}^{H,G}} &  T(X)}\ .
\end{equation}
\end{kor}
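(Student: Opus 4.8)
The plan is to deduce the statement directly from the category-level square of \cref{gouweriofjmwkemrfwklerf} by applying $\pi_{0}H$ and invoking the naturality of the tracing $\trc^{H}$ of \cref{erthgerthertertgertt}. First I would record the identifications on which the argument rests. Since $H$ sends Karoubi filtrations to fibre sequences and preserves filtered colimits, for any pair $(W,\cY)$ with $\cY$ a big family one has $H\cX_{\bA}(W,\cY)\simeq H(\bV_{\bA}(W,\cY))$ and, equivariantly, $H\cX^{G}_{\bA}(W,\cY)\simeq H(\bV^{G}_{\bA}(W,\cY))$, as in the display following \cref{jrtzjrtzhrt}, compatibly with the colimit over the members of $\cY$. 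Under these identifications, and using \eqref{gwrewfwerfwf} together with \cref{pkgtrgtegw}, the transfer on homology $\tr_{f}\colon\pi_{0}H\cX_{\bA}(Y,\cZ)\to\pi_{0}H\cX^{G}_{\bA}(X,f^{-1}(\cZ))$ is exactly $\pi_{0}H$ applied to the functor $\tr_{f}\colon\bV_{\bA}(Y,\cZ)\to\bV^{G}_{\bA}(X,f^{-1}(\cZ))$; moreover the maps $\bar\tau^{H}_{Y}$ and $\bar\tau^{H,G}_{X}$ are, by construction, obtained by assembling the scalar tracings $\tau^{H}_{i}$ and $\tau^{H,G}_{j}$ attached via \cref{erthgerthertertgertt} to the traces $\tau_{i}$ and $\tau^{G}_{j}$ on the quotient categories, indexed over the coarse components $i\in\pi_{0}^{\crs}(Y)$ and $j\in\pi_{0}^{\crs}(X)/G$ and passed through the colimits defining $T(Y)$ and $T(X)$.

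Now \cref{gouweriofjmwkemrfwklerf} says precisely that the functor $\tr_{f}$ pulls back the everywhere-defined vector-valued trace $\bar\tau^{G}_{X}$ on $\bV^{G}_{\bA}(X,f^{-1}(\cZ))$ to the trace $f^{*}\circ\bar\tau_{Y}$ on $\bV_{\bA}(Y,\cZ)$; equivalently, $\tr_{f}$ is a morphism $(\bV_{\bA}(Y,\cZ),f^{*}\bar\tau_{Y})\to(\bV^{G}_{\bA}(X,f^{-1}(\cZ)),\bar\tau^{G}_{X})$ in the category of additive categories equipped with (everywhere-defined, $T(X)$-valued) traces. Reading this componentwise and applying the naturality square of $\trc^{H}$ coordinate by coordinate, I obtain $\bar\tau^{H,G}_{X}\circ\pi_{0}H(\tr_{f})=(f^{*}\bar\tau_{Y})^{H}$. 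Finally, since the tracing construction is compatible with postcomposing a trace by a homomorphism of its target module — here $f^{*}\colon T(Y)\to T(X)$, which on the coordinate level merely relabels the component $i=\pi^{\crs}_{0}(f)(j)$ — one has $(f^{*}\bar\tau_{Y})^{H}=f^{*}\circ\bar\tau^{H}_{Y}$, and combining the two equalities with the identification $\pi_{0}H(\tr_{f})=\tr_{f}$ proves commutativity of the square.

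The only genuinely delicate point is the bookkeeping needed to legitimize working coordinate by coordinate: one must check that $\trc^{H}$, which is defined for $k$-valued traces with everywhere-defined domain, extends compatibly to the $T(Y)$- and $T(X)$-valued traces, i.e. that the construction commutes with the filtered colimit over $Z\in\cZ$ (handled by continuity of $H$, which kills the contributions of components eventually absorbed by $\cZ$) and with the indexing over coarse components (which, after fixing $Z$, reduces to a product of the single-bounded-component situation where $\tau_{i}$ and $\tau^{G}_{j}$ are genuinely everywhere defined, so that \cref{erthgerthertertgertt} applies verbatim). Once this reduction is in place there is no further computation: the asserted equality of functions is forced by \cref{gouweriofjmwkemrfwklerf} together with the naturality of $\trc^{H}$. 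I expect this colimit-and-product bookkeeping, rather than any new geometric input, to be the main technical obstacle.
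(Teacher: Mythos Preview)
Your proposal is correct and matches the paper's intended argument: the corollary is stated without proof immediately after \cref{gouweriofjmwkemrfwklerf}, and is meant to follow from that proposition by applying the traced homological functor $H$ componentwise and invoking the naturality of $\trc^{H}$. The bookkeeping you flag (reducing the vector-valued tracing to its scalar components indexed by coarse components, and passing to the colimit over $Z\in\cZ$) is precisely the content the paper leaves implicit in labelling this a corollary.
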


Assume now  that $X\to Y$ is  a uniform $G$-covering. We assume that the coarse components of $Y$ are bounded and that  the coarse $G$-components of $X$ are $G$-bounded.
Then 
$\cO^{\infty}(X)\to \cO^{\infty}(Y)$ is  by \cref{okgpthertetg}.\ref{pklhtrthrteht} a branched coarse $G$-covering
with respect to the big family $\cO^{-}(Y)$.
 The following is our algebraic $K$-homology version of Atiyah's $L^{2}$-index theorem.
 Recall the  \cref{khphertgetrg} of the cone transfer $\tr_{\cO^{\infty}(f)}$.
\begin{theorem}[Algebraic $L^{2}$-index theorem]\label{koprhererge}
 The  diagram commutes:
\begin{equation}\label{grwegergwefwerf}
\xymatrix{ \pi_{1}H\cX_{\bA}^{G}(\cO^{\infty}(X))\ar[r]^-{\partial^{\cone}}  &\pi_{0}H\cX_{\bA}^{G}(X) \ar[r]^{\tau_{X}^{H,G}}& \prod_{\pi_{0}^{\crs}(X)/G}k \\  \pi_{1}H\cX_{\bA}(\cO^{\infty}(Y))\ar[u]^{\simeq}_{\tr_{\cO^{\infty}(f)}}\ar[r]^-{\partial^{\cone}} & \pi_{0}H\cX_{\bA}(Y) \ar[r]^{\tau_{Y}^{H}}& \prod_{\pi_{0}^{\crs }(Y)}k\ar[u]^{f^{*}}} \ .\end{equation}
\end{theorem}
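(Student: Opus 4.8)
The plan is to assemble the statement from three facts already in place: the cone transfer is computed by the relative transfer, the cone boundary $\partial^{\cone}$ is natural, and the relative transfer is compatible with the traces (\cref{kogpwergrefwferfw}). Every corner of \eqref{grwegergwefwerf} splits as a product indexed by coarse (resp.\ coarse $G$-) components and every arrow respects this splitting, so it suffices to verify $\tau^{H,G}_{X}\circ\partial^{\cone}_{X}\circ\tr_{\cO^{\infty}(f)}=f^{*}\circ\tau^{H}_{Y}\circ\partial^{\cone}_{Y}$ componentwise, i.e.\ \eqref{iuhugiregesrg}; one may even assume the relevant components are coarsely connected. Next, by \cref{khphertgetrg} applied to $H\cX^{G}_{\bA}$ (which has transfers for all $G$-equivariant branched coarse coverings by \cref{poiobipdfgbdfgbd}) and by \cref{kophokhotrpherthetrhee} applied to the transfer equivalence \eqref{vsdfvsdfvwrdfv}, the left column $\tr_{\cO^{\infty}(f)}$ of \eqref{grwegergwefwerf} is the composite of the equivalence $H\cX_{\bA}(\cO^{\infty}(Y))\xrightarrow{\ \simeq\ }H\cX_{\bA}(\cO^{\infty}(Y),\cO^{-}(Y))$, the relative transfer equivalence \eqref{vsdfvsdfvwrdfv}, and the inverse of $H\cX^{G}_{\bA}(\cO^{\infty}(X))\xrightarrow{\ \simeq\ }H\cX^{G}_{\bA}(\cO^{\infty}(X),\cO^{-}(X))$; the outer maps are equivalences since the members of $\cO^{-}(X)$, $\cO^{-}(Y)$ are flasque and $\bV_{\bA}$ and $H$ annihilate flasques. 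Thus one may replace the left column of \eqref{grwegergwefwerf} by the relative transfer.

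Second I would use the naturality of $\partial^{\cone}$. Recall from \cite{ass} that $\partial^{\cone}_{X}$ arises, via the flasqueness of one half-cone, as a Mayer–Vietoris connecting map for a decomposition of the cone $\cO^{\infty}(X)$ into its two half-cones $\cO^{\infty}_{\pm}(X)$ whose overlap is the level-zero slice $\cO^{0}(X)\cong\iota X$; in particular $\partial^{\cone}$ is built from the coarse homology theory structure alone and is natural in the argument. Since the cone of $f$ preserves levels, $\cO^{\infty}_{\pm}(X)=\cO^{\infty}(f)^{-1}(\cO^{\infty}_{\pm}(Y))$ and $\cO^{0}(X)=\cO^{\infty}(f)^{-1}(\cO^{0}(Y))$, so by \cref{wejigowegfrefrefwe} (base change preserving the property of being a branched coarse $G$-covering) the cone decomposition of $\cO^{\infty}(X)$ is the base change along $\cO^{\infty}(f)$ of that of $\cO^{\infty}(Y)$, through a diagram of morphisms in $G\BCov$. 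As the relative transfer is a natural transformation of coarse homology theories (\cref{poiobipdfgbdfgbd}, \cref{pkgtrgtegw}) it is compatible with excision, hence with these Mayer–Vietoris boundary maps — the automatic compatibility of transfers with Mayer–Vietoris boundaries noted in the introduction. This produces a map $\tr^{\partial}_{f}\colon\pi_{0}H\cX_{\bA}(\iota Y)\to\pi_{0}H\cX^{G}_{\bA}(\iota X)$, obtained from the relative transfer for the half-cone covering $\cO^{\infty}_{+}(f)$ by descent along $\cO^{0}(f)$, such that $\partial^{\cone}_{X}\circ\tr_{\cO^{\infty}(f)}=\tr^{\partial}_{f}\circ\partial^{\cone}_{Y}$.

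It then remains to prove $\tau^{H,G}_{X}\circ\tr^{\partial}_{f}=f^{*}\circ\tau^{H}_{Y}$. Here $\cO^{\infty}_{+}(f)\colon\cO^{\infty}_{+}(X)\to\cO^{\infty}_{+}(Y)$ is a branched coarse $G$-covering relative to the family $([0,n)\times Y)_{n}$; the standing hypotheses (coarse components of $Y$ bounded, coarse $G$-components of $X$ $G$-bounded, $G$ acting freely on $X$ with finite stabilizers) ensure that the members of this family are unions of coarse components and that, for large $n$, every coarse component of the source maps onto a coarse component of the target — the latter by the remark preceding \cref{gouweriofjmwkemrfwklerf}, once one observes that under the countability assumptions in force $\cO^{\infty}(Y)$ has a cofinal sequence of generating entourages. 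Since $X$ is componentwise $G$-bounded and $Y$ componentwise bounded, the traces $\tau^{H,G}_{X}$ and $\tau^{H}_{Y}$ on $\iota X$ and $\iota Y$ are everywhere defined, and \cref{kogpwergrefwferfw} (whose categorical origin is \cref{gouweriofjmwkemrfwklerf}) yields exactly $\tau^{H,G}_{X}\circ\tr^{\partial}_{f}=f^{*}\circ\tau^{H}_{Y}$. Composing with $\partial^{\cone}_{Y}$ and combining with the identity of the previous step gives the commutativity of \eqref{grwegergwefwerf}; the topological version \cref{igoewrifoperfrefw} follows by the same scheme with \cref{klgpwegregw9} and \cref{koheprthkoptrkgergrtgegrtgegrtgete} replaced by, i.e.\ with \cref{klgpwegregw9} and \cref{koheprthkoptrkgergrtgegrtgte} in place of their algebraic counterparts.

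I expect the second step to be the main obstacle. It requires making the construction of $\partial^{\cone}$ from \cite{ass} completely explicit — which subspaces and big families enter, and exactly how the flasqueness of a half-cone is used — and then checking that the relative transfer, which a priori lives only on the relative homology of the full cones, descends through excision to the half-cone quotient in a well-defined way that is compatible with composition. The genuine subtlety is that $\iota f$ is in general \emph{not} a branched coarse covering, so $\tr^{\partial}_{f}$ cannot be read off as ``the transfer of $\iota f$''; it must be manufactured from the transfer for $\cO^{\infty}_{+}(f)$ (or $\cO^{\infty}(f)$) together with excision, and the naturality and descent bookkeeping is the real content of the proof.
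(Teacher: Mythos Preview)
Your overall plan is natural, but the second and third steps do not go through as stated, and the paper's proof takes a genuinely different route precisely to avoid the obstacle you identify at the end.

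The concrete gap is in your construction and use of $\tr^{\partial}_{f}$. You propose to obtain it from the branched coarse $G$-covering $\cO^{\infty}_{+}(f)$ relative to $([0,n)\times Y)_{n}$ and then to invoke \cref{kogpwergrefwferfw}. But the hypotheses of \cref{gouweriofjmwkemrfwklerf} and \cref{kogpwergrefwferfw} fail for this covering: the coarse components of $\cO^{\infty}_{+}(Y)$ are the half-rays $[0,\infty)\times Y_{i}$, which are \emph{unbounded}, so the traces $\tau^{H}$ and $\tau^{H,G}$ are not everywhere defined on $\bV_{\bA}(\cO^{\infty}_{+}(Y))$ and $\bV^{G}_{\bA}(\cO^{\infty}_{+}(X))$; and the members $[0,n)\times Y$ of the big family are \emph{not} unions of coarse components of $\cO^{\infty}_{+}(Y)$. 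So \cref{kogpwergrefwferfw} simply does not apply here, and there is no direct way to compare $\tau^{H,G}_{X}\circ\tr^{\partial}_{f}$ with $f^{*}\circ\tau^{H}_{Y}$. More fundamentally, the Mayer--Vietoris descent you sketch does not by itself produce a well-defined map $\tr^{\partial}_{f}$ on $\pi_{0}H\cX_{\bA}(\iota Y)$: the relative transfer lives only on the quotient by the big family, and the intersection slice $\iota Y$ carries no branched covering structure to which the transfer formalism attaches a map.

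The paper's proof circumvents this by passing to the \emph{squeezing space}. One replaces $f$ by the map $\hat f\colon\hat X_{h}\to\hat Y_{h}$ between the hybrid spaces $\hat X_{h}=\bigl(\bigsqcup_{\nat}^{\bd}X\bigr)_{h}$ and $\hat Y_{h}$ (\cref{kohperthtregtg}), which by \cref{kjthgprthhdh} \emph{is} a branched coarse $G$-covering relative to the family $\hat\cZ=(\hat Y_{\le n})_{n}$. Crucially, the coarse components of $\hat Y_{h}$ are the individual copies of $Y$, hence bounded, and the members of $\hat\cZ$ \emph{are} unions of coarse components. So \cref{kogpwergrefwferfw} applies to $\hat f$. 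One then shows (\cref{kogpwergrefwef}, \cref{gjiwoegrefwf}) that the cone transfer for $\hat f$ and the Mayer--Vietoris boundary $\hat\partial$ are compatible after passing to the relative groups modulo $\hat\cZ$, and assembles a large diagram whose commutativity, applied to the image of a single class along the diagonal inclusion $Y\hookrightarrow\hat Y$, yields an equality of \emph{constant} families in $T(\hat X_{h})\cong\prod_{\nat}\C/\bigoplus_{\nat}\C$. Reading off the common value of these constants gives exactly $\tau^{H,G}_{X}\circ\partial^{\cone}\circ\tr_{\cO^{\infty}(f)}=f^{*}\circ\tau^{H}_{Y}\circ\partial^{\cone}$. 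The squeezing-space detour is not a cosmetic choice: it manufactures a setting in which a genuine branched coarse covering exists at the ``slice'' level with bounded components, which is exactly what your direct argument lacks.
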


\begin{proof}

We form  the $G$-uniform bornological space $\hat X$   as in \eqref{weirug9egrferw} and the squeezing space $\hat X_{h}$, the $G$-bornological coarse space described in \cref{kohperthtregtg}.
  We consider the cone $\cO^{\infty} (\hat X)$ with the coarsely excisive pair of invariant subsets
$$\tilde \cO^{-}(\hat X):=\bigsqcup_{n\in \nat} (-\infty,n]\times X\ , \qquad \tilde \cO^{+}(\hat X):=\bigsqcup_{n\in \nat} [n,\infty)\times X\ .$$
We then have a coarse equivalence
$$\hat X_{h}\to  \tilde \cO^{-}(\hat X)\cap \tilde \cO^{+}(\hat X)\ , \quad x_{n}\mapsto (n,x_{n})\ ,$$
where $x_{n}$ denotes the point $x$ in the $n$th component $X_{n}$ of $\hat X$.

Let $$\Yo:G\BC\to G\Sp\cX$$ denote the universal equivariant coarse homology theory \cite[Def. 4.9]{equicoarse}.
The Mayer-Vietoris boundary of the decomposition $(\tilde \cO^{-}(\hat X),\tilde \cO^{+}(\hat X))$ of $\cO^{\infty} (\hat X)$   gives a morphism
$$\hat \partial:\Yo \cO^{\infty} (\hat X)\to \Sigma \Yo \hat X_{h}$$
in $G\Sp\cX$.

Note that $\cO^{\infty}(\hat X)$ and $\hat X_{h}$ have a coarsely disjoint  decomposition (indexed by $\nat$) into copies of $\cO^{\infty}(X)$ or $X$, respectively. 
Using excision in coarse homology theory for every $n$ we have  projections $\cO^{\infty}(p_{n})$ and $p_{n}$ to the $n$th component, respectively. 
The following lemma follows from the naturality of the Mayer-Vietoris boundary  maps and the usual identification of the cone-boundary with  the Mayer-Vietoris boundary.
\begin{lem}\label{kogpwergrefwef} For every $n$ in $\nat$ the following square commutes:
$$\xymatrix{\Yo \cO^{\infty} (\hat X)\ar[r]^-{\hat \partial}\ar[d]^{\cO^{\infty}(p_{n})} &\Sigma \Yo\hat X_{h} \ar[d]^{p_{n}} \\ \Yo\cO^{\infty}(X)\ar[r]^-{\partial^{\cone}} & \Sigma \Yo X} \ .$$
\end{lem}
 
By \cref{kjthgprthhdh} the map $\hat f:\hat X_{h}\to \hat Y_{h}$ is a 
  branched coarse $G$-covering  with respect to the big  family $\hat \cZ$ from \eqref{regwerwerfwefwef}.
  We set $\hat \cY:=f^{-1}(\hat \cZ)$.

%
%

We consider an equivariant coarse homology theory $E^{G}:G\BC\to \cC$ and a
 coarse homology theory $E:G\BC\to \cC$. Recall \cref{wegjiowgwergrwewf}.
\begin{lem}\label{gjiwoegrefwf}
If $E^{G}$ and $E$ are related by a transfer, then the following diagram commutes:
$$\xymatrix{E \cO^{\infty}(\hat Y)\ar[r]^{\hat \partial}\ar[d]^{\tr_{\cO^{\infty}}(\hat  f)} &\Sigma E(\hat Y_{h})  \ar[r]&\Sigma E(\hat Y_{h},\hat \cZ) \ar[d]^{\tr_{\hat f_{h}}} \\ E^{G}\cO^{\infty}(\hat X)\ar[r]^{\hat \partial} &\Sigma E^{G}(\hat X_{h} ) \ar[r]&\Sigma E^{G}(\hat X_{h},\hat \cY) } \ .$$
\end{lem}
\begin{proof}
Using the definition of the cone transfer  \cref{khphertgetrg} on the left vertical part we expand the diagram as follows:
$$\hspace{-2cm}\xymatrix{E \cO^{\infty}(\hat Y) \ar[r]^{\hat \partial}\ar[d]^{\simeq}  &\Sigma E(\hat Y_{h})  \ar[rr]&&\Sigma E(\hat Y_{h},\hat \cZ) \ar[ddd]^{\tr_{\hat f_{h}}} \\  E (\cO^{\infty}(\hat Y),\cO^{-}(\hat Y))\ar[r]\ar[d]^{\tr_{\cO^{\infty}}(\hat f)}&E(\cO^{\infty}(\hat Y),\cO^{\infty}(\hat \cZ)\cup  \cO^{-}(\hat Y))\ar[d]_{}^{\tr_{\cO^{\infty}}(\hat f)} &E(\cO^{\infty}(\hat Y),\cO^{\infty}(\hat \cZ) )\ar[l]^-{\simeq}\ar[d]^{\tr_{\cO^{\infty}}(\hat  f)}\ar[ur]^{\hat \partial}&\\ E^{G} (\cO^{\infty}(\hat X),\cO^{-}(\hat X))\ar[r]& E^{G}(\cO^{\infty}(\hat X),\cO^{\infty}(\hat \cY)\cup \cO^{-}(\hat X)) &E^{G}(\cO^{\infty}(\hat X),\cO^{\infty}(\hat \cY) )\ar[l]^-{\simeq}\ar[dr]^{\hat \partial}& \\ \ar[u]_{\simeq  } E^{G}\cO^{\infty}(\hat X)\ar[r]^{\hat \partial} &\Sigma E^{G}(\hat X_{h} ) \ar[rr]&&\Sigma E^{G}(\hat X_{h},\hat \cY) } \ .$$
The two squares in the middle commute by the naturality of the transfer
for morphisms of branched coarse coverings which in this case just increase the big families.
The right middle square commutes since 
$$\xymatrix{\hat X_{h}\ar[rr]^-{x_{n}\mapsto (n,x)}\ar[d] &&\cO^{\infty}(\hat X) \ar[d] \\\hat Y_{h} \ar[rr]^-{y_{n}\mapsto (n,y)} && \cO^{\infty}(\hat Y)} $$
gives rise to a morphism of branched coarse coverings
with respect to the families $\hat \cZ$ and  $\cO^{\infty}(\hat \cZ)$, respectively,
and the naturality of the transfer  as a transformation between excisive functors.
The upper and lower hexagon commute since $E$ and $E^{G}$ are excisive.
%
%
%
%
%
%
\end{proof}

 In the following diagram we abbreviate $\pi_{i}H\cX^{G}_{\bA} $ by $H\cX_{i}^{G} $
 and similarly for $\pi_{i}H\cX_{\bA} $ by $H\cX_{i} $.

$$\hspace{-1.8cm}\xymatrix{&H\cX_{0}(Y) \ar@/^1cm/[drr]^{\diag}&&&\\&&\prod_{\nat}H\cX_{1}\cO^{\infty}(Y) \ar[r]^-{\prod_{\nat }\partial^{\cone}}&\prod_{\nat} H\cX_{0}(Y)\ar[r]^-{\prod_{\nat}\tau^{H}_{Y}}& \prod_{\nat} \prod_{\pi_{0}^{\crs}(Y)} k\ar[d]^{[-]}\ar@/^2cm/[ddd]^{\tilde f^{*}} \\H\cX_{1}\cO^{\infty}(Y)\ar@/^0.5cm/[rru]^{\diag}\ar[uur]^{\partial^{\cone}}\ar[r]^{\tr}\ar[d]^{\tr_{\cO^{\infty}(f)}}_{\simeq}&H\cX_{1} \cO^{\infty}(\hat Y)\ar@{}[rrd]^{\cref{gjiwoegrefwf}}\ar[ur]^{(\cO^{\infty}(p_{n}))}\ar[r]^{\hat \partial}\ar[d]_{\simeq }^{\tr_{\cO^{\infty}(\hat f)}} &H\cX_{0}(\hat Y_{h}) \ar@{}[u]^{\cref{kogpwergrefwef}} \ar[ur]^{(p_{n})} \ar[r]&H\cX_{0}(\hat Y_{h},\hat \cZ) \ar@{}[dr] | { \cref{kogpwergrefwferfw} }\ar[d]^{\tr_{\hat f_{h}}}\ar[r]^-{ \bar \tau^{H}_{\hat Y_{y}}}&T(\hat Y_{h})\ar[d]^{\bar f^{*}}\\ 
\ar@/_0.5cm/[drr]^{\diag}H\cX_{1}^{G}\cO^{\infty}(X)\ar[r]^{\tr}\ar[ddr]^{\partial^{\cone}}&\ar[dr]^{(\cO^{\infty}(p_{n}))}H\cX_{1}^{G}\cO^{\infty}(\hat X)\ar[r]^{\hat \partial} &H\cX_{0}^{G}(\hat X_{h} )\ar@{}[d]^{\cref{kogpwergrefwef}}\ar[dr]^{( p_{n})} \ar[r]&H\cX_{0}^{G}(\hat X_{h},\hat \cY)\ar[r]^-{\bar \tau^{H,G}_{\hat X_{h}}}&T(\hat X_{h})\\&& \prod_{\nat} H\cX^{G}_{1}\cO^{\infty}(X)\ar[r]^-{\prod_{\nat} \partial^{\cone}}& \prod_{\nat} H\cX^{G}_{0}(X)  \ar[r]^-{\prod_{\nat} \tau^{H,G}_{X}}& \prod_{\nat} \prod_{\pi_{0}^{\crs}(X)/G}k \ar[u]_{[-]}\\& H\cX_{0}^{G}(X)\ar@/_1cm/[urr]^{\diag}&&&} $$
All squares   commute for obvious or the    indicated reasons.
The commutativity of the left middle square is best seen by an inspection of the formulas for the two compositions on the level of categories of controlled objects.

 This diagram implies the commutativity of \eqref{grwegergwefwerf} as follows.
If we start with a class $H\cX_{1}\cO^{\infty}(Y)$, then its images in the right upper and lower corners are   constant families
with respect to the index in $\nat$.
Let $t$ in  $\prod_{\pi_{0}^{\crs}(Y)} k$ and $t'$ in $ \prod_{\pi_{0}^{\crs}(X)/G} k$ denote these constants. We let    $[(t)_{n}]$ be the constant family with value $t$.
The commutativity of the diagram says that $$[(f^{*}t)_{n}]=[\tilde f^{*}((t)_{n})]=\bar f^{*}[(t)_{n}]=[(t')_{n}]$$ in $T(\hat X_{h})$.
This implies that $t'=f^{*}t$.
 \end{proof}

We now state a version of \cref{koprhererge}  for the uncompleted topological coarse $K$-homology \eqref{fwqedewdqweddqe}.
Let $(\bC,\tau)$ be an idempotent complete $C^{*}$-category   admitting all $AV$-sums with a trace $\tau$ defined at least on $\bC^{u}$.

\begin{kor}\label{kohpertgertgertg} Assume that $Y$ has finite asymptotic dimension, finite uniform topological dimension, and a countably generated uniform structure. Then the following  diagram commutes:
\begin{equation}\label{grwegergwefrrrrwerf}
\xymatrix{ \pi_{1}H\cX_{\bC}^{G,\ctr}(\cO^{\infty}(X))\ar[r]^-{\partial^{\cone}}  &\pi_{0}H\cX_{\bC}^{G,\ctr}(X) \ar[r]^{\tau_{X}^{H,G}}& \prod_{\pi_{0}^{\crs}(X)/G}\C \\  \pi_{1}H\cX_{\bC}^{\ctr}(\cO^{\infty}(Y))\ar[u]^{\simeq}_{\tr_{\cO^{\infty}(f)}}\ar[r]^-{\partial^{\cone}} & \pi_{0}H\cX_{\bC}^{\ctr}(Y) \ar[r]^{\tau_{Y}^{H}}& \prod_{\pi_{0}^{\crs }(Y)}\C\ar[u]^{f^{*}}} \ .\end{equation}
\end{kor}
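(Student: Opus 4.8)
The plan is to transport the proof of the algebraic $L^{2}$-index theorem \cref{koprhererge} to the present setting, replacing $H\cX^{G}_{\bA}$ by the uncompleted topological coarse $K$-homology $H\cX^{G,\ctr}_{\bC}$ and $H\cX_{\bA}$ by $H\cX^{\ctr}_{\bC}$. Concretely I would again build the bounded union $\hat X$ of \eqref{weirug9egrferw}, the squeezing space $\hat X_{h}$, the cone $\cO^{\infty}(\hat X)$ with its coarsely excisive pair $(\tilde\cO^{-}(\hat X),\tilde\cO^{+}(\hat X))$ and the associated Mayer--Vietoris boundary $\hat\partial$, and then assemble literally the same large commutative diagram. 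The structural lemmas \cref{kogpwergrefwef} (comparison of $\hat\partial$ with the cone boundary) and \cref{gjiwoegrefwf} (compatibility of the transfer with $\hat\partial$), together with the cone transfer \cref{khphertgetrg}, only use excision, coarse invariance, $u$-continuity and naturality of the transfer, all of which $H\cX^{G,\ctr}_{\bC}$ possesses by \cref{hlkepthetrgege} and \cref{rgrthegkeporg}. The trace-compatibility square is the $C^{*}$-analogue of \cref{kogpwergrefwferfw}: since the transfer on $\bV^{G,\ctr}_{\bC}$ is given by the very same matrix formula \eqref{werfwerweg} and the relevant trace on $\bV^{G,\ctr}_{\bC}(X)$ is the restriction of the weighted diagonal-sum trace $\tau^{G}_{i}$ on $\bV^{G}_{\bC}(X)$ from \cref{jkopwergergwefwre} along the dense inclusion \eqref{vwercecvfsdv}, the identity $\tau^{G}_{j}(B)=\tau_{i}(A)$ established in \cref{gouweriofjmwkemrfwklerf} carries over verbatim (it only involves finite sums over fundamental domains). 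A diagram chase with a class in $\pi_{1}H\cX^{\ctr}_{\bC}\cO^{\infty}(Y)$, whose images in the upper-right and lower-right corners are constant $\nat$-families with values $t$ and $t'$, then forces $t'=f^{*}t$, which is the asserted commutativity of \eqref{grwegergwefrrrrwerf}.

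The one genuinely new ingredient, absent in the algebraic case, is that the transfer for $H\cX^{G,\ctr}_{\bC}$ is defined only on ${}^{G}\BCov^{s\fdsc}$ (\cref{rgrthegkeporg}) and is a transfer equivalence only on $G\BCov^{\fdsc}$ (\cref{kophertgegertg}). So I must check that every branched coarse covering entering the diagram satisfies the relevant finite-dimension condition. These coverings are $\cO^{\infty}(f)\colon\cO^{\infty}(X)\to\cO^{\infty}(Y)$ relative to $\cO^{-}(Y)$ (a branched coarse $G$-covering by \cref{wtiogwtgerwferferfw} and \cref{okgpthertetg}.\ref{pklhtrthrteht}); $\hat f\colon\hat X_{h}\to\hat Y_{h}$ relative to $\hat\cZ$ (\cref{kjthgprthhdh}); the cone $\cO^{\infty}(\hat f)\colon\cO^{\infty}(\hat X)\to\cO^{\infty}(\hat Y)$ relative to $\cO^{-}(\hat Y)$; and the fold maps $\hat Y\to Y$ and $\hat X\to X$, which are uniform coverings with discrete fibre $\nat$ and hence, after applying $\cO^{\infty}$ or forming squeezing spaces, branched coarse coverings relative to the empty big family.

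To verify the conditions I use the hypotheses on $Y$: finite asymptotic dimension, finite uniform topological dimension and a countably generated uniform structure. By \cref{kohpehtgeg} this gives that $\cO^{\infty}(Y)$ and $\hat Y_{h}$ have finite asymptotic dimension, in particular finite dimension at coarse scales, so $(\cO^{\infty}(Y),\cO^{-}(Y))$ and $(\hat Y_{h},\hat\cZ)$ lie in the $\fdsc$ subcategory. The bounded union $\hat Y=\bigsqcup^{\bd}_{\nat}Y$ inherits the three hypotheses from $Y$, so \cref{kohpehtgeg} applies once more to show that $\cO^{\infty}(\hat Y)$ has finite asymptotic dimension; the coverings relative to $(\emptyset)$ are then harmless, an empty big family over a space of finite dimension at coarse scales satisfying $\fdsc$ trivially. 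By \cref{woigwgwegferfrefw} (and the remark after \cref{jgoiertbgdbgf}) finite dimension of the target propagates to the source, so each of these coverings also satisfies $s\fdsc$; hence all transfers in the diagram are defined, and the ones whose source carries the trivial $G$-action are equivalences, in particular $\tr_{\cO^{\infty}(f)}$.

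I expect the main obstacle to be purely the bookkeeping of these dimension hypotheses — especially confirming that the cone $\cO^{\infty}(\hat f)$ of the squeezing construction still satisfies the hypotheses of \cref{kohpehtgeg}, i.e. that the bounded union $\hat Y$ retains finite uniform topological dimension and finite asymptotic dimension uniformly in the index, and that the fold coverings behave correctly under $\cO^{\infty}$. The analytic content — norm-boundedness of the transferred and inverse-transferred matrices — is already packaged in \cref{rekgopewergwerfwerf} and \cref{kohpertegtrg}, and the trace computation is combinatorially identical to the additive case, so no new estimates are required.
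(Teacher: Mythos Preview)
Your proposal is correct and follows essentially the same route as the paper: the paper's own proof simply observes that the hypotheses on $Y$, via \cref{kohpehtgeg}, \cref{woigwgwegferfrefw} and \cref{kophertgegertg}, guarantee that the transfers $\tr_{\cO^{\infty}(f)}$, $\tr_{\cO^{\infty}(\hat f)}$ and $\tr_{\hat f_{h}}$ are all defined and are equivalences for $H\cX^{\ctr}_{\bC}$, that the analogues of \cref{gouweriofjmwkemrfwklerf} and \cref{kogpwergrefwferfw} hold, and then repeats the proof of \cref{koprhererge} verbatim. One minor slip: after applying $\cO^{\infty}$ to the fold coverings $\hat Y\to Y$ and $\hat X\to X$ the resulting branched coarse coverings are relative to $\cO^{-}(Y)$ (resp.\ $\cO^{-}(X)$), not to the empty big family, but this does not affect your dimension bookkeeping.
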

\begin{proof}
The assumption implies by \cref{kohpehtgeg}, \cref{woigwgwegferfrefw} and \cref{kophertgegertg}  that the transfers $\tr_{\cO^{\infty}(f)}$, $\tr_{\cO^{\infty}(\hat f)}$ and $\tr_{\hat f_{h}}$ are defined and equivalences for
the uncompleted topological coarse $K$-homology theory.
Also the analogues of \cref{gouweriofjmwkemrfwklerf} and \cref{kogpwergrefwferfw} hold true for the uncompleted topological coarse $K$-homology theory. The corollary now follows  by repeating the proof of \cref{koprhererge}.
\end{proof}

\section{The topological $L^{2}$-index theorem}\label{ohkjptwehgrtherge}

The main result of
this section depends on the existence of  a trace-preserving algebraic approximation of topological $K$-theory.
We let $\Ccat^{\tr}$ denote the category of pairs $(\bC,\tau)$ of a $C^{*}$-category $\bC$ and a continuous trace $\tau$.
We let furthermore $\Ccat^{\tr,\perf}$ be the full subcategory of objects $(\bC,\tau)$ where $\tau$ is defined everywhere.
Let $\cT^{\trc}_{\C}:\Ccat^{\tr,\perf}\to \Cat_{\C}^{\tr,\perf}$ denote the forgetful functor from  $C^{*}$-categories to $\C$-linear categories with traces.
Let $\bC$ be in  $\Ccat^{\tr,\perf}$.  

\begin{ddd} \label{objpbdbdgbd} A trace-preserving algebraic $\bC$-approximation of topological $K$-theory
 is  a homological  and traced functor $H:\Cat_{\C}\to \Sp$ with a natural transformation
$$c:H\cT_{\C}\to K^{\topp}
:\Ccat\to \Sp$$ such that:
\begin{enumerate} 
\item The triangle  
\begin{equation}\label{sdfvsdfvsdfvwerv}\xymatrix{\pi_{0}H\cT^{\trc}_{\C}\ar[rr]^{c}\ar[dr]_{\trc^{H}}&&\pi_{0} K^{\topp}\ar[dl]^{\trc^{K^{\topp}}}\\&\const_{\C}&} 
\end{equation} of functors from $\Ccat^{\tr,\perf}$ to $\Ab$ commutes.
\item We have an equivalence  \begin{equation}\label{sbfdvdfvfdvsd}c_{\bC}:H\cT_{\C}( \bC)\stackrel{\simeq}{\to} K^{\topp}(\bC)\ .
\end{equation} 
\end{enumerate}
\end{ddd}
 
A more precise  notation for a  trace-preserving algebraic $\bC$-approximation would be $((H,\trc^{H}),c)$ but we drop the tracing  in order to shorten.
 
%
%
%
%
%
%

\begin{ex}
By \cref{oiw0rgwrefwerfwrefw}  the pair $((K^{\Cat_{\Z}}H\cZ)_{\cL^{1}}\cT_{\C},c^{\cL^{1}})$ with tracing \eqref{erfewrfwegrgw1} is a trace-preserving $\Hilb^{\fin}(\C)$-approximation of 
$K^{\topp}$.
%
\hB
\end{ex}

We consider an idempotent complete $C^{*}$-category $\bC$  admitting all AV-sums  with a trace  
$\tau$ such that  $\bC^{u}\subseteq  \dom(\tau)$, where 
$\bC^{u}$ denotes the subset of unital objects.

Let $f:X\to Y$ be a uniform $G$-covering.
Then in analogy to \eqref{fqpopoqwedqwedqwedqwede} and using \cref{jkopwergergwefwre} for the components  we get vector-valued traces 
\begin{equation}\label{vweroivhioefw2}\tau_{Y}:\bV_{\bC}(Y)\to \prod_{\pi_{0}^{\crs}(Y)}\C\ , \quad  \tau^{G}_{X}:\bV^{G}_{\bC}(X)\to \prod_{\pi_{0}^{\crs}(X)/G}\C\end{equation}
defined on objects with component-wise bounded support or  $G$-component-wise $G$-bounded support, respectively.

If the coarse components of $Y$ are bounded, and the coarse $G$-components of $X$ are $G$-bounded, then the traces $\tau_{Y}$ and $\tau_{X}^{G}$ are   defined everywhere and we get
induced maps 
\begin{equation}\label{vweroivhioefw}\tau^{K}_{Y}:\pi_{0}K\cX_{\bC}(Y)\to \prod_{\pi_{0}^{\crs}(Y)}\C\ , \quad  \tau^{K,G}_{X}:\pi_{0}K\cX^{G}_{\bC}(X)\to \prod_{\pi_{0}^{\crs}(X)/G}\C
\end{equation}
(we abbreviate $\tau^{K^{\topp}}_{Y}$ by $\tau^{K}_{Y}$ etc. for better readability).
Recall the universal local homology theory $\Yo\cB:\UBC\to \Sp\cB$ from \cite[Sec. 4]{ass}.
By $ \Sp\cB\langle \Yo\cB(*)\rangle$ we denote the localizing subcategory of $\Sp\cB$ generated by $\Yo\cB(*)$.
\begin{theorem} [Topological $L^{2}$-index theorem]\label{igoewrifoperfrefw} Let $ f:X\to Y$ be a uniform $G$-covering.  
We assume: \begin{enumerate}
\item\label{pkothperthertge} $\Yo\cB(Y)\in  \Sp\cB\langle \Yo\cB(*)\rangle$.
\item  \label{htekogrtgrteeptrgertgreger} $Y$ has finite asymptotic dimension, finite uniform topological dimension, and a countably generated uniform structure.\item \label{htekoeptrgertgreger} All coarse components of $Y$ are bounded and all coarse $G$-components of $X$ are $G$-bounded.
\item \label{htekoeptrgertgrege}There exist a trace-preserving   algebraic $\bC^{u}$-approximation $(H,c)$ of $K^{\topp}$.
\end{enumerate}
Then  the  diagram
\begin{equation}\label{grwegergwefwetttrf}
\xymatrix{ \pi_{1} K\cX_{\bC}^{G}(\cO^{\infty}(X))\ar[r]^-{\partial^{\cone}}  &\pi_{0}K\cX_{\bC}^{G}(X) \ar[r]^{\tau_{X}^{K,G}}& \prod_{\pi_{0}^{\crs}(X)/G}\C \\  \pi_{1}K\cX_{\bC}(\cO^{\infty}(Y))\ar[u]_{\tr_{\cO^{\infty}(f)}}\ar[r]^-{\partial^{\cone}} & \pi_{0}K\cX_{\bC}(Y) \ar[r]^{\tau_{Y}^{K}}& \prod_{\pi_{0}^{\crs}(Y)}\C\ar[u]^{f^{*}}} \end{equation}
 commutes.
\end{theorem}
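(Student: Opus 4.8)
The plan is to deduce \cref{igoewrifoperfrefw} from the uncompleted algebraic/topological version \cref{kohpertgertgertg} (equivalently the master diagram in the proof of \cref{koprhererge}) by using a trace-preserving algebraic approximation of $K^{\topp}$ to compare the completed functor $K\cX^{G}_{\bC}=K^{\topp}\bV^{G}_{\bC}$ with an uncompleted functor to which \cref{kohpertgertgertg} applies, and then transporting everything through the natural transformation $c$. First I would note that assumption \ref{htekogrtgrteeptrgertgreger} together with \cref{kohpehtgeg} and \cref{woigwgwegferfrefw} guarantees that $\cO^{\infty}(Y)$, $\cO^{\infty}(X)$, $\hat Y_{h}$, $\hat X_{h}$, and their branched-covering counterparts all have finite asymptotic dimension, so that all the transfers $\tr_{\cO^{\infty}(f)}$, $\tr_{\cO^{\infty}(\hat f)}$, $\tr_{\hat f_{h}}$ exist and are equivalences for $K\cX_{\bC}$ by \cref{koheprthkoptrkgergrtgegrtgte} and \cref{klgpwegregw9}; this is exactly what makes the left vertical arrow in \eqref{grwegergwefwetttrf} an equivalence and what allows the master diagram of \cref{koprhererge} to be run for $E^{G}=K\cX^{G}_{\bC}$ and $E=K\cX_{\bC}$. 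The only piece of the master-diagram argument that is not purely formal coarse homotopy theory is the commutativity of the trace squares attaching $\bar\tau^{K,G}$, i.e. the analogue of \cref{kogpwergrefwferfw}; this in turn came in \cref{gokpwegregwre9} from \cref{gouweriofjmwkemrfwklerf}, whose proof is an explicit matrix computation on controlled objects and goes through verbatim for $\bV^{\ctr}_{\bC}$ and, after taking $\pi_{0}$, for the completed $\bV_{\bC}$ as well, since the relevant morphisms are controlled and the traces $\tau^{G}_{i}$ of \cref{jkopwergergwefwre} are continuous.

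The second step is to bring in the approximation $(H,c)$ from assumption \ref{htekoeptrgertgrege}. Here I would use that $H$ is homological and traced, so $H\cX^{\ctr}_{\bC}:=H\cT_{\C}\bV^{\ctr}_{\bC}$ and $H\cX^{G,\ctr}_{\bC}$ are strong coarse homology theories carrying transfers (by \cref{rgrthegkeporg} and \cref{kophertgegertg}), and that the natural transformation $c:H\cT_{\C}\to K^{\topp}$ induces a natural transformation of coarse homology theories $c\cX:H\cX^{\ctr}_{\bC}\to K^{\topp}\bV^{\ctr}_{\bC}$ which is compatible with cone boundaries, Mayer--Vietoris boundaries, and (by the last corollaries of \cref{kgopertgtergebdbfgbdfgb} and \cref{kogpegregwerf}) with all transfers. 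The key point I would extract from \cref{objpbdbdgbd}\eqref{sbfdvdfvfdvsd} is that for the specific $C^{*}$-category $\bC^{u}$ (the full subcategory of unital objects) $c$ is an equivalence; applying this fiberwise over points of a bornological coarse space $W$ — every controlled object is locally a sum of copies of objects in $\bC^{u}$ — upgrades $c\cX$ on $\bV^{\ctr}_{\bC}$ to an equivalence after completion, giving $c\cX_{W}:H\cX^{\ctr}_{\bC}(W)\xrightarrow{\simeq}K\cX_{\bC}(W)$, naturally in $W$; the same for the equivariant version. This identification intertwines $\tau^{H}_{Y}$ with $\tau^{K}_{Y}$ and $\tau^{H,G}_{X}$ with $\tau^{K,G}_{X}$ precisely by the commuting triangle \eqref{sdfvsdfvsdfvwerv}, applied to the $C^{*}$-categories $\bV_{\bC}(Y)$ and $\bV^{G}_{\bC}(X)$ with their everywhere-defined traces $\tau_{Y}$, $\tau^{G}_{i}$ (everywhere-defined by assumption \ref{htekoeptrgertgrege}).

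The final step is assembly. I would write down the commuting prism whose front face is \eqref{grwegergwefwetttrf}, whose back face is the corresponding diagram \eqref{grwegergwefrrrrwerf} for $H\cX^{\ctr}_{\bC}$ from \cref{kohpertgertgertg}, and whose connecting arrows are the equivalences $c\cX$; the left and middle vertical walls commute because $c\cX$ is compatible with $\partial^{\cone}$ and with $\tr_{\cO^{\infty}(f)}$, the right vertical wall commutes by \eqref{sdfvsdfvsdfvwerv}, and the back face commutes by \cref{kohpertgertgertg}. A diagram chase then yields the front face. Regarding assumption \ref{pkothperthertge}: it is needed to guarantee that $\tr_{\cO^{\infty}(f)}$ for $K\cX_{\bC}$ is available in the generality stated — strictly, \cref{klgpwegregw9} already gives the transfer equivalence on $G\BCov^{\fadim}$, and $\Yo\cB(Y)\in\Sp\cB\langle\Yo\cB(*)\rangle$ is the input ensuring that the local homology theory $\Sigma^{-1}K\cX_{\bC}\cO^{\infty}$ sees $Y$ as built from points, so that the vertical cone transfer is an equivalence on the relevant class rather than merely a transformation; I would invoke it at the point where the master diagram asserts $\tr_{\cO^{\infty}(f)}$ is an isomorphism.

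\textbf{Main obstacle.} The hard part is the second step: promoting the pointwise equivalence \eqref{sbfdvdfvfdvsd} for $\bC^{u}$ to an equivalence $H\cX^{\ctr}_{\bC}(W)\xrightarrow{\simeq}K\cX_{\bC}(W)$ of coarse homology theories on all $W$, and doing so \emph{compatibly with the completion} $\bV^{\ctr}_{\bC}\rightsquigarrow\bV_{\bC}$ and with the traces. One must check that $c$, which a priori only compares $H\cT_{\C}$ with $K^{\topp}$ on the \emph{uncompleted} controlled categories, still induces an equivalence after passing to the $C^{*}$-completion $\bV_{\bC}$; this uses that $K^{\topp}$ of a $C^{*}$-category agrees with $K^{\topp}$ of a dense $*$-subcategory under suitable hypotheses (so that $K^{\topp}\bV^{\ctr}_{\bC}\simeq K^{\topp}\bV_{\bC}=K\cX_{\bC}$) and that $H\cX^{\ctr}_{\bC}$ is already the "right" algebraic model, i.e. no further completion is forced on the $H$-side — exactly the content of \cref{oiw0rgwrefwerfwrefw} for the concrete approximation $(K^{\Cat_{\Z}}H\cZ)_{\cL^{1}}$. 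Keeping the bookkeeping of traces straight through both the completion and the approximation, so that \eqref{sdfvsdfvsdfvwerv} can be applied to $\bV_{\bC}(Y)$ rather than to $\bC$ itself, is where the real work lies; everything else is the formal coarse-homotopy-theoretic machinery already assembled in \cref{gokpwegregwre9}.
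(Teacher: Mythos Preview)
Your overall architecture is right: extend the target square to a prism with \cref{kohpertgertgertg} on the back and the comparison maps $c$ as connectors, and chase. But there is a genuine gap in your second step, and it is tied to a misreading of the role of assumption \ref{pkothperthertge}.

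You assert that the pointwise equivalence $c_{\bC^{u}}$ from \eqref{sbfdvdfvfdvsd} can be upgraded ``fiberwise'' to an equivalence $c\cX_{W}:H\cX^{\ctr}_{\bC}(W)\xrightarrow{\simeq}K\cX_{\bC}(W)$ for \emph{every} bornological coarse space $W$. This is not justified and is almost certainly false: knowing that $H\cT_{\C}(\bC^{u})\simeq K^{\topp}(\bC^{u})$ gives no control over $H(\bV^{\ctr}_{\bC}(W))$ versus $K^{\topp}(\bV_{\bC}(W))$ for general $W$, since $\bV^{\ctr}_{\bC}(W)$ is not built from $\bC^{u}$ by operations that both functors are known to preserve. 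The paper does \emph{not} claim this. What the paper uses is much weaker and is exactly what assumption \ref{pkothperthertge} provides: since $\bV^{\ctr}_{\bC}(*)\simeq\bC^{u}$, the comparison $c$ of \emph{local} homology theories $H\cX^{\ctr}_{\bC}(\cO^{\infty}(-))\to K\cX_{\bC}(\cO^{\infty}(-))$ is an equivalence on the point, hence on every object in the localizing subcategory $\Sp\cB\langle\Yo\cB(*)\rangle$, hence on $Y$ by \ref{pkothperthertge}. So the \emph{only} instance of $c$ that is an equivalence in the extended diagram is the lower-left one, $c:\pi_{1}H\cX^{\ctr}_{\bC}(\cO^{\infty}(Y))\to\pi_{1}K\cX_{\bC}(\cO^{\infty}(Y))$; the remaining instances of $c$ are merely maps. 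The chase still works because that single equivalence lets you lift any class from $\pi_{1}K\cX_{\bC}(\cO^{\infty}(Y))$ to the $H$-side, run \cref{kohpertgertgertg} there, and push forward along the other (non-invertible) $c$'s using the commutativity of the side faces.

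Correspondingly, your account of assumption \ref{pkothperthertge} is off: it has nothing to do with the availability of $\tr_{\cO^{\infty}(f)}$ (that is entirely handled by \ref{htekogrtgrteeptrgertgreger} via \cref{kohpehtgeg} and \cref{koheprthkoptrkgergrtgegrtgte}), and the vertical transfer in \eqref{grwegergwefwetttrf} is not asserted to be an equivalence. Assumption \ref{pkothperthertge} is there solely to make $c$ invertible at $\cO^{\infty}(Y)$. Once you correct this, your prism argument goes through with only that one connector inverted.
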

 
\begin{proof}
The assumptions \ref{igoewrifoperfrefw}.\ref{htekoeptrgertgreger} on the components ensures that the traces $\tau^{K,G}_{X}$ and
$\tau^{K}_{Y}$ in \eqref{vweroivhioefw} are defined. The assumption  \ref{igoewrifoperfrefw}.\ref{htekogrtgrteeptrgertgreger} ensures that the cone transfer   $\tr_{\cO^{\infty}(f)}$ is defined by \cref{okjgpwerergw9} and \cref{kophokhotrpherthetrhee} and that \cref{kohpertgertgertg} is applicable.
The transformation
$c:H\cT_{\C}\to K^{\topp}$ (from the data assumed in \ref{igoewrifoperfrefw}.\ref{htekoeptrgertgrege}) 
together with the inclusions  $$\bV^{\ctr}_{\bC}(-)\to\cT_{\C} (\bV_{\bC}(-)) \ ,\quad
 \bV^{G,\ctr}_{\bC}(-)\to\cT_{\C} (\bV^{G}_{\bC}(-))$$  of the uncompleted into the completed Roe categories (see \eqref{vwercecvfsdv})
  induce natural transformations
  $$c:H\cX_{\bC}^{\ctr} \to K\cX_{\bC}\ , \quad c^{G}:H\cX_{\bC}^{G,\ctr} \to K\cX^{G}_{\bC}$$
  of coarse homology theories.
We extend the diagram  \eqref{grwegergwefwetttrf} as follows
\begin{equation}\label{grwegergwefwfffffetttrf}
\xymatrix{ \pi_{1} H\cX_{\bC}^{G,\ctr}(\cO^{\infty}(X))\ar[dr]^{c_{\cO^{\infty}(X)}^{G}}\ar[rr]^{\partial^{\cone}} &&\ar[d]^{c^{G}_{X}}\pi_{0}H\cX^{G,\ctr}(X)\ar[dr]^{\tau_{X}^{H,G}}&\\&  \pi_{1}K\cX_{\bC}^{G}(\cO^{\infty}(X))\ar[r]^-{\partial^{\cone}}  &\pi_{0}K\cX_{\bC}^{G}(X) \ar[r]^{\tau_{X}^{K,G}}& \prod_{\pi_{0}^{\crs}(X)/G}\C \\  & \pi_{1}K\cX_{\bC}(\cO^{\infty}(Y))\ar[u]_{\tr_{\cO^{\infty}(f)}}\ar[r]^-{\partial^{\cone}} & \pi_{0} K\cX_{\bC}(Y) \ar[r]^{\tau_{Y}^{K}}& \prod_{\pi_{0}^{\crs}(Y)}\C\ar[u]^{f^{*}}\\ \pi_{1}H\cX^{\ctr}_{\bC}(\cO^{\infty}(Y))\ar[ur]^{\simeq}_{c_{\cO^{\infty}(Y)}}\ar[uuu]_{\tr_{\cO^{\infty}(f)}}\ar[rr]_{\partial^{\cone}} &&\ar[u]_{c_{Y}}\pi_{0}H\cX^{\ctr}(Y)\ar[ur]_{\tau_{Y}^{H}}&} \end{equation}
The squares involving the cone boundary commute by the naturality of $c$ or $c^{G}$.
 The triangles on the right commute by 
\eqref{sdfvsdfvsdfvwerv}, the fact that the traces on $\bV^{\ctr}_{\bC}(Y)$ or $\bV^{G,\ctr}_{\bC}(X)$
are the restrictions of the traces on $ \bV_{\bC}(Y)$ or $\bV^{G}_{\bC}(X)$, respectively, and the naturality of the transformation in \eqref{fdvsdfvsdffsc}.
The left square commutes by the compatibility of the
definitions of the transfers on the level of categories of controlled objects and the naturality of $c$ or $c^{G}$.
Finally, the outer part of the diagram commutes by \cref{kohpertgertgertg}.

Note that $H\cX^{\ctr}_{\bC}$ and $K\cX_{\bC}$ are strong coarse homology theories, see  \cref{hlkepthetrgege} and \cref{lrgerhrtgerg}. By \cite[Lem. 9.6]{ass}  their compositions with the cone functor are local homology theories.
The equivalence \eqref{sbfdvdfvfdvsd} (with $\bC^{u}$ in place of $\bC$) and the fact that
$\bV^{\ctr}_{\bC}(*)\simeq \bC^{u}$ implies that the transformation of  local homology theories 
$H\cX^{\ctr}_{\bC}(\cO^{\infty}(-))\to K\cX_{\bC}(\cO^{\infty}(-))$  is an equivalence on the point, and therefore on all objects with $\Yo\cB(Y)\in \Sp\cB\langle \Yo\cB(*)\rangle$.   Hence by our assumption \ref{igoewrifoperfrefw}.\ref{pkothperthertge} the lower left diagonal map $c_{\cO^{\infty}(Y)}$
is an equivalence as indicated.

The commutativity of the cells shown so far now imply the commutativity of the cell \eqref{grwegergwefwetttrf}.
\end{proof}

\begin{ex} 
The Assumption \ref{igoewrifoperfrefw}.\ref{pkothperthertge} is the most general finiteness assumption on $Y$ under which we can show that the comparison map $c$ is an equivalence. It is 
 e.g. satisfied if $Y$ is a compact manifold or  homotopy equivalent to a finite simplicial complex.  
If $Y$ is a compact Riemannian manifold or finite simplicial complex and $f:X\to Y$ is a $G$-covering
with induced metric structures, then   Assumption \ref{igoewrifoperfrefw}.\ref{htekogrtgrteeptrgertgreger}
is obvious and Assumption
 \ref{igoewrifoperfrefw}.
 \ref{htekoeptrgertgreger} is satisfied by 
 \cref{knknonkpfgfghnfnfhnf9}. 
 \hB
\end{ex}

\begin{ex}\label{ojperherthetrg}
Assume that $Y$ is a compact Riemannian manifold with a Dirac type operator $\Dirac_{Y}$ of degree $0$ with symbol
$\sigma(\Dirac_{Y})$ in $\pi_{1}K\cX(\cO^{\infty}(Y))$.
Assume further that $X\to Y$ is a Riemannian $G$-covering and that $\Dirac_{X}$ is a $G$-equivariant lift of $\Dirac_{Y}$.
Then one can check  that $$\tr_{\cO^{\infty}(f)}(\sigma(\Dirac_{Y}))=\sigma(\Dirac_{X})$$ in $\pi_{1}K\cX^{G}(\cO^{\infty}(X))$. For the argument one must go into the details of \cite{indexclass}. In particular one must construct the analogue of the transfer along  for  Roe algebras and show  that it is compatible with the identification   \cite[(6.2)]{indexclass}.

 Note that $$\partial^{\cone}(\sigma(\Dirac_{X}))=\ind\cX(\Dirac_{X})$$ in $K\cX_{0}^{G}(X)$, and similarly $$\partial^{\cone}(\sigma(\Dirac_{Y}))=\ind\cX(\Dirac_{Y})$$ in $K\cX_{0}(Y)$.
We can now recover the classical statement of  Atiyah's $L^{2}$-index theorem  
$$\tau^{K}_{Y}(\ind\cX(\Dirac_{Y}))=\tau^{K,G}_{X}(\ind\cX(\Dirac_{X}))$$ as an immediate consequence
of \cref{igoewrifoperfrefw}.
\hB

%
%
%
%

\end{ex}

\section{ Higson's counterexample to the coarse Baum-Connes conjecture}\label{lkthperthergetrgtre}

The goal of this section is to present Higson's counter example \cite{cbcc} to the surjectivity of the coarse assembly map within the formalism of this  note.
We consider a discrete group $G$ with the following properties:
\begin{ass}\label{rkojgpwergwrefwerf} \mbox{}
\begin{enumerate}
\item $G$ is finitely generated.
\item \label{fjioqwefwqdqewd} $G_{can}$    has finite asymptotic dimension.
\item\label{fjioqwefwqdqewd1}  $G$ has property $T$.
\item \label{fjioqwefwqdqewd2} $G$ is residually finite.
\end{enumerate}
\end{ass}
In \cref{rkojgpwergwrefwerf}.\ref{fjioqwefwqdqewd}   we more precisely require that the coarse space $G_{can}$, i.e., $G$ with the canonical $G$-coarse structure generated by the entourages $\{(g,gh)\mid g\in G\}$ for all $h$ in $G$, has finite asymptotic dimension as defined in  \cref{uihefigvweeffd1}.\ref{okgopwererfwerf}.
Property $T$ in \cref{rkojgpwergwrefwerf}.\ref{fjioqwefwqdqewd1} is  equivalent to the existence of a projection
$q_{Kzdn}$ in the maximal group $C^{*}$-algebra $C^{*}_{\max}(G)$ with the following property:  If $\lambda:G\to U(H)$ is any unitary representation of $G$
on a Hilbert space, then $\lambda(q_{Kzdn})$ is the orthogonal projection onto the subspace of $G$-invariant vectors in $H$.  Here $\lambda(q_{Kzdn})$ is defined  using  the universal property of $C^{*}_{\max}(G)$ saying that any
unitary representation of $G$ on a Hilbert spaces uniquely extends to a homomorphismus of $C^{*}$-algebras from $C^{*}_{\max}(G)$ to the bounded operators on the same Hilbert space.
Residual finiteness in  \cref{rkojgpwergwrefwerf}.\ref{fjioqwefwqdqewd2} is witnessed by a decreasing family $(G_{n})_{n\in \nat}$  of normal and  finite index  subgroups
 with $\bigcap_{n\in \nat} G_{n}=\{e\}$.

\begin{ex}
  Torsion-free cocompact lattices  $G$   in $Sp(1,n)$  satisfy \cref{rkojgpwergwrefwerf}. 
They exist by \cite{zbMATH03598578}.

The coarse space $G_{can}$ is coarsely equivalent to the symmetric space (the quarternionic hyperbolic space of dimension  $4n$) of $Sp(1,n)/Sp(1)Sp(n)$ which has finite asymptotic dimension $\le 4n$ since is boundary sphere has dimension $4n-1$ 
\cite{Buyalo_2007}.

The group  $G$ is a  fundamental group of the compact  manifold  $G\backslash Sp(1,n)/Sp(1)Sp(n)$ and therefore finitely generated.

It is well-known that  the Lie group $Sp(1,n)$ has property $T$.
 The property $T$  for $G$ is inhertited  as a subgroup of $Sp(1,n)$. 
 
 Finally, discrete subgroups  of  linear groups like $Sp(1,n)$ are residually finite. 
\hB
 \end{ex}

We consider the branched coarse $G$-covering $f:X\to Y$ with respect to the family $\cZ$  constructed from the data $G$ and $(G_{n})_{n\in \nat}$ in \cref{kopgergwerg9}. 

Following \cite[Def. 9.4]{ass} the coarse assembly map for $Y$ is the map
\begin{equation}\label{gwergerwgergwerffrf}\mu: \colim_{V\in \cC_{Y}} \Sigma^{-1}K\cX\cO^{\infty}(P_{V}(Y))\stackrel{\partial^{\cone}}{\to}\colim_{V\in \cC_{Y}} K\cX (P_{V}(Y))\stackrel{\simeq}{\leftarrow} K\cX(Y)\ .
\end{equation}
In \cite{cbcc} Higson constructs a non-trivial class $p$ in $\pi_{0}K\cX(Y)$ and then shows that
 it is   not in the image of the coarse assembly map. The goal of the rest of this section is to give a detailed proof of this result.
 
 We start with a description of the class $p$.
Recall that $X\cong G_{can,min}\otimes S_{min,min}$ and $Y\cong G\backslash X$.
 The underlying set of  $X$ has an additional right $G$-action induced by the right multiplication of $G$ on itself  which induces a right $G$-action on the underlying set of $Y$. For $g$   in $G$ the subset $U_{g}:=\{(x,xg)|x\in X\}$ is a coarse entourage of $X$. In fact, the coarse structure of $X$ is generated by the collection of  these entourages for all $g$ in $G$.
 It follows that
 $V_{g}:=\{(y,yg)|y\in Y\}$ is a coarse entourage of $Y$.

 The Hilbert space $L^{2}(Y)$ gives rise to an object $(L^{2}(Y),\mu)$ of $\bV(Y)$ in a natural way.
 The $G$-action on $Y$ induces   a unitary representation $\lambda$ of $G$ on $L^{2}(Y)$.
 By construction the  operator $\lambda(g)$ is controlled by $V_{g}$. We therefore get a homomorphism
 $G\to U \End_{\bV(Y)}((L^{2}(Y),\mu))$. By the universal property of the maximal group $C^{*}$-algebra it extends to a homomorphism of $C^{*}$-algebras
 $\lambda:C_{\max}^{*}(G)\to \End_{\bV(Y)}((L^{2}(Y),\mu))$.
 Then we set
 $P:=\lambda(q_{Kzdn}) $ and let $p$ be the  class of the  projection $P$  in $\pi_{0}K\cX(Y)$.
 Here we use $\pi_{0}K\cX(Y)\cong \pi_{0}K^{\topp}(\bV(Y))$ and the description of this group as in \cref{kogpwergweferwf}.

 The coarse components of $Y$ are given by the subspaces $Y_{n}:=G\backslash G_{can,min}\otimes G/G_{n}$ and indexed by $\nat$.
 These are finite sets and hence bounded. Therefore  the trace \eqref{vweroivhioefw}
$$\tau_{Y}^{K}:\pi_{0}K\cX(Y)\to \prod_{n\in \nat}  \C$$ is well-defined. 
 We let $P_{n}$ be the component of the projection $P$ on $Y_{n}$. By the universal property of the Kaszhdan projection it projects onto the $G$-invariant subspace of $L^{2}(Y_{n})$.
 We now note that $G$ acts transitively on $Y_{n}$. It follows that $  L^{2}(Y_{n})^{G}\cong \C$ consists of the constant functions  and we get  $\tr_{Y_{n}}(P)=1$ for the $n$'th component of  the trace $\tau_{Y}(P)$ in \eqref{vweroivhioefw2}.   This implies that
 $\tau^{K}_{Y}(p) $  is   the constant family with value $1$ in $ \prod_{n\in \nat}  \C$.
 In particular $p\not=0$.

Since   $G_{ can}$ has finite asymptotic dimension also   $X$ has finite asymptotic dimension and
 $(f:X\to Y,\cZ)$ belongs to
$G\BCov^{s\fadim}$. In particular the transfer $\tr_{f}:K\cX(Y,\cZ)\to K\cX^{G}(X,f^{-1}(\cZ))$ is defined 
by \cref{koheprthkoptrkgergrtgegrtgte}.

\begin{rem}
We will see below that $\tr_{f}[p]=0$ so that we do not have a transfer equivalence. So $Y$ can not have finite asymptotic dimension in view of \cref{koheprthkoptrkgergrtgegrtgte}. \hB
\end{rem}

The transfer of the object $(L^{2}(Y),\mu)$ is $(L^{2}(X),\sigma,\nu)$ in $\bV^{G}(X)$, where
$\sigma$ is the representation induced by the left action on $X $. We let $\kappa$ denote the  unitary $G$-action on $L^{2}(X) $ given by the right action of $G$ on $X$. For $g$ in $G$ the unitary operator  $\kappa(g)$ is controlled by the entourage  $U_{g}$ introduced above.

 Let $(q_{i})_{i\in \nat}$ be a sequence of elements in $\C[G]$ such that $\lim_{i\to \infty} q_{i}=q_{Kzdn}$ in $C^{*}_{\max}(G)$. 
 Then $P=\lim_{i\to \infty} \lambda(q_{i})$. Since the representation $\kappa$ on $L^{2}(X)$  does not have  non-trivial invariant vectors we get
 $\lim_{i\to \infty} \kappa(q_{i})=\kappa(q_{Kzdn})=0$.

 Consider $i$ in $\nat$. Since $q_{i}$ is a finite linear combination of elements of $G$ the operator $\lambda(q_{i})$ is $V$-controlled by some coarse entourage $V_{i}$ of $Y$.  There exists $n_{i}$ in $\nat$ such that the parallel transport at scale $V_{i}$ is defined on $Y_{\le n_{i}}^{c}$. In view of the explicit description of the transfer in terms of matrices \eqref{werfwerweg} the transfer of the unitary $\lambda(g) \mu(Y_{\le n_{i}}^{c}) $ is $\kappa(g) \nu(X_{\le n_{i}}^{c})$ provided $V_{g}\subseteq V_{i}$. Therefore 
 the transfer of $ \lambda(q_{i}) \mu(Y_{\le n_{i}}^{c})$ is  represented by  $\kappa(q_{i})\nu(X_{\le n_{i}}^{c})$.
 Fix $\epsilon$ in $(0,\infty)$ and let $i$ be so large that $\|q_{i}-q_{Kzdn}\|\le \epsilon/2$ and $\|\kappa(q_{i})\|<\epsilon/2$.  Since the transfer is a morphism of $C^{*}$-categories and therefore contractive
 we get   $\|[\kappa(q_{i})\nu(X_{\le n_{i}}^{c})]-\tr_{f}([P])\|\le \epsilon/2$ and hence $\|\tr_{f}([P])\|\le \epsilon$.
 As $\epsilon$ was arbitrary we get $\tr_{f}([P])=0$.
 The brackets  indicate taking equivalence classes in the quotients  in $\bV(Y,\cZ)$ or $\bV^{G}(X,f^{-1}(\cZ))$.
 We conclude that $\tr_{f}[p]=0$ in $\pi_{0}K\cX^{G}(X,f^{-1}(\cZ))$.
 
 Alternatively we could argue that $p$ is a ghost and apply \cref{lphetgerg}.

 \begin{prop}
 The class $p$ is not in the image of  the assembly map \eqref{gwergerwgergwerffrf}.
 \end{prop}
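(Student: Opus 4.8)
The plan is to derive a contradiction from the assumption that $p$ lies in the image of the assembly map, using the transfer $\tr_{f}$ together with the cone boundary and the algebraic $L^{2}$-index theorem machinery. First I would recall that the assembly map \eqref{gwergerwgergwerffrf} factors through the cone boundary: if $p$ is in its image, then there is an entourage $V$ of $Y$ and a class $\sigma$ in $\pi_{1}K\cX\cO^{\infty}(P_{V}(Y))$ with $\partial^{\cone}(\sigma)$ mapping to $p$ under $\colim_{V} K\cX(P_{V}(Y))\xleftarrow{\simeq} K\cX(Y)$. The key point is that by \cref{sfvfdvsfd9v} (and its equivariant refinement \cref{okoprtkgpobgfbdfgbdfgbdgfb}) the restriction of $f$ over a suitable $Z^{c}$ becomes a uniform $G$-covering $P_{U}(f^{-1}(Z^{c}))\to P_{V}(Z^{c})$ after enlarging $Z$, so that \cref{wtiogwtgerwferferfw} gives a branched coarse $G$-covering $\cO^{\infty}(P_{U}(f^{-1}(Z^{c})))\to \cO^{\infty}(P_{V}(Z^{c}))$ of finite asymptotic dimension (using \cref{woigwgwegferfrefw} and that $G_{can}$, hence $X$, has finite asymptotic dimension). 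Therefore the cone transfer $\tr_{\cO^{\infty}(f)}$ is defined on the relevant relative theories.

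Next I would push the putative preimage $\sigma$ through the transfer. By \cref{koheprthkoptrkgergrtgegrtgte} (relative version) and the compatibility of $\tr$ with $\partial^{\cone}$ established in \cref{gjiwoegrefwf} and used in the proof of \cref{koprhererge}, applying the transfer to the factorization of the assembly map yields that $\tr_{f}[p]$ in $\pi_{0}K\cX^{G}(X,f^{-1}(\cZ))$ equals $\partial^{\cone}$ of a transferred symbol class coming from $\cO^{\infty}$ of the Rips complexes on $f^{-1}(Z^{c})$. Concretely: the image of $p$ in $\pi_{0}K\cX(P_{V}(Y))\simeq \pi_{0}K\cX(Y)$ maps to $\pi_{0}K\cX(P_{V}(Y),P_{V}(\cZ))$, and there the transfer is defined; by naturality of $\tr$ with respect to morphisms of branched coverings (the square \eqref{qwefqwfohjfioqwefwefqerwf} with $P_{V}(Z^{c})\hookrightarrow P_{V}(Y)$ on the bottom) this image coincides with the image of $\tr_{f}[p]$ under the map $\pi_{0}K\cX^{G}(X,f^{-1}(\cZ))\to \pi_{0}K\cX^{G}(P_{U}(f^{-1}(Z^{c})),\dots)$. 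But we computed above that $\tr_{f}[p]=0$ in $\pi_{0}K\cX^{G}(X,f^{-1}(\cZ))$; so the image of $p$ in $\pi_{0}K\cX(P_{V}(Y),P_{V}(\cZ))$ is likewise zero, for every $V$.

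On the other hand, I would show that the image of $p$ in $\colim_{V}\pi_{0}K\cX(P_{V}(Y),P_{V}(\cZ))$ is nonzero, which gives the contradiction. For this I use the trace $\tau_{Y}^{K}$: the relative trace $\bar\tau_{Y}^{K}$ of the class $[p]$ in $T(Y)=\colim_{Z\in\cZ}\prod_{\pi_{0}^{\crs}(Y)}\C/\prod_{\pi_{0}^{\crs}(Z)}\C$ is the class of the constant family with value $1$, since we computed $\tr_{Y_{n}}(P)=1$ for all $n$ and the $Y_{n}$ with $Y_{n}\subseteq Z^{c}$ are exactly the components surviving in the quotient; hence $\bar\tau_{Y}^{K}[p]\neq 0$ in $T(Y)$ because $(1,1,1,\dots)$ is not eventually zero. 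Since $\bar\tau_{Y}^{K}$ factors through $\pi_{0}K\cX(Y,\cZ)\simeq\colim_{V}\pi_{0}K\cX(P_{V}(Y),P_{V}(\cZ))$, the image of $p$ there is nonzero, contradicting the previous paragraph.

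The main obstacle will be verifying carefully that the transfer is genuinely compatible with the assembly-map factorization at the Rips-complex level: one must check that enlarging $Z$ (forced by \cref{sfvfdvsfd9v} as $V$ grows) is compatible with passing to the colimit over $V$, i.e. that the members of $\cZ$ needed for the transfer on $P_{V}(Y)$ can be chosen cofinally, and that the naturality square relating $\tr_{f}$ on $X$ and on the Rips complex of $f^{-1}(Z^{c})$ commutes after passing to these colimits. This is essentially bookkeeping with the functoriality of $\tr$ along morphisms in ${}^{G}\BCov$ (cf. the construction in the proof of \cref{kogpwreregwe913}) combined with \cref{okoprtkgpobgfbdfgbdfgbdgfb}.\ref{elrtjhperthtrgrtgetr}, but it requires care; everything else — the vanishing $\tr_{f}[p]=0$ and the non-vanishing $\bar\tau_{Y}^{K}[p]\neq 0$ — has already been done above.
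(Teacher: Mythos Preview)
Your proposal has a genuine gap in the second paragraph: you infer from $\tr_{f}[p]=0$ (and hence $\tr_{P(f)}[p']=0$) that the image of $p$ in $\pi_{0}K\cX(P_{V}(Y),P_{V}(\cZ))$ vanishes. This would require the transfer $\tr_{P(f)}$ to be injective, i.e.\ essentially an equivalence. But the transfer equivalence in \cref{koheprthkoptrkgergrtgegrtgte} needs the condition $\fadim$ on the \emph{target}, and $Y$ (hence $P_{V}(Y)$, which is coarsely equivalent) does \emph{not} have finite asymptotic dimension --- the paper itself notes this in the remark just before the proposition: the very fact that $\tr_{f}[p]=0$ while $[p]\neq 0$ forces $Y$ to have infinite asymptotic dimension. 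So the only available transfer is under $s\fadim$, which is merely a transformation, not an equivalence, and nothing prevents it from having a kernel.

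The paper's argument avoids this by never going back to the $Y$-side. Instead it stays on the equivariant $X$-side and uses the topological $L^{2}$-index theorem (\cref{igoewrifoperfrefw}) component by component. On each finite piece $P(f_{n}):P_{U}(X_{n})\to P_{V}(Y_{n})$ the target \emph{is} a finite complex, so the hypotheses of \cref{igoewrifoperfrefw} hold, and one computes $\tau^{K,G}_{n}\bigl(\partial^{\cone}\tr_{\cO^{\infty}(P(f_{n}))}(u_{n})\bigr)=\tau^{K}_{n}(p')=1$. Assembling over $n$ gives $\bar\tau^{K,G}_{X}\bigl[\partial^{\cone}\tr_{\cO^{\infty}(P(f))}(u)\bigr]=[(1)_{n}]\neq 0$ in $T(X)$. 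On the other hand, naturality of the transfer with the cone boundary (the large diagram in the paper) identifies this class with $\bar\tau^{K,G}_{X}\bigl(\tr_{P(f)}[p']\bigr)$, which vanishes since $\tr_{P(f)}[p']=0$. That is the contradiction. Your outline mentions the $L^{2}$-index theorem but never invokes it; it is precisely the missing ingredient that lets one extract numerical information on the equivariant side without needing injectivity of the transfer.
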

\begin{proof}
Assume by contradiction that $p$ is in the image of the assembly map \eqref{gwergerwgergwerffrf}.
 Then there exists an entourage $V$ (which we can assume to be generating)  of $Y$ and $u$ in $\pi_{1}K\cX\cO^{\infty}(P_{V}(Y))$ such that
 $p':=\partial^{\cone}(u)$ in $\pi_{1}K\cX(P_{V}(Y))$ is the image of $p$ under $\pi_{0}K\cX(Y)\stackrel{}{\to} \pi_{0}K\cX(P_{V}(Y))$ induced by the map $Y\to P_{V}(Y)$ sending the points of $Y$ to Dirac measures.

  We  can assume that $V=f(U)$ for some invariant  generating entourage $U$ of $X$.
 By \cref{okoprtkgpobgfbdfgbdfgbdfgbdgfb}.\ref{elrtjhperthtrgrtgetr} there exists an integer $n$ such that
 $$P_{U}(X_{\le n}^{c})\to P_{V}(Y_{\le n}^{c})$$ is a uniform $G$-covering.
In order to simplify the notation we can throw away the first few members of the big family and then assume that
 $$P(f):P_{U}(X)\to P_{V}(Y )$$ is itself a uniform $G$-covering.
 Since $U$ and $V$ are generating entourages the coarse components of  $P_{U}(X)$ and $P_{V}(Y)$ correspond to the coarse components of $X$ and $Y$ and are indexed by $\nat$.

 We consider the $n$'th component $ P(f_{n}):P_{U}(X_{n})\to P_{V}(Y_{n})$. In the following we argue that this uniform $G$-covering 
satisfies the  
  assumptions for the coarse $L^{2}$-index  \cref{igoewrifoperfrefw}. 
  First of all $P_{V}(Y_{n})$ is a finite simplicial complex and therefore $\Yo\cB(Y_{n})\in \Sp\cB\langle \Yo\cB(*)\rangle$  
  by homotopy invariance and excisiveness of $\Yo\cB$. This verifies Condition \ref{igoewrifoperfrefw}.\ref{pkothperthertge}. 
Since $P_{U}(Y_{n})$ is a finite simplicial complex also
 Condition 
 \ref{igoewrifoperfrefw}.\ref{htekogrtgrteeptrgertgreger} is satisfied.
 The bornological coarse space $Y_{n}$ consists of one bounded component and
  $X_{n}$ consists of a single  $G$-bounded $G$-component yielding Condition  \ref{igoewrifoperfrefw}.\ref{htekoeptrgertgreger}).
  Finally,  as we work with the coefficient category $\bC=\Hilb_{c}(\C)$ with the standard trace
 by \cref{oiw0rgwrefwerfwrefw} there exists a trace-preserving $\bC^{u}=\Hilb^{\fin}(\C)$-approximation of $K^{\topp}$ as required in  Condition  \ref{igoewrifoperfrefw}.\ref{htekoeptrgertgrege}.

 By $u_{n}$ in $\pi_{1}K\cX(\cO^{\infty}(P_{V}(Y_{n})))$ and $p'_{n}=\partial^{\cone}u_{n}$ in $\pi_{0}K\cX(P_{V}(Y_{n}))$ we denote the  corresponding components of $u$ and $p'$.  
 By 
 \cref{igoewrifoperfrefw}  we have
 \begin{equation}\label{ertgertggertgetrg} \tau^{K,G}_{X_{n}}(\partial^{\cone}\tr_{\cO^{\infty}(P(f_{n}))}(u_{n}))= \tau^{K}_{Y_{n}}(p'_{n})=1\ .
\end{equation} 

  We now use the   diagram 
$${\tiny \hspace{-2.5cm}\xymatrix{ \ar[d]\Sigma^{-1}K\cX(\cO^{\infty}(P_{V}(Y_{n})) )\ar@/_1cm/[ddddd]_{\tr_{\cO^{\infty}(P(f_{n}))}}\ar[rrr]^{\partial^{\cone}}&&&K\cX(P_{V}(Y_{n}))\ar[d]
\\ \Sigma^{-1}K\cX(\cO^{\infty}(P_{V}(Y)) )\ar[dr]^{[-]}\ar[ddd]^{\tr_{\cO^{\infty}(P(f))}}\ar[rrr]^-{\partial^{\cone}} &&& K\cX(P_{V}(Y))\ar[dl]_{[-]} \\ & \Sigma^{-1}K\cX(\cO^{\infty}(P_{V}(Y)),\cO^{\infty}(P_{V}(\cZ))) \ar[r]^-{\partial^{\cone}}\ar[d]^{\tr_{\cO^{\infty}(P(f))}}&  K\cX(P_{V}(Y),P_{V}(\cZ))\ar[d]^{\tr_{P(f)}}&\\&\Sigma^{-1}K\cX^{G}(\cO^{\infty}(P_{U}(X)),\cO^{\infty}(P_{U}(f^{-1}(\cZ))))\ar[r]^-{\partial^{\cone}}& K\cX^{G}(P_{U}(X),P_{U}(f^{-1}(\cZ)))&\\ \Sigma^{-1}K\cX^{G}(\cO^{\infty}(P_{U}(X)))\ar[ur]^{[-]} \ar[rrr]^-{\partial^{\cone}} &&& K\cX^{G}(P_{U}(X)) \ar[ul]_{[-]} \\ \Sigma^{-1}K\cX^{G}(\cO^{\infty}(P_{U}(X_{n})))\ar[rrr]^{\partial^{\cone}}\ar[u]&&& K\cX^{G}(P_{U}(X_{n}))\ar[u]} }$$
which commutes by the naturality of the transfer. The unnamed vertical maps are all induced by inclusions of components.
The  commutativity of the middle part says that 
$$ [\partial^{\cone}\tr_{\cO^{\infty}(P(f))}(u)]= \tr_{P(f)} [p']$$
where the brackets indicate the projection to the relative $K$-groups.
We conclude that
\begin{equation}\label{werfewerfwerf} \bar \tau^{K,G}_{X}  [\partial^{\cone}\tr_{\cO^{\infty}(P(f))}(u)]=\bar \tau^{K,G}_{X}( \tr_{P(f)} [p'])\ . \end{equation}
The outer part of the diagram says that
$ \partial^{\cone}\tr_{\cO^{\infty}(P(f))}(u)$ has the components
$ \partial^{\cone}\tr_{\cO^{\infty}(P(f_{n}))}(u_{n})$ and therefore
 that by \eqref{ertgertggertgetrg} the trace 
$\bar \tau^{K,G}_{X}  [\partial^{\cone}\tr_{\cO^{\infty}(P(f))}(u)]$ is  the class of the constant family $(1_{n})$ in $T(X)\cong \prod_{\nat}/\bigoplus_{\nat}\C$.
Since $\tr_{f}[p]=0$ we also have  $ \tr_{P(f)} [p'] =0$. This would imply 
$\bar \tau^{K,G}_{X}( \tr_{P(f)} [p'])=0$ which  is a contradiction.
\end{proof}

\begin{rem}
The argument above is essentially the same as the one given in \cite[Sec. 5]{cbcc} with the following exceptions.  Firstly, we provide an alternative to
   \cite[Lem. 5.4]{cbcc} and propose the condition of finite asymptotic dimension as a condition for the 
existence of the transfer. Secondly, instead of justifying the  applicability of Atiyah's $L^{2}$-index theorem in \cite[Prop. 5.6]{cbcc}  
we propose to use our topological $L^{2}$-index theorem \cref{igoewrifoperfrefw}
which allows us to argue completely within coarse $K$-homology theory.
 \hB
 \end{rem}

\section{Comparison of algebraic and topological $K$-theory functors}\label{jiogwerfwrefrefw}


Let $$K^{\Cat_{\Z}}:\Cat_{\Z}\to \Sp\qquad \mbox{and}\qquad  K^{\topp}:\nCcat\to \Sp$$ denote the
algebraic $K$-theory functor for  $\Z$-linear categories and
the topological $K$-theory functor for $C^{*}$-categories. We   form the universal polynomially homotopy invariant functor (see \eqref{vdfsvewrvfsdvsdfvsd})
  $$h:K^{\Cat_{\Z}}\to K^{\Cat_{\Z}}H:\Cat_{\Z}\to \Sp$$ under $K^{\Cat_{\Z}}$ called the homotopy $K$-theory for  $\Z$-linear categories.

Let $\cZ:\Cat_{\C}\to \Cat_{\Z}$ denote the forgetful functor from $\C$-linear categories to $\Z$-linear categories, and
$\cT_{\C}:\Ccat\to \Cat_{\C}$ the forgetful functor from unital $C^{*}$-categories to $\C$-linear categories.
Finally let 
$\cL^{1}$ be the algebra of trace class operators on the Hilbert space $\ell^{2}$.  Then we can twist the composition $K^{\Cat_{\Z}}H\cZ$ with $\cL^{1}$ and obtain
  the functor 
  $$(K^{\Cat_{\Z}}H\cZ)_{\cL^{1}}:\Cat_{\C}\to \Sp\ .$$
The main goal of this section is  to state \cref{hlrztrhertgetrg} of the natural transformation
\begin{equation}\label{gwegerfwr} c^{\cL^{1}}:(K^{\Cat_{\Z}}H\cZ)_{\cL^{1}}\cT_{\C} \to K^{\topp}_{|\Ccat}:\Ccat\to \Sp
\ .\end{equation}  We will observe that it induces an equivalence on the   $C^{*}$-category $\Hilb^{\fin}(\C)$ of finite-dimensional Hilbert spaces, and that it is compatible with the trace homomorphisms.
  
 In the language of \cref{objpbdbdgbd} we can formulate our result as follows.
 
 \begin{theorem}
 The pair $((K^{\Cat_{\Z}}H\cZ)_{\cL^{1}},c^{\cL^{1}})$ is a trace-preserving $\Hilb^{\fin}(\C) $-approximation of $K^{\topp}$.
 \end{theorem}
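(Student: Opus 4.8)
The plan is to verify the three requirements of \cref{objpbdbdgbd} for the pair $((K^{\Cat_{\Z}}H\cZ)_{\cL^{1}},c^{\cL^{1}})$, assuming the existence and basic properties of the comparison transformation $c^{\cL^{1}}$ of \eqref{gwegerfwr} which will be constructed in this section (this is the content of the theorem referenced as \cref{hlrztrhertgetrg} and of the key result \cref{oiw0rgwrefwerfwrefw}). First I would record that $(K^{\Cat_{\Z}}H\cZ)_{\cL^{1}}$ is a homological functor for $\C$-linear categories: $K^{\Cat_{\Z}}$ is homological, $H$-fication preserves this (by the cited properties of homotopy $K$-theory, cf. \cref{kopgwerfewfwefw} and \cref{koopherhegg}), restriction along $\cZ$ preserves it (\cref{koopherhegg}.\ref{gkweropfwerfwfrfwrefw}), and twisting by the nonunital algebra $\cL^{1}$ preserves it via the fibre-sequence definition $H_{\cL^{1}}=\Fib(H_{(\cL^{1})^{+}}\to H_{\C})$ (\cref{koopherhegg}.\ref{hetggretg}). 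Likewise it is traced: algebraic $K$-theory of $\Z$-linear categories is traced (the argument of \cref{kogpwergweferwf} with ``unitary'' replaced by ``invertible''), this tracing passes through $H$-fication (\cref{gokrjpwegerwwre}) and through $\cZ$, and the twist by the traced algebra $(\cL^{1},\tr)$ inherits a tracing by the recipe of \cref{kgopweewrfwerf}. This establishes that $H:=(K^{\Cat_{\Z}}H\cZ)_{\cL^{1}}$ is a homological and traced functor $\Cat_{\C}\to\Sp$, as required.

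Next I would address the two remaining conditions. Condition (2), the equivalence $c_{\Hilb^{\fin}(\C)}:H\cT_{\C}(\Hilb^{\fin}(\C))\stackrel{\simeq}{\to}K^{\topp}(\Hilb^{\fin}(\C))$, is exactly the assertion that $c^{\cL^{1}}$ is an equivalence on the $C^{*}$-category of finite-dimensional Hilbert spaces; this is part of the statement of \cref{oiw0rgwrefwerfwrefw} and may be invoked directly. The key computational input behind it is that $K^{\topp}(\Hilb^{\fin}(\C))\simeq KU$, while on the algebraic side Morita invariance of homotopy $K$-theory together with the stabilization by $\cL^{1}$ — which is $H$-unital and whose $K$-theory realizes topological $K$-theory of $\C$ after homotopy-invariantization (the classical computation $K^{\alg}H(\cL^{1})\simeq ku$ or its variant) — identifies $(K^{\Cat_{\Z}}H\cZ)_{\cL^{1}}(\C)$ with $KU$ compatibly. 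Condition (1), commutativity of the triangle \eqref{sdfvsdfvsdfvwerv}, i.e. $\trc^{K^{\topp}}\circ c = \trc^{H}$ as natural transformations $\pi_{0}H\cT^{\trc}_{\C}\to\C$ on $\Ccat^{\tr,\perf}$, is the ``trace-preserving'' clause and is again built into \cref{oiw0rgwrefwerfwrefw}; I would reduce it, by naturality and the universal property of $K$-theory, to checking the single scalar equation on $\C$: both traces send the class of a rank-one projection to $1\in\C$. Concretely one traces a class through the identification $K^{\topp}_{0}(\bC)\ni[(P,\tilde P)]\mapsto\tau(P)-\tau(\tilde P)$ of \cref{kogpwergweferwf} and checks that the comparison map $c$ carries the algebraic trace pairing (induced by $\tr$ on $\cL^{1}$ and the Goodwillie–Jones/cyclic-homology description of the tracing on homotopy $K$-theory) to the $C^{*}$-algebraic one; since both sides are determined by their value on a minimal projection of $\C$, it suffices to match that value.

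The main obstacle, and the real content of the section, is the construction of $c^{\cL^{1}}$ itself together with these two properties — in other words, everything packaged into \cref{oiw0rgwrefwerfwrefw}. Building a natural transformation from an algebraic ($\cL^{1}$-twisted homotopy $K$-theory) functor on $\C$-linear categories to topological $K$-theory on $C^{*}$-categories requires the classical comparison between $\cL^{1}$-stabilized algebraic $K$-theory and operator $K$-theory (in the vein of work on the Karoubi conjecture and on $K$-theory of operator ideals), upgraded to the categorical setting and made natural and trace-compatible; verifying trace-compatibility in particular forces one to identify the cyclic-homological trace on $(K^{\Cat_{\Z}}H\cZ)_{\cL^{1}}$ with the $C^{*}$-algebraic trace via the canonical trace on $\cL^{1}$. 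Once \cref{hlrztrhertgetrg}/\cref{oiw0rgwrefwerfwrefw} are in hand, the present theorem is a formal bookkeeping of Definitions, and I would present it as a one-paragraph consequence: properties (1) and (2) of \cref{objpbdbdgbd} are the two clauses of that result, and the homological-plus-traced hypothesis on $H$ is assembled from \cref{kopgwerfewfwefw}, \cref{koopherhegg} and \cref{kgopweewrfwerf} as above. \hB
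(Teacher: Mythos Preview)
Your proposal has the right overall shape --- you correctly identify that the theorem is a packaging statement whose substance lies in the construction of $c^{\cL^{1}}$, the equivalence at $\Hilb^{\fin}(\C)$, and the trace compatibility --- but there are two problems.

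First, the formal structure is circular. You repeatedly invoke \cref{oiw0rgwrefwerfwrefw} to justify conditions (1) and (2), but that corollary \emph{is} the theorem you are proving (it is the restatement at the end of the section of the theorem announced at the beginning). What you should cite instead are the actual ingredients: for condition (2), the Proposition establishing that $c^{\cL^{1}}_{\incl(\C)}$ is an equivalence (via \cite[Thm.~6.5.3]{Corti_as_2008}) together with Morita invariance of both sides to pass to $\Hilb^{\fin}(\C)$; and for condition (1), the trace comparison \cref{kohpetrhrtgertg}. Your assembly of the homological-and-traced hypotheses from \cref{kopgwerfewfwefw}, \cref{koopherhegg}, \cref{gokrjpwegerwwre}, and \cref{kgopweewrfwerf} is fine and matches the paper.

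Second, your sketch of condition (1) --- ``reduce by naturality and the universal property of $K$-theory to checking a single scalar equation on $\C$'' --- does not match the paper and does not obviously work. Trace compatibility is a statement about a natural transformation of functors on all of $\Ccat^{\tr,\perf}$, and there is no evident universal property that reduces it to the single object $\C$. The paper's proof of \cref{kohpetrhrtgertg} instead reduces (via $A_{\C}$ and the isomorphism $\pi_0 K^{Ring}\cZ(\cL^{1}\otimes R)\cong \pi_0 K^{Ring}H\cZ(\cL^{1}\otimes R)$ from \cite{Corti_as_2008}) to an arbitrary $C^{*}$-algebra $A$ with trace $\tau$, and then closes the triangle by passing through the Banach $K$-theory $K^{\Ban}(\cL^{1}\otimes_{\pi}A)$, on which the trace $\tr\otimes\tau$ extends continuously. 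This intermediate Banach step is the key idea you are missing.
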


An intermediate step towards the transformation \eqref{gwegerfwr} is
a transformation $$K^{\Cat_{\Z}}\cT \to K^{\topp}_{|\Ccat}:\Ccat\to \Sp $$ (see \eqref{qewdqwdwedqd})
from the algebraic to the topological  $K$-theory of unital $C^{*}$-categories, where $\cT:\Ccat\to \Cat_{\Z}$ is the obvious forgetful functor.
The existence of such a transformation is surely folklore. But the details of the construction of 
$K^{\Cat_{\Z}}$ and $K^{\topp}$ are quite different, and so the construction of this transformation 
 is not completely obvious.

\subsection{$K$-theory for additive and $\Z$-linear categories}
\label{regoierjfoiewfrw}
 
The source of all algebraic $K$-theory functors considered in the present note is the non-connective $K$-theory 
\begin{equation}\label{roijhoetgret}K^{\alg}:\Add\to \Sp
\end{equation} for additive  categories. Original references for this functor  are 
 \cite{MR2206639}, \cite{zbMATH04095731}. 
 
 \begin{rem}
 The most direct classical construction of the connective cover $K^{\alg}_{\ge 0}$ of this functor 
 associates to $\bA$ in $\Add$ the group completion $K^{\alg}_{\ge 0}(\bA)$  of the commutative groupoid $\bA^{\simeq}$
  considered as a monoid in anima (realized by the nerve of the underlying groupoid of $\bA$) with  monoid structure induced by the sum in $\bA$.
Here we identify commutative groups in anima with connective spectra.

Alternatively one can equip $\bA$ with  a Waldhausen category structure and obtain $K^{\alg}_{\ge 0}(\bA)$ by applying Waldhausen's $S_{\bullet}$-construction.

The non-connective versions $K^{\alg}$ are then constructed by applying delooping constructions, see e.g. \cite{Bunke:2017aa}
for a discussion of the uniqueness of the resulting functor $K^{\alg}$. 

In the present paper we prefer to define  $K^{\alg}$  using universal constructions.
\hB
 \end{rem}

Following 
  \cite{Bunke:2017aa} 
    we consider the functor
 $$\Ch^{b}_{\infty}:\Add\to \Cat^{\exa}_{\infty}$$
which sends an additive catgeory $\bA$ to the stable $\infty$-category obtained  by Dwyer-Kan localizing  the category
of bounded chain complexes in $\bA$  at  the homotopy equivalences.

\begin{rem}
In this remark we characterize $\Ch^{b}_{\infty}$ by universal properties.
Let $\ell:\Add\to \Add_{2,1}$ denote the Dwyer-Kan localization of $\Add$ at the equivalences. 
Since $\Ch^{b}_{\infty}$ sends equivalences to equivalences it has a factorization
$$\xymatrix{\Add\ar[dr]_{\ell}\ar[rr]^{\Ch^{b}_{\infty}}&& \Cat^{\exa}_{\infty}\\ &\Add_{2,1}\ar[ur]_{\Ch^{b}_{\infty,2,1}}&}\ .$$
We have a fully faithful embedding $\Add_{2,1}\to \Cat^{\add}_{\infty}$ into the category of additive $\infty$-categories. We furthermore have a forgetful (inclusion) functor $\cR:\Cat_{\infty}^{\exa}\to \Cat^{\add}_{\infty}$
from stable $\infty$-categories to additive $\infty$-categories. 
By \cite[Thm. 7.4.9]{unik} the functor $\Ch^{b}_{\infty,2,1}$ is equivalent to the restriction
of the left-adjoint of $\cR$ to $\Add_{2,1}$.
\hB
\end{rem}

Recall that a localizing invariant   is a functor  $\Cat_{\infty}^{\exa}\to \cC$ to a cocomplete stable $\infty$-category which preserves filtered colimits and sends Verdier sequences to fibre sequences. By  \cite{MR3070515} there exists
a  universal  localizing invariant \begin{equation}\label{fewdqewdqwedq}\cU_{\loc}:\Cat^{\exa}_{\infty}\to \cM_{\loc}\ .
\end{equation} 
   Let $\Sp^{\omega}$ denote the stable $\infty$-category of compact objects in $\Sp$.
 \begin{ddd}\label{hkoperthrthe9}  We define 
 the algebraic $K$-theory functor  for additive categories as the composition  \begin{equation}\label{werfwerfewfwefwref}K^{\alg}:\Add\xrightarrow{\Ch^{b}_{\infty}} \Cat^{\exa}_{\infty}\xrightarrow{\cU_{\loc}} \cM_{\loc}\xrightarrow{\map_{\cM_{\loc}}(\cU_{\loc}(\Sp^{\omega}),-)} \Sp\ .
\end{equation}
 \end{ddd}

\begin{rem}\label{okgpwerewfwerfw}  The following properties of the functor in \eqref{werfwerfewfwefwref} are relevant for the present paper:    \begin{enumerate}
 \item $K^{\alg}$ sends equivalences to equivalences.
 \item $K^{\alg}$ sends  Karoubi filtrations (see \cite[Def. 8.2]{equicoarse}) to fibre sequences.
 \item $K^{\alg}$ annihilates flasques  (see \cite[Def. 8.1]{equicoarse}).
 \item $K^{\alg}$ preserves filtered colimits.
  \item $K^{\alg}$ sends Morita equivalences to equivalences.   
 \item $K^{\alg}$ sends infinite products of additive categories to products.
  \end{enumerate} 
  So   $K^{\alg}$ is in particular homological in the sense of \cref{kopwegrfw}.
  Based on the definition \eqref{werfwerfewfwefwref} the first five properties follow from the corresponding properties of the universal $K$-theory functor
 $$\UK:=\cU_{\loc}\circ \Ch^{b}_{\infty}:\Add\to \cM_{\loc}$$ shown in    \cite[Sec. 2.3]{Bunke:2017aa}.
 The preservation of products is more involved and uses different models \cite{zbMATH07183274}. \hB
  \end{rem}
  
  
As an intermediate category between additive categories and rings we consider   the category $\Cat_{\Z}$   of categories enriched in abelian groups,  called $\Z$-linear categories.
A $\Z$-linear category with a single object is a unital ring.  
 We indicate the $\infty$-categories obtained by inverting equivalences with subscripts $(2,1)$.
 We have  a forgetful functor \begin{equation}\label{goiugjowergwergw}\cS:\Add\to \Cat_{\Z}
\end{equation} ($\cS$ for "forgets existence of sums"). It preserves equivalences and therefore has a unique  factorization $\cS_{2,1}$  over the localization at the equivalences. This factorization then  fits into an adjunction
 \begin{equation}\label{werfwevfwerf}(-)_{\oplus,2,1}:\Cat_{\Z,2,1}\leftrightarrows \Add_{2,1}:\cS_{2,1}
\end{equation} whose left-adjoint is
given by the free sum completion \cite[Cor. 2.60]{zbMATH07194060}. The notation indicates that the free sum completion
exists as a functor $(-)_{\oplus}:\Cat_{\Z}\to \Add$. But only its descent to the localization fits into the adjunction.

Since $K^{\alg}$ preserves equivalences it has a unique factorization 
$$\xymatrix{\Add\ar[dr]_{\ell}\ar[rr]^{K^{\alg}}&&\Sp\\&\Add_{2,1}\ar@{..>}[ur]_{K^{\alg}_{2,1}}&}\ .$$
 \begin{ddd}\label{oophkerpthetreg}
We define the algebraic $K$-theory functor 
$$K^{\Cat_{\Z}}:\Cat_{\Z}\to \Sp$$ for $\Z$-linear categories by the diagram
\begin{equation}\label{wergwerfwerfwer1}\xymatrix{ \ar@{..>}[dr]_{K^{\Cat_{\Z}}}\Cat_{\Z}\ar[r]^{\ell}&\Cat_{\Z,2,1}\ar[r]^{(-)_{\oplus,2,1}} &\Add_{2,1}\ar[dl]^{K^{\alg}_{2,1}}\\&\Sp& }\ .
\end{equation} 

\end{ddd}

 
\begin{rem} \label{okgpwerewfwerfw1} 
This extension has the following properties inherited from $K^{\alg}$   since $(-)_{\oplus}$ preserves equivalences, Morita equivalences
and filtered colimits:
\begin{enumerate}
\item $K^{\Cat_{\Z}}$ sends equivalences to equivalences.
\item $K^{\Cat_{\Z}}$ sends Morita equivalences to equivalences. 
\item $K^{\Cat_{\Z}}$ preserves filtered colimits.
\end{enumerate} \hB
\end{rem}

\subsection{$K$-theory for Rings and $C^{*}$-algebras}\label{kopwergwergwef}

In this subsection we recall  the $K$-theory functors for rings and $C^{*}$-algebras. Its main result is the construction of a natural comparison map from the algebraic $K$-theory of $C^{*}$-algebras considered as  rings to  their topological $K$-theory. Thereby  both functors are defined in terms of universal constructions, and the transformation is derived from universal properties.

We have a canonical inclusion $\Ring\to \Cat_{\Z}$ viewing  unital rings as $\Z$-linear categories
with a single object.

\begin{ddd}
We define the algebraic $K$-theory functor for unital rings by
$$K^{Ring}_{|\Ring}:\Ring\to \Cat_{\Z}\xrightarrow{K^{\Cat_{\Z}}} \Sp\ .$$ 
\end{ddd}
Let $\nRing$ denote the category of  possibly non-unital rings.
\begin{ddd} 
We extend the functor above to a functor 
\begin{equation}\label{gwreferfwerf} K^{Ring}:\nRing \to \Sp\ , \quad K^{Ring}(R):=\Fib(K^{Ring}(R^{+})\to K^{Ring}(\Z))\ ,
\end{equation} 
where $R^{+}$ is the unitalization of $R$ and $R^{+}\to \Z$ is the canonical split projection.
\end{ddd}

\begin{rem} 
Making  \eqref{gwreferfwerf} and \eqref{wergwerfwerfwer1} explicit  for a unital ring $A$  and using the equivalence
  $$ \ell(\incl(A))_{\oplus}\simeq \ell(\Mod^{\fg,\free}(A))$$ in $\Add_{2,1}$  we get
$$K^{Ring}(A)\simeq K^{\alg}(\Mod^{\fg,\free}(A))$$ which is one of the usual definitions.
Since $K^{\alg}:\Add\to \Sp$ is Morita invariant 
and  the inclusion
$ \Mod^{\fg,\free}(A)\to  \Mod^{\fg,\proj}(A)$ is a Morita equivalence we also get the  formula 
\begin{equation}\label{jovaopsdcadscasdca}K^{Ring}(A)\simeq K^{\alg}(\Mod^{\fg,\proj}(A))\ .
\end{equation}  \hB
\end{rem}

%
%
%

Extending \cite{MR1068250} from ordinary to $\infty$-categories by  \cite[Thm. 8.5]{zbMATH07948612} we have a   universal symmetric monoidal (for $\otimes_{\max}$) homotopy invariant, $K$-stable, exact and $s$-finitary functor \begin{equation}\label{refwreerf}\ee:\nCalg\to \EE
\end{equation} 
   called the $E$-theory functor. It is the analogue of the universal localizing invariant in  \eqref{fewdqewdqwedq}.
   Following \cite[Def. 9.3]{zbMATH07948612}  and in analogy to \cref{hkoperthrthe9} we adopt the following:   \begin{ddd}\label{kopgwgergw9}
 We  define the $K$-theory functor for $C^{*}$-algebras as the composition \begin{equation}\label{werfwefefwf}K^{C^{*}}:\nCalg\stackrel{\ee}{\to} \EE\xrightarrow{\map_{\EE}(\ee(\C),-)}\Sp\ .
\end{equation} \end{ddd}
As explained in  \cite[Sec. 9 \& 10]{zbMATH07948612} this is compatible with classical definitions.

\begin{rem}
The first six of the following properties of $K^{C^{*}}$ are immediate from this definition and the corresponding properties of the $E$-theory functor 
\begin{enumerate}
\item \label{gwerreg} $K^{C^{*}}$ takes values in $\Mod(KU)$.
\item $K^{C^{*}}$ is homotopy invariant.
\item $K^{C^{*}}$ is $K$-stable.
\item $K^{C^{*}}$ is exact.
\item $K^{C^{*}}$ is $s$-finitary.
\item $K^{C^{*}}:\nCalg\to \Mod(KU)$ is lax symmetric monoidal.
\item $K^{C^{*}}$ preserves filtered colimits.
 \end{enumerate}
Property \ref{gwerreg} is automatic if one defines the commutative complex $K$-theory  ring spectrum by $KU:=\map_{\EE}(\ee(\C),\ee(\C))$. It encodes Bott periodicity.
The preservation of filtered colimits   is equivalent to the statement that $\ee(\C)$ is a compact object. The verification is more complicated and uses the comparison with the classical picture which expresses e.g.  $K_{0}(A)$ in terms of projections in matrix algebras over the unitalization of $A$.
 \hB\end{rem}

We have a forgetful functor \begin{equation}\label{vfsdvsfdvsvvewe}\cT:\nCalg\to \nRing
\end{equation}
($\cT$ for "forgets the topology and $\C$-linear structure")
from possibly non-unital $C^{*}$-algebras to possibly non-unital rings
which we use  in order to restrict  functors 
defined on $\nRing$ to $C^{*}$-algebras. 
The main goal of this subsection is to construct a natural transformation
\begin{equation}\label{t432r2f34r234}c^{C^{*}}:K^{Ring}\cT\to K^{C^{*}}:\nCalg\to \Sp\end{equation}
relating the algebraic $K$-theory of $C^{*}$-algebras  considered as  rings with their topological $K$-theory.
 
\begin{rem} The existence of such a transformation based on the construction of $K^{Ring}$ in terms of Quillen's $+$-construction is well-known \cite{zbMATH04095731}, \cite{Rosenberg_1997}. Let $A$ be a unital $C^{*}$-algebra.  Then we have a  map $BGL(A)^{\delta}\to BGL(A)$, where one considers $GL(A)=\colim_{n\in \nat} GL_{n}(A)$  as a topological group and $GL(A)^{\delta}$ is the underlying discrete group. 
One then observes a factorization
$$\xymatrix{BGL(A)^{\delta}\ar[rr]\ar[dr]&&BGL(A)\\&BGL(A)^{\delta,+}\ar@{..>}[ru]&}\ .$$
This  can be used to determine a map 
$$K^{Ring}_{\ge 1}(\cT(A))\to K^{C^{*}}_{\ge 1}(A)$$
of  $1$-connected covers. One then extends this to the non-connective versions of the $K$-theory spectra and non-unital $C^{*}$-algebras. 

In the present paper we will provide a different construction  of the transformation \eqref{t432r2f34r234}
which is adapted to our definition of $K$-theory functors via universal properties.
  \hB\end{rem}


We let  $K$  denote the $C^{*}$-algebra of compact operators on $\ell^{2}$.
\begin{lem}\label{plherthetrh9}
The functor $$K^{Ring}\cT(K\otimes_{\max}-):\nCalg\to \Sp \ .$$
has the following properties:
\begin{enumerate}
\item homotopy invariant
\item $K$-stable
\item exact
\item s-finitary
\end{enumerate}
\end{lem}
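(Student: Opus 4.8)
The plan is to verify each of the four properties by reducing it to the corresponding property of the $E$-theory–free tools already available, namely: the fact that $K^{C^{*}}$ (hence nothing is needed here) and more relevantly the behaviour of $K^{Ring}$ and of $\cT$ under the operations involved. The key structural input is that $K^{Ring}$ is a localizing invariant of rings in the appropriate sense — it is exact, i.e.\ sends ring extensions $0\to I\to A\to A/I\to 0$ (with $A\to A/I$ admitting a ring-theoretic splitting, or more generally for all extensions using non-connective $K$-theory and excision failure handled as usual) to fibre sequences — together with the classical theorems of Higson, Suslin–Wodzicki and others that algebraic $K$-theory of stable $C^{*}$-algebras is homotopy invariant and $K$-stable. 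Concretely I would proceed as follows.

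\textbf{Homotopy invariance.} Let $A$ be a $C^{*}$-algebra; I must show $K^{Ring}\cT(K\otimes_{\max}(A\otimes C[0,1]))\simeq K^{Ring}\cT(K\otimes_{\max}A)$ via evaluation at a point. Since $K\otimes_{\max}(A\otimes C[0,1])\cong (K\otimes_{\max}A)\otimes C[0,1]$, this is the statement that algebraic $K$-theory of the \emph{stable} ring $\cT(K\otimes_{\max}A)$ is invariant under tensoring with $\cT(C[0,1])$. This is precisely the content of the homotopy invariance theorem for algebraic $K$-theory of stable $C^{*}$-algebras (Suslin–Wodzicki, Higson); I would cite it. One checks the tensor identification is an isomorphism of rings (for $\otimes_{\max}$ this needs only the universal property, and $C[0,1]$ is nuclear so $\otimes_{\max}=\otimes_{\min}$), and that the two evaluation maps $C[0,1]\to\C$ induce the claimed equivalence.

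\textbf{$K$-stability.} I must show the canonical corner inclusion $K\otimes_{\max}A\to K\otimes_{\max}(K\otimes_{\max}A)$ (equivalently $e_{11}$-inclusion into matrices, compatibly) induces an equivalence on $K^{Ring}\cT(-)$. Again $K\otimes_{\max}(K\otimes_{\max}A)\cong (K\otimes K)\otimes_{\max}A$, and $K\otimes K\cong K$, so the claim is that the ring $\cT(K\otimes_{\max}A)$ is $K$-stable for algebraic $K$-theory in the sense of being idempotent under tensoring with $\cT(K)$; this is the $K$-stability theorem for algebraic $K$-theory of stable $C^{*}$-algebras, which I would invoke. Exactness: given a short exact sequence $0\to I\to A\to B\to 0$ of $C^{*}$-algebras, applying $K\otimes_{\max}-$ is exact (tensoring with the nuclear, in fact with any, $C^{*}$-algebra $K$ preserves exactness of $C^{*}$-sequences), so I get $0\to K\otimes I\to K\otimes A\to K\otimes B\to 0$, and then $\cT$ gives a ring extension. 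Stable $C^{*}$-algebras satisfy excision in algebraic $K$-theory (they are $H$-unital — Suslin–Wodzicki), so $K^{Ring}$ of this extension is a fibre sequence. $s$-finitariness: $K\otimes_{\max}-$ commutes with filtered colimits (sequential, or all filtered, of $C^{*}$-algebras with injective transition maps — the relevant case for $s$-finitary), $\cT$ preserves filtered colimits, and $K^{Ring}$ preserves filtered colimits by \cref{okgpwerewfwerfw1}; compose.

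\textbf{Main obstacle.} The genuinely hard inputs are homotopy invariance and $K$-stability of \emph{algebraic} $K$-theory of stable $C^{*}$-algebras; these are the deep theorems of Suslin–Wodzicki / Higson (and for the non-connective range, standard deloopings). Everything else — the tensor identifications $K\otimes K\cong K$, $K\otimes_{\max}(A\otimes C[0,1])\cong (K\otimes_{\max}A)\otimes C[0,1]$, preservation of exact sequences and filtered colimits by $K\otimes_{\max}-$ and by $\cT$ — is formal and I would not spell it out in detail. The only subtlety to flag is the passage to non-unital $A$: one uses the definition \eqref{gwreferfwerf} via unitalization, and the fact that $K\otimes_{\max}-$ and $\cT$ are compatible with forming $(-)^{+}$ up to the relevant fibre sequences, so the four properties for $K^{Ring}\cT(K\otimes_{\max}-)$ follow from the unital case together with the five-lemma / fibre-sequence bookkeeping; I expect the write-up to say exactly this.
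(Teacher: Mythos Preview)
Your proposal is correct, but the paper's argument is more economical and the order of deductions differs in an instructive way.

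For exactness and $s$-finitariness your argument matches the paper's: exactness comes from Wodzicki's $H$-unitality of $C^{*}$-algebras (so $K^{Ring}\cT$ is already exact, and $K\otimes_{\max}-$ preserves exact sequences), and $s$-finitariness from the fact that $\cT\circ(K\otimes_{\max}-)$ preserves $\aleph_{1}$-filtered colimits and $K^{Ring}$ preserves all filtered colimits.

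For $K$-stability you overshoot. The paper simply says ``by construction'': once you precompose with $K\otimes_{\max}-$, the corner inclusion $A\to K\otimes A$ becomes $K\otimes A\to K\otimes K\otimes A$, and since $K\otimes K\cong K$ this is (up to the choice of isomorphism) a corner inclusion of the matrix-stable ring $\cT(K\otimes A)$ into itself, which $K^{Ring}$ inverts by ordinary matrix-stability. No deep theorem is needed here; you recognised the isomorphism $K\otimes K\cong K$ but then unnecessarily invoked a ``$K$-stability theorem for algebraic $K$-theory of stable $C^{*}$-algebras''.

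For homotopy invariance the paper takes a different and shorter route: having already established exactness (hence split-exactness) and $K$-stability, it invokes Higson's abstract theorem that any split-exact, $K$-stable functor on $C^{*}$-algebras is automatically homotopy invariant (\cite[Thm.~3.2.2]{zbMATH04176083}). Your direct appeal to the Suslin--Wodzicki/Higson result that algebraic $K$-theory of stable $C^{*}$-algebras is homotopy invariant is also valid, but it is a special case of the same circle of ideas and makes the argument look heavier than it is. The paper's ordering --- $K$-stable (trivial), exact (Wodzicki), hence homotopy invariant (Higson) --- isolates the genuinely nontrivial input as H-unitality plus Higson's abstract principle, rather than treating homotopy invariance as an independent deep fact.
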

\begin{proof}
By construction the functor $K^{Ring}\cT(K\otimes_{\max}-)$ is $K$-stable.
Since the maximal tensor product of $C^{*}$-algebras preserves exact sequences and $K^{Ring}\cT$ is known to be exact (every $C^{*}$-algebra is $H$-unital by a result of Wodzicki), the functor $K^{Ring}\cT(K\otimes_{\max} -)$ is exact, too. 
   It is  in particular split exact and therefore homotopy invariant by 
 \cite[Thm. 3.2.2]{zbMATH04176083}.
 
  Recall that a functor $F:
\nCalg\to \cC$ is called $s$-finitary   if for all $A$ in $\nCalg$ we have
$$\colim_{A'\subseteq A} F(A')\simeq F(A)\ ,$$
where the colimit runs over the $\aleph_{1}$-filtered poset of separable subalgebras $A'$ of $A$.
  The functor $\cT\circ (K\otimes_{\max}-):\nCalg\to \nRing$ preserves $\aleph_{1}$-filtered colimits.
 Since $K^{Ring}$ preserves all filtered colimits we finally conclude that $K^{Ring}\cT(K\otimes_{\max}-)$ is $s$-finitary.
 \end{proof}

 Since $K^{C^{*}}$  is defined as a functor corepresented by $\ee(\C)$ in $E$-theory it is easy  to construct  natural  transformations out of $K^{C^{*}}$ using universal properties.  
It follows from \cref{plherthetrh9} and the universal property of the $E$-theory functor  in \eqref{refwreerf}  that $  K^{Ring}\cT(K\otimes_{\max}-):\nCalg\to \EE$ has a factorization \begin{equation}\label{gwerfwerfwerfwref}\xymatrix{\nCalg\ar[rrrr]^{K^{Ring}\cT(K\otimes_{\max}-)}\ar[drr]_{\ee}&&&& \Sp\\&&\EE \ar@{..>}[urr]_{!}&&}\ .
\end{equation}
The choice of a one-dimensional projection $p$ in $K$ determines  a natural transformation
\begin{equation}\label{wfgerfwerfw}\id\to K\otimes_{\max}-:\nCalg\to \nCalg
\end{equation}
called the left upper corner embedding.
 We have a map
 \begin{equation}\label{werferwferwfwf}S\xrightarrow{unit} K^{Ring}(\Z)\xrightarrow{\Z\to \C}  K^{Ring}\cT(\C)    \xrightarrow{\eqref{wfgerfwerfw}}  K^{Ring}\cT(K\otimes_{\max}\C) \ .
\end{equation} Together with the dotted arrow in  \eqref{gwerfwerfwerfwref} it  determines a transformation
\begin{equation}\label{rfwrefwfwf}K^{C^{*}} \to  K^{Ring}\cT(K\otimes_{\max}-):\nCalg\to \Sp\ .
\end{equation} 
In detail it is given by
\begin{eqnarray*}
K^{C^{*}}&\simeq&\map_{\EE}(\ee(\C),\ee(-))\\&\stackrel{!}{\to}&
 \map_{\Sp}( K^{Ring}\cT(K\otimes_{\max}\C),  K^{Ring}\cT(K\otimes_{\max}-))\\
 &\stackrel{\eqref{werferwferwfwf}}{\to}&
 K^{Ring}\cT(K\otimes_{\max}-)\ .
\end{eqnarray*}

The statement of  following theorem is  known as the   Karoubi conjecture  and was shown by Suslin and Wodzicki \cite{Suslin_1990}.
\begin{theorem}
The transformation \eqref{rfwrefwfwf}
 is an equivalence.
 \end{theorem}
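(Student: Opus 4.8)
The plan is to reduce the statement \eqref{rfwrefwfwf} to Wodzicki's theorem that the algebraic and topological $K$-theories of \emph{stable} $C^{*}$-algebras agree, which is exactly the content of the Karoubi conjecture in its original formulation. By \cref{plherthetrh9} the functor $A\mapsto K^{Ring}\cT(K\otimes_{\max}A)$ is homotopy invariant, $K$-stable, exact and $s$-finitary, so by the universal property of the $E$-theory functor \eqref{refwreerf} it factors through a (essentially unique) functor $\EE\to\Sp$ as in \eqref{gwerfwerfwerfwref}. Hence both $K^{C^{*}}$ and $K^{Ring}\cT(K\otimes_{\max}-)$ are corepresented in $\EE$, the former by $\ee(\C)$ by \cref{kopgwgergw9} and the latter by some object together with the element coming from \eqref{werferwferwfwf}. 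To prove the transformation \eqref{rfwrefwfwf} is an equivalence it therefore suffices, by Yoneda in $\EE$, to check that it induces an equivalence on the single $C^{*}$-algebra $\C$ (more precisely, that the element of $K^{Ring}\cT(K)\simeq K^{Ring}\cT(K\otimes_{\max}\C)$ produced by \eqref{werferwferwfwf} corepresents the same functor as $\ee(\C)$, i.e. is a generator under which the comparison becomes the identity).

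First I would unwind the definitions: since $K^{Ring}$ preserves filtered colimits and $K=\colim_n M_n(\C)$, we have $K^{Ring}\cT(K)\simeq\colim_n K^{Ring}(M_n(\C))$, and by Morita invariance of $K^{alg}$ (\cref{okgpwerewfwerfw}, equivalently \eqref{jovaopsdcadscasdca}) each transition map is an equivalence, so $K^{Ring}\cT(K)\simeq K^{Ring}(\C)=K^{alg}(\Mod^{\fg,\proj}(\C))$, which is the algebraic $K$-theory spectrum $K(\C)$ of the field $\C$. Under this identification the left-upper-corner map \eqref{wfgerfwerfw} evaluated on $\C$ is (up to equivalence) the identity, so the map $S\to K^{Ring}\cT(K\otimes_{\max}\C)$ of \eqref{werferwferwfwf} is the unit $S\to K(\C)$. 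On the other side $K^{C^{*}}(\C)\simeq KU$. So the transformation \eqref{rfwrefwfwf} on $\C$ is a map $KU\to K^{alg}(\C)$. This map is \emph{not} an equivalence of spectra; what is true, and what we need, is that the induced comparison on $K\otimes_{\max}(-)$, i.e. after stabilizing, becomes an equivalence — and this is precisely Wodzicki's theorem on the Karoubi conjecture. Concretely I would invoke: for every $C^{*}$-algebra $A$ (equivalently every stable $C^{*}$-algebra $K\otimes_{\max}A$), the canonical comparison $K^{alg}(K\otimes_{\max}A)\to K^{top}(K\otimes_{\max}A)$ is an equivalence; this is Suslin--Wodzicki (\cite{zbMATH04176083} and Wodzicki's $H$-unitality results, already cited in \cref{plherthetrh9}).

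The key steps, in order, are: (i) record that both functors $K^{C^{*}}$ and $K^{Ring}\cT(K\otimes_{\max}-)$ are of the form $\map_{\EE}(-,\ee(-))$ for an object of $\EE$ — for the first by \cref{kopgwgergw9}, for the second via the factorization \eqref{gwerfwerfwerfwref} guaranteed by \cref{plherthetrh9}; (ii) observe that a natural transformation between two such corepresentable-type functors on $\nCalg$ whose source and target both send filtered colimits, exact sequences, homotopies and $K$-stabilization correctly is an equivalence iff it is an equivalence on $\C$ (this is the content of the universal property of $\ee$ together with the fact that $\ee(\C)$ is a $\otimes$-unit); (iii) identify the value on $\C$: $K^{C^{*}}(\C)\simeq KU$ and $K^{Ring}\cT(K\otimes_{\max}\C)\simeq K^{alg}(\C)$ via Morita invariance and the corner inclusion; and (iv) cite Suslin--Wodzicki to conclude that the comparison $K^{alg}$ of a stable $C^{*}$-algebra agrees with its $K^{top}$. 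The main obstacle is step (ii)/(iv): the statement is genuinely the Karoubi conjecture, whose proof (Wodzicki's $H$-unitality of $C^{*}$-algebras, excision in algebraic $K$-theory, and the Suslin--Wodzicki comparison of stable algebraic and topological $K$-theory) is deep input we must import wholesale rather than reprove; our contribution here is only the packaging that reduces \eqref{rfwrefwfwf} to it via the universal properties of $\ee$ and $\cU_{\loc}$.
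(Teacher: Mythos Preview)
Your step (iii) contains a genuine error. You write $K=\colim_n M_n(\C)$ and conclude $K^{Ring}\cT(K)\simeq\colim_n K^{Ring}(M_n(\C))\simeq K^{\alg}(\C)$. But that colimit is taken in $\nCalg$, and the forgetful functor $\cT:\nCalg\to\nRing$ does \emph{not} preserve it: the colimit of the $M_n(\C)$ in rings is the algebra $M_\infty(\C)$ of finite matrices, whereas $\cT(K)$ is its norm completion, the compact operators. Hence your identification $K^{Ring}\cT(K)\simeq K^{\alg}(\C)$ is false; in fact the Karoubi conjecture itself says $K^{Ring}\cT(K)\simeq KU$. You partly notice the problem when you observe that the resulting map $KU\to K^{\alg}(\C)$ cannot be an equivalence, but your attempted salvage (``what we need is that the comparison on $K\otimes_{\max}(-)$ becomes an equivalence'') is empty: the target functor already \emph{is} $K^{Ring}\cT(K\otimes_{\max}-)$, so there is nothing left to stabilize, and you have only restated the theorem. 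Step (ii) is also unjustified: the factorization of $K^{Ring}\cT(K\otimes_{\max}-)$ through $\EE$ supplied by \cref{plherthetrh9} is not known to be corepresentable or colimit-preserving, and $\ee(\C)$ is not a generator of $\EE$ in any sense that would make a Yoneda reduction to the single algebra $\C$ work.

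The paper proceeds differently and avoids these issues. Both functors factor through the stable $\infty$-category $\EE$, and the transformation descends to a transformation of exact functors there; hence it is an equivalence as soon as it is an isomorphism on $\pi_0$ for every $C^{*}$-algebra $A$ (desuspension in $\EE$, equivalently exactness plus Bott periodicity, handles the remaining degrees). For the $\pi_0$ statement one uses $K$-stability of $K^{C^{*}}$ to replace $A$ by $K\otimes_{\max}A$ on the left, after which it becomes the elementary fact that algebraic and topological $K_0$ agree for any $C^{*}$-algebra. No reduction to a single test object is attempted; the deep input is confined to degree zero, where it is classical.
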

  
  \begin{proof}
 After all these preparations it is not complicated to finish the argument for the Karoubi coinjecture.  
Using the exactness of the functors in the domain and target   it suffices to verify that \eqref{rfwrefwfwf} induces an isomorphism
$$\pi_{0}K^{C^{*}}(A) \stackrel{\cong}{\to}   \pi_{0}K^{Ring}\cT(K\otimes_{\max}A)$$
for all $C^{*}$-algebras $A$. To see this we can replace $A$ by $K\otimes_{\max}A$ in the domain using $K$-stability of $K^{C^{*}}$ and
then argue that for $C^{*}$-algebras topological and algebraic $K$-theory in degree zero coincide.
\end{proof}
%
%
%
%
%
%
%
%
\begin{ddd}
We define the comparison map from algebraic to topological $K$-theory of $C^{*}$-algebras as the 
  composition
 \begin{equation}\label{t432r234r234}
 c^{C^{*}}:K^{Ring}\cT  \xrightarrow{\eqref{wfgerfwerfw}}
 K^{Ring}\cT(K\otimes_{\max} -) \xleftarrow{\eqref{rfwrefwfwf},\simeq} K^{C^{*}}   \ .\end{equation}
 \end{ddd}

 \subsection{$K$-theory of $C^{*}$-categories}
 
 In this subsection we recall the construction of the $K$-theory functor
 $$K^{\topp}:\nCcat\to \Sp$$
 for possibly non-unital $C^{*}$-categories.
 The usual construction derives  $K^{\topp}$  from the
topological $K$-theory functor for $C^{*}$-algebras 
\eqref{werfwefefwf}.
Following \cite{joachimcat} we use the adjunction
\begin{equation}\label{werfwerfwef}A^{f}:\nCcat\leftrightarrows  \nCalg:\incl\ .
\end{equation} 
\begin{ddd}\label{kopgwergwerg9} We define the $K$-theory functor for $C^{*}$-categories as the composition
  \begin{equation}\label{sdfvsdfvsdv}K^{\topp}:\nCcat\stackrel{A^{f}}{\to} \nCalg\stackrel{K^{C^{*}}}{\to} \Sp\ .
\end{equation}
\end{ddd}

 We refer to \cite{cank}, \cite{KKG} for a detailed discussion of properties of this functor.

 \begin{rem}\label{tkohprthrethrth} For the present paper the following are relevant:
\begin{enumerate}
\item $K^{\topp}$ extends $K^{C^{*}}$ in the following sense: For a $C^{*}$-algebra $A$ we have \begin{equation}\label{gwerfwerfw}
 K^{\topp}(\incl(A))\simeq K^{C^{*}}(A)\ .\end{equation}
\item $K^{\topp}$ sends unitary equivalences to equivalences.
\item $K^{\topp}$ sends exact sequences to fibre sequences.
\item $K^{\topp}$ annihilates  flasques.
\item  $K^{\topp}$ preserves filtered colimits.
\item $K^{\topp}$ ist stable: It sends the stabilization morphisms
$\bC\to K\otimes_{\max} \bC$ to equivalences.
\item $K^{\topp}$ is homotopy invariant: It sends the morphisms
$\bC\to \bC[0,1]:=C([0,1])\otimes_{\max} \bC$ to equivalences. 
\item $K^{\topp}$ sends Morita and weak Morita equivalences to equivalences.
\item $K^{\topp}$  sends infinite products of additive $C^{*}$-categories to products.
\item $K^{\topp}$ has  a lax symmetric monoial refinement.
\end{enumerate}  
\hB \end{rem}
The only properties in the list in \cref{tkohprthrethrth} which can be seen  directly from the definition \eqref{sdfvsdfvsdv} are that $K^{\topp}$ preserves filtered colimits and admits a lax symmetric monoidal refinement.
 In order to see the other properties one uses a different construction based on the functor 
 \begin{equation}\label{wefwvefsvvdfvs}A:c\nCcat\to \nCalg
\end{equation}
 which sends a $C^{*}$-category to the completion $A(\bC)$  of the  pre-$C^{*}$-algebra with underlying vector space 
 \begin{equation}\label{wregwfrewr}A^{0}(\bC):=\bigoplus_{C,D\in \bC}\Hom_{\bC}(C,D)\ ,
\end{equation} and with matrix multiplication and involution
 induced by the composition and the involution of $\bC$.   Here 
 $c\nCcat$ denotes the wide subcategory of $\nCcat$ of functors which are injective on objects.
 
 The unit $\bC\to A^{f}(\bC)$ is initial for functors from $\bC$ to $C^{*}$-algebras.
 The canonical functor $\bC\to A(\bC)$ therefore  induces a natual map $\alpha_{\bC}:A^{f}(\bC)\to A(\bC)$.
 It has been shown in  \cite{joachimcat} (in the unital case) and \cite{KKG} in general that
 $$K^{C^{*}}(\alpha_{\bC}):K^{C^{*}}(A^{f}(\bC))\to K^{C^{*}}(A(\bC))$$ is an equivalence for all $\bC$ in $\nCcat$.
  The restriction of 
 $K^{\topp}$ to $c\nCcat$ is thus given by the formula
 \begin{equation}\label{gwergefwefwerfweff}K^{\topp}_{|c\nCcat}\simeq K^{C^{*}}A :c\nCcat\to \Sp\ .
\end{equation} 
Using this formula one can check the  remaining properties 
 of $K^{\topp}$ listed in \cref{tkohprthrethrth}, see  \cite{cank}.

 The following constructions are inspired by \cite{Land_2017}.
   As shown in  \cite{zbMATH06107958} (see also \cite{startcats}) the category $\Ccat$  of unital $C^{*}$-categories 
  has a simplicial combinatorial model category structure whose weak equivalences are unitary equivalences, and whose
  category of cofibrations is the subcategory $c\Ccat$ of functors which are injective on objects. In particular all objects are cofibrant.
  Recall that the subscript $(-)_{2,1}$ denotes the  Dwyer-Kan localization at unitary equivalences.
The notation indicates that  $\Ccat_{2,1}$ is represented by the $(2,1)$-category of unital $C^{*}$-categories, functors and unitary natural isomorphisms.  
 By \cite[Thm. 1.3.4.20]{HA}  the inclusion induces an equivalence 
$c\Ccat_{2,1}\stackrel{\simeq}{\to} \Ccat_{2,1}$.
Since $K^{C^{*}} A_{|c\Ccat}:c\Ccat \to \Sp$  and $K^{\topp}_{|\Ccat}:\Ccat\to \Sp$ send unitary equivalences to equivalences they factorize over the localization.
We get a commutative diagram \begin{equation}\label{werfwerfwer}\xymatrix{c\Ccat_{2,1}\ar[rr]^-{(K^{C^{*}} A_{|c\Ccat})_{2,1}}\ar[d]_{\simeq} && \Sp\\   \Ccat_{2,1}&& \Ccat\ar[ll]^{\ell}\ar[u]_{K^{\topp}_{|\Ccat}}&}
\end{equation}
expressing  $K^{\topp}_{|\Ccat}$ in terms of $K^{C^{*}}A_{|c\Ccat}$.

   \subsection{Algebraic $K$-theory of $\Z$-linear categories  via $K$-theory of rings} 

In this section we derive a formula the algebraic $K$-theory of $\Z$-linear categories which is similar to \eqref{werfwerfwer}. It will be used to construct the comparison map from the algebraic $K$-theory of $C^{*}$-categories considered as $\Z$-linear categories to their topological $K$-theory.

 In analogy with \eqref{werfwerfwef} the inclusion of unital rings  into $\Z$-linear categories has a left-adjoint:
\begin{equation}\label{rrwefrefwerfwef}A^{f}_{\Z}:\Cat_{\Z}\leftrightarrows \Ring:\incl\ .
\end{equation}


\begin{rem} In view of \eqref{sdfvsdfvsdv} the first try could be to use the left adjoint in \eqref{rrwefrefwerfwef} and 
 $$\xymatrix{\Add\ar[r]^{\cS}\ar@{..>}[dr]&\Cat_{\Z}\ar[r]^{A_{\Z}^{f}}&\Ring \ar[dl]^{K^{Ring}}\\&\Sp&}\ .$$
 It is not clear that the resulting 
 functor $\Add\to \Sp$ indicated by the dotted arrow is equivalent to $K^{\alg}$. 
 \footnote{As observed by A. Engel (private communication) this reconstruction would work if one replaces $K$-theory by homotopy $K$-theory.} \hB \end{rem}

  As shown e.g. in \cite{zbMATH07194060} the categories $\Cat_{\Z}$ and $\Add$ admit combinatorial simplicial model category structures whose weak equivalences are the 
equivalences of  categories, and whose cofibrations are functors which are injective on objects.
We let $c\Add$ and $c\Cat_{\Z}$ denote the wide subcategories of cofibrations. Recall that the subscript $(-)_{2,1}$ denotes Dwyer-Kan localization at equivalences. As the notation suggests these are again represented by the $(2,1)$-categories of additive or $\Z$-linear categories, $\Z$-linear functors and natural isomorphisms.
By \cite[Thm. 1.3.4.20]{HA}     the inclusions induce equivalences
$$c\Add_{2,1}\stackrel{\simeq}{\to} \Add_{2,1} \ , \qquad c\Cat_{\Z } \stackrel{\simeq}{\to} \Cat_{\Z,2,1}\ .$$  In analogy to  \eqref{wefwvefsvvdfvs} we have a functor
\begin{equation}\label{vfvdsvsdfvdfvsv}A_{\Z}:c\Cat_{\Z}\to  \nRing
\end{equation} 
  which sends  a $\Z$-linear category $\bC$ to the matrix ring with underlying group 
  \begin{equation}\label{ewdqewdwedqfef}A_{\Z}(\bC)=\bigoplus_{C,D\in \bC}\Hom_{\bC}(C,D) \ .
\end{equation}
We consider the composition
$$K^{Ring}(A_{\Z}(-)):  c\Cat_{\Z}\xrightarrow{A_{\Z}} \nRing\xrightarrow{K^{Ring}}\Sp\ .$$
 %

\begin{lem}\label{lhprertherth9}
If $\bA\to \bB$ is a weak equivalence in $c\Cat_{\Z}$, then
$K^{Ring}(A_{\Z}(\bA))\to K^{Ring}(A_{\Z}(\bB))$ is an equivalence.
\end{lem}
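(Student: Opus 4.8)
The plan is to reduce the statement to a known Morita-invariance property of $K^{Ring}$ (or equivalently of $K^{\Cat_{\Z}}$), using the fact that for a $\Z$-linear category $\bC$ the matrix ring $A_{\Z}(\bC)$ from \eqref{ewdqewdwedqfef} is Morita equivalent (in the sense that produces $K$-theory equivalences) to $\bC$ itself, and that a fully faithful functor which is essentially surjective after idempotent completion induces a Morita equivalence. Concretely, I would first observe that for any $\bC$ in $c\Cat_{\Z}$ there is a canonical $\Z$-linear functor $\bC \to \incl(A_{\Z}(\bC))$ (the unit of the adjunction \eqref{rrwefrefwerfwef}, or the analogue of the map $\bC\to A(\bC)$ in the $C^{*}$-case), and that after passing to $(-)_{\oplus}$ and idempotent completion this functor becomes a Morita equivalence: the summands of the single object of $A_{\Z}(\bC)$ recover the objects of $\bC$ via the idempotents $e_{C}\in A_{\Z}(\bC)$ given by $\id_C$. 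Hence by \cref{okgpwerewfwerfw1} (Morita invariance of $K^{\Cat_{\Z}}$) the composite $K^{Ring}(A_{\Z}(-)) = K^{\Cat_{\Z}}(\incl(A_{\Z}(-)))$ is naturally equivalent, on $c\Cat_{\Z}$, to $K^{\Cat_{\Z}}$ restricted along the inclusion $c\Cat_{\Z}\to \Cat_{\Z}$.

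Granting that comparison, the lemma is immediate: $K^{\Cat_{\Z}}$ sends equivalences of $\Z$-linear categories to equivalences by \cref{okgpwerewfwerfw1}, and a weak equivalence in $c\Cat_{\Z}$ is in particular an equivalence of categories, so $K^{\Cat_{\Z}}(\bA)\to K^{\Cat_{\Z}}(\bB)$ is an equivalence, whence so is $K^{Ring}(A_{\Z}(\bA))\to K^{Ring}(A_{\Z}(\bB))$. So the content of the proof is entirely in establishing the Morita equivalence claim and its compatibility with the relevant functors.

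First I would make the Morita equivalence precise. Given $\bC$ in $c\Cat_{\Z}$ with set of objects $I$, the ring $A_{\Z}(\bC)=\bigoplus_{C,D\in I}\Hom_{\bC}(C,D)$ is non-unital in general, but for each finite subset $F\subseteq I$ the sub-ring $\bigoplus_{C,D\in F}\Hom_{\bC}(C,D)$ is unital with unit $\sum_{C\in F}\id_C$, and $A_{\Z}(\bC)$ is the filtered colimit of these. Since $K^{Ring}$ preserves filtered colimits (part of \cref{okgpwerewfwerfw1}, via $K^{\Cat_{\Z}}$), I can reduce to the case $I$ finite, where $A_{\Z}(\bC)$ is unital. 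Then I would use \eqref{jovaopsdcadscasdca}, i.e. $K^{Ring}(A_{\Z}(\bC))\simeq K^{\alg}(\Mod^{\fg,\proj}(A_{\Z}(\bC)))$, together with the equivalence of additive categories $\Mod^{\fg,\proj}(A_{\Z}(\bC)) \simeq (\bC_{\oplus})^{\natural}$ (idempotent completion of the free additive completion), which is the classical identification of finitely generated projective modules over a matrix ring with summands of direct sums of the "columns." The functor $\ell(\incl(A_{\Z}(\bC)))_{\oplus}\to \ell(\bC)_{\oplus}$ witnessing this is natural in $\bC\in c\Cat_{\Z}$, and since $K^{\alg}$ is Morita-invariant (\cref{okgpwerewfwerfw}) the induced map on $K$-theory is an equivalence, naturally in $\bC$.

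\textbf{Main obstacle.} I expect the real work to be bookkeeping: checking that all the identifications above ($A_{\Z}(\bC)$ as a filtered colimit of unital rings, $\Mod^{\fg,\proj}(A_{\Z}(\bC))\simeq (\bC_{\oplus})^{\natural}$, and the resulting map to $K^{\Cat_{\Z}}$) are genuinely functorial on the cofibration category $c\Cat_{\Z}$ — in particular that restricting to functors injective on objects is exactly what makes $A_{\Z}(-)$ well-defined and compatible with the unit of the adjunction, mirroring the $C^{*}$-case in \eqref{wefwvefsvvdfvs}–\eqref{gwergefwefwerfweff}. Once this functoriality is in hand the equivalence-preservation is formal. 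An alternative, more economical route that avoids the module-category comparison is to argue directly from the definition \eqref{wergwerfwerfwer1} of $K^{\Cat_{\Z}}$: show that the two $\Z$-linear functors $\bC\rightrightarrows$ (relevant category) agree up to natural isomorphism after applying $\ell$ and $(-)_{\oplus,2,1}$, so that $K^{Ring}(A_{\Z}(-))$ and $K^{\Cat_{\Z}}$ literally agree as functors out of $c\Cat_{\Z,2,1}$; then weak equivalences, being inverted in $c\Cat_{\Z,2,1}$, are sent to equivalences by any functor out of that localization. I would try this second approach first and fall back on the module-theoretic one only if the identification at the level of $(2,1)$-categories needs the extra concreteness.
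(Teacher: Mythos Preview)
Your approach is correct but takes a genuinely different route from the paper. The paper argues directly from \emph{matrix stability} of $K^{Ring}$, not Morita invariance: for $\bA,\bB$ with finitely many objects it forms $\bA_{\infty},\bB_{\infty}$ by repeating every object countably many times, so that $A_{\Z}(\bA_{\infty})\cong M_{\infty}\otimes A_{\Z}(\bA)$ and the inclusions $A_{\Z}(\bA)\to A_{\Z}(\bA_{\infty})$ are corner embeddings; matrix stability makes these $K^{Ring}$-equivalences, and crucially $\bA_{\infty}\to\bB_{\infty}$ is an \emph{isomorphism} (not merely an equivalence), so $A_{\Z}(\bA_{\infty})\to A_{\Z}(\bB_{\infty})$ is already an isomorphism of rings. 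The general case follows by writing the weak equivalence as a filtered colimit of such finite ones. This is more elementary than your argument: it needs no comparison with $K^{\Cat_{\Z}}$ and no module-category identification.

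Your route instead proves the stronger statement that $K^{Ring}(A_{\Z}(-))\simeq K^{\Cat_{\Z}}(-)$ naturally on $c\Cat_{\Z}$, via $\Mod^{\fg,\proj}(A_{\Z}(\bC))\simeq(\bC_{\oplus})^{\natural}$ and Morita invariance of $K^{\alg}$. That is essentially the content of the paper's subsequent \cref{lpetgertgergert}, so you have collapsed two lemmas into one. This buys you a cleaner conceptual picture, at the price of invoking a heavier input (Morita invariance rather than bare matrix stability) and more naturality bookkeeping. One caution: your ``alternative, more economical route'' at the end is circular as stated --- to regard $K^{Ring}(A_{\Z}(-))$ as a functor out of $c\Cat_{\Z,2,1}$ you must already know it inverts weak equivalences, which is precisely the lemma. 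Stick with your first approach.
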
 \begin{proof}  
We use that $K^{Ring}$ is matrix stable and preserves filtered colimits.
The category $\bA$ is a full subcategory of $\bB$ and every object in $
\bB\setminus \bA$ is isomorphic to an object of $\bA$.
Let us first assume that $\bA$ and $\bB$ have finitely many objects.
Then we can construct a square
$$\xymatrix{\bA\ar[r]\ar[d]^{\simeq} & \bB\ar[d]^{\simeq} \\\bA_{\infty} \ar[r]^{\cong} & \bB_{\infty}} $$
where
$\bA_{\infty}$ and $\bB_{\infty}$ are obtained by repeating
every object of $\bA$ or $\bB$ countably many times.
The vertical maps are equivalences in $c\Add$
and
the lower  horizontal map is an isomorphism.
We now apply the functor $A_{\Z}$ and get, using isomorphisms of the form 
$A_{\Z}(\bA_{\infty})\cong M_{\infty}\otimes A_{\Z}(\bA)$,
$$\xymatrix{A_{\Z}(\bA)\ar[r]\ar[d]^{corner} & A_{\Z}(\bB)\ar[d]^{corner}  \\ M_{\infty}\otimes A_{\Z}(\bA) \ar[r]^{\cong} & M_{\infty}\otimes A_{\Z}(\bB)} $$
The vertical maps are corner conclusions.
By matrix stability of $K^{Ring}$
we conclude the isomorphisms in the commutative square
$$\xymatrix{K^{Ring}(A_{\Z}(\bA))\ar[r]\ar[d]^{\simeq} & K^{Ring}(A_{\Z}(\bB))\ar[d]^{\simeq}  \\ K^{Ring}(M_{\infty}\otimes A_{\Z}(\bA)) \ar[r]^{\simeq} &K^{Ring}(M_{\infty}\otimes A_{\Z}(\bB))} \ .$$
Thus the assertion is true for categories with finitely many objects.

For arbitrary categories we use that
$K^{Ring}(A_{\Z}(-))$ preserves filtered colimits and that every $\Z$-linear category is the filtered colimit of its subcategories with finitely many objects.
\end{proof}

By \cref{lhprertherth9}  and the universal property of $\ell$       the functor 
$K^{Ring}(A_{\Z}(-))$ has a  unique factorization 
$$\xymatrix{\Cat_{\Z} \ar[rr]^{K^{Ring}(A_{\Z}(-))}\ar[dr]_{\ell}&& \Sp\\&\Cat_{\Z,2,1} \ar@{..>}[ur]_{K^{Ring}(A_{\Z}(-))_{2,1}}&}$$
Inspired by \cite{Land_2017}  we can now define the functors
$$K^{\alg,\prime}:\Add\to \Sp\ , \quad      K^{\Cat_{\Z},\prime}:\Cat_{\Z}\to \Sp$$ using the diagram: 
\begin{equation}\label{werfewrfeewerfwerfwerf}\xymatrix{c\Add_{2,1}\ar[r]^{\cS_{2,1}}\ar[d]^{\simeq}&c\Cat_{\Z,2,1}\ar[rr]^-{K^{Ring}(A_{\Z}(-))_{2,1}}\ar[d]^{\simeq}&&\Sp\ar@{=}[d]\\\Add_{2,1} \ar[r]^{\cS_{2,1}} &\Cat_{\Z,2,1} \ar[rr]^{K^{\Cat_{\Z,2,1},\prime}}&&\Sp \ar@{=}[d]\\ \Add \ar@{..>}@/_1cm/[rrr]_{K^{\alg,\prime}}\ar[r]^{\cS}\ar[u]^{\ell}&\Cat_{\Z}\ar@{..>}[rr]^{K^{\Cat_{\Z},\prime}}\ar[u]^{\ell}&&\Sp}\ .
\end{equation} 
    The upper vertical equivalences are   explained above, and the maps
  denoted by $\ell$ are the localizations.

The following lemma shows that the construction above is really a reconstruction of the algebraic $K$-theory for $\Z$-linear categories from $K^{Ring}$.
\begin{lem}\label{lpetgertgergert} We have an equivalence
$K^{\Cat_{\Z}}\simeq K^{\Cat_{\Z},\prime}:\Cat_{\Z}\to \Sp$.
\end{lem}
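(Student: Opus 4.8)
The plan is to compare the two functors $K^{\Cat_{\Z}}$ and $K^{\Cat_{\Z},\prime}$ by unwinding their respective constructions and reducing everything to a statement about $K^{\alg}$ on additive categories, where one of the two natural definitions is literally built from $K^{Ring}$ via matrix algebras. Concretely, both functors are obtained by localizing at equivalences, so by the universal property of $\ell:\Cat_{\Z}\to\Cat_{\Z,2,1}$ it suffices to produce an equivalence $K^{\Cat_{\Z}}_{2,1}\simeq K^{\Cat_{\Z},\prime}_{2,1}$ of functors on $\Cat_{\Z,2,1}$. Using the adjunction \eqref{werfwevfwerf} between free sum completion and $\cS_{2,1}$, and the fact (from \eqref{werfewrfeewerfwerfwerf}) that $K^{\Cat_{\Z},\prime}_{2,1}=K^{Ring}(A_{\Z}(-))_{2,1}$ factors through $\cS_{2,1}$ only after precomposition, the cleanest route is to show both functors agree after restriction along $\cS:\Add\to\Cat_{\Z}$, i.e.\ to prove $K^{\alg}\simeq K^{\alg,\prime}:\Add\to\Sp$, and then to check that this forces $K^{\Cat_{\Z}}\simeq K^{\Cat_{\Z},\prime}$ on all of $\Cat_{\Z}$ because both $\Z$-linear $K$-theory functors are, by \cref{oophkerpthetreg} resp.\ the bottom row of \eqref{werfewrfeewerfwerfwerf}, obtained from their restrictions to $\Add$ by precomposing with the free sum completion $(-)_{\oplus,2,1}$ (which is a localization-inverse to $\cS_{2,1}$ up to the relevant equivalences).

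First I would establish the additive-category comparison $K^{\alg}\simeq K^{\alg,\prime}$. For a unital additive category $\bA$ with finitely many objects, $A_{\Z}(\bA)$ is (Morita equivalently) a matrix ring, and the classical identification $K^{\alg}(\bA)\simeq K^{Ring}(A_{\Z}(\bA))$ holds because $\Mod^{\fg,\proj}(A_{\Z}(\bA))\simeq \bA_{\oplus}$ and $K^{\alg}$ is Morita invariant (\cref{okgpwerewfwerfw}, together with the formula \eqref{jovaopsdcadscasdca} for $K^{Ring}$). For general $\bA$ one passes to the filtered colimit over full subcategories on finitely many objects, using that both $K^{\alg}$ and $K^{Ring}(A_{\Z}(-))$ preserve filtered colimits (the latter was exactly \cref{lhprertherth9} and its proof). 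This naturality-compatible chain of equivalences descends to the localizations, yielding $K^{\alg}_{2,1}\simeq K^{\alg,\prime}_{2,1}$, hence $K^{\alg,\prime}\simeq K^{\alg}$ as functors $\Add\to\Sp$.

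Next I would transport this across the free sum completion. By \cref{oophkerpthetreg}, $K^{\Cat_{\Z}}=K^{\alg}_{2,1}\circ(-)_{\oplus,2,1}\circ\ell$, while from the middle row of \eqref{werfewrfeewerfwerfwerf} the functor $K^{\Cat_{\Z,2,1},\prime}=K^{Ring}(A_{\Z}(-))_{2,1}$ satisfies $K^{\Cat_{\Z,2,1},\prime}\circ\cS_{2,1}\simeq K^{\alg,\prime}_{2,1}\simeq K^{\alg}_{2,1}$. Since $(-)_{\oplus,2,1}$ is left adjoint to $\cS_{2,1}$ in \eqref{werfwevfwerf} and $\cS_{2,1}$ is fully faithful with the counit of the adjunction an equivalence on the essential image (the free sum completion retracts onto its one-object generators), precomposing the equivalence $K^{\Cat_{\Z,2,1},\prime}\circ\cS_{2,1}\simeq K^{\alg}_{2,1}$ with $(-)_{\oplus,2,1}$ and using $\cS_{2,1}\circ(-)_{\oplus,2,1}\simeq \id$ on the relevant subcategory gives $K^{\Cat_{\Z,2,1},\prime}\simeq K^{\alg}_{2,1}\circ(-)_{\oplus,2,1}$, i.e.\ $K^{\Cat_{\Z},\prime}\simeq K^{\Cat_{\Z}}$ after applying $\ell$.

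The step I expect to be the main obstacle is the careful bookkeeping of naturality in the first paragraph of the argument: the identification $K^{\alg}(\bA)\simeq K^{Ring}(A_{\Z}(\bA))$ must be made natural in $\bA$ over the category $c\Add$ (injective-on-objects functors only), and the passage to the colimit over finite subcategories has to be organized so that the comparison equivalence is compatible with the restriction maps. One must also verify that the $\cS$ appearing in the bottom and middle rows of \eqref{werfewrfeewerfwerfwerf} is compatible with the $\cS$ of \eqref{goiugjowergwergw} and that no cofibrancy issues obstruct applying \cite[Thm.~1.3.4.20]{HA}; but these are the same model-categorical inputs already invoked for $\Ccat$ in \eqref{werfwerfwer}, so the difficulty is expository rather than conceptual. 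All the hard analytic and homotopical content (Morita invariance, filtered-colimit preservation, matrix stability) is available from \cref{okgpwerewfwerfw}, \cref{okgpwerewfwerfw1} and \cref{lhprertherth9}.
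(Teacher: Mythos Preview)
Your first step---showing $K^{\alg}\simeq K^{\alg,\prime}$ on $\Add$ by reducing to finite-object categories and invoking the Morita equivalence $\bA_{\oplus}\to\Mod^{\fg,\proj}(A_{\Z}(\bA))$---is correct and is exactly the core computation the paper uses. The problem is in your second step, the ``transport across the free sum completion.''

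You assert that $\cS_{2,1}\circ(-)_{\oplus,2,1}\simeq\id$ ``on the relevant subcategory.'' This is the \emph{unit} of the adjunction \eqref{werfwevfwerf}, and it is \emph{not} an equivalence: for a $\Z$-linear category $\bA$ the map $\bA\to\cS(\bA_{\oplus})$ is the inclusion into the sum completion, which is only a Morita equivalence, not an equivalence of $\Z$-linear categories. What you correctly observe (full faithfulness of $\cS_{2,1}$) gives that the \emph{counit} $(-)_{\oplus,2,1}\circ\cS_{2,1}\to\id_{\Add_{2,1}}$ is an equivalence, but that is the wrong composite for your argument: from $K^{\Cat_{\Z,2,1},\prime}\circ\cS_{2,1}\simeq K^{\alg}_{2,1}$ you cannot recover $K^{\Cat_{\Z,2,1},\prime}$ on non-additive $\bA$ without knowing in addition that $K^{\Cat_{\Z,2,1},\prime}$ inverts the unit, i.e.\ that $K^{Ring}(A_{\Z}(\bA))\to K^{Ring}(A_{\Z}(\bA_{\oplus}))$ is an equivalence. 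That Morita-invariance statement is true, but proving it requires the same finite-object/filtered-colimit argument you already used---at which point the detour through $\Add$ has gained nothing.

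The paper avoids this altogether by working directly on $\Cat_{\Z}$: it constructs a natural map $K^{\Cat_{\Z}}(\bA)\to K^{\Cat_{\Z},\prime}(\bA)$ via the fully faithful column-module functor $\bA\to\Mod^{\fg,\proj}(A_{\Z}(\bA)^{+})$ (whose composite to $\Mod^{\fg,\proj}(\Z)$ vanishes), and then checks that this map is an equivalence by the finite-object reduction. Your approach can be repaired by adding the missing Morita step, but the direct route is both shorter and cleaner.
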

\begin{proof}
Let $\bA$ be in $\Cat_{\Z}$.
Since $A_{\Z}(\bA)$ is non-unital (except if $\bA$ has finitely many objects)
we must use  \eqref{gwreferfwerf}.  Unfolding  definitions  thus have $$K^{\Cat_{\Z},\prime}
(\bA)\simeq \Fib(  K^{\alg} (\Mod^{\fg,\proj}(A_{\Z}(\bA)^{+}))\to  K^{\alg}(\Mod^{\fg,\proj}(\Z)))\ .$$ We have a functor
$$\bA\to \Mod^{\fg,\proj}(A_{\Z}(\bA)^{+})$$
which sends the object $A$ in $\bA$ to the submodule of $A_{\Z}(\bA)$ given by
the column with index $A$. This module is generated by the matrix $1_{A}\in A_{\Z}(\bA)$.
By the Yoneda Lemma this functor is fully faithful.
 The composition $\bA\to    \Mod^{\fg,\proj}(A_{\Z}(\bA)^{+})\to  \Mod^{\fg,\proj}(\Z)$ vanishes. 
We therefore get a natural  map
$K^{\Cat_{\Z}}(\bA)\to K^{\Cat_{\Z},\prime}(\bA)$.

We claim that  the natural transformation $$K^{\Cat_{\Z}}(-)\to K^{\Cat_{\Z},\prime}(-):\Cat_{\Z}\to \Sp$$ just defined is an equivalence. We use  that both
functors preserve filtered colimits.
Since every $\Z$-linear category is the colimit of its full subcategories with finitely many objects
it suffices to show the assertion under the hypothesis that $\bA$ has finitely many objects.
Then $A_{\Z}(\bA)$ is unital and $K^{\alg,\prime}(\bA)\simeq K^{\alg}(\Mod^{\fg,\proj}(A_{\Z}(\bA))$.

Since $ \Mod^{\fg,\proj}(A_{\Z}(\bA))$ is additive  the functor $\bA\to  \Mod^{\fg,\proj}(A_{\Z}(\bA))$ described above extends to $ \bA_{\oplus}\to  \Mod^{\fg,\proj}(A_{\Z}(\bA))$ which is still fully faithful.
Any  summand of  $ A_{\Z}(\bA)^{n}$ corresponds to a projection on some object of $\bA_{\oplus}$.
It follows that 
$$\bA_{\oplus}\to  \Mod^{\fg,\proj}(A_{\Z}(\bA))$$ is  a Morita equivalence.
Since $K^{\alg}$ sends Morita equivalences to equivalences, the induced map
$$K^{\Cat_{\Z}}(\bA) \stackrel{\cref{oophkerpthetreg}}{\simeq} K^{\alg}(\bA_{\oplus})\stackrel{\simeq}{\to} K^{\alg}( \Mod^{\fg,\proj}(A_{\Z}(\bA)))\simeq K^{\Cat_{\Z},\prime}(\bA)$$ is an equivalence as desired.
\end{proof}

\subsection{The comparison map}\label{kogpwerfwerfwf}

Analogously to   \eqref{vfsdvsfdvsvvewe} we have a "forget the topology, involution and $\C$-linear structure" functor
  \begin{equation}\label{werfwrefw424r2}\cT:\Ccat\to  \Cat_{\Z}
\end{equation}
 which we use to  evaluate functors defined on $\Z$-linear categories 
 on $C^{*}$-categories.
The main goal of this  subsection  
is the construction of  the comparison map \begin{equation}\label{qewdqwdwedqd} c:K^{\Cat_{\Z}} \cT\to K^{\topp}_{|\Ccat}:\Ccat\to \Sp
\end{equation} 
    from the algebraic $K$-theory of a $C^{*}$-category to its topological $K$-theory.
 
  The completion maps  (the inclusions  $A^{0}(\bC)\to A(\bC) $ of the pre-$C^{*}$-algebras from  \eqref{wregwfrewr} into their  completions $A(\bC)$ for all $C^{*}$-categories $\bC$)  give a natural transformation
  \begin{equation}\label{gwergerfwref}A_{\Z}\cT\to \cT A:c\Ccat\to \nRing\ ,
\end{equation} 
  where $A_{\Z}$ is as in \eqref{vfvdsvsdfvdfvsv} and
  $A$ is the functor from \eqref{wefwvefsvvdfvs}.
  We get the transformation
 \begin{equation}\label{fferfrfwefferfw}K^{Ring}A_{\Z}\cT\xrightarrow{\eqref{gwergerfwref}} K^{Ring}\cT A\stackrel{\eqref{t432r2f34r234}}{\to} K^{C^{*}}A
 \end{equation}
 of functors from  
$c\Ccat$ to $\Sp$.
The functor $\cT$ in \eqref{werfwrefw424r2} sends unitary equivalences to equivalences and therefore
has a factorization $\cT_{2,1}$ over the localizations.
We have the following   diagram
\begin{equation}\label{fweqdqewdqwedqwedqwed}\xymatrix{ &&&\\
c\Ccat_{1,2}\ar[r]^{\cT_{2,1}}\ar@/^1.5cm/[rrr]^{(K^{C^{*}}A_{|c\Ccat})_{2,1}} \ar[d]^{\simeq}&c\Cat_{\Z,2,1}\ar[rr]^{K^{Ring}(A_{\Z}(-))_{2,1}}\ar[d]^{\simeq}\ar@{=>}[u]^{\eqref{fferfrfwefferfw}}&&\Sp\ar@{=}[dd]\\
\Ccat_{1,2}\ar[r]^{\cT_{2,1}}&\Cat_{\Z,2,1}&&\\\Ccat\ar[u]^{\ell}\ar[r]^{\cT}\ar@/^-1.5cm/[rrr]_{K^{\topp}_{|\Ccat}}&\ar@{==>}[d]^{c}\Cat_{\Z}\ar[u]^{\ell}\ar[rr]^{K^{\Cat_{\Z}}}&&\Sp\\&&&}\ .
\end{equation}
  The right square   commues by  \eqref{werfewrfeewerfwerfwerf} and \cref{lpetgertgergert}.
  The upper triangle is filled by  the factorization of \eqref{fferfrfwefferfw} over the localizations (a natural transformation, not necesarily an equivalence). Finally, 
  the outer region is filled by an equivalence \eqref{werfwerfwer}.
  We therefore get a transformation
  $c$ in \eqref{qewdqwdwedqd} filling the lower triangle as a composition of cells of this diagram.

  \subsection{Homotopy $K$-theory for $\Z$-linear and additive categories}\label{gwergrefwfref} 
  
  In this subsection we recall the construction of the homotopy $K$-theory  functor
   \begin{equation}\label{fqijioqjwodkmqow}K^{\Cat_{\Z}}H:\Cat_{\Z}\to \Sp
\end{equation}
 for $\Z$-linear categories.
 We then define  the homotopy $K$-theory functor \begin{equation}\label{wergoowerfwefewfwe}K^{\alg}H:= K^{\Cat_{\Z}} H\cS:\Add\to \Sp
\end{equation}
  
for additive categories, where $\cS$ is the forgetful functor from \eqref{goiugjowergwergw}.
The constructions are designed such that
 \begin{equation}\label{erfwefwerfwgw}
   K^{\Cat_{\Z}}H\circ \incl:\Ring \to \Sp\ ,\end{equation} 
 is the restriction of Weibel's   homotopy $K$-theory functor $K^{Ring}H:\nRing \to \Sp$ (see \cite{zbMATH04095731}) to unital rings.

Our construction    follows the approach of \cite{stlue}.  
 Any unital ring $R$  
 gives naturally rise to an endofunctor $$\Cat_{\Z}\to \Cat_{\Z}\ , \quad \bA\mapsto R\otimes \bA\ .$$ 
 The unit of $R$ induces a canonical inclusion
$ \bA\to R\otimes \bA$. 
 A functor $E:\Cat_{\Z}\to \cC$ is called homotopy invariant if $E( \bA)\to E(\Z[t]\otimes\bA)$ is an 
equivalence for all $\bA$ in $\Cat_{\Z}$. We let $\Fun^{h}(\Cat_{\Z},\cC)$ denote the full subcategory of $\Fun(\Cat_{\Z},\cC)$ of homotopy invariant functors.

 If  $\cC$ is admits sifted colimits, then
we have a left Bousfield localization \begin{equation}\label{tkohpwegwregewrg}(E\mapsto  EH): \Fun(\Cat_{\Z},\cC)\leftrightarrows \Fun^{h}(\Cat_{\Z},\cC):\incl\ .
\end{equation}
If $\cC$ is presentable, then the  assertion simply follows from the fact that  $\Fun^{h}(\Cat_{\Z},\cC)$ is closed under limits.
If $\cC$  just admits sifted colimits, then
we construct, following \cite{stlue}, an explicit model of $EH$ by 
\begin{equation}\label{oiewrfoiewrfwefwef}EH(\bA):=\colim_{[n]\in \Delta^{\op}} E(\Delta_{\alg}^{n}\otimes \bA)  \ . \end{equation}  
Here $([n]\to \Delta_{\alg}^{n}):\Delta^{\op}\to  \Ring$ denotes the usual functor of polynomial simplices.
 The unit of the adjunction provides a natural transformation
 \begin{equation} E\to EH:\Cat_{\Z}\to \cC\ .
\end{equation}
 
 Using the model for $EH$ we can see that if $E $ has one of the properties listed below, then $EH$ has this property too.
\begin{enumerate}
\item It sends equivalences to equivalences.
\item It sends Morita equivalences to equivalences.
\item It preserves filtered colimits.
\end{enumerate}
For the restriction to additive categories we get in addition the preservation of the following properties:
\begin{enumerate}
\item It sends Karoubi filtrations to fibre sequences. 
\item  It annihilates flasque objects.
\end{enumerate}
All this follows since $R\otimes -$ preserves the corresponding class of functors, objects or colimits, and the properties pass to the colimit.

If we apply this construction to the functor $K^{\Cat_{\Z}}$, then we
 get the functor in \eqref{fqijioqjwodkmqow} together with a natural transformation
 \begin{equation}\label{vdfsvewrvfsdvsdfvsd}h: K^{\Cat_{\Z}}\to K^{\Cat_{\Z}}H:\Cat_{\Z}\to \Sp\ .
\end{equation} 
If we replace in this construction $\Cat_{\Z}$ by $\Add$, then we obtain the construction of the homotopy $K$-theory
functor   in \cite{stlue} which turns out to be equivalent to the functor defined in \eqref{wergoowerfwefewfwe}.
Using \eqref{wergoowerfwefewfwe}, \cref{okgpwerewfwerfw} and \cref{okgpwerewfwerfw1} we get: 
\begin{kor}\label{kopgwerfewfwefw}
The functor $K^{\alg}H:\Add\to \Sp$ is homological in the sense of \cref{kopwegrfw}. \end{kor}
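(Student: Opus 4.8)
The plan is to read this off from the fact, recorded at the end of \cref{okgpwerewfwerfw}, that $K^{\alg}\colon\Add\to\Sp$ is already homological in the sense of \cref{kopwegrfw}, together with the stability of the defining properties under the homotopy-invariant localization $E\mapsto EH$ discussed in \cref{gwergrefwfref}. First I would recall the two incarnations of $K^{\alg}H$: by \eqref{wergoowerfwefewfwe} it is the composition $K^{\Cat_{\Z}}H\cS$, whereas by the remark closing \cref{gwergrefwfref} it is equivalently the functor obtained by applying the construction $E\mapsto EH$, with the explicit model \eqref{oiewrfoiewrfwefwef}, directly to $E=K^{\alg}\colon\Add\to\Sp$. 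It is this second description that I would use, since the preservation statements in \cref{gwergrefwfref} are phrased for the endofunctors $R\otimes-$ of $\Add$, and since the conditions of \cref{kopwegrfw} concerning Karoubi filtrations and flasque objects are naturally formulated at the level of $\Add$.

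Next I would check the five conditions of \cref{kopwegrfw} for $H:=K^{\alg}H\colon\Add\to\Sp$ in turn. Condition (1) is immediate: $\Sp$ is a cocomplete (even presentable) stable $\infty$-category. Conditions (2)--(5) follow from the list assembled in \cref{gwergrefwfref}: each endofunctor $R\otimes-$ on $\Add$ preserves equivalences, filtered colimits, Karoubi filtrations and flasque objects, and a functor possessing one of the properties ``sends equivalences to equivalences'', ``preserves filtered colimits'', ``sends Karoubi filtrations to fibre sequences'', ``annihilates flasque objects'' retains it after passing through $E\mapsto EH$; hence $K^{\alg}H$ inherits all four from $K^{\alg}$, which enjoys them by \cref{okgpwerewfwerfw} (the Morita-invariance recorded there and in \cref{okgpwerewfwerfw1} is not needed for \cref{kopwegrfw}, but comes along for free). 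Here one uses that the colimit over $\Delta^{\op}$ in \eqref{oiewrfoiewrfwefwef} is sifted, hence commutes with the finite limits entering the fibre-sequence condition in the stable category $\Sp$, and that $R\otimes(-)$ sends a flasque additive category to a flasque one. This establishes all of \cref{kopwegrfw}.

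I do not expect a genuine obstacle; the corollary is a bookkeeping assembly of \cref{okgpwerewfwerfw} and the preservation list of \cref{gwergrefwfref}. The only point deserving an explicit sentence is the compatibility of the two constructions of $K^{\alg}H$ invoked above --- that applying $E\mapsto EH$ before or after the forgetful functor $\cS\colon\Add\to\Cat_{\Z}$ yields equivalent functors --- which is exactly the content of the last sentence of \cref{gwergrefwfref}, itself a consequence of the fact that $\cS$ intertwines the endofunctors $R\otimes-$ on $\Add$ and on $\Cat_{\Z}$ and commutes with the relevant colimits.
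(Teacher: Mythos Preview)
Your proposal is correct and follows essentially the same approach as the paper: the paper's ``proof'' is the one-line lead-in ``Using \eqref{wergoowerfwefewfwe}, \cref{okgpwerewfwerfw} and \cref{okgpwerewfwerfw1} we get:'', which is precisely the combination of ingredients you spell out---identify $K^{\alg}H$ with the $EH$-construction applied to $K^{\alg}$ on $\Add$, and then invoke the preservation list from \cref{gwergrefwfref} together with the homological properties of $K^{\alg}$ from \cref{okgpwerewfwerfw}. Your added remarks on why the sifted colimit commutes with fibre sequences and why $R\otimes-$ preserves flasques are the details the paper leaves implicit.
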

%
%
%
%

  \subsection{Factorization of the comparison map over homotopy $K$-theory}
 
The goal of this subsection is to construct a factorization
\begin{equation}\label{fwrefwerfwefer}\xymatrix{K^{\Cat_{\Z}}\cT\ar[rr]^{c}_{\eqref{qewdqwdwedqd}}\ar[dr]^{h\cT}_{\eqref{vdfsvewrvfsdvsdfvsd}}&&K^{\topp}_{|\Ccat}\\&K^{\Cat_{\Z}}H\cT\ar@{..>}[ur]_{cH}&}
\end{equation}
of natural transformations between functors from $\Ccat$ to $\Sp$.

%
%
%
%

By definition the transformation $h$ from  \eqref{vdfsvewrvfsdvsdfvsd}  is initial for transformations from $K^{\Cat_{\Z}}$ to polynomially homotopy invariant functors defined on all of $\Cat_{\Z}$. 
But it is not clear that $h\cT$ in the diagram \eqref{fwrefwerfwefer} has a universal property.
We will therefore use the explicit model \eqref{oiewrfoiewrfwefwef}.


 The following could be developed for $\nCcat$ but we will state the constructions for $\Ccat$ since this case is needed for our purpose.
 
 For any unital  $C^{*}$-algebra $A$ we have an endofunctor 
   \begin{equation}\label{werfwefwer}A\otimes -:\Ccat\to \Ccat\ , \quad \bC \mapsto A\otimes \bC
\end{equation}
   involving the maximal tensor product of $C^{*}$-categories \cite[Def. 7.2]{KKG}, where 
   we view $A$ as a $C^{*}$-category with a single object. The unit of $A$ induces a natural transformation
   $\id\to   A\otimes - $ of endofunctors of $\Ccat$.

 A functor $E:\Ccat\to \cC$ is called homotopy invariant if the induced map $E(\bC)\to E(C([0,1])\otimes \bC)$ is an equivalence for all $\bC$ in $\Ccat$. We let $\Fun^{h}(\Ccat,\cC)$ denote the full subcategory of $\Fun(\Ccat,\cC)$ of  homotopy invariant functors.  In analogy to \eqref{tkohpwegwregewrg} (with essentially the same proof) we get:
 If $\cC$ admits sifted colimits, then 
 there is a left Bousfield localization
$$(E\mapsto E\cH):\Fun( \Ccat,\cC)\leftrightarrows \Fun^{h}( \Ccat ,\cC):\incl\ .$$  
A model for $E\cH$ is given in analogy to \eqref{oiewrfoiewrfwefwef} by 
\begin{equation}\label{qwefwqedqwed}E\cH(\bC) :=\colim_{[n]\in \Delta^{\op}} E(C(\Delta^{n})\otimes \bC)\ .
\end{equation} 
The inclusions $\Delta_{\alg}^{n}\to C(\Delta^{n})$ of integral polynomial functions on the simplex into continuous complex-valued functions on the topological simplex induces a natural transformation
$$ \Delta^{n}_{\alg}\otimes \cT\bC\to \cT(C(\Delta^{n})\otimes \bC)$$ of functors from $\Delta^{\op}$ to $\Cat_{\Z}$, where  $\cT$ is as in 
\eqref{werfwrefw424r2}.  
Applying a functor 
  $E:\Cat_{\Z}\to \cC$ to this family 
 and taking the colimit over $\Delta^{op}$   we get the natural transformation of functors which send
$E:\Cat_{\Z}\to \cC$ to the transformation \begin{equation}\label{fwerfwerfwer}t_{E}:EH\cT\to E\cT\cH: \Cat_{\Z}\to \Sp
\end{equation} Since $K^{\topp}_{|\Ccat}$ is homotopy invariant the unit map
$K^{\topp}_{|\Ccat}\to K^{\topp}_{|\Ccat}\cH$ is an equivalence.
We have the following commutative diagram
$$\xymatrix{&K^{\topp}_{|\Ccat}\ar[r]^{\simeq}&K^{\topp}_{|\Ccat}\cH \\
K^{\Cat_{\Z}}\cT\ar[r]\ar[d]_{h\cT,\eqref{vdfsvewrvfsdvsdfvsd}}\ar[ur]^{c, \eqref{qewdqwdwedqd}} &K^{\Cat_{\Z}}\cT\cH\ar[ur]_{c\cH}&\\K^{\Cat_{\Z}}H\cT\ar[ur]_{ t_{K^{\Cat_{\Z}}},\eqref{fwerfwerfwer}}\ar@{..>}[uur]^(0.55){cH}&&}$$
of natural transformations between functors from $\Ccat$ to $\Sp$.
The dotted arrow
provides the desired factorization in \eqref{fwrefwerfwefer}.

\subsection{The twist by trace class operators}

The goal of this subsection is finally  to construct the  natural transformation
\begin{equation}\label{fqweiqwdiew9qdi09qwedq}
c^{\cL^{1}}:(K^{\Cat_{\Z}}H\cZ)_{\cL^{1}} \cT_{\C}\to K^{\topp}_{|\Ccat}:\Ccat\to \Sp\ ,
\end{equation}
announced in \eqref{gwegerfwr}.
We further show that its evaluations at $\C$ considered as a $C^{*}$-category or the $C^{*}$-category $\Hilb^{\fin}(\C)$ of finite-dimensional Hilbert spaces are an equivalences.


Recall that a unital $\C$-algebra $R$   defines an endofunctor $R\otimes_{\C}-:\Cat_{\C}\to \Cat_{\C}$.
Given a possibly non-unital $\C$-algebra $R$   in $\nAlg_{\C}$ and a functor $E:\Cat_{\C}\to\cC$ with a stable target
 we  define the twist  \begin{equation}\label{rewfewrfvdsfv}E_{R}:\Cat_{\C}\to \cC \ ,\quad \bA\mapsto \Fib(E(R^{+}\otimes_{\C} \bA)\to E(\bA))\ ,
\end{equation}
  where the map is induced by the canonical projection $R^{+}\to \C$ from the unitalization $R^{+}$ in $\Alg_{\C}$ to the ground ring $\C$.

Given a possibly non-unital $C^{*}$-algebra $A$   in $\nCalg$ and a functor $E:\Ccat\to\cC$ with a stable target, then using \eqref{werfwefwer}
 we  similarly define the twist 
  \begin{equation}\label{rewfewrfvdsfv1} E_{A}:\Ccat\to \cC \ ,\quad \bC\mapsto \Fib(E(A^{+}\otimes \bC)\to E(\bC))\ .\end{equation}

These constructions are functorial in $R$ or $A$ in $\nAlg_{\C}$ or $\nCalg$, respectively. Furthermore,
if the original functor $E$ is Morita invariant, then the twisted functor
$E_{R}$ or $E_{A}$, respectively, is Morita invariant, too. Here we use that the functors
$R\otimes_{\C}- $ or $A\otimes-$ for unital $R$ or $A$ preserve Morita equivalences.

 In the following we let  $\cT_{\C}:\Ccat\to \Cat_{\C}$ or $\cT_{\C}:\nCalg\to \nAlg_{\C}$ denote the forgetful functors.
The completion map from the algebraic to the maximal tensor product induces a natural transformation
of bi-functors
$$\cT_{\C}(-)\otimes^{\alg}_{\C} \cT_{\C}(-)\to \cT_{\C}( -\otimes^{C^{*}} -):\Calg\times \Ccat\to \Cat_{\C}\ ,$$
where for clarity we indicated the context of the tensor products by superscripts.
Combined with the constructions from \eqref{rewfewrfvdsfv1}
and \eqref{rewfewrfvdsfv} for any functor $E:\Cat_{\C} \to \cC$ we get a natural transformation of bi-functors
\begin{equation}\label{hrtegtrgeg}E_{\cT_{\C}(-)}\cT_{\C}(-)\to (E\cT_{\C})_{-}(-):\nCalg
\times \Ccat\to \cC\ .
\end{equation} 
 Let $\cZ:\Cat_{\C}\to \Cat_{\Z}$ denote the forgetful functor.
Recall that $\cL^{1}\subseteq K$ denotes the ideal of trace class operators on $\ell^{2}$.
We consider $\cL^{1}$ as an object of $\nAlg_{\C}$. 
The inclusion $\cL^{1}\to \cT_{\C}(K)$ induces a natural transformation
\begin{equation}\label{vfsfsvfdv1}(K^{\Cat_{\Z}}H\cZ)_{\cL^{1}}\cT_{\C}\to (K^{\Cat_{\Z}}H\cZ)_{\cT_{\C}(K)}\cT_{\C}:\Ccat\to \Sp\ .
\end{equation}  We appologize for the accumulation of forgetful functors, but they are important
in order to clarify the which tensor product (amongst $\otimes_{\Z}$, $\otimes_{\C}$ or $\otimes_{\max}$) is involved in the definition of  the terms.
 Specializing  \eqref{hrtegtrgeg} to $E=K^{\Cat_{\Z}}H\cZ$    and using the equivalence $\cT\simeq \cZ\cT_{\C}$ we get a transformation
 \begin{equation}\label{vfsfsvfdv2}
 (K^{\Cat_{\Z}}H\cZ)_{\cT_{\C}(K)}\cT_{\C}\to (K^{\Cat_{\Z}}H\cT)_{K}:\Ccat\to \Sp\ .
\end{equation} 
 The diagram \eqref{fwrefwerfwefer}   gives a transformation 
\begin{equation}\label{vfsfsvfdv3}
(cH)_{K}: (K^{\Cat_{\Z}}H\cT)_{K}\to  (K^{\topp}_{|\Ccat})_{K}
\end{equation} 
By $K$-stability of $K^{\topp}$  we have an equivalence
 \begin{equation}\label{vfsfsvfdv4}K^{\topp}_{|\Ccat}\simeq (K^{\topp}_{|\Ccat})_{\C}\xrightarrow{\simeq,\eqref{wfgerfwerfw}} (K^{\topp}_{|\Ccat})_{K}\ .
\end{equation} 
\begin{ddd}\label{hlrztrhertgetrg}
We define   the comparison map 
\eqref{fqweiqwdiew9qdi09qwedq} as the composition 
$$c^{\cL^{1}}:(K^{\Cat_{\Z}}H\cZ)_{ \cL^{1}}\cT_{\C}\xrightarrow{\eqref{vfsfsvfdv4}^{-1}\circ \eqref{vfsfsvfdv3}\circ \eqref{vfsfsvfdv2}  \circ \eqref{vfsfsvfdv1}} K^{\topp}_{|\Ccat}: \Ccat\to \Sp\ .
$$
\end{ddd}

We consider the  one-object $C^{*}$-category $\incl(\C)$  in $\Ccat$, see \eqref{werfwerfwef} for notation.
\begin{prop}
The component
$$c^{\cL^{1}}_{\incl(\C)}: (K^{\Cat_{\Z}}H\cZ_{\cL^{1}}) ( \cT_{\C}(\incl(\C)))\to 
K^{\topp}(\incl(\C))$$ 
is an equivalence.
\end{prop}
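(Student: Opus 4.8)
\emph{Reduction of the statement.} The plan is first to make both sides explicit. Since $\cT_{\C}(\incl(\C))$ is the $\C$-algebra $\C$ regarded as a one-object $\C$-linear category and $(\cL^{1})^{+}\otimes_{\C}\C\cong(\cL^{1})^{+}$, the twist \eqref{rewfewrfvdsfv} gives $(K^{\Cat_{\Z}}H\cZ)_{\cL^{1}}(\cT_{\C}(\incl(\C)))\simeq\Fib\big(K^{\Cat_{\Z}}H(\cZ((\cL^{1})^{+}))\to K^{\Cat_{\Z}}H(\cZ(\C))\big)$, and by \eqref{erfwefwerfwgw} this is $\Fib\big(K^{Ring}H((\cL^{1})^{+})\to K^{Ring}H(\C)\big)$, i.e.\ Weibel's homotopy $K$-theory $KH(\cL^{1})$ of the non-unital ring $\cL^{1}$. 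On the target side $K^{\topp}(\incl(\C))\simeq K^{C^{*}}(\C)\simeq KU$ by \eqref{gwerfwerfw}. Next I would trace the four constituents of $c^{\cL^{1}}$ from \cref{hlrztrhertgetrg} at $\incl(\C)$: the map \eqref{vfsfsvfdv2} is an identity up to the canonical comparison of "unitalise then tensor" with "tensor then unitalise"; the map \eqref{vfsfsvfdv4} is the $K$-stability equivalence $K^{C^{*}}(\C)\simeq K^{C^{*}}(\cK)$; and the maps \eqref{vfsfsvfdv1} and \eqref{vfsfsvfdv3} compose, after these identifications, to $KH(\cL^{1})\xrightarrow{\iota_{*}}KH(\cK)\xrightarrow{cH_{\cK}}K^{C^{*}}(\cK)$, where $\iota\colon\cL^{1}\hookrightarrow\cK$ is the inclusion of rings and $cH_{\cK}$ is the homotopy-$K$-theory incarnation of the comparison map coming from the factorisation \eqref{fwrefwerfwefer} (using that on one-object categories $c$ restricts to the $C^{*}$-algebra comparison $c^{C^{*}}$). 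Hence the Proposition reduces to the claim that $cH_{\cK}\circ\iota_{*}$ is an equivalence.

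\emph{The $C^{*}$-algebra $\cK$.} By the Karoubi conjecture, established above in the form that \eqref{rfwrefwfwf} is an equivalence, taken at $A=\C$ and combined with $K$-stability of $K^{C^{*}}$, one has $K^{C^{*}}(\cK)\simeq K^{C^{*}}(\C)\simeq KU$. Unwinding the definition \eqref{t432r234r234}, the comparison map $c^{C^{*}}_{\cK}\colon K^{Ring}(\cK)\to K^{C^{*}}(\cK)$ is the corner map $K^{Ring}(\cK)\to K^{Ring}(\cK\otimes_{\max}\cK)$ followed by the inverse of \eqref{rfwrefwfwf}; I would argue the corner map is an equivalence by comparing it, via naturality of \eqref{rfwrefwfwf}, with the corner inclusion $\C\to\cK$ and the $K$-stability of $K^{C^{*}}$ (equivalently, one quotes $\cK$-stability of algebraic $K$-theory for stable $C^{*}$-algebras). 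Thus $c^{C^{*}}_{\cK}$ is an equivalence. Since \eqref{fwrefwerfwefer} factors $c^{C^{*}}$ as $K^{Ring}(\cK)\xrightarrow{h}KH(\cK)\xrightarrow{cH_{\cK}}K^{C^{*}}(\cK)$, it follows that $cH_{\cK}$ is an equivalence provided $h\colon K^{Ring}(\cK)\to KH(\cK)$ is, i.e.\ provided $\cK$ is $K$-regular; and stable $C^{*}$-algebras are $K$-regular.

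\emph{The Schatten ideal $\cL^{1}$.} It remains to show $\iota_{*}\colon KH(\cL^{1})\to KH(\cK)$ is an equivalence. Here I would invoke the classical facts about the trace-class ideal: $\cL^{1}$ is $H$-unital (Wodzicki), so algebraic and hence homotopy $K$-theory satisfy excision for the ideal $\cL^{1}$ in any ring; and $\cL^{1}$ is a dense $*$-subalgebra of $\cK$, closed under holomorphic functional calculus, with the same $K_{0}$ and $K_{1}$ as $\cK$. Using Bott periodicity together with the homotopy invariance and $M_{\infty}$-stability of $KH$, one deduces that $\iota$ is a $KH$-equivalence (equivalently, that $KH(\cL^{1})$ agrees with the topological $K$-theory of the Banach algebra $\cL^{1}$, which is $KU$). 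Assembling the three steps shows that $c^{\cL^{1}}_{\incl(\C)}$ is an equivalence.

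\emph{Main obstacle.} The reduction and the computation of the target are formal consequences of the Karoubi conjecture \eqref{rfwrefwfwf} and of the $K$-stability and homotopy-invariance properties of $K^{\topp}$ and $KH$ already recorded. The genuinely non-formal point — the one I expect to be the main obstacle — is the $KH$-equivalence $\iota_{*}\colon KH(\cL^{1})\xrightarrow{\simeq}KH(\cK)$ together with $K$-regularity of $\cK$: these rest on Wodzicki's $H$-unitality of $\cL^{1}$ and on the behaviour of algebraic $K$-theory under completion of Schatten ideals, so it is at this point that one must invoke (or re-prove within the present framework) the relevant results of Wodzicki, Suslin–Wodzicki and Cortiñas.
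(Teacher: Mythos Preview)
Your reduction of the source to $K^{Ring}H(\cL^{1})$ and the target to $K^{C^{*}}(\C)$, together with your identification of $c^{\cL^{1}}_{\incl(\C)}$ as the composite $K^{Ring}H(\cL^{1})\to K^{Ring}H(\cK)\to K^{C^{*}}(\C)$, coincide with the paper's proof. The paper is simply terser at the last step, citing Corti\~{n}as's comparison theorem directly for the map $K^{Ring}H(\cL^{1})\to K^{Ring}H(\cK)$ being an equivalence and the Karoubi conjecture for the remaining identification, whereas you unpack the latter via $K$-regularity of $\cK$; both arguments rest on the same Corti\~{n}as--Wodzicki input you name in your final paragraph.
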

\begin{proof}
We use the expression of the algebraic $K$-theory of $\Z$-linear categories or $C^{*}$-algebras in terms of the 
$K$-theory of rings or $C^{*}$-algebras using the functors  $A_{\Z}$ or $A$, respectively. 
Using  \cref{lpetgertgergert},  \eqref{erfwefwerfwgw},  the comparison of \eqref{rewfewrfvdsfv} with \eqref{gwreferfwerf} and $A_{\Z}(\incl(R))\cong R$ for any ring $R$ in $\Ring$
we get an equivalence
$$(K^{\Cat_{\Z}}H\cZ_{\cL^{1}})(\cL^{1}\otimes_{\C}\cT_{ \C}(\incl(\C))\simeq K^{Ring}H \cZ(\cL^{1})\ .$$
Similarly, using $A(\C)\cong \C$  we get 
$$K^{\topp}(\incl(\C))\stackrel{\simeq}{\to} K^{C^{*}}(\C) \ .$$ 
Unfolding the definitions, under these identifications
the map $c^{\cL^{1}}_{\C}$ becomes the composition 
$$K^{Ring}H \cZ(\cL^{1})\stackrel{!}{\to} K^{Ring}H \cT(K)  \stackrel{\eqref{rfwrefwfwf}^{-1}_{\C}}{\simeq}
K^{C^{*}}(\C)\ .$$
The marked map is an equivalence e.g. by 
\cite[Thm. 6.5.3]{Corti_as_2008}.
 \end{proof}

Using that the domain and target of $c^{\cL^{1}}$ are Morita invariant functors and that
$\incl(\C)\to \Hilb^{\fin}(\C)$ is a Morita equivalence we conclude:
\begin{kor}
The evaluation
$$c^{\cL^{1}}_{\Hilb^{\fin}(\C)}: (K^{\Cat_{\Z}}H\cZ)_{\cL^{1}} \cT_{\C}(\Hilb^{\fin}(\C))\to 
K^{\topp}(\Hilb^{\fin}(\C))$$
is an equivalence.
\end{kor}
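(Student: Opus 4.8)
The plan is to reduce the statement about $\Hilb^{\fin}(\C)$ to the already-proven statement about $\incl(\C)$ by exploiting Morita invariance of both sides. First I would recall that the inclusion functor $\incl(\C) \to \Hilb^{\fin}(\C)$ of unital $C^{*}$-categories is a Morita equivalence; this is the standard fact that a $C^{*}$-algebra (viewed as a one-object $C^{*}$-category) embeds as a full subcategory, up to isomorphism of objects, into its category of finitely-generated projective modules, which here becomes the finite-dimensional Hilbert spaces.

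Next I would verify that both functors whose values we are comparing, namely the domain $(K^{\Cat_{\Z}}H\cZ)_{\cL^{1}} \cT_{\C}$ and the target $K^{\topp}_{|\Ccat}$, send Morita equivalences of (unital) $C^{*}$-categories to equivalences of spectra. For the target this is item in the list of properties of $K^{\topp}$ recorded in \cref{tkohprthrethrth}. For the domain I would invoke the discussion in \cref{kogpwerfwerfwf} and the subsection on the twist by trace class operators: the functor $K^{\Cat_{\Z}}H$ is Morita invariant (it is built from $K^{\Cat_{\Z}}$, which is Morita invariant by \cref{okgpwerewfwerfw1}, and homotopy invariance preserves Morita invariance as noted in \cref{gwergrefwfref}), the forgetful functor $\cZ$ takes a Morita equivalence of $\C$-linear categories to a Morita equivalence of $\Z$-linear categories, and twisting by a fixed (possibly non-unital) $\C$-algebra $\cL^{1}$ preserves Morita invariance since $\cL^{1,+}\otimes_{\C}-$ preserves Morita equivalences. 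Finally $\cT_{\C}$ sends a Morita equivalence of $C^{*}$-categories to one of $\C$-linear categories. Hence the composite domain functor is Morita invariant.

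With both sides Morita invariant, and the natural transformation $c^{\cL^{1}}$ of \cref{hlrztrhertgetrg} being, by construction, a transformation of functors on $\Ccat$, the preceding proposition (that $c^{\cL^{1}}_{\incl(\C)}$ is an equivalence) together with the Morita equivalence $\incl(\C)\to \Hilb^{\fin}(\C)$ forces the component $c^{\cL^{1}}_{\Hilb^{\fin}(\C)}$ to be an equivalence as well: in the commuting square whose vertical arrows are the images of the Morita equivalence under the two functors and whose horizontal arrows are the two components of $c^{\cL^{1}}$, three of the four arrows are equivalences, so the fourth is too.

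I do not expect a genuine obstacle here; this corollary is a formal consequence already flagged in the text. The only point requiring a little care is making sure that \emph{every} functor appearing in the long composition defining $c^{\cL^{1}}$ (see Definition \ref{hlrztrhertgetrg}) respects Morita equivalences, so that the source functor $(K^{\Cat_{\Z}}H\cZ)_{\cL^{1}}\cT_{\C}$ really is Morita invariant; this is the one place where one must assemble the Morita-invariance statements scattered through \cref{regoierjfoiewfrw}, \cref{gwergrefwfref}, and the twist subsection rather than cite a single lemma. Once that bookkeeping is done, the argument is the two-line diagram chase described above.
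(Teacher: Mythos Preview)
Your proposal is correct and follows essentially the same approach as the paper: the paper's argument is precisely the one-sentence reduction you describe, namely that both domain and target of $c^{\cL^{1}}$ are Morita invariant and $\incl(\C)\to\Hilb^{\fin}(\C)$ is a Morita equivalence, so the previous proposition for $\incl(\C)$ transfers. Your extra care in tracing Morita invariance through the layers of the domain functor is more explicit than what the paper writes, but the logic is identical.
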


\subsection{Traces}

Recall that $\Cat^{\tr}_{\C}$ denotes the category of pairs $(\bA,\tau)$  of a $\C$-linear  category $\bC$  and 
a $\C$-valued trace (defined on $\dom(\tau)$) and trace-preserving functors. 
Furthermore,  $\Cat^{\tr,\perf}_{\C}$ is the full subcategory of  $\Cat^{\tr}_{\C}$ of pairs where $\tau$ is defined on all objects. 
The main goal of this subsection is to construct a tracing (\cref{erthgerthertertgertt}) \begin{equation}\label{erfewrfwegrgw}\trc_{\cL^{1}}^{K^{\Cat_{\Z}H\cZ}}:\pi_{0}K^{\Cat_{\Z}}H\cZ_{\cL^{1}}  \to \const_{\C} 
\end{equation}
see \eqref{erfewrfwegrgw1}, a natural transformation of functors from
$\Cat^{\tr,\perf}_{\C}$ to $\Ab$.

We will use cyclic homology theory for $\C$-algebras. We have  a fibre sequence of functors \begin{equation}\label{vsdfvsfdvsdfvsd}HN\stackrel{I}{\to} HP\stackrel{S}{\to} \Sigma^{2} HC \end{equation} from $\nAlg_{\C}$  to $\Sp$
(see \cite[Sec. 5.1]{zbMATH01093754} for definitions)
representing negative  cyclic homology theory, periodic cyclic homology theory and cyclic homology.

\begin{rem}The periodic cyclic homology is represented by the total complex (involving products) of the bi-complex
$CC^{\per}$ living in degrees  $\Z\times \nat$.
Then $HN$ is represented by the part $CC^{-}$ in degrees $(-\infty,1]\times \nat$, and 
$HC$ by the part $CC$ in degree $[0,\infty)\times \nat$.
The map $I$ is given by the inclusion $CC^{-}\to CC^{\per}$ and
the map $S$ is induced by
the chain map $CC^{\per}\to CC$ 
given by the projection 
onto its part in degrees $[2,\infty)\times \nat$
 and a twofold left-shift (putting the column in degree $2$ into degree $0$). 
We will use   the further natural transformation \begin{equation} p:HP\to HC
\end{equation} induced by the projection  $CC^{\per}\to CC$
  \hB\end{rem}
  As the functors in \eqref{vsdfvsfdvsdfvsd} are defined in terms of chain complexes of complex vector spaces 
 their natural target    is the 
stable $\infty$-category $\bD(\C)$, the derived category of $\C$. We implicitly use the functor
$\map_{\bD(\C)}(\C,-):\bD(\C) \to  \Sp$
in order to interpret them   as spectrum valued.

We start with rings. Recall that $\cZ:\nAlg_{\C}\to \nRing$ is the forgetful functor.
The primary character is the Goodwillie-Jones character
$$\ch N: K^{Ring}\cZ  \to HN:\nAlg_{\C}\to \Sp$$
\cite{McCarthy_1994}, \cite{lrd}.

The
natural transformation
 $K^{Ring}\cZ\to K^{Ring}H\cZ:\nAlg_{\C}\to \Sp$  is the initial   natural  transformations from $ K^{Ring}\cZ$ to a polynomially homotopy invariant $\Sp$-valued functor on $\nAlg_{\C}$.
Using that $HP$ is polynomially homotopy invariant     \cite{Goodwillie_1986} \cite{Goodwillie_1985} and the universal property of the upper horizontal map  just discussed
 we get a factorization (denoted by $\ch P$)
$$\xymatrix{K^{Ring}\cZ\ar[r]\ar[d]^{\ch N} &K^{Ring}H\cZ \ar@{..>}[d]^{\ch P} \\ HN \ar[r]^{I} & HP} $$
of natural transformation between functors from $\nAlg_{\C}$ to $\Sp$
 called the Chern character (see e.g. the diagram on \cite[p.9 ]{Corti_as_2008}).

 We now extend the Chern character $\ch P$
 from $\C$-algebras to $\C$-linear  categories.
 The functor $A_{\Z}$ from \eqref{vfvdsvsdfvdfvsv} restricts to a functor 
 $$A_{\C}:c\Cat_{\C}\to \nAlg_{\C}$$ so that 
    \begin{equation}\label{sdfvsfdvw}\cZ A_{\C}\simeq A_{\Z}\cZ:c\Cat_{\C}\to \nAlg_{\Z}\ ,
\end{equation} 
where as before $\cZ$ forgets the $\C$-vector space structures.

Using the natural isomorphisms $A_{\Z}(\Delta_{\alg}^{n}\otimes \bA)\cong \Delta_{\alg}^{n}\otimes A_{\Z}(\bA)$
  we get
an equivalence \begin{equation}\label{gwerfwerfrefw}
E A_{\Z} H\simeq E H A_{\Z}\ .
\end{equation}
We define the natural transformation 
\begin{equation}\label{gwergweopfjeiorjogwerg}\ch: K^{\Cat_{\Z}}H\cZ_{|c\Cat_{\C}}\to HC A_{\C}:c\Cat_{\C}\to \Sp
\end{equation}
as the composition
\begin{eqnarray*}
 K^{\Cat_{\Z}}H\cZ&\stackrel{\cref{lpetgertgergert}}{\simeq}&
 K^{Ring} A_{\Z} H\cZ\\
 &\stackrel{\eqref{gwerfwerfrefw}}{\simeq}&K^{Ring} H A_{\Z} \cZ\\&\stackrel{\eqref{sdfvsfdvw}}{\simeq}&
 K^{Ring} H  \cZ A_{\C} \\&\stackrel{\ch P A_{\C}}{\to}&
 HP A_{\C}\\&\stackrel{p A_{\C}}{\to}& HC A_{\C}
 \end{eqnarray*}
 We must restrict to the wide subcategory $c\Cat_{\C}$ of $\Cat_{\C}$ because of the restricted functoriality of $A_{\C}
$. 

\begin{rem}
As it is not clear that $HC A_{\C}$ sends equivalences in $\Cat_{\C}$ to 
equivalences we can not use the method from \eqref{werfewrfeewerfwerfwerf} in order to extend
$HCA_{\C}$ from $c\Cat_{\C}$ to $\Cat_{\C}$. \hB
\end{rem}

If $A$ is in $\nAlg_{\C}$, then we have an isomorphism
\begin{equation}\label{dfvsdfvwrvfv}\pi_{0}HC(A)\cong \frac{A}{[A,A]}\ .
\end{equation}

\begin{rem} In this remark we justify the formula \eqref{dfvsdfvwrvfv}.
By convention the functor $HC$ is first defined on unital algebras and then extended to non-unital algebras $A$
by
$$HC(A):=\Fib(HC(A^{+})\to HC(\C))\ .$$ 
We use the the complex $C^{\lambda}(A)$ (the quotient of the zero column in $CC(A)$ by the image of the horizontal differential) in order to represent $HC(A)$. 
The split surjection  $A^{+}\to \C$ induces a split surjection of chain complexes
$C^{\lambda}(A^{+})\to C^{\lambda}(\C)$ representing the map $HC(A^{+})\to HC(\C)$ in $\bD(\cC)$.
It follows that the fibre $HC(A)$
is represented by the actual kernel.
In the following diagram we see the degree $\le
1$-part of the  cyclic homology complexes vertically with the zero's homology added in the lowest line. 
The right two vertical complexes are $C^{\lambda}(A^{+})$ and $C^{\lambda}(\C)$, while the left vertical complex is the fibre.
All tensor products are over $\C$:
 \begin{equation}\label{werfwerfwerfwerf}\xymatrix{
0\ar[r]& (A^{+}\otimes A+A\otimes  A^{+})_{\lambda}  \ar[d]\ar[r]&  (A^{+}\otimes  A^{+})_{\lambda}\ar[r]\ar[d]&(\C\otimes \C)_{\lambda}\ar[r]\ar[d]&0\\
0\ar[r]&\ar[d]\ A\ar[r]& A^{+}\ar[r]\ar[d]&\C\ar[r]\ar[d]&0\\&\frac{A}{[ A , A]}\ar[r]&\frac{ A^{+}}{[ A , A]}\ar[r]&\frac{ \C}{[\C,\C]}&}
\end{equation} 
where $(-)_{\lambda}$ indicates the quotient by the symmetric tensors, and 
where we use that
$[A^{+},A^{+}]=[A,A]$.
Of course, we have $(\C\otimes \C)_{\lambda}\cong 0$ and $[\C,\C]\cong 0$.
\hB
\end{rem}

A trace $\hat \tau:A \to \C$ annihilates the commutator $[A,A]$ and therefore factorizes over a  homomorphism
$$\hat \tau_{(A,\tau)}^{HC}:\pi_{0}HC(A) \to \C\ .$$

We consider $(\bA,\tau)$ in $\Cat^{\tr,\perf}_{\C}$. The trace $\tau$ on the $\C$-linear category $\bA$  induces a trace $\hat \tau$ on the $\C$-algebra $A_{\C}(\bA)$ in the canonical way such that
 for $f:C\to D$ in $\bA$ we have $\tau(f)=\tau(i(f))$, where  $i(f)$ in $A_{\C}(\bA)$ is the matrix with a single non-zero  
 entry  $f$ at place $(D,C)$ see \eqref{ewdqewdwedqfef}.
  We can now define
$$\tau_{(\bA,\tau)}^{HC}:=\hat \tau^{HC}_{(A_{\C}(\bA),\hat \tau^{HC}_{\hat \tau})}:\pi_{0}HC A_{\C} (\bA)\to \C\ .$$ 
In the following we use the obvious adaptation of the \cref{fdvsdfvsdffsc} of a tracing to functors only defined on $c\Cat_{\C}$.
 \begin{prop}
The map  $\trc^{HC}:(\bA,\tau)\mapsto(\tau_{(\bA,\tau)}^{HC}:\pi_{0}HC A_{\C} (\bA)\to \C)$
is a $\C$-valued tracing of the functor $HC A_{\C}:c\Cat_{\C}\to \C$.
\end{prop}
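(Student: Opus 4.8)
The statement to prove is that $\trc^{HC}$, which assigns to a pair $(\bA,\tau)$ in $\Cat^{\tr,\perf}_{\C}$ the homomorphism $\tau^{HC}_{(\bA,\tau)}\colon\pi_0 HC\,A_{\C}(\bA)\to\C$, is a $\C$-valued tracing of the functor $HC\,A_{\C}$ in the sense (adapted to $c\Cat_{\C}$) of \cref{fdvsdfvsdffsc}. What must be checked is therefore two things: that the map $\tau^{HC}_{(\bA,\tau)}$ is well-defined as a group homomorphism $\pi_0 HC\,A_{\C}(\bA)\to\C$ for each object, and that it is natural in morphisms of $\Cat^{\tr,\perf}_{\C}$, i.e.\ for every trace-preserving functor $\phi\colon(\bA',\tau')\to(\bA,\tau)$ in $c\Cat_{\C}$ the evident square
$$\xymatrix{\pi_0 HC\,A_{\C}(\bA')\ar[r]^-{HC(A_{\C}(\phi))}\ar[d]_{\tau'^{HC}_{(\bA',\tau')}} & \pi_0 HC\,A_{\C}(\bA)\ar[d]^{\tau^{HC}_{(\bA,\tau)}}\\ \C\ar@{=}[r] & \C}$$
commutes.

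First I would handle well-definedness. By construction $\tau^{HC}_{(\bA,\tau)}=\hat\tau^{HC}_{(A_{\C}(\bA),\hat\tau)}$, where $\hat\tau\colon A_{\C}(\bA)\to\C$ is the $\C$-linear trace on the $\C$-algebra $A_{\C}(\bA)$ induced by $\tau$; concretely, on the matrix component $\Hom_{\bA}(C,D)$ at place $(D,C)$ the functional $\hat\tau$ is $\tau_C$ if $C=D$ (so that the component is an endomorphism algebra lying in $\dom(\tau)=\bA$) and $0$ otherwise. One must check $\hat\tau$ is a trace, i.e.\ vanishes on $[A_{\C}(\bA),A_{\C}(\bA)]$: for matrix units $f$ at $(D,C)$ and $g$ at $(C',D')$ the product $fg$ is nonzero only if $C'=D$, and then $\hat\tau(fg)-\hat\tau(gf)$ is nonzero only if in addition $C=D'=D=C'$, i.e.\ all four objects coincide, in which case it is $\tau_C(f\circ g)-\tau_C(g\circ f)=0$ by the cyclicity axiom in \cref{kopherhetreg}; extend bilinearly. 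Since a trace $\hat\tau$ on a $\C$-algebra $A$ kills $[A,A]$, it factors through $A/[A,A]$, and via the identification $\pi_0 HC(A)\cong A/[A,A]$ of \eqref{dfvsdfvwrvfv} this produces the homomorphism $\hat\tau^{HC}\colon\pi_0 HC(A)\to\C$. This is routine once \eqref{dfvsdfvwrvfv} is granted (which the preceding remark established via the complex $C^{\lambda}$), so the only content here is the commutator check above.

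Next, naturality. A trace-preserving functor $\phi\colon\bA'\to\bA$ that is injective on objects induces an algebra homomorphism $A_{\C}(\phi)\colon A_{\C}(\bA')\to A_{\C}(\bA)$ (injectivity on objects is exactly what makes $A_{\C}$ functorial on $c\Cat_{\C}$, cf.\ \eqref{vfvdsvsdfvdfvsv}), hence a map $\pi_0 HC\,A_{\C}(\bA')\to\pi_0 HC\,A_{\C}(\bA)$. Under \eqref{dfvsdfvwrvfv} this is just the map $A_{\C}(\bA')/[-,-]\to A_{\C}(\bA)/[-,-]$ induced by $A_{\C}(\phi)$. So it suffices to observe that $\hat\tau\circ A_{\C}(\phi)=\hat\tau'$ as linear functionals on $A_{\C}(\bA')$: on a matrix unit $f$ at $(D',C')$ in $\bA'$, the left side is $\hat\tau$ of the matrix unit $\phi(f)$ at $(\phi(D'),\phi(C'))$, which (using injectivity of $\phi$ on objects, so that $\phi(C')=\phi(D')\iff C'=D'$) is $\tau_{\phi(C')}(\phi(f))$ if $C'=D'$ and $0$ otherwise; and $\tau_{\phi(C')}(\phi(f))=\tau'_{C'}(f)$ precisely because $\phi$ is trace-preserving (\cref{kopherhetreg}'s category $\Cat^{\tr}_k$). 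This gives the claimed commutativity after passing to $\pi_0 HC$. I would also remark that the naturality statement only involves $\pi_0$, so no subtleties about coherence of the cyclic-homology functor enter.

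The only step with any friction is the compatibility of $\hat\tau$ with the algebra structure of $A_{\C}(\bA)$ — specifically, verifying the trace identity $\hat\tau(ab)=\hat\tau(ba)$ reduces, via bilinearity and the matrix-unit description, to the object-bookkeeping shown above and to the cyclicity axiom of $\tau$, and it is the bookkeeping (keeping track of which products of matrix units are nonzero and when their traces are non-trivial) that one must do carefully. Everything else is formal: it is a matter of unwinding \eqref{dfvsdfvwrvfv}, the definition of $A_{\C}$, and the definition of a morphism in $\Cat^{\tr}_{\C}$. I expect the write-up to be short.
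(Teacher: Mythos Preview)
Your proposal is correct and takes exactly the approach the paper has in mind: the paper's own proof reads in its entirety ``the claimed naturality statement is observed by unfolding definitions,'' and you have carried out precisely that unfolding. One minor index slip to fix in your write-up: with the paper's convention that the entry at place $(D,C)$ is a morphism $C\to D$, the matrix product $fg$ of a unit $f$ at $(D,C)$ and a unit $g$ at $(C',D')$ is nonzero iff $C=C'$ (not $C'=D$), after which $\hat\tau(fg)-\hat\tau(gf)$ reduces to $\tau_D(f\circ g)-\tau_C(g\circ f)$ when additionally $D=D'$; the trace axiom then finishes it exactly as you say.
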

\begin{proof} The  claimed naturality statement is  observed by 
unfolding definitions. 
\end{proof}

\begin{lem}\label{gokrjpwegerwwre}
We obtain  a tracing \begin{equation}\label{wergpojkwoperfwerfwerf}
\trc^{K^{\Cat_{\Z}}H\cZ}:\pi_{0}K^{\Cat_{\Z}}H\cZ  \stackrel{\ch }{\to}\pi_{0}HCA_{\C}\stackrel{ \trc^{HC}}{\to} \const_{\C}\ .
 \end{equation} 
 of $K^{\Cat_{\Z}}H\cZ$. \end{lem}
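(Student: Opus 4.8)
The plan is to build the tracing \eqref{wergpojkwoperfwerfwerf} first on the wide subcategory $c\Cat^{\tr,\perf}_{\C}\subseteq\Cat^{\tr,\perf}_{\C}$ whose morphisms are the trace-preserving $\C$-linear functors that are injective on objects, and only afterwards to extend it to all trace-preserving functors by descent along a localization. On $c\Cat^{\tr,\perf}_{\C}$ the transformation is simply the composite
\[
\pi_{0}K^{\Cat_{\Z}}H\cZ\circ\cF_{\trc}\xrightarrow{\pi_{0}\ch\circ\cF_{\trc}}\pi_{0}HCA_{\C}\circ\cF_{\trc}\xrightarrow{\trc^{HC}}\const_{\C},
\]
and here there is nothing to do beyond unwinding: the first arrow is natural because $\ch$ in \eqref{gwergweopfjeiorjogwerg} is a natural transformation of functors on $c\Cat_{\C}$, the second because $\trc^{HC}$ is a $\C$-valued tracing of $HCA_{\C}\colon c\Cat_{\C}\to\Sp$ (the Proposition just above), and the additivity of each component $\tau^{HC}_{(\bA,\tau)}\circ\pi_{0}\ch_{\bA}$ is inherited from that of $\tau^{HC}_{(\bA,\tau)}$ via the formula \eqref{dfvsdfvwrvfv}.

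Next I would promote this to a transformation of functors from $\Cat^{\tr,\perf}_{\C}$ to $\Ab$, arguing exactly as in \eqref{werfewrfeewerfwerfwerf}. The inclusion induces an equivalence $c\Cat_{\C,2,1}\xrightarrow{\simeq}\Cat_{\C,2,1}$ by \cite[Thm. 1.3.4.20]{HA}, and the same holds for the trace-preserving refinements, giving $c\Cat^{\tr,\perf}_{\C,2,1}\xrightarrow{\simeq}\Cat^{\tr,\perf}_{\C,2,1}$, where $(-)_{2,1}$ denotes Dwyer–Kan localization at (trace-preserving) equivalences. Since $K^{\Cat_{\Z}}H$ sends equivalences of linear categories to equivalences (\cref{okgpwerewfwerfw1} together with the discussion in \cref{gwergrefwfref}) and $\cZ$ preserves equivalences, the functor $\pi_{0}K^{\Cat_{\Z}}H\cZ\circ\cF_{\trc}$ inverts the equivalences of $c\Cat^{\tr,\perf}_{\C}$; the constant functor $\const_{\C}$ does so trivially. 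By the universal property of the localization, a natural transformation between two functors each of which inverts a class of morphisms factors uniquely through the corresponding localization, so the transformation of the previous paragraph descends to $c\Cat^{\tr,\perf}_{\C,2,1}$; transporting along $c\Cat^{\tr,\perf}_{\C,2,1}\simeq\Cat^{\tr,\perf}_{\C,2,1}$ and precomposing with the localization $\Cat^{\tr,\perf}_{\C}\to\Cat^{\tr,\perf}_{\C,2,1}$ yields the tracing $\trc^{K^{\Cat_{\Z}}H\cZ}$ on all of $\Cat^{\tr,\perf}_{\C}$, which by construction restricts to $\trc^{HC}\circ\ch$ on $c\Cat^{\tr,\perf}_{\C}$.

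I expect the only genuinely delicate point to be the traced refinement $c\Cat^{\tr,\perf}_{\C,2,1}\simeq\Cat^{\tr,\perf}_{\C,2,1}$, i.e.\ that the model-categorical input behind \cite[Thm. 1.3.4.20]{HA} — used in the paper for $\Add$, $\Cat_{\Z}$ and $\Ccat$ — survives when an everywhere-defined trace is carried along. This should be harmless, since the standard replacement of a functor by an injective-on-objects one and of a category by an equivalent one in which every object occurs both pull a trace back in a preferred way (a trace is closed under isomorphism, so the choice is canonical), but spelling it out is the one step that is not pure bookkeeping. Everything else — the naturality on $c\Cat^{\tr,\perf}_{\C}$, the linearity of the character, and the identification of the restriction — is immediate from the constructions already in place, so the write-up should be short.
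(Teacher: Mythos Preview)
Your approach is correct but more elaborate than the paper's, and the elaboration is exactly where you locate the delicate point. The paper avoids that point entirely by observing that the target of the would-be tracing is the $1$-category $\Ab$, so naturality on the larger category $\Cat^{\tr,\perf}_{\C}$ is a \emph{property} of the already-given components, not additional structure to be produced by descent. Concretely: the components $\tau^{HC}_{(\bA,\tau)}\circ\pi_{0}\ch_{\bA}$ are defined for every object (since $c\Cat^{\tr,\perf}_{\C}$ is wide), and one must only check that for an arbitrary trace-preserving functor $\phi:(\bA,\tau)\to(\bA',\tau')$ the naturality square commutes. The paper does this by the elementary factorisation $\phi=\beta\circ\alpha$ with $\alpha$ injective on objects and $\beta$ an equivalence admitting an inverse $\gamma$ that is itself injective on objects; naturality for $\alpha$ and for $\gamma$ is already known (both lie in $c\Cat^{\tr,\perf}_{\C}$), and since $\pi_{0}K^{\Cat_{\Z}}H\cZ$ sends $\gamma$ to an isomorphism one reads off naturality for $\phi$ from a small triangle diagram.

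The upshot is that no localisation machinery or model structure on traced categories is needed: the factorisation is a bare-hands construction (replace $\bA'$ by its essential image under $\phi$ with objects relabelled), and the argument lives entirely in ordinary category theory. Your descent argument would also work, but the equivalence $c\Cat^{\tr,\perf}_{\C,2,1}\simeq\Cat^{\tr,\perf}_{\C,2,1}$ you single out as delicate is precisely what the paper's direct check renders unnecessary.
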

 \begin{proof}
It is clear that the formula defines a tracing of $ K^{\Cat_{\Z}}$
on the wide subcategory $c\Cat_{\C}^{\tr,\perf}$ of $\Cat^{\tr,\perf}_{\C}$ of functors which are injective on objects.
 Naturality on the bigger category is a property.
 Let $(\bA,\tau)\to (\bA',\tau')$ be a morphism  in $\Cat^{\tr,\perf}_{\C}$. Then we can
 find a  factorization
$$\phi:(\bA,\tau)\stackrel{\alpha}{\to} (\bA'',\tau'')\stackrel{\beta}{\to} (\bA',\tau')$$
where $\alpha$ is injective on objects and $\beta$ is an equivalence witnessed by an inverse functor $\gamma$ which is also injective on objects.
The lower two triangles of the  following diagram commute by  the already known naturality of $ \tr^{K^{\Cat_{\Z}}H\cZ} $ on $c\Cat_{\C}^{\tr,\perf}$: $$\xymatrix{\ar@/^1cm/[rr]^{\phi}\pi_{0}K^{\Cat_{\Z}}H\cZ(\bA)\ar[r]^{\alpha}\ar[dr]_{\trc_{\bA}^{K^{\Cat_{\Z}H\cZ}}}&\ar[d]^{\trc^{K^{\Cat_{\Z}H\cZ}}_{\bA''}}\pi_{0}K^{\Cat_{\Z}}H\cZ(\bA'')&\ar[l]_{\cong}^{\gamma}\pi_{0}K^{\Cat_{\Z}}H\cZ(\bA')\ar[dl]^{\trc^{K^{\Cat_{\Z}H\cZ}}_{\bA'}}\\&k&}$$
Commutativity of the upper triangle shows the desired naturality for $\phi$.
  \end{proof}

 Recall  the twisting construction   \cref{kgopweewrfwerf}.
%
%
%
%
%
 A trace-preserving homomorphism $f:(R,\kappa)\to (R',\kappa')$ 
 induces a natural transformation of split-exact (we need this property in order to incorporate non-unital twists) traced functors
 $H_{R}\to H_{R'}$ such that
 $$\xymatrix{\pi_{0}H_{R}\ar[rr]\ar[dr]_{\trc_{R}}&&\pi_{0}H_{R'}\ar[dl]^{\trc_{R'}}\\&\C&}$$
 commutes. 
%
%

 The algebra $\cL^{1}$ of trace class operators is a $\C$-algebra with a trace $\kappa$.
 Note that $K^{\Cat_{\Z}}H\cZ:\Cat_{\C}\to \Sp$ is split-exact. \begin{ddd}\label{erfewrfwegrgw11} We define the   tracing \eqref{erfewrfwegrgw} 
 as the twist of \eqref{wergpojkwoperfwerfwerf} by $(\cL^{1},\kappa)$: \begin{equation}\label{erfewrfwegrgw1}\trc_{\cL^{1}}^{K^{\Cat_{\Z}H\cZ}}:\pi_{0} K^{\Cat_{\Z}}H\cZ_{\cL^{1}} \to \const_{\C}\ ,
\end{equation}
  \end{ddd}
 Recall that $\trc_{\cL^{1}}^{K^{\Cat_{\Z}H\cZ}}$ is natural transformation between functors from $\Cat_{\C}^{\tr,\perf}$ to $\Ab$.

  \subsection{The trace comparison theorem}
  
  We have a forgetful functor
  $$\cT_{\C}^{\tr}:\Ccat^{\tr,\perf}\to \Cat_{\C}^{\tr,\perf}$$
  which sends a pair $(\bC,\tau)$ of a $C^{*}$-category with an everywhere defined trace to the pair
  $(\cT_{\C}(\bC),\tau)$ of $\bC$ considered as a $\C$-linear category with the same trace.
  
  Recall that we have a tracing
  $$\trc^{K^{\topp}}:\pi_{0}K^{\topp} \to \const_{\C}\ ,$$
  a natural transformation of functors from $\Ccat^{\tr,\perf}$ to $\Ab$.
  
  The main goal of this subsection is to show the trace comparison theorem:
  \begin{theorem}\label{kohpetrhrtgertg}
  The following triangle of natural transformations of functors from $\Ccat^{\tr,\perf}$ to $\Ab$ commutes:
  $$\xymatrix{\pi_{0} (K^{\Cat_{\Z}}H\cZ)_{\cL^{1}}\cT_{\C} \ar[rr]^-{c^{\cL^{1}},\eqref{fqweiqwdiew9qdi09qwedq}}\ar[dr]_{\trc_{\cL^{1}}^{K^{\Cat_{\Z}H\cZ}}\cT_{\C},\eqref{erfewrfwegrgw1}}&&\pi_{0}K^{\topp} \ar[dl]^{\trc^{K^{\topp}}}\\&\C&}$$
  \end{theorem}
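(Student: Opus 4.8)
The plan is as follows. Since both legs of the triangle are natural transformations of $\Ab$-valued functors on $\Ccat^{\tr,\perf}$, it suffices to verify their agreement after evaluation at a pair $(\bC,\tau)$ and on a class. I would first reduce to a one-object situation. Both $\pi_{0}(K^{\Cat_{\Z}}H\cZ)_{\cL^{1}}\cT_{\C}$ and $\pi_{0}K^{\topp}$ are Morita invariant — for the first because $K^{\Cat_{\Z}}$ is Morita invariant (\cref{okgpwerewfwerfw1}) and the homotopy-invariantification, the restriction $\cZ$, the $\cL^{1}$-twist and $\cT_{\C}$ all preserve this property, for the second by \cref{tkohprthrethrth} — and both tracings are automatically compatible with Morita equivalences by naturality (the trace extends canonically to additive and idempotent completions). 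Hence we may assume $\bC$ additive and idempotent complete, and then by \cref{kogpwergweferwf} together with the $K_{0}$-description of \cite[Lem.\ 15.3]{cank} an arbitrary element of $\pi_{0}K^{\topp}(\bC)$ is a $\Z$-linear combination of classes $[P]$ of self-adjoint projections $P\in A:=\End_{\bC}(C)$. Pointing a functor at $C$ and passing to the additive-idempotent completion gives a morphism $\Mod^{\fg,\proj}(A)\to\bC$ in $\Ccat^{\tr,\perf}$ (with the canonical extension of $\tau_{C}$ as trace on $\Mod^{\fg,\proj}(A)$) carrying the class of the idempotent $P$ to $[P]$; combined with Morita invariance this reduces the theorem to the case $\bC=\incl(A)$ for a unital $C^{*}$-algebra $A$ with an everywhere defined trace $\kappa:=\tau_{C}$ and to the class $[P]\in\pi_{0}K^{\topp}(\incl(A))\cong K_{0}^{C^{*}}(A)$ of a single projection $P\in A$. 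On this class the right-hand leg is immediate from \cref{kogpwergweferwf} with the representing pair $(P,0)$: $\trc^{K^{\topp}}([P])=\kappa(P)$.

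For the left-hand leg I would unwind the definitions \eqref{fqweiqwdiew9qdi09qwedq} and \eqref{erfewrfwegrgw1}. Using $A_{\Z}(\incl(R))\cong R$, $A(\incl(A))\cong A$ and \cref{lpetgertgergert}, the domain $\pi_{0}(K^{\Cat_{\Z}}H\cZ)_{\cL^{1}}(\cT_{\C}\incl(A))$ is identified with $\pi_{0}(K^{Ring}H\cZ)_{\cL^{1}}(A)$, i.e.\ with $\pi_{0}$ of the $\cL^{1}$-twisted homotopy $K$-theory of the ring $A$; the map $c^{\cL^{1}}$ becomes the degree-$0$ part of the comparison built from \eqref{rfwrefwfwf} and Definition \eqref{fqweiqwdiew9qdi09qwedq}, landing in $\pi_{0}K^{C^{*}}(A)=K_{0}^{C^{*}}(A)$; and $\trc_{\cL^{1}}^{K^{\Cat_{\Z}}H\cZ}$ becomes the $\cL^{1}$-twist of $\trc^{K^{\Cat_{\Z}}H\cZ}=\trc^{HC}\circ\ch$ from \eqref{wergpojkwoperfwerfwerf}. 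The essential input is the degree-$0$ computation of the character: for a unital ring $R$ with trace $\lambda$ and an idempotent $e\in R$, the composite $\pi_{0}K^{Ring}H\cZ(R)\xrightarrow{\ch}\pi_{0}HC(R)\cong R/[R,R]$ — i.e.\ the Goodwillie--Jones character $\ch N$, its factorization $\ch P$ through $HP$ obtained from the polynomial homotopy invariance of $HP$, and the projection $p\colon HP\to HC$, read off through $A_{\C}$ as in \eqref{gwergweopfjeiorjogwerg} — sends $[e]$ to the class $[e]$ of $e$, so that $\trc^{HC}([e])=\lambda(e)$ since $\lambda$ factors through $R/[R,R]$ (see \eqref{dfvsdfvwrvfv}).

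Applying this to the $\cL^{1}$-stabilized algebra and tracking the $\cL^{1}$-twist, one gets that the left-hand leg also sends $[P]$ to $\kappa(P)$: the one point to check is that the trace $\kappa$ on $\cL^{1}$ used in the twist is normalized so that a rank-one projection has trace $1$ — exactly the normalization matching the $K$-stability (corner embedding) isomorphism on the $C^{*}$-side — and that $c^{\cL^{1}}$ on $\pi_{0}$ carries the algebraic class of an idempotent to the topological class of the same projection, which is precisely how the Karoubi-conjecture equivalence \eqref{rfwrefwfwf} was used, namely that algebraic and topological $K$-theory of $C^{*}$-algebras agree in degree $0$ compatibly with $c$. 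Both legs therefore send $[P]$ to $\kappa(P)$, which gives the commutativity of the triangle on all classes and hence as a diagram of natural transformations.

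The step I expect to be the main obstacle is making the degree-$0$ character computation fully rigorous in the present model — where $\ch$ is assembled from $\ch N$, the $HP$-factorization $\ch P$, the projection $p\colon HP\to HC$ and the $A_{\C}$-bookkeeping of \eqref{gwergweopfjeiorjogwerg} — together with checking that all the forgetful-functor naturality squares for traces ($\cZ$, $\cT$, $\cT_{\C}$, $A_{\Z}$, $A_{\C}$, $A$, and the unitalization entering the twist \eqref{rewfewrfvdsfv}) commute and that the normalizations above are aligned. A convenient shortcut for the purely numerical part is \cite[Thm.\ 6.5.3]{Corti_as_2008}, which already records the relevant composite on $\cL^{1}$ compatibly with traces and which was invoked in the proof that $c^{\cL^{1}}_{\incl(\C)}$ is an equivalence.
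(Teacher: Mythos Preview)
There is a logical gap in your reduction. You produce generators of the \emph{target} $\pi_{0}K^{\topp}(\bC)$ (classes $[P]$ of projections) and then check the triangle on their preimages, but commutativity must be verified on every element of the \emph{source} $\pi_{0}(K^{\Cat_{\Z}}H\cZ)_{\cL^{1}}\cT_{\C}(\bC)$. These would only coincide if $c^{\cL^{1}}$ were known to be an isomorphism on $\pi_{0}$ for general $\bC$, which is not established (only for $\Hilb^{\fin}(\C)$). Even after a correct reduction to $\bC=\incl(A)$ (say via filtered colimits and Morita invariance), checking on elements of the special form $[p\otimes P]$ does not exhaust the source: classes in $\pi_{0}K^{Ring}\cZ(\cL^{1}\otimes_{\C}A)$ are represented by arbitrary idempotents $e$ in matrix algebras over $(\cL^{1}\otimes_{\C}A)^{+}$, and for such an $e$ the image $c^{\cL^{1}}([e])\in K_{0}^{C^{*}}(A)$ has no evident representative as a projection in $A$ on which $\tau$ could be read off directly.

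The paper supplies the missing device: Banach-algebra $K$-theory of the projective tensor product $\cL^{1}\otimes_{\pi}A$. After passing to $c\Ccat^{\tr,\perf}$ and rewriting both legs via the functors $A_{\Z}$, $A_{\C}$, $A$ (so that one works over a $C^{*}$-algebra $A$ with trace $\tau$), and after using \cite[Thm.\ 6.5.3]{Corti_as_2008} to replace $K^{Ring}H$ by $K^{Ring}$ on $\pi_{0}$, both legs are factored through $\pi_{0}K^{\Ban}(\cL^{1}\otimes_{\pi}A)$. The key point is that $\tr\otimes\tau$ extends by continuity to a trace $\tau'$ on $\cL^{1}\otimes_{\pi}A$ (whereas it does \emph{not} extend to $K\otimes_{\max}A$), so $\tau'$ induces a homomorphism on this intermediate $K$-group. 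The completion map $\cL^{1}\otimes_{\C}A\to\cL^{1}\otimes_{\pi}A$ makes $\tau'$ compatible with $\tau_{\cL^{1}}$ on the algebraic side, and the isomorphism $\pi_{0}K^{\Ban}(\cL^{1}\otimes_{\pi}A)\cong K_{0}^{C^{*}}(A)$ makes it compatible with $\tau^{K^{C^{*}}}$ on the topological side. This factorization handles all elements of the source uniformly and is the idea missing from your proposal.
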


Recall \cref{objpbdbdgbd}.
       \begin{kor}\label{oiw0rgwrefwerfwrefw}
       The pair $(K^{\Cat_{\Z}}H\cZ)_{\cL^{1}}\cT_{\C},c^{\cL^{1}})$ is a 
    trace-preserving algebraic $\Hilb^{\fin}(\C)$-approximation of topological $K$-theory.
       \end{kor}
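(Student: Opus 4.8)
The plan is to verify, one at a time, the two defining conditions of \cref{objpbdbdgbd} for the data
$H := (K^{\Cat_{\Z}}H\cZ)_{\cL^{1}}\colon\Cat_{\C}\to\Sp$, the transformation $c := c^{\cL^{1}}$ of \cref{hlrztrhertgetrg}, and the $C^{*}$-category $\bC := \Hilb^{\fin}(\C)$. Since every ingredient has been prepared in the preceding subsections, the argument for the corollary itself is purely one of assembling those ingredients in the correct order, and involves no new computation.

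First I would check that $H$ is a homological and traced functor. Restricting $H$ along the forgetful functor $\cS_{\C}\colon\Add_{\C}\to\Cat_{\C}$ and using the identification $K^{\Cat_{\Z}}\cS\simeq K^{\alg}$ underlying \cref{oophkerpthetreg}, together with the compatibility of the $\cL^{1}$-twist with $\cS_{\C}$ (tensoring with a fixed algebra commutes with forgetting sums), one obtains $H\cS_{\C}\simeq K^{\alg}H\cZ_{\cL^{1}}$. This functor is homological for $\C$-linear additive categories by the chain: $K^{\alg}H\colon\Add\to\Sp$ is homological by \cref{kopgwerfewfwefw}; hence $K^{\alg}H\cZ$ is homological by \cref{gkweropfwerfwfrfwrefw}; hence the twist $K^{\alg}H\cZ_{\cL^{1}}=(K^{\alg}H\cZ)_{\cL^{1}}$ by the non-unital algebra $\cL^{1}$ is homological by \cref{hetggretg}. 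That $H$ is traced is exactly \cref{erfewrfwegrgw11}, which equips $K^{\Cat_{\Z}}H\cZ_{\cL^{1}}$ with the tracing $\trc_{\cL^{1}}^{K^{\Cat_{\Z}H\cZ}}$ of \eqref{erfewrfwegrgw1}, a natural transformation of functors from $\Cat_{\C}^{\tr,\perf}$ to $\Ab$.

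It then remains to feed in the comparison map $c=c^{\cL^{1}}\colon H\cT_{\C}\to K^{\topp}_{|\Ccat}$ of \cref{hlrztrhertgetrg} and to check the two bulleted requirements of \cref{objpbdbdgbd}. The commutativity of the trace triangle \eqref{sdfvsdfvsdfvwerv} is precisely the trace comparison theorem \cref{kohpetrhrtgertg}. The equivalence \eqref{sbfdvdfvfdvsd}, that is, that $c_{\Hilb^{\fin}(\C)}\colon H\cT_{\C}(\Hilb^{\fin}(\C))\to K^{\topp}(\Hilb^{\fin}(\C))$ is an equivalence, is the corollary (deduced from the proposition on $\incl(\C)$ by Morita invariance of $H\cT_{\C}$ and of $K^{\topp}$) stated just above the present one; I would also record that all objects of $\Hilb^{\fin}(\C)$ are unital, so this category coincides with its own subcategory of unital objects, which is the form in which the approximation is invoked in applications such as \cref{igoewrifoperfrefw}.

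The corollary is therefore immediate once these references are in place. The one genuinely hard step has already been isolated and handled separately, namely \cref{kohpetrhrtgertg}, whose proof routes both tracings through cyclic homology via the Goodwillie--Jones/Chern character and the Karoubi-conjecture equivalence \eqref{rfwrefwfwf}. Within the assembly of the corollary the only thing requiring genuine care is bookkeeping --- keeping track at each stage of which of the tensor products $\otimes_{\Z}$, $\otimes_{\C}$, $\otimes_{\max}$ and which of the forgetful functors $\cZ$, $\cT$, $\cT_{\C}$, $\cS$, $\cS_{\C}$ is in play --- exactly as in the chain of transformations \eqref{vfsfsvfdv1}, \eqref{vfsfsvfdv2}, \eqref{vfsfsvfdv3}, \eqref{vfsfsvfdv4} defining $c^{\cL^{1}}$.
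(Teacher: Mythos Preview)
Your proposal is correct and matches the paper's approach: the corollary is immediate once one recalls \cref{objpbdbdgbd} and plugs in the already-established ingredients (the tracing \eqref{erfewrfwegrgw1}, the equivalence of $c^{\cL^{1}}$ at $\Hilb^{\fin}(\C)$, and \cref{kohpetrhrtgertg}), and the paper indeed treats it as such, with the proof environment that follows being devoted to the only outstanding piece, namely the trace comparison theorem itself. Your explicit verification that the restriction of $H$ to $\Add_{\C}$ is homological is a useful clarification that the paper leaves implicit.
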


  \begin{proof}
  By a similar argument as in the proof of \cref{gokrjpwegerwwre} it suffices to verify the naturality on the wide subcategory $c\Ccat^{\tr,\perf}$ of $\Ccat^{\tr,\perf}$.
  Recall from \cref{kogpwerfwerfwf}  that the restriction of the comparison map $c:K^{\Cat_{\Z}}\cT \to K^{\topp}$
  to $c\Ccat$ is equivalent to a natural transformation
  $K^{Ring} A_{\Z} \cT \to K^{C^{*}}A$. In detail it is the composition 
  \begin{align*}K^{Ring} A_{\Z} \cT\stackrel{h}{\to}K^{Ring} HA_{\Z} \cT& \stackrel{\eqref{sdfvsfdvw}}{\simeq} K^{Ring} H\cZ A_{\C} \cT_{\C}\to  (K^{Ring} H\cZ)_{\cT_{\C}(K)} A_{\C} \cT_{\C}\\&\stackrel{\eqref{hrtegtrgeg}}{\to}  (K^{Ring} H \cT (K\otimes A(-))\stackrel{\eqref{rfwrefwfwf}}{\simeq} K^{C^{*}}(K\otimes A(-))\simeq K^{C^{*}}A\ ,\end{align*}
  where the two unnamed maps are induced by the left-upper corner inclusion $\C\to  K$, and for the last equivalence we use that it   induces an equivalence in $C^{*}$-algebra $K$-theory. 
Using the canonical isomorphism $A_{\C}(\cL^{1}\otimes -)\cong \cL^{1}\otimes A_{\C}(-)$ the restriction of 
  $c^{\cL^{1}}$ from \eqref{fqweiqwdiew9qdi09qwedq} to $c\Ccat$ can then be identified with the composition  %
 \begin{align*}
  (K^{Ring} H\cZ)_{\cL^{1}} A_{\C}\cT_{\C} & \stackrel{!}{\to}  (K^{Ring}H\cZ)_{\cT_{\C}(K)}A_{\C}\cT_{\C} \stackrel{!!}{\to}   (K^{Ring}H\cT) (K\otimes A -))\\ &\simeq K^{C^{*}}(K\otimes A( -))\simeq K^{C^{*}}A
  \end{align*}
where  $!$ is induced by the inclusion $\cL^{1}\to \cT_{\C}(K)$,
$!!$ is induced by  the completion map $\cT_{\C}(K)\otimes  \cT_{\C}A(-)\to \cT_{\C}(K\otimes A(-))$, and we also used that
$\id\otimes p:K\to K\otimes_{\max} K$
   induces  an equivalence in $C^{*}$-algebra $K$-theory.
    
  Write $R:=A_{\C}\cT_{\C} (\bA)$. Then the map $$(h\cZ)_{\cL^{1}} A_{\C}\cT_{\C} :(K^{Ring} \cZ )_{\cL^{1}}A_{\C}\cT_{\C} \to   (K^{Ring} H\cZ)_{\cL^{1}} A_{\C}\cT_{\C} $$ 
  is the map (written without forgetful functors)
  $$ K^{Ring}(\cL^{1}\otimes R) \to K^{Ring}H(\cL^{1}\otimes R)\ .$$
   By \cite[Thm. 6.5.3]{Corti_as_2008} it induces an isomorphism in $\pi_{0}$.
 It thus suffices to show for every pair $(A,\tau)$ of a $C^{*}$-algebra $A$ with a trace  that the following triangle commutes:
    \begin{equation}\label{sdfvsdfvs}\xymatrix{\pi_{0} K^{Ring}\cZ(\cL^{1}\otimes \cT_{\C}(A))  \ar[r] \ar[dr]_{\tau_{\cL^{1}}} &\pi_{0}K^{C^{*}}(K\otimes A)&\ar[l]_-{\cong}\pi_{0}K^{C^{*}} (A)\ar[dl]^{\tau^{K^{C^{*}}}}\\&\C&}
\end{equation}
 We extend this triangle as follows:
  $$\xymatrix{\pi_{0} K^{Ring}\cZ(\cL^{1}\otimes \cT_{\C}(A)) \ar[dr] \ar[r] \ar[ddr]_{\tau_{\cL^{1}}} & \pi_{0}K^{C^{*}}(K\otimes A)&\ar[dl]_{\cong}\ar[l]_-{\cong}\pi_{0}K^{C^{*}} (A)\ar[ddl]^{\tau^{K^{C^{*}}}}\\&\pi_{0}K^{\Ban}(\cL^{1}\otimes_{\pi} A)\ar[u]^{\cong} \ar[d]^{\tau'}&\\&\C&}$$
where $\otimes_{\pi}$ is the projective tensor product of Banach spaces and $K^{\Ban}$ is the topological $K$-theory 
  for Banach algebras. By continuity the trace $\tr\otimes \tau$ on $\cL^{1}\otimes \cT_{\C}(A)$ extends to
  a trace $\tau'
$ on the projective tensor product  $\cL^{1}\otimes_{\pi}A$
which induces the vertical morphism with the same name.
The commutativity of \eqref{sdfvsdfvs} now follows by a diagram chase. \footnote{I thank A. Thom and W. Cortinas for suggesting to use the Banach $K$-theory of $\cL^{1}\otimes_{\pi} A$ as an intermediate step on which the trace is still defined.}
 \end{proof}

\bibliographystyle{alpha}
\bibliography{forschung2021}

\begin{thebibliography}{BEKW20d}

\bibitem[Ati76]{zbMATH03505915}
M.~F. Atiyah.
\newblock Elliptic operators, discrete groups and von {Neumann} algebras.
\newblock In {\em Colloque ``Analyse et Topologie'' en l'honneur de Henri
  Cartan.}, pages 43--72. Paris: Soci{\'e}t{\'e} Math{\'e}matique de France
  (SMF), 1976.

\bibitem[AV20]{antoun_voigt}
J.~Antoun and Ch. Voigt.
\newblock {On bicolimits of $C^*$-categories}.
\newblock {\em Theory Appl. Cat.}, 35(46):1683--1725, 2020.

\bibitem[BC20]{Bunke:2017aa}
U.~Bunke and D.-Ch. Cisinski.
\newblock A universal coarse {{\(K\)}}-theory.
\newblock {\em New York J. Math.}, 26:1--27, 2020.

\bibitem[BCKW]{unik}
U.~Bunke, D.-Ch. Cisinski, D.~Kasprowski, and Ch. Winges.
\newblock {Controlled objects in left-exact $\infty$-categories and the Novikov
  conjecture}.
\newblock \href{https://arxiv.org/abs/1911.02338}{arXiv:1911.02338}.

\bibitem[BD82]{Baum_1982}
P.~Baum and R.~G. Douglas.
\newblock Index theory, bordism, and {K}-homology.
\newblock In {\em Operator Algebras and {K}-Theory}, pages 1--31. American
  Mathematical Society, 1982.

\bibitem[BEa]{cank}
U.~Bunke and A.~Engel.
\newblock {Additive $C^{*}$-categories and $K$-theory}.
\newblock \href{https://arxiv.org/abs/2010.14830}{arXiv:2010.14830}.

\bibitem[BEb]{indexclass}
U.~Bunke and A.~Engel.
\newblock {The coarse index class with support}.
\newblock \href{https://arxiv.org/abs/1706.06959}{arXiv:1706.06959}.

\bibitem[BE20a]{ass}
U.~Bunke and A.~Engel.
\newblock {Coarse assembly maps}.
\newblock {\em J.\ Noncommut.\ Geom.}, 14(4):1245--1303, 2020.

\bibitem[BE20b]{buen}
U.~Bunke and A.~Engel.
\newblock {\em Homotopy theory with bornological coarse spaces}, volume 2269 of
  {\em Lecture Notes in Math.}
\newblock Springer, 2020.
\newblock \href{https://arxiv.org/abs/1607.03657}{arXiv:1607.03657}.

\bibitem[BE23]{coarsek}
U.~Bunke and A.~Engel.
\newblock Topological equivariant coarse {K}-homology.
\newblock {\em Annals of K-Theory}, 8(2):141--220, June 2023.

\bibitem[BEKW20a]{equicoarse}
U.~Bunke, A.~Engel, D.~Kasprowski, and C.~Winges.
\newblock {Equivariant coarse homotopy theory and coarse algebraic
  $K$-homology}.
\newblock In {\em {$K$-Theory in Algebra, Analysis and Topology}}, volume 749
  of {\em Contemp.\ Math.}, pages 13--104, 2020.

\bibitem[BEKW20b]{desc}
U.~Bunke, A.~Engel, D.~Kasprowski, and C.~Winges.
\newblock {Injectivity results for coarse homology theories}.
\newblock {\em Proc.\ London Math.\ Soc.}, 121(3):1619--1684, 2020.

\bibitem[BEKW20c]{trans}
U.~Bunke, A.~Engel, D.~Kasprowski, and C.~Winges.
\newblock Transfers in coarse homology.
\newblock {\em M{\"u}nster J. Math.}, 13(2):353--424, 2020.

\bibitem[BEKW20d]{coarsetrans}
U.~Bunke, A.~Engel, D.~Kasprowski, and C.~Winges.
\newblock Transfers in coarse homology.
\newblock {\em M{\"u}nster J.\ Math.}, 13:353--424, 2020.

\bibitem[BEKW20e]{zbMATH07194060}
U.~Bunke, A.~Engel, D.~Kasprowski, and Ch. Winges.
\newblock Homotopy theory with marked additive categories.
\newblock {\em Theory Appl. Categ.}, 35:371--416, 2020.

\bibitem[BEL]{KKG}
U.~Bunke, A.~Engel, and M.~Land.
\newblock A stable $\infty$-category for equivariant $\mathrm{K\!K}$-theory.
\newblock \href{https://arxiv.org/pdf/2102.13372.pdf}{arxiv:2102.13372}.

\bibitem[BGT13]{MR3070515}
A.~J. Blumberg, D.~Gepner, and G.~Tabuada.
\newblock A universal characterization of higher algebraic {$K$}-theory.
\newblock {\em Geom. Topol.}, 17(2):733--838, 2013.

\bibitem[BH78]{zbMATH03598578}
A.~Borel and G.~Harder.
\newblock Existence of discrete cocompact subgroups of reductive groups over
  local fields.
\newblock {\em J. Reine Angew. Math.}, 298:53--64, 1978.

\bibitem[BHS07]{Baum_2007}
P.~Baum, N.~Higson, and T.~Schick.
\newblock On the equivalence of geometric and analytic {K}-homology.
\newblock {\em Pure and Applied Mathematics Quarterly}, 3(1):1--24, 2007.

\bibitem[BHS10]{zbMATH05901984}
P.~Baum, N.~Higson, and T.~Schick.
\newblock A geometric description of equivariant {K}-homology for proper
  actions.
\newblock In {\em Quanta of maths. Conference on non commutative geometry in
  honor of Alain Connes, Paris, France, March 29--April 6, 2007}, pages 1--22.
  Providence, RI: American Mathematical Society (AMS); Cambridge, MA: Clay
  Mathematics Institute, 2010.

\bibitem[BL07]{Buyalo_2007}
S.~Buyalo and N.~Lebedeva.
\newblock Dimensions of locally and asymptotically self-similar spaces.
\newblock {\em St. Petersburg Mathematical Journal}, 19(01):45--66, December
  2007.

\bibitem[BL24]{werfwerfwrefw}
U.~Bunke and M.~Ludewig.
\newblock Coronas and {Callias} type operators in coarse geometry.
\newblock
  \href{https://arxiv.org/abs/2411.01646}{https://arxiv.org/abs/2411.01646},
  2024.

\bibitem[Bun19]{startcats}
U.~Bunke.
\newblock {Homotopy theory with *-categories}.
\newblock {\em Theory Appl.\ Categ.}, 34(27):781--853, 2019.

\bibitem[Bun24a]{Bunke:2024aa}
U.~Bunke.
\newblock Finite asymptotic dimension and the coarse assembly map.
\newblock
  \href{https://arxiv.org/pdf/2412.10746.pdf}{https://arxiv.org/pdf/2412.10746.pdf},
  12 2024.

\bibitem[Bun24b]{zbMATH07948612}
U.~Bunke.
\newblock {{\(KK\)}}- and {{\(E\)}}-theory via homotopy theory.
\newblock {\em Orbita Math.}, 1(2):103--210, 2024.

\bibitem[Cap20]{Caputi_2020}
L.~Caputi.
\newblock Cyclic homology for bornological coarse spaces.
\newblock {\em Journal of Homotopy and Related Structures}, 15(3--4):463--493,
  July 2020.

\bibitem[CM03]{zbMATH02028608}
I.~Chatterji and G.~Mislin.
\newblock Atiyah's {{\(L^2\)}}-index theorem.
\newblock {\em Enseign. Math. (2)}, 49(1-2):85--93, 2003.

\bibitem[CT08]{Corti_as_2008}
W.~Corti{\~n}as and A.~Thom.
\newblock Comparison between algebraic and topological {K}-theory of locally
  convex algebras.
\newblock {\em Advances in Mathematics}, 218(1):266--307, May 2008.

\bibitem[Del12]{zbMATH06107958}
I.~Dell'Ambrogio.
\newblock The unitary symmetric monoidal model category of small
  {{\(C^\ast\)}}-categories.
\newblock {\em Homology Homotopy Appl.}, 14(2):101--127, 2012.

\bibitem[Goo85]{Goodwillie_1985}
T.~G. Goodwillie.
\newblock Cyclic homology, derivations, and the free loopspace.
\newblock {\em Topology}, 24(2):187--215, 1985.

\bibitem[Goo86]{Goodwillie_1986}
T.~G. Goodwillie.
\newblock Relative algebraic {K}-theory and cyclic homology.
\newblock {\em The Annals of Mathematics}, 124(2):347, September 1986.

\bibitem[Hig90a]{zbMATH04176083}
N.~Higson.
\newblock An approach to {{\({\mathbb{Z}}/k\)}}-index theory.
\newblock {\em Int. J. Math.}, 1(2):189--210, 1990.

\bibitem[Hig90b]{MR1068250}
N.~Higson.
\newblock Categories of fractions and excision in {$KK$}-theory.
\newblock {\em J. Pure Appl. Algebra}, 65(2):119--138, 1990.

\bibitem[Hig99]{cbcc}
N.~Higson.
\newblock Counterexamples to the coarse {B}aum-{C}onnes conjecture.
\newblock unpublished, 1999.

\bibitem[Joa03]{joachimcat}
M.~Joachim.
\newblock {$K$-homology of $C^{\ast}$-categories and symmetric spectra
  representing $K$-homology}.
\newblock {\em Math. Ann.}, 327:641--670, 2003.

\bibitem[Kas88]{kasparovinvent}
G.~G. Kasparov.
\newblock {Equivariant $K\!K$-theory and the Novikov conjecture}.
\newblock {\em Invent.\ Math.}, 91(1):147--201, 1988.

\bibitem[KW20]{zbMATH07183274}
D.~Kasprowski and Ch. Winges.
\newblock Shortening binary complexes and commutativity of {{\(K\)}}-theory
  with infinite products.
\newblock {\em Trans. Am. Math. Soc., Ser. B}, 7:1--23, 2020.

\bibitem[LNS17]{Land_2017}
M.~Land, Th. Nikolaus, and K.~Szumilo.
\newblock Localization of cofibration categories and groupoid
  ${C}^{*}$--algebras.
\newblock {\em Algebraic $\&$ Geometric Topology}, 17(5):3007--3020, September
  2017.

\bibitem[Lod98]{zbMATH01093754}
J.-L. Loday.
\newblock {\em Cyclic homology.}, volume 301 of {\em Grundlehren Math. Wiss.}
\newblock Berlin: Springer, 2nd ed. edition, 1998.

\bibitem[LR06]{lrd}
W.~L{\"u}ck and H.~Reich.
\newblock Detecting ${K}$-theory by cyclic homology.
\newblock {\em Proceedings of the London Mathematical Society}, 93(3):593--634,
  October 2006.

\bibitem[LS14]{stlue}
W.~L{\"u}ck and W.~Steimle.
\newblock Non-connective {K} and {N}il-spectra of additive categories.
\newblock {\em Contemp. Math}, 617:205--236, 2014.

\bibitem[Lur]{HA}
J.~Lurie.
\newblock {Higher Algebra}.
\newblock
  \href{http://www.math.harvard.edu/~lurie/}{www.math.harvard.edu/$\sim$lurie}.

\bibitem[McC94]{McCarthy_1994}
R.~McCarthy.
\newblock The cyclic homology of an exact category.
\newblock {\em Journal of Pure and Applied Algebra}, 93(3):251--296, May 1994.

\bibitem[Ros97]{Rosenberg_1997}
J.~Rosenberg.
\newblock The algebraic k-theory of operator algebras.
\newblock {\em K-Theory}, 12(1):75--99, July 1997.

\bibitem[Sch05]{zbMATH02246450}
T.~Schick.
\newblock {{\(L^2\)}}-index theorems, {KK}-theory, and connections.
\newblock {\em New York J. Math.}, 11:387--443, 2005.

\bibitem[Sch06]{MR2206639}
M.~Schlichting.
\newblock Negative {$K$}-theory of derived categories.
\newblock {\em Math. Z.}, 253(1):97--134, 2006.

\bibitem[SW90]{Suslin_1990}
A~A Suslin and M~Wodzicki.
\newblock Excision in algebraic k-theory and karoubi's conjecture.
\newblock {\em Proceedings of the National Academy of Sciences},
  87(24):9582--9584, December 1990.

\bibitem[Wei89]{zbMATH04095731}
Ch.~A. Weibel.
\newblock Homotopy algebraic {K}-theory.
\newblock Algebraic {{\(K\)}}-theory and algebraic number theory, {Proc}.
  {Semin}., {Honolulu}/{Hawaii} 1987, {Contemp}. {Math}. 83, 461-488 (1989).,
  1989.

\bibitem[WY12]{Willett_2012I}
R.~Willett and G.~Yu.
\newblock Higher index theory for certain expanders and gromov monster groups,
  i.
\newblock {\em Advances in Mathematics}, 229(3):1380--1416, February 2012.

\end{thebibliography}

\end{document}